\documentclass[10pt,reqno]{amsart}
\usepackage{amsmath}
\usepackage{amsfonts}
\usepackage{mathrsfs}
\usepackage{amssymb}
\usepackage{amsthm}
\usepackage{enumitem}
\usepackage{bbm}
\usepackage{times}
\usepackage{tabularx}
\usepackage{amsbsy}
\usepackage{mathtools}
\usepackage{tikz}
\usetikzlibrary{arrows,decorations.markings}
\usetikzlibrary{matrix}
\usetikzlibrary{graphs}
\usetikzlibrary{backgrounds}
\usepackage{setspace}     \spacing{1}
\usepackage{color}
\usepackage[normalem]{ulem}
\usepackage{soul}
\usepackage{cancel}

\usepackage[colorlinks=true,linkcolor=blue,citecolor=blue]{hyperref}

\theoremstyle{Theorem}
\newtheorem{theorem}{Theorem} [section]

\newtheorem{proposition}[theorem]{Proposition}
\newtheorem{claim}[theorem]{Claim}
\newtheorem{lemma}[theorem]{Lemma}
\newtheorem{corollary}[theorem]{Corollary}
\newtheorem{custheorem}{Theorem}

\theoremstyle{definition}
\newtheorem{definition}[theorem]{Definition}

\newtheorem{remark}[theorem]{Remark}

\theoremstyle{remark}

\newlist{enumlemma}{enumerate}{3}
\setlist[enumlemma]{label*={(\alph*)}, ref= {(\alph*)} }

\usepackage{marginnote}

\newcommand{\restrict}[2]{{#1}{|_{{ #2}}}}
\newcommand{\diff}{\mathrm{Diff}}

\renewcommand{\epsilon}{\varepsilon}
\renewcommand{\emptyset}{\varnothing}

\def \diag{\mathrm{diag}}

\newcommand{\td}{\tilde}
\newcommand{\wtd}{\widetilde}
\def\Fib{\mathrm{Fiber}}

\DeclareMathOperator{\Diff}{Diff}

\DeclareMathOperator{\Id}{Id}

\newcommand{\sm}{\smallsetminus}
\newcommand{\R}{\mathbb {R}}
\newcommand{\Q}{\mathbb {Q}}
\newcommand{\Z}{\mathbb {Z}}
\newcommand{\N}{\mathbb {N}}

\newcommand{\e}{\epsilon}
\newcommand{\Xt}{X_{\mathrm{thick}}}

\def \sl{\mathfrak{sl}}

\newcommand{\Sl}{\mathrm{SL}}

\newcommand{\So}{\mathrm{SO}}
\def\SO{\So}
\def\SL{\Sl}
\newcommand{\inv}{^{-1}}
\newcommand{\id}{\mathrm{Id}}

\def\calI{\mathcal I}
\def\calA{\mathcal A}
\def\calE{\mathcal E}

\def\calF{\mathcal F}

\def\orb{\mathcal O}

 \newcommand{\lieg}{\mathfrak g}
\newcommand{\lieh}{\mathfrak h}
\newcommand{\liek}{\mathfrak k}

\newcommand{\lien}{\mathfrak n}
\newcommand{\liea}{\mathfrak a}

\newcommand{\liep}{\mathfrak p}

\renewcommand\P{\mathbb{P}}

\def\calM{\mathcal M}
\def\calI{\mathcal I}

\def\Folner{F{\o}lner }

\def\blue{}

\title{Zimmer's conjecture for actions of $\Sl(m,\Z)$}

\author[A.~Brown]{Aaron Brown}
\address{University of Chicago, Chicago, IL 60637, USA}
\email{awb@uchicago.edu}

\author[D.~Fisher]{David Fisher}
\address{Indiana University, Bloomington, Bloomington, IN 47401, USA}
\email{fisherdm@indiana.edu}

\author[S.~Hurtado]{Sebastian Hurtado}
\address{University of Chicago, Chicago, IL 60637, USA}
\email{shurtados@uchicago.edu}

\thanks{DF was partially supported by NSF Grants DMS-1308291 and DMS-1607041.  DF was also partially supported
by the University of Chicago, and by NSF grants DMS 1107452, 1107263, 1107367, ``RNMS: Geometric Structures and Representation Varieties" (the GEAR Network) during a visit to the Isaac Newton Institute in Cambridge.}

\long\def\symbolfootnote[#1]#2{\begingroup\def\thefootnote{\fnsymbol{footnote}}
\footnote[#1]{#2}\endgroup}

\begin{document}
\maketitle

\begin{abstract}
We prove Zimmer's conjecture for $C^2$ actions by finite-index subgroups of $\Sl(m,\Z)$ provided $m>3$. The method utilizes many ingredients from our earlier proof of the conjecture for actions by cocompact lattices in $\Sl(m,\R)$  \cite{BFH} but new ideas are needed to overcome the lack of compactness
of the  space $(G \times M)/\Gamma$ (admitting the induced $G$-action).  Non-compactness allows both measures and Lyapunov exponents to escape to infinity under averaging and
a  number of algebraic, geometric, and dynamical tools are used control this escape.
 New ideas are provided by the
work of Lubotzky, Mozes, and Raghunathan on the structure of nonuniform lattices and, in particular, of $\Sl(m,\Z)$ providing a   geometric decomposition of the cusp into rank one directions, whose  geometry is more easily controlled. The proof also makes use of a precise quantitative form of non-divergence of unipotent orbits by Kleinbock and Margulis, and an extension by de la Salle of strong property (T) to representations of nonuniform lattices.
\end{abstract}





\section{Introduction}
\subsection{Statement of results}
The main result of this paper is the following:

\begin{custheorem}\label{main1} Let $\Gamma$ be a finite-index subgroup of $\Sl(m,\Z)$ and  let $M$ be a closed manifold of dimension $\dim(M) \leq m - 2$.  If  $\alpha\colon \Gamma \to \Diff^2(M)$ is a group homomorphism  then $\alpha(\Gamma)$ is finite\footnote{After this work was completed, Brown-Damjanovic-Zhang showed that some modifications of our arguments also give a proof for $C^1$ diffeomorphisms \cite{BDZ}.}.  In addition, if $\omega$ is a volume form on $M$, $m>2$ and if $\dim(M) \leq m-1$, then if and $\alpha\colon \Gamma \to \Diff^2(M, \omega)$ is a group homomorphism then $\alpha(\Gamma)$ is finite.
\end{custheorem}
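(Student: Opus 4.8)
The plan is to follow the strategy of \cite{BFH} for cocompact lattices, replacing at each step the use of compactness of $(G\times M)/\Gamma$ by quantitative substitutes. Write $G=\Sl(m,\R)$, let $A<G$ be the diagonal Cartan subgroup, and form the induced (suspension) space $M^\alpha=(G\times M)/\Gamma$, a bundle over $G/\Gamma$ with fiber $M$ carrying a $G$-action lifting the translation action on $G/\Gamma$. The ultimate goal is to show that $\alpha$ has uniform subexponential growth of derivatives; granting this, an averaging argument over $\Gamma$ --- powered by property (T), more precisely by de la Salle's strong property (T) for nonuniform lattices --- produces an $\alpha$-invariant Riemannian metric on $M$, so $\alpha(\Gamma)\subset\Isom(M,g)$ lies in a compact Lie group, whence $\alpha(\Gamma)$ is finite, since a Zariski-dense homomorphic image of the higher-rank lattice $\Gamma$ in a compact group would extend to $G$ by Margulis superrigidity, which is impossible. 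As in \cite{BFH}, the growth estimate in turn follows once one proves: for every $A$-invariant Borel probability measure $\mu$ on $M^\alpha$ projecting to the Haar measure of $G/\Gamma$, all fiberwise Lyapunov exponents of the $A$-action vanish. The hypothesis $m>3$ is used below.

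First I would produce the relevant $A$-invariant measures on $M^\alpha$ as weak-$*$ limits of averages of a reference measure along \Folner sets in $A$ (or along one-parameter unipotent orbits in $G$), and then check that Oseledets' theorem applies, so the fiberwise Lyapunov exponents are well-defined affine functionals $\lambda_1^\mu,\dots,\lambda_k^\mu$ on $\liea$ with $k\le\dim M$. Both points require substitutes for compactness: mass can escape into the cusp of $G/\Gamma$ under averaging, and the $\Gamma$-cocycle defining $M^\alpha$ is unbounded near the cusp, so that integrability of the fiberwise derivative cocycle is not automatic. Here I would use the Lubotzky--Mozes--Raghunathan description of the cusp as a union of finitely many ``rank-one'' horoball regions --- in which the geometry of $G/\Gamma$ and the size of the defining cocycle are controllable --- together with the Kleinbock--Margulis quantitative non-divergence of unipotent orbits, which bounds the time a typical orbit spends deep in the cusp; together these yield non-escape of mass and the needed integrability.

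The core step is to derive a contradiction from the assumption that $\lambda_i^\mu\ne 0$ for some $i$ and some ergodic $A$-invariant $\mu$. Since there are at most $\dim M\le m-2$ functionals $\lambda_i^\mu$ while the restricted root system of $G$ is $A_{m-1}$ (roots $e_i-e_j$, $1\le i\ne j\le m$), a non-resonance/counting argument shows that, as the one-parameter subgroup $a_t<A$ ranges over the Cartan, the hypotheses of the invariance principle (Ledrappier; Avila--Viana; Brown--Rodriguez Hertz--Wang) are met for enough root directions $\beta$ that $\mu$ becomes invariant under a family of root subgroups $U^\beta<G$; for $m>3$ and $\dim M\le m-2$ this family generates $G$, so $\mu$ is $G$-invariant. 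But for a $G$-invariant measure, Zimmer's cocycle superrigidity (using higher rank of $G$ and de la Salle's strong property (T)) shows the fiberwise derivative cocycle is measurably cohomologous to a cocycle valued in a compact group times the restriction of a linear representation $\sigma\colon\Sl(m,\R)\to\Gl(\dim M,\R)$; as $\dim M<m$ and the smallest nontrivial representation of $\Sl(m,\R)$ has dimension $m$, $\sigma$ is trivial, the cocycle is compact-valued, and all fiberwise Lyapunov exponents vanish --- a contradiction. In the volume-preserving case the cocycle is valued in the volume-preserving subgroup, so the sum of the Lyapunov exponents (with multiplicity) vanishes; this gains one dimension in the counting step and yields the sharp bound $\dim M\le m-1$.

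The step I expect to be the main obstacle is everything forced by the loss of compactness of $G/\Gamma$. Besides escape of mass in the averaging construction and non-integrability of the fiberwise cocycle near the cusp, there is the subtler phenomenon that Lyapunov exponents themselves can escape to infinity: as measures are pushed toward the cusp, a nonzero contribution to an exponent can concentrate in ever-deeper horoballs, so uniform estimates survive the limiting process only if cusp excursions are quantitatively negligible in the relevant sense. Manufacturing the compactness substitutes that make these steps go through --- the Lubotzky--Mozes--Raghunathan rank-one decomposition of the cusp, the Kleinbock--Margulis non-divergence estimates, and de la Salle's strong property (T) for $\Sl(m,\Z)$ --- is where the real work beyond \cite{BFH} lies; once they are in hand, the skeleton of the \cite{BFH} argument (invariance principle, generation by unipotents, cocycle superrigidity, dimension count) carries over.
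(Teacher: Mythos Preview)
Your proposal correctly identifies the overall architecture---reduce to uniform subexponential growth of derivatives, apply de la Salle's strong property $(T)$ to obtain an invariant metric, conclude finiteness---and correctly names the main obstacles (escape of mass, escape of Lyapunov exponents) together with the external ingredients (Lubotzky--Mozes--Raghunathan, Kleinbock--Margulis, the invariance principle of \cite{AWBFRHZW-latticemeasure}, cocycle superrigidity). The endgame you describe matches the paper.

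There is, however, a genuine gap in the step producing an $A$-invariant measure with positive fiberwise exponent from failure of subexponential growth. You propose to average along \Folner sets and control cusp behavior directly via Lubotzky--Mozes--Raghunathan plus Kleinbock--Margulis. The paper explains in Section~\ref{subsection:failure} why this is insufficient: for $\Q$-rank $\ge 2$, trajectories deep in the cusp of $G/\Gamma$ are \emph{not} forced to have unipotent return cocycle, so one cannot bound the fiberwise derivative contribution of cusp excursions and the exponential growth might a priori concentrate entirely there; the large-deviation bounds one gets from Kleinbock--Margulis are not sharp enough to rule this out. The fix, absent from your outline, is a two-stage bootstrap. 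First, use \cite[Corollary~3]{MR1244421} to reduce to the restrictions $\alpha|_{\Lambda_{i,j}}$ with $\Lambda_{i,j}\cong\Sl(2,\Z)$. Second---and this is the key new idea---establish subexponential growth \emph{for unipotent elements first} (Proposition~\ref{unipotentisgood}): working inside a copy of $\Sl(2,\Z)\ltimes\Z^2\subset\Gamma$, run the \cite{BFH} argument combined with Kleinbock--Margulis to show that a positive-proportion subset of $\Sl(2,\Z)$ is ``good,'' conjugate to obtain a positive-density set of good unipotents in $\Z^2$, then use an abelian sumset argument to reach all of $\Z^2$. Only after this does one assume subexponential growth fails on $\Lambda_{1,2}$: since cusp excursions in $\Sl(2,\R)/\Sl(2,\Z)$ \emph{do} have unipotent return cocycle, and unipotents are now known to be subexponential, one can extract $a^t$-orbit segments with large fiberwise growth and \emph{sublinear} cusp depth (Lemma~\ref{lemma:maximal}); these are averaged over a tailored \Folner set in $AN'$ (with $N'$ abelian unipotent, orbits closed tori) to get measures with uniformly exponentially small mass in the cusps and retained positive exponent. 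Without the prior control on unipotents there is no mechanism to separate ``genuine'' exponential growth from growth accumulated in the cusp, and your averaging step cannot be made to work. (One small inaccuracy: strong property $(T)$ is used only to pass from subexponential growth to an invariant metric; Zimmer's cocycle superrigidity itself does not require it.)
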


For $m\ge 3$, we remark that the conclusion of Theorem \ref{main1} is known for actions on the circle by results of Witte Morris  \cite{MR1198459} (see also \cite{MR1703323,MR1911660} for actions by more general lattices on the circle) and for volume-preserving actions on surfaces by results of Franks and Handel and of Polterovich  \cite{MR2219247, MR1946555}.   The proof in this paper  requires that $m\ge 4$ though we expect it can be modified to cover actions by $\Sl(3,\Z)$; since these results are not new, we only present the case for $m\ge 4$.  While this is a very special case of Zimmer's conjecture, it is a key example.  For instance, the version of Zimmer's conjecture restated by Margulis in his problem list \cite{MR1754775} is a special case of Theorem \ref{main1}.

Note that if $\Gamma$ is a finite-index subgroup of  $\Sl(m,\Z)$ acting on compact manifold $M$, we may induce an action of $\Sl(m,\Z)$  on a (possibly non-connected) compact manifold $\td M= (\Sl(m,\R)\times M)/\sim$ where $(\gamma,x)\sim (\gamma', x')$ if there is $\hat \gamma\in \Gamma$ with $\gamma' = \gamma \hat \gamma$ and $x' = \alpha(\hat \gamma\inv)(x)$.
Connectedness of $M$ is neither assumed nor is it  used in either the proof of Theorem \ref{main1} or in \cite{BFH}.  Thus, for the remainder we will simply  assume $\Gamma = \Sl(m,\Z)$.

This paper is a first step in extending the results in \cite{BFH} to the case where $\Gamma$ is a nonuniform lattice in a split simple Lie group $G$ and the strategy of the proof of Theorem \ref{main1} relies strongly on the strategy used in \cite{BFH}.  In the remainder of the introduction, we recall the proof in the cocompact case, indicate where the difficulties arise in the nonuniform case, and outline the proof of Theorem \ref{main1}.  At the end of the introduction we make some remarks on other approaches and difficulties we encountered.  

 We recall a   key definition from \cite{BFH}.
Let $\Gamma$ be a finitely generated group.  Let $\ell\colon \Gamma \to \N$ denote the word-length function with respect to some choice of finite generating set for $\Gamma$.  Given a $C^1$ diffeomorphism $f\colon M\to M$ let $\|Df \| = \sup_{x \in M} \|D_xf \|$ (for some choice of norm on $TM$).
\begin{definition}
An action $\alpha\colon \Gamma\to \diff^1(M)$   has \emph{uniform subexponential growth of derivatives} if
\begin{equation}\label{eq:USEGOD}\text{   for every $\e>0$, there is $C_{\e}$ such  that  $\|D\alpha(\gamma)\| \leq C_{\e}e^{\e \ell(\gamma)}$ for all $\gamma\in \Gamma.$}\end{equation}
\end{definition}

The  main result of the paper is the following: 
 \begin{custheorem}\label{main2} For {$m\ge 4$}, let $\Gamma= \Sl(m,\Z)$ and let $M$ be a closed manifold.
 \begin{enumerate}
 \item If  $\dim(M) \leq m - 2$ then any action  $\alpha\colon  \Gamma \to \Diff^2(M)$ has {uniform subexponential growth of derivatives};
 \item  if $\omega$ is a volume form on $M$ and $\dim(M) \leq m-1$ then any action  $\alpha\colon \Gamma \to \Diff^2(M, \omega)$ has {uniform subexponential growth of derivatives}.
 \end{enumerate}
\end{custheorem}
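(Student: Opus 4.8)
The plan is to follow the architecture of \cite{BFH} but to replace the use of compactness of $(G\times M)/\Gamma$ by quantitative non-divergence estimates in the cusp. We induce the $\Gamma$-action to a $G = \Sl(m,\R)$-action on the (noncompact) space $X = (G\times M)/\Gamma$, which fibers over $\Sl(m,\R)/\Sl(m,\Z)$ with fiber $M$. Suppose for contradiction that the action does not have uniform subexponential growth of derivatives. Then, using the standard subadditive/fiberwise-cocycle formalism, there is an $\alpha$-invariant (in the induced sense) ergodic probability measure $\mu$ on $X$ whose fiberwise Lyapunov exponents for the derivative cocycle $D\alpha$ over a suitable one-parameter (or $\Z$) subgroup are not all zero; concretely, one extracts $\mu$ as a weak-$*$ limit of empirical measures along words realizing the exponential growth. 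The first obstacle, absent in the cocompact case, is that such a limit may fail to be a probability measure — mass can escape into the cusp. This is where the Kleinbock--Margulis quantitative non-divergence of unipotent orbits enters: by averaging the empirical measures against unipotent pieces of $G$ (exploiting that $\Sl(m,\Z)$ is generated by unipotents and that words can be routed through unipotent subgroups), one shows the limiting measure $\mu$ has no atoms at infinity, i.e. is genuinely a probability measure on $X$. One also needs that the Lyapunov exponents themselves do not escape to $+\infty$ under the averaging; this requires the geometric control of the cusp coming from Lubotzky--Mozes--Raghunathan, decomposing the cusp into rank-one directions along which the distortion of the induced metric — and hence the norm growth of $D\alpha$ along cusp excursions — is controlled.

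The heart of the argument is then the same suppression-of-exponents mechanism as in \cite{BFH}. One considers the fiberwise Lyapunov exponent functionals as functions on the restricted root system of $\Sl(m,\R)$; using the $P$-invariance of the relevant measures (via stationarity and the structure of invariant measures on $X$, together with Ledrappier--Young-type inequalities and the entropy considerations of \cite{BFH}), one shows that any nonzero fiberwise exponent would have to be proportional to a restricted root. A counting argument on the number of available roots versus $\dim(M)$ — precisely where the dimension bounds $\dim M \le m-2$ (and $\le m-1$ in the volume-preserving case, where one also uses invariance of the volume to rule out the extra exponent) come from — then forces a contradiction unless all fiberwise exponents vanish. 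The key new inputs making this step go through for the nonuniform lattice are the extension by de la Salle of strong property (T) to representations of $\Sl(m,\Z)$ (needed to run the averaging/fixed-point arguments that in \cite{BFH} used strong (T) for cocompact lattices) and the non-divergence estimates that guarantee the measures to which these arguments are applied remain supported on $X$ rather than leaking to the boundary.

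I expect the main obstacle to be the non-divergence/non-escape step: showing simultaneously that (i) the limiting stationary measure $\mu$ is a probability measure and (ii) the Lyapunov data passes to the limit without blowing up. Both require interlocking the dynamics of the $G$-action on $\Sl(m,\R)/\Sl(m,\Z)$ (where Kleinbock--Margulis controls excursion times into the cusp) with the fiberwise derivative growth along those excursions (where LMR geometry bounds the distortion), and doing so uniformly in the word length. Secondary technical difficulties are (a) arranging the words realizing exponential growth to be routed through unipotents so that Kleinbock--Margulis applies, which uses the bounded-generation / distortion estimates for $\Sl(m,\Z)$, and (b) upgrading from $C^2$ to the Lyapunov/entropy machinery in a way that is compatible with the noncompact fiber bundle — here one restricts attention to compact pieces of $X$ exhausting it and controls the contribution of the complement via the non-divergence estimates. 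Once $\mu$ is secured as an honest $P$-invariant probability measure on a compact-enough piece with controlled exponents, the remainder is essentially the root-counting argument of \cite{BFH}.
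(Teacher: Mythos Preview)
Your proposal identifies several of the right ingredients (the induced $G$-action, fiberwise Lyapunov exponents, Kleinbock--Margulis non-divergence, LMR cusp geometry) but misses the central new idea and misplaces at least one tool.

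First, a misplacement: strong property $(T)$ plays no role in the proof of Theorem~\ref{main2}. It is used only to pass from Theorem~\ref{main2} to Theorem~\ref{main1} (subexponential growth $\Rightarrow$ invariant Riemannian metric). Likewise, there are no $P$-stationary measures or stationarity arguments here; the measures constructed are $A$-invariant, and extra invariance under root subgroups comes from Ratner's theorem together with \cite[Proposition~5.1]{AWBFRHZW-latticemeasure}, exactly as in \cite{BFH}.

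The genuine gap is in the non-divergence step, which you correctly flag as the main obstacle but underestimate. Kleinbock--Margulis alone does not prevent escape of Lyapunov exponent: as the paper discusses in \S\ref{subsection:failure}, the quantitative bounds from Margulis-function and large-deviation methods are not sharp enough to run your direct limiting argument. The paper circumvents this by a two-stage argument that your proposal does not contain.

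\emph{Stage 1}: prove \emph{first} that the restriction of $\alpha$ to unipotent elements already has subexponential growth of derivatives (Proposition~\ref{unipotentisgood}). This is done inside a subgroup $\Sl(2,\Z)\ltimes\Z^2\subset\Gamma$: equidistribution of generic $a^t$-orbits in $\Sl(2,\R)/\Sl(2,\Z)$ plus the averaging/superrigidity mechanism of \cite{BFH} shows a positive-proportion subset of $\Sl(2,\Z)$ is ``good''; conjugation transfers this to a positive-density subset of the abelian $\Z^2$; and an elementary sumset argument (Claim~\ref{babycombinatorics}) upgrades positive density to all of $\Z^2$.

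\emph{Stage 2}: LMR (\cite[Corollary~3]{MR1244421}) reduces Theorem~\ref{main2} to subexponential growth for the restriction to a single $\Lambda_{1,2}\cong\Sl(2,\Z)$. Assuming this fails, the already-established control on unipotents implies that excursions into the cusp of $\Sl(2,\R)/\Sl(2,\Z)$ contribute only subexponentially to derivative growth, so one can extract $a^t$-orbit segments with exponential fiberwise growth but only \emph{sublinear} depth in the cusp (Lemma~\ref{lemma:maximal}). These segments are then averaged over a specific F{\o}lner sequence in a solvable group $AN'$ (with $N'$ an abelian unipotent group whose orbits are closed tori); it is the sublinear cusp depth together with the particular choice of $N'$ that yields uniformly exponentially small mass in the cusps (Proposition~\ref{maincusps}) and preservation of the positive exponent in the limit (Proposition~\ref{positiveexponent}). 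Kleinbock--Margulis enters only as one tool in this machinery (Lemma~\ref{lemma:translates}), not as the main mechanism.

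Without the unipotents-first step there is no way to decouple the cusp contribution from genuine exponential growth, and your direct limiting argument would fail for exactly the reason explained in \S\ref{subsection:failure}.
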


To deduce Theorem \ref{main1} from  Theorem \ref{main2}, we apply \cite[Theorem 2.9]{BFH} and de la Salle's recent result establishing  strong property $(T)$ for nonuniform lattices  \cite[Theorem 1.2]{delaSallenonuniform} and  conclude that any action $\alpha$ as  in Theorem \ref{main1}  preserves a continuous Riemannian metric.  For clarity, we point out that we need de la Salle's Theorem $1.2$ and not his Theorem $1.1$ because we need the measures converging to the projection to be positive measures.  That Theorem \cite[Theorem 1.2]{delaSallenonuniform} provides positive measures where \cite[Theorem 1.1]{delaSallenonuniform} does not is further clarified in \cite[Section 2.3]{delaSallenonuniform}.   Once a continuous invariant metric is preserved, the   image of any homomorphism $\alpha$  in Theorem \ref{main1} is contained in  a compact Lie group $K$.  All such homomorphisms necessarily  have finite image due to the presence of unipotent elements in $\Sl(m,\Z)$.  We remark that  while the finiteness of the image of $\alpha$  was deduced using Margulis's superrigidity theorem in  \cite{BFH},  it is unnecessary in the setting  of Theorem \ref{main1} since, as  any unipotent element of $\Sl(m,\Z)$ lies in the center of some integral Heisenberg subgroup of $\Sl(m,\Z)$, all unipotent elements have finite image in $K$ and therefore so does $\Sl(m,\Z)$. 

\subsection{Review of the cocompact case} To explain the proof of Theorem \ref{main2}, we briefly  explain the   difficulties in extending the arguments from \cite{BFH} to the setting of actions by nonuniform lattices.  We begin by recalling the proof in the cocompact setting.


In both  \cite{BFH} and the proof of Theorem \ref{main2}, we consider a fiber bundle $$M \rightarrow M^{\alpha}:=(G \times M)/\Gamma \xrightarrow{\pi} G/\Gamma$$ which allows us to replace  the $\Gamma$-action on $M$ with a $G$-action on $M^\alpha$.  In the case that $\Gamma$ is cocompact,  showing subexponential growth of derivatives of the $\Gamma$-action is equivalent to showing subexponential growth of the fiberwise derivative cocycle for the $G$-action.

%

To prove such subexponential growth for the $G$-action on $M^{\alpha}$ we argued by contradiction to obtain a sequence of points $x_n \in M^{\alpha}$ and semisimple elements $a_n$ in a Cartan subgroup $A \subset G$  which satisfy $\|\restrict{D_{x_n}a_n}{F}\|  \geq e^{\lambda d(a_n, \Id)}$ for some $\lambda>0$. Here $D_{x} g$ denotes the derivative of translation by $g$ at $x\in M^\alpha$, $F$ is the fiberwise tangent bundle of $M^\alpha$, and $\restrict{D_{x_n}a_n}{F}$ is the restriction of $ D_{x_n}a_n $ to ${F(x_n)}$.


The pairs $(x_n, a_n)$ determine empirical measures $\mu_n$ on $M^{\alpha}$ supported on the orbit $\{a_n^s(x_n): 0\le s\le t_n\}$
which accumulate on a  
measure $\mu$ that is $a$-invariant for some $a \in A$ and has a positive Lyapunov exponent for the fiberwise derivative cocycle of size at least  $\lambda$.
Using classical results in homogeneous dynamics in conjunction with the key proposition from \cite{AWBFRHZW-latticemeasure}, we averaged the measure $\mu$ to obtain a $G$-invariant measure $\mu'$ on $M^\alpha$ with a non-zero fiberwise Lyapunov exponent;  the existence of such a measure $\mu'$ contradicts Zimmer's cocycle superrigidity theorem.  

\subsection{Difficulties in the nonuniform setting.} When $\Gamma$ is nonuniform
the space $M^{\alpha}$ is not compact and the sequence of empirical measures $\mu_n$ might  diverge to infinity in $M^{\alpha}$; that is, in the limit we might have a ``loss of mass".
Additionally, even if the measures $\{\mu_n\}$ satisfy some tightness criteria so as to prevent escape of mass, one might have  ``escape of Lyapunov exponents:" for a limiting measure $\mu$, the Lyapunov exponents may be infinite or the value could drop below the value expected by the growth of fiberwise cocycles along the orbits $\{a^s(x_n): 0\le s\le t_n\}$.
For instance,  the contribution to the exponential growth of derivatives along the sequence of empirical measures could arise primarily from excursions of orbits deep into the cusp.  If one makes na\"ive computations with the \emph{return cocycle} $\beta\colon G \times G/\Gamma \rightarrow \Gamma$ (measuring for $x$ in a fundamental domain $D$ the element of $\Gamma$ needed to bring $gx$ back to a $D$)  one in fact expects that the fiberwise derivative are very large for translations of points far out in the cusp since the orbits of such points cross a large number of fundamental domains. 
The weakest consequence of this observation is that subexponential growth of the fiberwise derivative of the induced $G$-action is much stronger than subexponential growth of derivatives of the $\Gamma$-action.  While we   still     work with the induced $G$-action and the fiberwise derivative in many places, the arguments become more complicated than in the cocompact case.

In the  homogeneous dynamics literature, there  are many tools to study  escape of mass.  Controlling the escape of Lyapunov exponents seems to be more novel.  To rule out escape of mass, it suffices to prove  tightness of family of  measures $\{\mu_n\}.$
To control Lyapunov exponents, we introduce a quantitative tightness condition: we construct measures  $\{\mu_n\}$ with \emph{uniformly exponentially small mass in the cusps}. See Section \ref{sec:sampson}.
 It is a standard computation to show the Haar measure on $\Sl(m,\R)/\Sl(m,\Z)$  (or any  $G/\Gamma$ where $G$ is semisimple and $\Gamma$ is a lattice) has exponentially small mass in the cusps.  

\subsection{Outline of proof} 
 With the above difficulties in mind, we outline   the strategy of the proof of Theorem \ref{main2}.  Lubotzky, Mozes and Raghunathan proved  that $\Sl(m, \Z)$ is quasi-isometrically embedded in $\Sl(m, \R)$.  And in this special case, they give a proof that every element  $\gamma\in \Sl(m, \Z)$ can be written   as a product of at most $m^2$ elements $\delta_i$ contained in  canonical copies of $\Sl(2, \Z)$ determined by pairs of standard basis vectors for $\R^m$; moreover the word-length of each $\delta_i$ is at most proportional to the word-length of $\gamma$ \cite[Corollary 3]{MR1244421}.  (We note however that such effective generation of $\Gamma$ only holds for $\Sl(m,\Z)$;  for  the general case, in  \cite{MR1828742} a weaker generation of $\Gamma$ in terms of $\Q$-rank 1 subgroups is shown.) Thus, to show uniform subexponential growth of derivatives for the action of $\Sl(m, \Z)$, it suffices to show uniform subexponential growth of derivatives for the restriction of our action to each canonical copy of $\Sl(2,\Z)$.


We first obtain uniform subexponential growth of derivatives for the unipotent elements in $\Sl(2,\Z)$  in Section \ref{unipotents}.  See Proposition \ref{unipotentisgood}.
The strategy is to consider a subgroup of the form $\Sl(2,\Z)\ltimes \Z^2 \subset \Sl(m,\Z)$. We first prove that a large proportion of    elements in $\Sl(2, \Z)$ satisfy \eqref{eq:USEGOD}. To prove this, we use that if $a^t:= \text{diag}(e^t, e^{-t}) $ 
 then a typical $a^t$-orbit in $\SL(2, \R)/\Sl(2, \Z)$ equidistributes to the Haar measure.  
 In particular, for the empirical measures along such $a$-orbits we   apply the techniques from \cite{BFH} to show subexponential growth of fiberwise derivatives along such orbits and conclude that a large proportion of $\Sl(2, \Z)$ satisfies \eqref{eq:USEGOD}.   See Proposition \ref{mainunipo}. The proof of this fact repeats most of the ideas and techniques from \cite{BFH} as well a quantitative non-divergence of unipotent averages following  Kleinbock and Margulis. 
The precise averaging procedure is different here than in \cite{BFH}.

Having shown Proposition \ref{mainunipo}, we consider the $\Sl(2,\Z)$-action on the normal  subgroup $\Z^2$ of $\Sl(2,\Z)\ltimes \Z^2$  to show that   for every $n \geq 0$, the ball $B_n$ of radius $n$ in $\Z^2$ contains a positive-density subset  of unipotent elements satisfying \eqref{eq:USEGOD}.  Taking iterated sumsets of such good unipotent elements of $B_n(\Z^2)$  with   a finite set  one obtains uniform subexponential growth of derivatives for every element in $B_n$. This relies heavily on the fact that $\Z^2$ is abelian. See Subsection \ref{sec:mutualmastication}.


It is worth noting that the subgroups of the form $\Sl(2,\Z)\ltimes \Z^2\subset \Gamma$ are also considered in the work of Lubotzky, Mozes, and Raghunathan in \cite{MR1244421} as well as in Margulis's early constructions of expander graphs and   subsequent work on property (T) and expanders \cite{MR0484767}. 

Having established Proposition \ref{unipotentisgood},  we assume for the sake of  contradiction that the restriction of $\alpha$ to $\Sl(2,\Z)$ fails to exhibit uniform subexponential growth of derivatives.  We  obtain in Subsection \ref{maximal} a sequence $\zeta_n$ of $a^t$-orbit segments  in $\Sl(2, \R)/\Sl(2, \Z)$ which drift  only a sub-linear distance into the cusp with respect to their length and accumulate exponential growth of the fiberwise derivative.
Here we use that orbits deep in the cusp of $\Sl(2,\R)/\Sl(2,\Z)$ correspond to unipotent deck transformations and  that Proposition \ref{unipotentisgood} implies that these do not contribute to the exponential growth of the fiberwise derivative.
Here, we heavily use the structure of 
$\Sl(2,\Z)$ subgroups. 

We   promote the family of  orbit segments $\zeta_n$  in $M^\alpha$ to a family of  measures $\{\mu_n\}$ all of whose  subsequential limits   are $A$-invariant measures $\mu$ on $M^{\alpha}$ with non-zero fiberwise exponents. To construct $\mu_n$,   we construct a  \Folner sequence $F_n \subset G$ inside a  solvable subgroup $AN'$ where $A$ is the full Cartan subgroup of $\Sl(m,\R)$ and $N'$ is a well-chosen abelian subgroup of unipotent elements. We average our orbit segments $\zeta_n$  over  $F_n$ to obtain  the sequence of measures $\mu_n$ in $M^{\alpha}$. In general, \Folner sets for $AN'$ are subsets which are linearly large in the $A$-direction and exponentially large in the $N'$ direction. In our case the $N'$-part will not affect the Lyapunov exponent because we work inside a subset where the return cocycle $\beta$ restricted to $N'$ takes unipotent values and we have already proven subexponential growth of the fiberwise derivatives for unipotent elements.

The fact that $\mu_n$ behaves well in the cusp is due to two facts: First, the segments obtained in Subsection \ref{maximal} do not drift  too deep into the cusp of $\Sl(2,\R)/\Sl(2,\Z)$.   Second, we choose our subgroup $N'$ such that the $N'$-orbits of each  point  along each $\zeta_n$ is a closed torus that is  well-behaved when translated by $A$.  The argument here is related to the fact  closed horocycles in the cusp of $\Sl(2,\R)/\Sl(2,\Z)$ equidistribute  to the Haar measure when flowed backwards by the geodesic flow.

To finish the argument, we show that any $AN'$-invariant measure on $M^{\alpha}$ projects to Haar measure on $\Sl(m,\R)/\Sl(m,\Z)$ using  Ratner's measure classification and equidistribution theorems.  Then, as in \cite{BFH}, we can use \cite[Proposition 5.1]{AWBFRHZW-latticemeasure} and argue as in the cocompact case in \cite{BFH} show that  $\mu$ is in fact $G$-invariant and thereby obtain a contradiction with Zimmer's cocycle superrigidity theorem.

\subsection{A few remarks on other approaches.}
 \label{subsection:failure}

 We close the introduction by making some remarks on other approaches, particularly other approaches for controlling the  escape of mass.  We emphasize here that one key difficulty for all approaches is that we are not able to control the ``images" of the cocycle $\beta\colon G \times G/\Gamma \rightarrow \Gamma$ in either our special case or in general. To understand this remark better, consider first the case where $G=\Sl(2,\R)$ and $\Gamma =\Sl(2,\Z)$. If we take a one-parameter subgroup $c(t)<\Sl(2,\R)$ and take the trajectory $c(t)x$ for $t$ in some interval $[0,T]$ and assume and assume the entire trajectory on $G/\Gamma$ lies deep enough in the cusp, then $\beta(a(t),x)$ is necessarily unipotent for all $t$ in $[0,T]$.  No similar statement is true for $G=\Sl(m,\R)$ and $\Gamma=\Sl(m,\Z)$.  In fact analogous statements are true if and only if $\Gamma$ has $\Q$-rank one, this is closely related to the fact that higher $\Q$-rank locally symmetric spaces are $1$-connected at infinity.  This forces us to ``factor" the action into actions of rank-one subgroups in order to  control the growth of derivatives.


One might hope to obtain subexponential growth of derivatives more directly for all elements of $\Sl(2,\Z)$, or even directly in $\Sl(m,\Z)$, by proving better estimates on the size of the ``generic" subsets of $\Sl(2,\R)$  (or $\Sl(m, \R)$) whose $A$-orbits define empirical measures satisfying some tightness condition. While one can get good estimates on the size of the sets in Proposition \ref{mainunipo} using Margulis functions and large deviation estimates as in  \cite{MR2247652, MR2787598}, the resulting estimates are not sharp enough to allow us to prove subexponential growth of derivatives. One can compare with the conjectures in \cite{KKLM} about loss of mass.

An elementary related question is the following:  Let $B_n$ be a ball of radius $n$ in a Lie group $G$ (or a lattice $\Gamma$) and suppose there exists subset $S_n$ of $B_n$ such that $S_n$ and $B_n$ have more or less equal mass, meaning that: $$\frac{vol(B_n \setminus S_n)}{vol(B_n)} < \e_n $$ for a certain sequence $\e_n$ of numbers converging to zero. Does there exists an integer $k$ (independent of $n$) such that for $n$ large:

\begin{equation}\label{mof}
B_{n} \subset S_n*S_n*\stackrel{k}{\cdots}*S_n
\end{equation}
Observe that the question depends on how fast $\e_n$ is decreasing and on the group $G$. For example if $G$ abelian, $\e_n$ can be a sufficiently small constant as a consequence of Proposition \ref{babycombinatorics}.  Also, it is not hard to see that for any group $G$ the existence of $k$ is guaranteed if $\e_n$ decreases exponentially quickly.  So the real question is how fast $\e_n$ has to decrease to zero in order for this statement to hold.  Does (\ref{mof}) holds for $ G = \Sl_3(\Z)$ and $\e_n = 2^{-n^c}$  for some $c < 1$?  If the answer to this question is yes, then it would be possible to approach our results via Margulis functions and large deviation estimates.


\subsection*{Acknowledgements}  We thank Dave Witte Morris for his generous willingness to answer questions of all sorts throughout the production of this paper and \cite{BFH}.  We also thank to Shirali Kadyrov, Jayadev Athreya and Alex Eskin for helpful conversations, particularly on the material in Subsection \ref{subsection:failure} and Mikael de la Salle for many helpful conversations regarding strong property $(T)$.  {\blue We also thank the anonymous referee for a very careful reading and numerous comments which helped improve the exposition.}

\section{Standing notation} 
We review the notation introduced in \cite{BFH} and establish some standing notation and conventions as well as state some facts  used in the remainder of the paper.

\subsection{Lie theoretic and geometric notation}
We write $G= \Sl(m,\R)$ and $\Gamma= \Sl(m,\Z)$.  Let $\lieg$ denote the Lie algebra of $G$.  Let $\Id$ denote the identity element of  $G$.
We fix the standard Cartan involution $\theta \colon \lieg\to \lieg$ given by $\theta (X) = -X^{t}$ and write $\liek$ and $\liep$, respectively,  for the $+1$ and $-1$ eigenspaces of $\theta$.   Define $\liea$  to be a maximal abelian subalgebra of $\liep$.  Then $\liea$ is the vector space of diagonal matrices.

The roots of $\lieg$ are the  linear functionals $\beta_{i,j}\in \liea^*$ defined as $$\beta_{i,j}(\diag (t_1, \dots, t_m)) = t_i-t_j.$$  The simple positive roots are $\alpha_{j} = \beta_{j,j+1}$ and the positive roots are the positive integral combinations of $\{\alpha_j\}$ that are still roots.

For a root $\beta$, write $\lieg^\beta$ for the associated root space.   Each root space $\lieg^\beta$ exponentiates to a 1-parameter unipotent subgroup $U^\beta\subset G$.
The Lie subalgebra $\lien$ generated by all root spaces $\lieg^\beta$ for positive roots $\beta$, coincides with the Lie algebra of all strictly upper-triangular matrices.

Let $A,N,$ and $ K$ be the {analytic subgroups} of $G$ corresponding to $\liea, \lien$ and $\liek$.  Then
\begin{enumerate}
\item $A= \exp (\liea)$ is the group of all diagonal matrices with positive entries.  $A$ is an abelian group and we identity linear functionals on $\liea$ with linear functionals on $A$ via the exponential map $\exp\colon \liea\to A$;
\item $N= \exp (\lien)$ is the group of upper-triangular matrices with $1$s on the diagonal;
\item $K = \So(m)$.
\end{enumerate}

The Weyl group of $G$ is the group of permutation matrices.  This acts transitively on the set of all roots $\Sigma$.

For $1\le i,j\le m$, the subgroup of $G$ generated by $U^{\beta_{i,j}}$ and $U^{\beta_{j,i}}$ is isomorphic to $\Sl(2,\R)$.  We denote this subgroup by  $H_{i,j} = \Sl_{{i,j}}(2,\R).$    Then $\Lambda_{i,j}:=H_{i,j} \cap \Gamma$ is a lattice in $\Sl_{{i,j}}(2,\R)$   isomorphic to $\Sl(2,\Z)$.  
 Note then that $X_{i,j}:= H_{i,j}/\Lambda_{i,j}$ is the unit tangent bundle to the modular surface.
We will use the standard notation $E_{i,j}$ for an elementary matrix with 1s on the diagonal and in the $(i,j)$-place and 0s everywhere else.  Note that $E_{i,j}$ and $E_{j,i}$ generate $\Lambda_{i,j}$.

We equip $G$  with a  left-$K$-invariant and  right-$G$-invariant metric.  Such a metric is unique up to scaling.  Let $d$ denote be the induced distance on $G$.
With respect to this metric and distance $d$, each $H_{i,j}$ is geodesically embedded.  By rescaling the metric, 
  we may assume the restriction of $d$ to each $H_{i,j}$ {\blue coincides with the standard metric of constant curvature $-1$ on the upper half plane $\So(2)\backslash \Sl(2,\R)$.}
This metric has the following properties that we exploit throughout.
\begin{enumerate}
\item  For any matrix norm $\|\cdot \|$ on $H_{i,j}\simeq \Sl(2,\R)$  there is a $C_1$ such that
\begin{equation}\label{normdistance}  {2}\log \|A\| - C_1 \leq d(A, \Id) \leq    {2}\log \|A\| + C_1\end{equation}
 for all $A\in H_{i,j}$.
 \item   Let $B(\Id,r)$ denote the metric ball of radius $r$  in   $H_{i,j}$ centered at $\id$.  Then with respect to the induced Riemannian  volume on $H_{i,j}$ we have $$\mathrm{vol}(B(\id,r))= 4\pi (\cosh(r) -1)\le 4\pi e^r$$
and for all sufficiently large $r>0$
\begin{equation}\label{decapitationOrImpeachment?}
\mathrm{vol}(B(x,r))\ge e^r.
\end{equation}
\item For any matrix norm $\|\cdot \|$ on $\Sl(m,\R)$, there are constants $C_0>1$ and $\kappa>1$ such that for any matrix $A\in \Sl(m,\R)$ we have 
    \begin{equation}\label{eq:easy} \begin{gathered}
\kappa\inv\log \|A\| -  C_0
 \le d(A,\id) \le \kappa \log \|A\| +  C_0.
%
%
\end{gathered}
\end{equation}
\item In particular, there are $C_2$ and $C_3$ so that if $E_{i,j}\in \Sl(m,\Z)$ is an elementary unipotent matrix  then
\begin{equation}\label{unipotentgrowth}d(E_{i,j}^k,\id) \le C_2 \log  k + C_3.\end{equation}  
\end{enumerate}

\subsection{Suspension space and induced $G$-action} \label{sec:lolo}

 Let $M^{\alpha} = (G \times M)/\Gamma$ be the fiber-bundle over $\Sl(m,\R)/\Sl(m, \Z)$ obtained as follows: on $G\times M$ let $\Gamma$ act as
$$(g,x)\cdot \gamma = (g\gamma, \alpha(\gamma\inv)(x))$$
and let $G$ act as $$g'\cdot(g,x) = (g'g,x).$$
The $G$-action on $G\times M$ descends to a $G$-action  on the quotient $M^{\alpha} = (G \times M)/\Gamma$.
 Let $\pi\colon M^\alpha \to \Sl(m,\R)/\Sl(m, \Z)$ be the canonical projection.
As in \cite{BFH}, we   write  $F= \ker D\pi$ for the fiberwise tangent bundle to $M^{\alpha}$.  Write  $\P F$ for the projectivization of the fiberwise tangent bundle.
We write $\restrict { D_x g} F \colon F(x) \rightarrow F(gx)$ for the fiberwise derivative as in \cite{BFH}.  For $(x,[v])\in \P F$ and $g\in G$, write $$g\cdot (x,[v]) := (g\cdot x, [\restrict { D_x g} {F (x)} v])$$ for the action of $g$ on $\P F$ induced by $\restrict { D_x g} F$.

We follow  \cite[Section  2.1]{AWBFRHZW-latticemeasure} and equip $G\times M$ with a $C^1$ Riemannian metric $\langle \cdot, \cdot \rangle$ with the following properties:
\begin{enumerate}
\item $\langle \cdot, \cdot \rangle$ is $\Gamma$-invariant.
\item for $x\in M$ and $g\in G$, under the canonical identification of the $G$-orbit of $(g,x)$ with $G$, the restriction of $\langle \cdot, \cdot \rangle$ to the   $G$-orbit of $(g,x)$  coincides with the fixed right-invariant metric on $G$.
\item There is a Siegel fundamental set $D\subset G$ and $C>1$ such that for any $g_1,g_2\in D$, the map $(g_1,x)\mapsto (g_2,x)$ distorts the restrictions of $\langle \cdot, \cdot \rangle$ to $\{g_1\} \times M$ and $\{g_2\} \times M$  by at most $C$.
\end{enumerate}
The metric then descends to a $C^1$ Riemannian metric on $M^\alpha$.  {\blue Note that by averaging the metric over the left action of $K$, we may also assume that the metric on $M^\alpha$ is left-$K$-invariant.   This, in particular, implies the right-invariant metric on $G$ in $(2)$ above is chosen to be left-$K$-invariant.} 

To analyze the coarse dynamics of the suspension action, it is often useful to consider the \emph{return cocycle}
$\beta\colon G \times G/\Gamma \rightarrow \Gamma$.  This cocycle is defined relative to a fundamental
domain $\calF$ for the right $\Gamma$-action on $G$.  For any $x\in G/\Gamma$, take $\tilde x$ to be the unique
lift of $x$ in $\calF$ and define $\beta(g,x)$ to be the unique element of $\gamma\in \Gamma$ such that
$g\tilde{x}\gamma^{-1} \in \mathcal F$.  Any two choices of fundamental domain for $\Gamma$
define cohomologous cocycles but we require a choice of well-controlled fundamental domains $\calF$.  Namely, we choose $\calF$ to either be contained in a Siegel fundamental set or to be a Dirichlet domain for the identity.  With these choices, we have the following.

{\blue Let $\wtd {\mathcal D} \subset \Sl(m,\R)$ denote the Dirichlet domain of the identity for the $\Sl(m,\Z)$ action on $\Sl(m,\R)$; that is
$$\wtd {\mathcal D}:= \{ g\in \Sl(n,\R): d(g,\id) \le d(g\gamma, \id) \text{ for all $\gamma\in \Sl(m,\Z)$}\}.$$
Since each $H_{i,j}$ is geodesically embedded in $\Sl(m,\R)$ and since $\Lambda_{i,j} = H_{i,j} \cap \Sl(m,Z)$, it follows
\begin{equation} \label{eq:dirc}\mathcal D := H_{1,2} \cap  \wtd {\mathcal D}\end{equation}
is a Dirichlet domain of the identity for the $\Lambda_{1,2}$-action on $H_{1,2}$.  
Viewing $H_{1,2}\simeq \Sl(2,\R)$ acting  on the upper half-plane model of hyperbolic space $\mathbb H^2= \So(2)\backslash \Sl(2,\R)$ by M\"obius transformations ${\So(2)\backslash \mathcal D}$ is the standard Dirichlet domain for the modular surface, the hyperbolic triangle with endpoints at $1/2 + i\sqrt 3/2$, $-1/2 + i\sqrt 3/2$, and $\infty$.
}

\begin{lemma}
\label{lemma:fromlmr}
If $\mathcal F$ is either contained in either a Siegel fundamental set or a Dirichlet domain for the identity then
there is a constant $C$ such that  for all $g\in G$ and $x\in G/\Gamma$ $$\ell (\beta(g,x)) \leq C d(g,e)+ C d(x, \Gamma) + C.$$
\end{lemma}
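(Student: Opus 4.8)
The plan is to reduce the bound to two facts: a triangle inequality comparing $\ell(\beta(g,x))$ with the word-length of a representative of $x$ and the word-length of a representative of $gx$, together with the Lubotzky--Mozes--Raghunathan quasi-isometric embedding of $\Gamma$ in $G$. First I would fix the fundamental domain $\calF$ (either inside a Siegel set or a Dirichlet domain for the identity) and, for $x\in G/\Gamma$, write $\tilde x\in\calF$ for its lift. The point is that $d(x,\Gamma)$ (the distance in $G/\Gamma$ from $x$ to the image of the identity coset, equivalently $d(\tilde x,\Gamma)$ in $G$) controls how far $\tilde x$ is from $\Gamma\cap\calF$: because $\calF$ is a Siegel set or Dirichlet domain, there is a constant so that $d(\tilde x,e)\le d(\tilde x,\Gamma)+C$ — indeed the nearest lattice point to $\tilde x$ in a Siegel set / Dirichlet domain is the identity up to bounded error, this being exactly the geometric content of choosing such a domain. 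Hence $d(\tilde x, e)\le d(x,\Gamma)+C$.

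Next I would estimate $d(g\tilde x,e)$. By the triangle inequality and right-invariance of the metric on $G$, $d(g\tilde x, e)\le d(g\tilde x, g)+d(g,e)=d(\tilde x,e)+d(g,e)\le d(g,e)+d(x,\Gamma)+C$. Now $\beta(g,x)$ is defined by $g\tilde x\,\beta(g,x)\inv\in\calF$; equivalently $\beta(g,x)$ is the element of $\Gamma$ such that $g\tilde x$ lies in $\calF\beta(g,x)$. Since $\calF$ is a bounded-geometry fundamental domain, $d(\beta(g,x),e)\le d(g\tilde x, e)+C'$ for a uniform constant $C'$: again this is because $g\tilde x$ and the lattice element $\beta(g,x)$ lie in the same translate $\calF\beta(g,x)$, and in a Siegel set / Dirichlet domain the identity is (up to bounded error) the closest lattice point to any point of $\calF$, so translating by $\beta(g,x)$ the point $\beta(g,x)$ itself is within bounded distance of $g\tilde x$. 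Combining, $d(\beta(g,x),e)\le d(g,e)+d(x,\Gamma)+C''$.

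Finally I would invoke the theorem of Lubotzky--Mozes--Raghunathan that the inclusion $\Gamma=\Sl(m,\Z)\hookrightarrow G=\Sl(m,\R)$ is a quasi-isometric embedding: there are constants $L\ge 1$, $K\ge 0$ with $\ell(\gamma)\le L\,d(\gamma,e)+K$ for all $\gamma\in\Gamma$ (one direction of the quasi-isometry suffices here, and for $m\ge 3$ this is the main theorem of \cite{MR1244421}; for $\Sl(2,\Z)$, which is virtually free, the word metric is already comparable to the restricted distance via \eqref{normdistance}). Applying this to $\gamma=\beta(g,x)$ and substituting the estimate from the previous step yields $\ell(\beta(g,x))\le L\bigl(d(g,e)+d(x,\Gamma)+C''\bigr)+K$, which is the claimed inequality after renaming constants.

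The main obstacle is making precise the step "in a Siegel set or Dirichlet domain the identity is, up to bounded error, the nearest lattice point to any point of the domain," i.e.\ that $\sup_{h\in\calF} d(h,e)-d(h,\Gamma)<\infty$. For a Dirichlet domain for the identity this is essentially immediate from the definition ($d(h,e)\le d(h,\gamma)$ for all $\gamma\in\Gamma$, so $d(h,e)=d(h,\Gamma)$ on the nose). For a Siegel fundamental set one uses the standard fact that on a Siegel set the function $h\mapsto d(h,\Gamma)$ differs from $h\mapsto d(h,e)$ by a bounded amount; this follows from reduction theory — a Siegel set meets each $\Gamma$-orbit in a bounded number of points and the identity realizes the distance to $\Gamma$ up to an additive constant depending only on the Siegel set. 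I would cite standard references on reduction theory (or \cite{MR1244421}) for this, since it is classical and independent of the dynamical content of the paper.
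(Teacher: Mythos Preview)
Your approach is essentially the paper's own: it too reduces the estimate to the Lubotzky--Mozes--Raghunathan quasi-isometry together with the reduction-theoretic fact that on a Siegel set or Dirichlet domain $d(h,e)$ and $d(h,\Gamma)$ differ by a bounded amount (the paper simply cites \cite{MR1767270} and \cite{MR2039990} rather than spelling this out). Two small slips are worth correcting.

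First, the identity $d(g\tilde x, g)=d(\tilde x,e)$ would require \emph{left}-invariance, which the chosen metric does not have; instead write $d(g\tilde x,e)\le d(g\tilde x,\tilde x)+d(\tilde x,e)$ and use right-invariance to get $d(g\tilde x,\tilde x)=d(g,e)$, which gives the same bound. Second, the assertion that ``$\beta(g,x)$ itself is within bounded distance of $g\tilde x$'' is false: the translate $\calF\beta(g,x)$ is unbounded, and in fact $d(g\tilde x,\beta(g,x))=d(f,e)$ where $f=g\tilde x\,\beta(g,x)^{-1}\in\calF$, which can be arbitrarily large. What the nearest-lattice-point property actually gives is $d(g\tilde x,\beta(g,x))\le d(g\tilde x,\gamma)+C$ for every $\gamma\in\Gamma$; taking $\gamma=e$ yields $d(g\tilde x,\beta(g,x))\le d(g\tilde x,e)+C$, and then the triangle inequality gives $d(\beta(g,x),e)\le 2\,d(g\tilde x,e)+C$. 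The extra factor of $2$ is absorbed into the constant of the lemma, so the argument goes through once these two steps are rewritten.
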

In the above lemma, $\ell$ is the word-length of $\beta(g,x)$, $d(g,e)$ is the distance from $g$ to $e$ in $G$, and $d(x, \Gamma)$ is the distance from $x\in G/\Gamma$ to the identity coset $\Gamma$ in $G/\Gamma$.
For a Dirichlet domain for the identity, the Lemma is shown in \cite[\S 2]{MR1767270}; for fundamental domains contained in Siegel fundamental sets, the estimate follows from  \cite[Corollary 3.19]{MR2039990} and   the fact that the distance to the identity in a  Siegel domain is quasi-Lipschitz equivalent to the distance to the identity in the quotient   $G/\Gamma$.   Both estimates heavily use
the main theorem of Lubotzky, Mozes, and Raghunathan \cite{MR1244421,MR1828742} to compare the word-length of $\beta(g,x)\in \Sl(m,\Z)$ with   $\log(\|\beta(g,x)\|)$. 

{\blue  Fix once and for all a fundamental domain $\mathcal F\subset \wtd {\mathcal D} \subset \Sl(m,\R)$.}

The estimates in  Lemma \ref{lemma:fromlmr} is often used to obtain integrability properties of $\beta$ and   related cocycles with respect  to the Haar measure on $G/\Gamma$.
As the function $x\mapsto d(x, \Gamma)$ is in  $L^p(G/\Gamma,\mathrm{Haar})$ for any compact set $K\subset G$ we have that $$x\mapsto \sup _{g\in K} \ell(\beta (g,x))$$ is in $L^p(G/\Gamma,\text{Haar})$ for all $p\ge 1$.
In the sequel, we typically do not directly use the integrability properties  (since we work with measures other than Haar)  but rather  the   estimate in Lemma \ref{lemma:fromlmr}.




\section{Preliminaries on measures, averaging, and Lyapunov exponents}
\label{section:preliminaries}

We present a number of technical facts regarding invariant measures, equidistribution, averaging, and Lyapunov exponents  that will be used in the remainder of the paper.

\subsection{Ratner's measure classification and equidistribution theorems}
We recall Ratner's theorems on equidistribution of unipotent flows.
Let $U = \{u(t) = \exp_\lieg (t X)\}$ be a 1-parameter unipotent subgroup in $G$.
Given any Borel probability measure $\mu$ on $G/\Gamma$ let
$$U^T\ast \mu := \frac 1 T \int_0^T  u(t)_* \mu \ d t.$$

\begin{theorem}[Ratner]\label{thm:ratner}
Let $U = \{u(t) = \exp_\lieg (t X)\}$ be a 1-parameter unipotent subgroup and consider the action on $G/\Gamma$.  The following hold:
\begin{enumlemma}
	\item \label{ratner1}Every ergodic, $U$-invariant probability measure on $G/\Gamma$ is homogeneous \cite[Theorem 1]{MR1262705}.
	\item \label{ratner2} The orbit closure $\orb_x:= \overline{\{u \cdot x :u\in U\} }$ is homogeneous for every $x\in G/\Gamma$  \cite[Theorem 3]{MR1262705}.
	\item \label{ratner4} The orbit  ${U \cdot x }$ equidistributes in $\orb_x$; that is $U^T\ast \delta_x$ converges to the Haar  measure on $\orb_x$ as $T\to \infty$.

\item \label{ratner3} \label{thisone} Let $\beta$ be a root of $\lieg$ and let $\sl_\beta(2)\subset \lieg$ be the Lie subalgebra generated by $\lieg^\beta$ and $\lieg^{-\beta}$.  Let $e,f,h\subset \sl_\beta(2)$ be an $\sl(2,\R)$ triple with $e\in \lieg^\beta$ and $f\in \lieg^{-\beta}$ and let $\lieh^\beta = \mathrm{span} (h)$.  Let $H^\beta= \exp \lieh^\beta$.

Let $\mu$ be a   $U^{\beta}$-invariant Borel probability measure on $G/\Gamma$.
	If $\mu$ is $H^\beta$-invariant, then $\mu$ is $U^{-\beta}$-invariant.
\end{enumlemma}
\end{theorem}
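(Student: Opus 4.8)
The plan is to decompose $\mu$ into ergodic components for the $U^{\beta}$-action, use that $H^{\beta}$ normalizes $U^{\beta}$ so that it permutes those components and preserves their distribution, and then combine a short Lie-algebra computation with a recurrence argument to show that $P$-almost every component is already $U^{-\beta}$-invariant. Concretely, fix an $\sl(2,\R)$-triple $e\in\lieg^{\beta}$, $f\in\lieg^{-\beta}$, $h=[e,f]$, so that $U^{\beta}=\exp(\R e)$, $U^{-\beta}=\exp(\R f)$, $H^{\beta}=\{a(t):=\exp(th)\}$, and $\Ad(a(t))$ acts by $e^{jt}$ on the $\ad(h)$-eigenspace $\lieg_{j}$ in the decomposition $\lieg=\bigoplus_{j\in\{0,\pm1,\pm2\}}\lieg_{j}$. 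Write the ergodic decomposition of $\mu$ over the $U^{\beta}$-action as $\mu=\int_{\calE}\nu\,dP(\nu)$. By Ratner's measure classification (Theorem~\ref{thm:ratner}\ref{ratner1}) each component $\nu$ is homogeneous; put $L_{\nu}:=\{g\in G:g_{*}\nu=\nu\}^{\circ}$, so $\nu$ is the $L_{\nu}$-invariant probability measure on a closed orbit $L_{\nu}g_{\nu}\Gamma$, with $U^{\beta}\subseteq L_{\nu}$ and $L_{\nu}$ unimodular (it contains the lattice $L_{\nu}\cap g_{\nu}\Gamma g_{\nu}^{-1}$). Since $H^{\beta}$ normalizes $U^{\beta}$, for $a\in H^{\beta}$ the measure $a_{*}\nu$ is again $U^{\beta}$-ergodic with $L_{a_{*}\nu}=aL_{\nu}a^{-1}$; as $a_{*}\mu=\mu$, uniqueness of the ergodic decomposition shows the induced flow $t\mapsto a(t)_{*}$ on $\calE$ preserves $P$.

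The algebraic heart is: \emph{if $\mathfrak l\subseteq\sl(m,\R)$ is an $\ad(h)$-invariant subalgebra with $e\in\mathfrak l$ and $\operatorname{tr}(\ad(h)|_{\mathfrak l})=0$, then $f\in\mathfrak l$.} Here $\lieg_{2}=\R e$ and $\lieg_{-2}=\R f$ are one-dimensional and $\ad(e)\colon\lieg_{-1}\to\lieg_{1}$ is an isomorphism (these are the $\mp1$ weight spaces of the $V_{1}$-isotypic part of $\sl(m,\R)$ for the $\sl_{\beta}(2)$-action, on which $e$ acts as the raising operator); writing $\mathfrak l=\bigoplus_{j}\mathfrak l_{j}$ with $\mathfrak l_{j}=\mathfrak l\cap\lieg_{j}$ we get $\mathfrak l_{2}=\R e$ and $\ad(e)\mathfrak l_{-1}\subseteq\mathfrak l_{1}$, hence $\dim\mathfrak l_{1}\ge\dim\mathfrak l_{-1}$, and then $0=\sum_{j}j\dim\mathfrak l_{j}=2+\dim\mathfrak l_{1}-\dim\mathfrak l_{-1}-2\dim\mathfrak l_{-2}\ge 2-2\dim\mathfrak l_{-2}$ forces $\mathfrak l_{-2}=\R f$. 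Applying this with $\mathfrak l=\lie(L_{\nu})$: whenever $\lie(L_{\nu})$ is $\ad(h)$-invariant and $h\in\lie(L_{\nu})$, unimodularity of $L_{\nu}$ gives the trace condition, so $f\in\lie(L_{\nu})$ and $\nu$ is $U^{-\beta}$-invariant; the same conclusion holds if $\lie(L_\nu)$ is $\ad(h)$-invariant and some $a(c)$, $c\neq0$, fixes $\nu$, since then $a(c)$ acts on $L_{\nu}g_{\nu}\Gamma\cong L_{\nu}/\Delta$ as an affine map whose linear part $\Ad(a(c))|_{L_{\nu}}$ preserves Haar, forcing again $\operatorname{tr}(\ad(h)|_{\lie(L_\nu)})=0$. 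Finally, if $\lie(L_{\nu})$ is $\ad(h)$-invariant but $f\notin\lie(L_{\nu})$, the same count gives $\operatorname{tr}(\ad(h)|_{\lie(L_{\nu})})=2+\dim\mathfrak l_{1}-\dim\mathfrak l_{-1}\ge2$, so $\nu$ is fixed by no nontrivial $a(c)$ and, since $\Ad(a(t))$ then expands $\lie(L_{\nu})$ on average, the orbit $L_{\nu}a(t)g_{\nu}\Gamma$ escapes into the cusp as $t$ tends to one infinity and equidistributes to a homogeneous measure on a strictly larger orbit (the analogue of equidistribution of expanding closed horocycles) as $t$ tends to the other; in particular it does not return to $\nu$.

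It remains to treat components with $\lie(L_{\nu})$ not $\ad(h)$-invariant. Let $\calE_{\mathrm{bad}}$ be the set of $\nu$ that are not $U^{-\beta}$-invariant; it is $H^{\beta}$-invariant by $L_{a_{*}\nu}=aL_{\nu}a^{-1}$. If $\lie(L_{\nu})$ is not $\ad(h)$-invariant, then $\Ad(a(t))\lie(L_{\nu})$ converges in the Grassmannian as $t\to\pm\infty$ to the dominant, resp.\ anti-dominant, $\ad(h)$-weight truncation of $\lie(L_{\nu})$, both of which differ from $\lie(L_{\nu})$; by the Mozes--Shah description of limits of homogeneous measures (equivalently, linearization and non-divergence of unipotent orbits), a relation $a(t_{n})_{*}\nu\to\nu$ with $|t_{n}|\to\infty$ would force $\Ad(a(t_{n}))\lie(L_{\nu})\to\lie(L_{\nu})$, a contradiction. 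Combined with the end of the previous paragraph, for every $\nu\in\calE_{\mathrm{bad}}$ the forward or backward $H^{\beta}$-orbit of $\nu$ fails to return to $\nu$, so Poincaré recurrence for the $P$-preserving flow $a(t)_{*}$ on $\calE$ gives $P(\calE_{\mathrm{bad}})=0$. Integrating over $P$ shows $\mu$ is $U^{-\beta}$-invariant.

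The main obstacle is the recurrence step: controlling the $H^{\beta}$-translates of a ``bad'' ergodic component and showing they diverge (into the cusp, or onto a larger homogeneous orbit) rather than return. This is precisely where quantitative non-divergence of unipotent orbits (in the spirit of Kleinbock--Margulis) and the Mozes--Shah rigidity for limits of homogeneous measures enter; by contrast, the Lie-algebra identity above is an elementary calculation, and it is the mechanism that turns unimodularity of $L_{\nu}$ into $U^{-\beta}$-invariance of $\nu$.
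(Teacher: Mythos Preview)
The paper does not prove part~\ref{thisone}; it simply records that the conclusion follows from \cite[Proposition~2.1]{MR1135878} together with the structure of $\sl(2,\R)$-triples. Your direct argument has the right architecture---the $U^{\beta}$-ergodic decomposition $\mu=\int_{\calE}\nu\,dP(\nu)$, the induced $P$-preserving $H^{\beta}$-flow on $\calE$, Poincar\'e recurrence, and the trace identity forcing $f\in\lie(L_{\nu})$ once $e,h\in\lie(L_{\nu})$---and the Lie-algebra computation is clean and correct. Your treatment of Case~2 via Mozes--Shah is also fine.

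The genuine gap is Case~1, where $\lie(L_{\nu})$ is $\ad(h)$-invariant and $\tau:=\operatorname{tr}(\ad(h)|_{\lie(L_{\nu})})>0$. You assert that the translates $a(t)_{*}\nu$ escape to the cusp in one time direction and equidistribute onto a strictly larger homogeneous orbit in the other. Neither claim is proved, and the stated dichotomy fails already in the simplest example: for $G=\Sl(3,\R)$, $\Gamma=\Sl(3,\Z)$, $\beta=\alpha_{1}$, $L_{\nu}=U^{\beta}$ and $g_{\nu}=e$, the translated closed orbit $U^{\beta}a(t)\Gamma$ escapes to the cusp as $t\to+\infty$ \emph{and} as $t\to-\infty$, so no equidistribution direction exists. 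The conclusion you actually need---impossibility of $a(t_{n})_{*}\nu\to\nu$ with $|t_{n}|\to\infty$---is correct, but its justification is the same Mozes--Shah step you already invoke in Case~2: convergence forces $\operatorname{supp}(a(t_{n})_{*}\nu)\subseteq\operatorname{supp}(\nu)$ for large $n$; since both are closed orbits of the same connected group $L_{\nu}$ (here $a(t_{n})$ normalizes $L_{\nu}$) they coincide, whence $a(t_{n})_{*}\nu=\nu$; and then $a(t_{n})$ preserves the $L_{\nu}$-Haar probability, so $\lvert\det\Ad(a(t_{n}))|_{\lie(L_{\nu})}\rvert=1$, i.e.\ $\tau=0$, a contradiction. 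This single argument handles both cases uniformly and removes any need for equidistribution of expanding translates or for quantitative non-divergence.
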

Conclusion \ref{thisone} follows from \cite[Proposition 2.1]{MR1135878} and the structure of $\mathfrak{sl}(2,\R)$-triples. See also the discussion in the paragraph preceding  \cite[Theorem 9]{MR1262705}.  In our earlier work on cocompact lattices \cite{BFH}, we averaged over higher-dimensional unipotent subgroups and required
a variant of (c) due to Nimish Shah  \cite{MR1291701}.  Here we only average over one-dimensional root subgroups and can use the earlier version due to  Ratner.

From Theorem  \ref{thm:ratner}, for any probability measure $\mu$ on $G/\Gamma$ it follows
that   the weak-$*$ limit $$U\ast \mu:= \lim_{T\to \infty} U^T\ast \mu $$ exists and that the $U$-ergodic components of $U\ast \mu$ are  homogeneous.

\subsection{Measures with exponentially small mass in the cusps}\label{sec:sampson}
We now define precisely  the notion of measures with exponentially small mass in the cusps from the introduction.
Let $(X,d)$ be a complete,  second countable, metric space.  Then $X$ is Polish.  Let  $\mu$ be a finite Borel (and hence Radon) measure on $X$.  We say that $\mu$ has \emph{exponentially small mass in the cusps with exponent $\eta_\mu$} if for all  $0<\eta<\eta_\mu$
\begin{equation}\label{eq:racistPrez}\int_X e^{\eta d(x_0, x)} \ d \mu(x) <\infty\end{equation}
for some (and hence any) choice of base point $x_0\in X$.
We say that a collection $\mathcal M=\{\mu_\zeta \}$ of probability measures on $X$ has \emph{uniformly exponentially small mass in the cusps with exponent $\eta_0$} if for all  $0<\eta<\eta_0$
$$\sup_{\mu_\zeta\in \mathcal M} \left\{\int e^{\eta d(x_0, x)} \ d \mu_\zeta(x)\right\} <\infty.$$

Below, we often work in in the setting $X= G/\Gamma$ where $G = \Sl(m,\R)$ and $\Gamma= \Sl(m,\Z)$ and where $d$ the distance induced from a right-invariant metric on $G$.
When $X=   \Sl(m,\R)/\Sl(m,\Z)$ we interpret  a point $x= g\Gamma \in G/\Gamma$ as a unimodular lattice $\Lambda_g = g\cdot \Z^m$.   Fix any norm on $\R^m$ and define the systole of a lattice  $\Lambda\subset \R^m$ to be $$\delta(\Lambda):= \inf\left\{ \| v\| : v\in \Lambda\sm \{0\}\right\}.$$
We have that  \begin{equation}\label{eq:ploy}c_1\le \frac{1- \log(\delta(\Lambda_g))}{1+ (d(g\Gamma, e\Gamma))} \le c_2\end{equation}
for some constants whence $$C_1 e^{c_1d(g\Gamma, e\Gamma)} \le \frac {1}{\delta(\Lambda_g)} \le C_2 e^{c_2 d(g\Gamma, e\Gamma)}.$$
Thus, if we only care about finding a positive exponent $\eta_\mu>0$  such that \eqref{eq:racistPrez} holds for all $\eta<\eta_\mu$, it suffices to find $\eta$ such that
\begin{equation}\label{eq:systole} \int \delta(\Lambda_g)^{-\eta}  \ d \mu(g\Gamma) <\infty.\end{equation}
We define the \emph{systolic exponent} $\eta^S_\mu$ to be the supremum of all $\eta$ satisfying \eqref{eq:systole}.


In the sequel, we will frequently use the following proposition to avoid  escape of mass into the cusps of $G/\Gamma$ when  averaging a measure along a unipotent flow.
\begin{proposition}\label{prop:bananas}Let $U$ be a 1-parameter unipotent subgroup of $G$.
Let $\mu$ be a probability measure on $X = \Sl(m,\R)/\Sl(m,\Z)$ with  exponentially small mass  in the cusps.  Then the family of  measures $$\{U^T\ast \mu: T\in \R\} \cup \{U\ast \mu\}$$
has uniformly exponentially small mass in the cusps.   
\end{proposition}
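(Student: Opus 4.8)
The plan is to establish a quantitative, uniform version of Kleinbock--Margulis nondivergence for unipotent averages, phrased in terms of the systolic exponent. The key observation is that \eqref{eq:systole} only requires a \emph{single} positive exponent $\eta$ for which $\int \delta(\Lambda_g)^{-\eta}\,d\mu < \infty$, and that exponential mass bounds are equivalent to such systolic bounds by \eqref{eq:ploy}. So it suffices to produce one $\eta>0$ and one finite constant $C$ such that
\begin{equation*}
\sup_{T\in\R}\ \int \delta(\Lambda_g)^{-\eta}\,d(U^T\ast\mu)(g\Gamma) \le C, \qquad \int \delta(\Lambda_g)^{-\eta}\,d(U\ast\mu)(g\Gamma)\le C.
\end{equation*}
First I would recall the Kleinbock--Margulis machinery: for a one-parameter unipotent $U=\{u(t)\}$ and any point $x\in G/\Gamma$, the function $t\mapsto \delta(\Lambda_{u(t)x})^{-1}$ is controlled by a ``$(C,\alpha)$-good'' structure (it is essentially a max of reciprocals of norms of polynomial curves $t\mapsto u(t)v$ for $v$ in exterior powers of $\Z^m$), and the quantitative nondivergence estimate gives, for every $r>0$ and every interval $J$,
\[
\frac{1}{|J|}\,\bigl|\{t\in J : \delta(\Lambda_{u(t)x}) < r\}\bigr| \ \le\ C'\, \Bigl(\frac{r}{\rho(x)}\Bigr)^{\alpha},
\]
where $\rho(x)$ is a scale depending on $x$ (roughly $\delta(\Lambda_x)$, or a uniform constant once one observes that along a unipotent orbit the systole cannot stay uniformly small). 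The crucial point is that the exponent $\alpha$ and constant $C'$ depend only on $m$, \emph{not} on $x$, $J$, or $T$.

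Next I would convert this distributional bound into a moment bound. Fix $\eta < \alpha$. For a fixed $x$, the layer-cake formula gives
\[
\int \delta(\Lambda_{u(t)x})^{-\eta}\,\frac{dt}{|J|} = \eta\int_0^\infty s^{\eta-1}\,\frac{|\{t\in J:\delta(\Lambda_{u(t)x})^{-1}>s\}|}{|J|}\,ds,
\]
and splitting the integral at $s = 1/\rho(x)$ and using the nondivergence bound on the tail shows this is $\le C''\,\rho(x)^{-\eta}$, again with $C''$ depending only on $m$ and $\eta$ (the integral converges precisely because $\eta<\alpha$). Integrating over $x$ against $\mu$ and using Fubini,
\[
\int \delta(\Lambda_g)^{-\eta}\,d(U^T\ast\mu)(g\Gamma) \ \le\ \frac{1}{T}\int_0^T\!\!\int \delta(\Lambda_{u(t)x})^{-\eta}\,d\mu(x)\,dt \ \le\ C''\int \rho(x)^{-\eta}\,d\mu(x).
\]
By hypothesis $\mu$ has exponentially small mass in the cusps, so by \eqref{eq:ploy}--\eqref{eq:systole} (after possibly shrinking $\eta$ below the systolic exponent $\eta^S_\mu$) the right-hand side is finite and independent of $T$; this handles $U^T\ast\mu$. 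For $U\ast\mu = \lim_{T\to\infty}U^T\ast\mu$, the functional $x\mapsto e^{\eta d(x_0,x)}$ (equivalently $\delta(\Lambda_x)^{-\eta}$) is a nonnegative lower semicontinuous function, so weak-$*$ convergence together with the uniform bound on $U^T\ast\mu$ passes to the limit by the portmanteau theorem, giving the same bound for $U\ast\mu$. This proves uniform exponential smallness in the cusps for the whole family.

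The main obstacle I anticipate is the bookkeeping in the nondivergence step: one must verify that the relevant polynomial curves (coordinates of $u(t)(v_1\wedge\cdots\wedge v_k)$ for primitive sublattices) genuinely satisfy the $(C,\alpha)$-good condition with constants depending only on $\deg = O(m)$ and dimension, and that the "scale" $\rho(x)$ that appears can be taken comparable to $\delta(\Lambda_x)$ uniformly — this is exactly where the precise quantitative form of Kleinbock--Margulis (rather than a soft nondivergence statement) is needed, and where one has to be careful that the estimate is genuinely uniform over \emph{all} starting points $x$ and all interval lengths $T$, including short ones. A secondary technical point is matching the two notions of "cusp exponent": one should check once and for all, via \eqref{eq:ploy}, that controlling $\int\delta(\Lambda_g)^{-\eta}$ for one small $\eta>0$ yields finiteness of $\int e^{\eta' d(x_0,\cdot)}$ for all $\eta'$ below some positive threshold, so that the conclusion is stated in the form demanded by the definition of uniformly exponentially small mass.
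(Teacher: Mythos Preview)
Your proposal is correct and follows essentially the same route as the paper: the paper also reduces to a systolic moment bound, invokes the quantitative Kleinbock--Margulis nondivergence estimate (with exponent $\alpha=1/m^2$, and with the scale $\rho(x)$ identified with $\delta(\Lambda_x)$ via Minkowski's lemma on sublattice covolumes), and then runs the same layer-cake computation to bound $\int \delta^{-\eta}\,d(U^T\ast\mu)$ uniformly in $T$ by a constant times $\int \delta^{-\beta}\,d\mu$ for $\eta<\beta<\min\{\eta^S_\mu,1/m^2\}$. For the limit $U\ast\mu$ the paper does the same thing you propose---passes the uniform tail bound through weak-$*$ convergence---though it writes this out via Markov's inequality and a second layer-cake rather than invoking portmanteau directly; the two arguments are equivalent.
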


\subsection{Proof of Proposition \ref{prop:bananas}}

We first show that the family of averaged measures  $$\{U^T\ast \mu: T\in \R\}$$ has uniformly exponentially small mass in the cusps.  The key idea is to use the quantitative non-divergence of unipotent orbits following Kleinbock and Margulis.
\begin{lemma}
\label{lemma:translates}
Let $\mu$ be a probability measure on $X = \Sl(m,\R)/\Sl(m,\Z)$ with    exponentially small mass in the cusps   and systolic exponent $\eta^S_\mu$.

Then the  family of measures  $\{U^T\ast \mu: T\in \R\}$ has uniformly exponentially small mass in the cusps with   systolic exponent   $\min\{\eta^S_\mu,  \frac 1{m^2}\}$.
\end{lemma}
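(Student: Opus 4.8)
The plan is to combine the quantitative non-divergence estimate of Kleinbock and Margulis with the hypothesis that $\mu$ already has exponentially small mass in the cusps, using a layered decomposition of the cusp. Recall that for $x = g\Gamma$ we control $\delta(\Lambda_g)^{-\eta}$ rather than $e^{\eta d(x,e\Gamma)}$, which is equivalent for the purpose of locating a positive systolic exponent. For a threshold $\rho \in (0,1)$ let $K_\rho = \{x \in X : \delta(\Lambda_x) \geq \rho\}$ be the corresponding compact set. The Kleinbock--Margulis theorem states that for a one-parameter unipotent subgroup $U = \{u(t)\}$ there are constants $C$ (absolute, or depending only on $m$) and an exponent $\gamma = \tfrac{1}{m^2}$ (the relevant exponent for $\Sl(m,\R)/\Sl(m,\Z)$, coming from the number of successive minima/flag length) such that for every $x \in X$ and every $T$,
\begin{equation*}
\frac{1}{T}\left|\{t \in [0,T] : u(t)x \notin K_\rho\}\right| \leq C\,\left(\frac{\rho}{\delta(\Lambda_x)}\right)^{\gamma},
\end{equation*}
valid whenever $\rho \leq \delta(\Lambda_x)$ (and the trivial bound $\leq 1$ otherwise). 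The point is that the fraction of time a unipotent orbit spends with systole below $\rho$ is at most a power of $\rho$, uniformly, with a prefactor involving the starting systole.

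First I would reduce the claim for $\{U^T \ast \mu\}$ to a uniform bound on $\int \delta(\Lambda_x)^{-\eta}\, d(U^T\ast\mu)(x)$ for a single fixed $\eta < \min\{\eta^S_\mu, 1/m^2\}$. Writing out the average,
\begin{equation*}
\int \delta(\Lambda_x)^{-\eta}\, d(U^T\ast\mu)(x) = \int_X \left(\frac{1}{T}\int_0^T \delta(\Lambda_{u(t)x})^{-\eta}\, dt\right) d\mu(x),
\end{equation*}
so it suffices to bound the inner time-average by a constant times $\delta(\Lambda_x)^{-\eta'}$ for some $\eta' < \eta^S_\mu$, uniformly in $T$ and $x$; integrating against $\mu$ then gives a finite bound since $\eta' < \eta^S_\mu$. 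To get the inner bound, I would dyadically decompose $[0,T]$ according to the value of $\delta(\Lambda_{u(t)x})$: writing the level sets $\{t : 2^{-(k+1)} < \delta(\Lambda_{u(t)x}) \leq 2^{-k}\}$ for $k \geq 0$ (plus the region where the systole is $\geq 1$), and applying the Kleinbock--Margulis bound with $\rho = 2^{-k}$ to estimate the measure of $\{t : \delta(\Lambda_{u(t)x}) \leq 2^{-k}\}$ by $C\,(2^{-k}/\delta(\Lambda_x))^{\gamma}$. Summing $2^{(k+1)\eta} \cdot C (2^{-k}/\delta(\Lambda_x))^{\gamma}$ over $k$ converges precisely because $\eta < \gamma = 1/m^2$, and produces a bound of the form $C' \delta(\Lambda_x)^{-\gamma}$; since we may arrange $\eta < \gamma \le \eta_\mu^S$ only when $\eta^S_\mu \ge 1/m^2$, in the other case one instead carries a factor $\delta(\Lambda_x)^{-\eta}$ directly — in either case the resulting power is strictly below $\eta^S_\mu$ after shrinking $\eta$ slightly, which is all that is needed. (One also has to handle the cheap region where $\delta(\Lambda_x)$ is smaller than the relevant $\rho$, using the trivial bound $\leq 1$ on the time fraction, which contributes an even more favorable term.)

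The main obstacle I anticipate is bookkeeping the two regimes ($\eta^S_\mu \lessgtr 1/m^2$) so that the final systolic exponent comes out as exactly $\min\{\eta^S_\mu, 1/m^2\}$ rather than something strictly smaller, and making sure the Kleinbock--Margulis constants are genuinely uniform over all starting points $x$ (including deep in the cusp) and all $T$ — in particular checking that their non-divergence statement applies to an arbitrary one-parameter unipotent $U$ in $\Sl(m,\R)$ with the exponent $1/m^2$, which is the standard statement but needs to be cited precisely. A secondary technical point is the passage from the finite-time averages $U^T\ast\mu$ to the limit $U\ast\mu$: once the family $\{U^T\ast\mu\}$ has a uniform bound on $\int \delta(\Lambda_x)^{-\eta}\,d\nu$, Fatou's lemma applied along the weak-$*$ convergent subsequence gives the same bound for $U\ast\mu$, so $U\ast\mu$ lies in the family with the same exponent; I would state this at the end of the proof of Proposition~\ref{prop:bananas}. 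No genuinely new idea beyond Kleinbock--Margulis is required; the content is the dyadic summation and the observation that the geometric series converges exactly at the threshold exponent $1/m^2$.
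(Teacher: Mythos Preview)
Your proposal is correct and follows essentially the same route as the paper: both apply the Kleinbock--Margulis quantitative non-divergence bound and then integrate via a layer-cake decomposition (yours dyadic, the paper's continuous). The one place where the paper is cleaner is your ``two regimes'' bookkeeping: rather than splitting into the cases $\eta^S_\mu \lessgtr 1/m^2$, the paper observes that since $(\epsilon/\rho)^{1/m^2}\le (\epsilon/\rho)^\beta$ whenever $\epsilon\le\rho$ and $\beta<1/m^2$, one may simply fix an intermediate exponent $\eta<\beta<\min\{\eta^S_\mu,1/m^2\}$ and run the estimate once with $\beta$ in place of $1/m^2$; this yields a factor $\delta(\Lambda_x)^{-\beta}$ (which is $\mu$-integrable since $\beta<\eta^S_\mu$) times a convergent $\ell$-integral (since $\eta<\beta$), and the claimed systolic exponent $\min\{\eta^S_\mu,1/m^2\}$ drops out immediately.
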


\begin{proof}
Let $\Delta\subset \R^m$ be a discrete subgroup.  Let $\|\Delta\|$ denote the volume of $\Delta_\R /\Delta$ where  $\Delta_\R$ denotes the $\R$-span of $\Delta$.  It follows from Minkowski's lemma that there is a constant $c_m$ (depending only on $m$) such that if
$$\|\Delta\| \le (\rho')^{\mathrm{rk}(\Delta)}$$ then there is a non-zero vector $v\in \Delta$ with $\|v\| \le c_m \rho'$.
In particular, if $\delta(\Lambda)\ge \rho$ then for some constant $c'_m$ we have $$
\|\Delta\| \ge (c_m' \rho)^{\mathrm{rk}(\Delta)}$$ for all discrete subgroups $\Delta\subset \Lambda$.

From \cite[Theorem 5.3]{MR1652916} as extended in \cite[Theorem 0.1]{MR2434296}, there is a $C>1$ such that  for every $\Lambda_g\in G/\Gamma$ and $\epsilon > 0$, if $\delta(\Lambda_g)\ge \rho$  then, since $
\|\Delta\| \ge (c_n' \rho)^{\mathrm{rk}(\Delta)}$ for every discrete subgroup $\Delta\subset \Lambda_g$,
we have
\begin{equation}\label{KM}m\{ t\in [0,T] : \delta( \Lambda_{u_tg}) \le \epsilon \}  \le C\left(\frac {\epsilon}{(c_n')\inv \rho}\right)^{\frac 1 {m^2}}T =
\hat C\left(\frac {\epsilon}{ \rho}\right)^{\frac 1 {m^2}}T\end{equation}
where $m(A)$ is  the Lebesgue measure of the set $A\subset \R$.
Note that   \eqref{KM} still holds 
even in the case $\epsilon \ge \rho$.
Note that if $\beta<{\frac 1 {m^2}}$ then for $\epsilon <\rho$ we have
$$\left(\frac {\epsilon}{ \rho}\right)^{\frac 1 {m^2}}T<\left(\frac {\epsilon}{ \rho}\right)^{\beta}T.$$
In particular, when $\beta<\frac 1 {m^2}$ we   have (for all $\epsilon>0$ including   $\epsilon>\delta(\Lambda_g)$) that
$$m\{ t\in [0,T] : \delta( \Lambda_{u_tg}) \le \epsilon \}  \le \hat C\left(\frac {\epsilon}{\delta(\Lambda_g)}\right)^{\beta}T.$$

Then for $\eta>0$  and $\beta< \frac 1{m^2}$ we have
\begin{align*}
\int  [\delta(\Lambda_{g})]^{-\eta} \ d U^T\ast \mu (g) &=
\int _M \frac{1}{T} \int_{0}^{T} [\delta(\Lambda_{u_t g})]^{-\eta} \ d  t \ d  \mu(g) \\
&= \int _M \frac{1}{T}  \int_{0}^{\infty} m\{t\in [0,T] : [\delta(\Lambda_{u_t g})]^{-\eta} \ge \ell \}   \ d \ell  \ d  \mu(g) \\
&\le
\int _M \frac{1}{T}\left[T+  \int_{1}^{\infty} m\{t\in [0,T] : [\delta(\Lambda_{u_t g})]^{-\eta} \ge \ell \}   \ d \ell\right] \ d  \mu(g) \\
&= 1+
\int _M \frac{1}{T} \int_{1}^{\infty} m\{t\in [0,T] : [\delta(\Lambda_{u_t g})] \le {\ell^{-\frac 1\eta}} \} |  \ d \ell \ d  \mu(g) \\
& \le  1 + \int _M \frac{1}{T}  \int_1^\infty
\left[\hat C\left(\frac {1}{\ell ^{\frac 1\eta} \delta(\Lambda_g)}\right)^{\beta}T\right] \ d \ell
\ d  \mu(g) \\
& =1+  \hat C \left(\int _M
\left(\frac {1}{\delta(\Lambda_g)}\right)^{\beta} \ d  \mu(g)\right)
\left(\int _1 ^\infty \left(\frac {1}{\ell ^{\frac 1\eta}}\right)^{\beta}  \ d \ell
        \right)
\end{align*}
which is uniformly bounded in $T$ as long as $\eta<\beta< \min\{\eta_\mu^S,  \frac 1{m^2}\} $.
\end{proof}

For the limit measure $U \ast \mu= \lim_{T\to \infty} U^T\ast \mu$ we have the following which holds in full generality.
\begin{lemma}\label{lem:pastrydough}

Let $(X,d)$ be a complete,  second countable, metric space.
Let $\nu_j$ be a sequence of  Borel probability  measures on $X$ converging in the weak-$*$ topology to a measure $\nu$.   If the family
$\{ \nu_j\}$ has uniformly exponentially small mass in the cusps with exponent   $\eta_0$ then the limit $\nu$ has   exponentially small mass in the cusps with  exponent   $\eta_0$.   \end{lemma}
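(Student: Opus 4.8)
The plan is to prove Lemma \ref{lem:pastrydough} via the portmanteau theorem together with a truncation argument. Fix a base point $x_0 \in X$ and, for $R > 0$, let $\phi_R(x) = \min\{e^{\eta d(x_0,x)}, R\}$, which is a bounded continuous function on $X$ for each fixed $R$ (since $d(x_0, \cdot)$ is continuous and $t \mapsto \min\{e^{\eta t}, R\}$ is continuous). Fix any $\eta < \eta_0$, and let $M_\eta := \sup_j \int_X e^{\eta d(x_0,x)} \, d\nu_j(x) < \infty$, which is finite by the uniform exponential smallness hypothesis.

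First I would observe that for each fixed $R$, weak-$*$ convergence gives $\int_X \phi_R \, d\nu = \lim_{j\to\infty} \int_X \phi_R \, d\nu_j$. Since $\phi_R(x) \le e^{\eta d(x_0,x)}$ pointwise, each term satisfies $\int_X \phi_R \, d\nu_j \le \int_X e^{\eta d(x_0,x)} \, d\nu_j \le M_\eta$, and therefore $\int_X \phi_R \, d\nu \le M_\eta$ for every $R > 0$. Now I would let $R \to \infty$: since $\phi_R \uparrow e^{\eta d(x_0, \cdot)}$ monotonically as $R \to \infty$, the monotone convergence theorem yields $\int_X e^{\eta d(x_0,x)} \, d\nu(x) = \lim_{R\to\infty} \int_X \phi_R \, d\nu \le M_\eta < \infty$. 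Since $\eta < \eta_0$ was arbitrary, this shows $\nu$ has exponentially small mass in the cusps with exponent $\eta_0$, as required. Finally, the base point is immaterial: a change of base point from $x_0$ to $x_0'$ multiplies $e^{\eta d(x_0,x)}$ by a factor between $e^{-\eta d(x_0,x_0')}$ and $e^{\eta d(x_0, x_0')}$ by the triangle inequality, so finiteness of the integral is independent of the choice.

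The only subtlety — and the one point that deserves care rather than being truly an obstacle — is making sure the truncated functions $\phi_R$ are genuinely bounded and continuous so that weak-$*$ convergence applies directly; this is immediate here because $X$ is a metric space (so $d(x_0, \cdot)$ is $1$-Lipschitz) and the cap at height $R$ makes $\phi_R$ bounded. Completeness and second countability of $X$ are not strictly needed for this particular argument, but they ensure that the weak-$*$ topology behaves well (e.g.\ that the measures are Radon), matching the standing hypotheses. No quantitative input — in particular nothing about unipotent flows or Kleinbock--Margulis — enters here; this is a soft lower-semicontinuity statement, in contrast to Lemma \ref{lemma:translates}, which carries the real analytic content of Proposition \ref{prop:bananas}.
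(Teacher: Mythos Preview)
Your proof is correct. It is also cleaner than the paper's, and in fact proves marginally more: you obtain directly that $\int e^{\eta d(x_0,x)}\,d\nu \le M_\eta$ for each $\eta<\eta_0$, i.e.\ the same uniform bound passes to the limit.

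The paper's argument is different in mechanics though equivalent in spirit. Rather than truncating the integrand and applying monotone convergence, it fixes $\eta'<\eta<\eta_0$, uses Markov's inequality on the $\nu_j$ to get the uniform tail bound $\nu_j\{e^{\eta d(x_0,\cdot)}>M\}\le N/M$, passes this to $\nu$ via the portmanteau inequality for open sets, and then recovers $\int e^{\eta' d(x_0,x)}\,d\nu<\infty$ from the layer-cake formula. This route costs an $\epsilon$ in the exponent (one shows finiteness at $\eta'$ from the bound at $\eta$), which is harmless for the stated conclusion but slightly less sharp than what you obtain. Your truncation argument is the more standard lower-semicontinuity proof and avoids that detour entirely; the paper's version has the minor advantage that it makes the tail estimate $\nu\{e^{\eta d(x_0,\cdot)}>M\}\le N/M$ explicit along the way, a fact used elsewhere (e.g.\ in Lemma~\ref{claim:jjjiiilllkkk}).
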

\begin{proof}
We have that $\nu_j\to \nu$ in the weak-$*$ topology.  In particular, for any closed set $C\subset X$ and open set  $U\subset X$ we have
$$\limsup_{j\to \infty} \nu_j (C) \le  \nu(C) \quad \text{and} \quad \liminf_{j\to \infty} \nu_j (U) \ge  \nu(U).$$

Fix $0<\eta'<\eta< \eta_0$  and take $\delta:= \frac{\eta}{\eta'}-1.  $
Fix $N$ with  $$\int e^{\eta d(x, x_0)} \ d \nu_j(x) <N$$
for all $j$.
Using Markov's inequality,  for all  $M>0$ and every $j$ we have
$$\nu_j\{ x : e^{\eta d(x,x_0)} > M\} \le  N/M $$
so $$\nu\{ x : e^{\eta d(x_0, x)} > M\} \le  N/M.$$


Then, for the limit measure $\nu$, we have
\begin{align*}
\int_{G/\Gamma} e^{\eta' d(x_0, x)} \ d \nu(x)
&=
\int_0^\infty
\nu\{ x : e^{\eta'  d(x_0, x)} \ge M\} \ d M\\
&=
\int_0^\infty
\nu\{ x : \left(e^{\eta  d(x_0, x)}\right)^{1/(1+\delta)} \ge {M}   \} \ d M\\
&= \int_0^\infty \nu\{ x : e^{\eta  d(x_0, x)} \ge {M}^{1+\delta}   \} \ d M\\
&\le 1 + \int_1^\infty  \frac{N}{{M}^{1+\delta} }  \ d M.\qedhere
\end{align*}
\end{proof}

\subsection{Averaging certain measures on $\Sl(m,\R)/\Sl(m,\Z)$}
Take  $\{\alpha_1, \dots, \alpha_m\}$ to be the standard set of simple positive roots of $\Sl(m,\R)$:
	$$\alpha_j (\diag(e^{t_1}, \dots, e^{t_m})) = t_j-t_{j+1}.$$

Let $H_1 $  be the analytic subgroup of $\Sl(m,\R)$ whose Lie algebra is generated by roots spaces associated to  $\{\pm \alpha_1\}$ and let  $H_2$ be the analytic subgroup of $\Sl(m,\R)$ whose Lie algebra is  generated by roots spaces associated to $\{\pm \alpha_3, \dots, \pm \alpha_n\}$.  We have  $H_1 \cong \Sl(2,\R)$ and $H_2 \cong \Sl(m-2, \R)$.  Then $H = H_1\times H_2\subset \Sl(m,\R)$ is the subgroup of all matrices of the form
$$\left(\begin{array}{cc}B& 0 \\0 & C\end{array}\right)$$
where $\det(B) = \det(C) =1$.

 We let $A'$ be the  the co-rank-1 subgroup $A'\subset A$  of the Cartan subgroup $A$  given by $A'= A\cap H$.
Let $\delta= \alpha_1 + \dots + \alpha_n$ be the highest positive  root.
\begin{proposition}
\label{proposition:averaging}
Let $\mu$ be any $H$-invariant probability on $\Sl(m,\R)/\Sl(m,\Z)$.  Let $\beta'= \alpha_2$ or $\beta' = \delta$ and let $\hat \beta = - \alpha_2$ or $\hat \beta =- \delta$.

Then $ U^{\beta'}\ast \mu$ is $H$-invariant and  $$U^{\hat \beta}\ast  U^{\beta'}\ast \mu$$ is the Haar measure on $G/\Gamma$.
\end{proposition}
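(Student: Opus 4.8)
The plan is to exploit the structure of $H = H_1 \times H_2$ together with Ratner's measure classification and conclusion \ref{thisone} of Theorem \ref{thm:ratner}, applied to the two roots $\beta' = \alpha_2$ or $\beta' = \delta$ and their negatives. First I would observe that $U^{\beta'}$ normalizes $H$ in each case: when $\beta' = \alpha_2$, the root space $\lieg^{\alpha_2}$ sits "between" the blocks $H_1$ (acting on coordinates $1,2$) and $H_2$ (acting on coordinates $3,\dots,m$), and a direct matrix computation shows $U^{\alpha_2}$ normalizes $H = H_1 \times H_2$; similarly $\lieg^{\delta} = \lieg^{\beta_{1,m}}$ is the top-right corner, which is also normalized by $H$ since $H$ only touches the $(1,2)$ and $(3,\dots,m)$ blocks and $\delta = \alpha_1 + \dots + \alpha_{m-1}$ connects coordinate $1$ to coordinate $m$. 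Hence $U^{\beta'}H = HU^{\beta'}$ is a subgroup, and since $\nu$ is $H$-invariant, $U^{\beta'}\ast \nu$ is both $U^{\beta'}$-invariant (by Ratner's averaging, conclusion \ref{ratner4}, giving a limit that is $U^{\beta'}$-invariant) and $H$-invariant (because $H$ normalizes $U^{\beta'}$, so $h_* (U^{\beta'}\ast \nu) = U^{\beta'}\ast (h_*\nu) = U^{\beta'}\ast \nu$). Call this measure $\mu_1 := U^{\beta'}\ast \nu$; it is $\langle H, U^{\beta'}\rangle$-invariant.

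Next I would average $\mu_1$ along $U^{\hat\beta}$ and set $\mu_2 := U^{\hat\beta}\ast \mu_1$. Again $\mu_2$ is $U^{\hat\beta}$-invariant, and since $H$ normalizes $U^{\hat\beta}$ as well (same computation as above, as $\hat\beta = -\beta'$ and negation preserves the normalizer), $\mu_2$ is also $H$-invariant. Now decompose $\mu_2$ into $U^{\hat\beta}$-ergodic components; by Theorem \ref{thm:ratner}\ref{ratner1} each component is homogeneous, supported on a closed orbit $Lx$ of some closed subgroup $L \supseteq U^{\hat\beta}$. The goal is to show each such $L$ is all of $G$, so that each ergodic component — and hence $\mu_2$ — is Haar. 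For this I would use that $\mu_2$ is invariant under the group $\langle H, U^{\hat\beta}\rangle$ generated by $H$ and $U^{\hat\beta}$: I claim this group is all of $\Sl(m,\R)$. Indeed, $H$ contains the one-parameter group $H^{\beta'} = \exp(\mathrm{span}(h_{\beta'}))$ where $h_{\beta'}$ is the coroot — for $\beta' = \alpha_2$ this is literally inside $H_1\times H_2$? No: $h_{\alpha_2} = \mathrm{diag}(0,1,-1,0,\dots)$ is not in $H$. Let me instead argue directly: $H$ together with $U^{\hat\beta}$ generates a subgroup whose Lie algebra contains $\lieh$ and $\lieg^{\hat\beta}$, hence (by taking brackets with the root spaces inside $\lieh$) contains enough root spaces to generate all of $\lieg$; this is a finite root-system computation that I would carry out explicitly in the two cases $\hat\beta = -\alpha_2$ and $\hat\beta = -\delta$, using that $\alpha_1, \alpha_3,\dots,\alpha_{m-1}$ (the simple roots appearing in $H$) together with $\alpha_2$ or $\delta$ generate the full root system.

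The subtle point — and the place where conclusion \ref{thisone} of Theorem \ref{thm:ratner} is genuinely needed — is that $\mu_2$ is, a priori, only invariant under $U^{\hat\beta}$ and $H$, but the bracket argument above needs invariance under $U^{\beta'}$ too in order to reach $H^{\beta'}$-style elements or to close up the generated group; more carefully, one gets $H$-invariance and $U^{\hat\beta}$-invariance but must recover $U^{\beta'}$-invariance. Here I would apply \ref{thisone}: $\mu_2$ is $U^{\hat\beta}$-invariant, and it is $H^{\hat\beta}$-invariant because $H^{\hat\beta} = H^{\beta'} = \exp(\mathrm{span}(h))$ for the $\sl(2)$-triple attached to $\beta'$, and $\mathrm{span}(h) \subset \liea \cap \lieh$ — one checks $h_{\alpha_2}$ and $h_\delta$ both lie in the Lie algebra of $H = H_1\times H_2$? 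Again $h_{\alpha_2} \notin \lieh$, so instead I use that $\mu_1$ was already $U^{\beta'}$-invariant, pass that invariance to ergodic components of $\mu_2$ via a standard argument (the $U^{\beta'}$-action is absorbed because $U^{\beta'}$ normalizes $U^{\hat\beta}$... it does not). The cleanest route, which I expect to be the main obstacle to write carefully, is: show $\mu_2$ is invariant under the full $\Sl(2,\R)$-copy $\langle U^{\beta'}, U^{\hat\beta}\rangle =: S_{\beta'}$, by first noting $\mu_2$ is $U^{\hat\beta}$-invariant and $H^{\beta'}$-invariant (the latter because $H^{\beta'}\subset$ the subgroup already preserving $\mu_1$, and averaging by $U^{\hat\beta}$ preserves this since $H^{\beta'}$ normalizes $U^{\hat\beta}$), then invoking \ref{thisone} with $\beta = \hat\beta$ to conclude $\mu_2$ is $U^{-\hat\beta} = U^{\beta'}$-invariant. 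Then $\mu_2$ is invariant under $\langle H, U^{\beta'}, U^{\hat\beta}\rangle = G$, hence Haar, and restricting back gives that $U^{\beta'}\ast\nu = \mu_1$ is $H$-invariant (already shown) and $U^{\hat\beta}\ast\mu_1$ is Haar — exactly the claim. The verification that $H^{\beta'}$ normalizes $U^{\hat\beta}$ and that $\langle H, U^{\beta'}, U^{\hat\beta}\rangle = G$ in both cases $\beta' \in \{\alpha_2,\delta\}$ is the routine-but-essential root-combinatorics step, and getting the bookkeeping of which invariances survive each averaging is where I'd be most careful.
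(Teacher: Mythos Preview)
Your proposal contains a genuine gap: the normalization claim that underpins your first argument is false, and your subsequent attempts to repair it never land on a correct proof.

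Concretely, $H$ does \emph{not} normalize $U^{\alpha_2}$ (nor conversely): conjugating $E_{2,3}(t)$ by a generic element of $H_1$ produces nonzero entries in both the $(1,3)$ and $(2,3)$ positions, so the result lies in the two-dimensional unipotent group $\langle U^{\alpha_1+\alpha_2}, U^{\alpha_2}\rangle$, not in $U^{\alpha_2}$ alone. Hence your argument that $h_*(U^{\beta'}\ast\nu)=U^{\beta'}\ast(h_*\nu)$ does not go through, and $H$-invariance of $\mu_1$ is not automatic. You correctly notice later that $h_{\alpha_2}=\mathrm{diag}(0,1,-1,0,\dots)\notin\lie(H)$, so $H^{\beta'}\not\subset H$, which blocks your ``cleanest route'' as well: you cannot feed $\beta=\beta'$ (or $\hat\beta$) into Theorem~\ref{thm:ratner}\ref{thisone} because the required diagonal invariance is unavailable.

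The idea you are missing---and the one the paper uses---is to apply Theorem~\ref{thm:ratner}\ref{thisone} not to $\beta'$ itself but to the \emph{simple roots of $H$}, namely $\alpha_1$ and $\alpha_j$ for $j\ge 3$. For each of these, the coroot $h_{\alpha_j}$ \emph{does} lie in $\lie(A')\subset\lie(H)$, so the $H^{\alpha_j}$-invariance needed for \ref{thisone} is available for free. The bookkeeping then becomes: determine which of $U^{\pm\alpha_1},U^{\pm\alpha_j}$ commute with $U^{\beta'}$ (so that invariance under them survives the averaging), apply \ref{thisone} to recover invariance under the opposite root groups, and hence recover full $H$-invariance of $\mu'$. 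For $\beta'=\alpha_2$ it is the \emph{negative} simple root groups of $H$ that commute with $U^{\alpha_2}$; for $\beta'=\delta$ it is the \emph{positive} ones. Once $H$-invariance is re-established, bracket relations (e.g.\ $[\lieg^{\alpha_1},\lieg^{\alpha_2}]=\lieg^{\alpha_1+\alpha_2}$, etc.) propagate invariance to all positive root groups. The second averaging by $U^{\hat\beta}$ is handled by the same mechanism, with the additional observation that one positive root group (either $U^\delta$ or $U^{\alpha_2}$) commutes with $U^{\hat\beta}$ and so survives, allowing you to bootstrap back to full $N$-invariance on top of the newly acquired $N^-$-invariance.
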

\begin{proof}
We have that $\mu$ is $A'$-invariant.  Let $\mu'= U^{\beta'}\ast \mu$ and note that $\mu'$ remains $H$- and $A'$-invariant.

 {\bf Case 1(a) : $\beta' = \alpha_2$.} Consider first the case that $\beta' = \alpha_2$.  Then $\mu'$ remains invariant under $U^{-\alpha_1}$ and $U^{-\alpha_j}$ for all $3\le j\le n$ since these roots commute with $\beta'$.
By Theorem \ref{thm:ratner}\ref{ratner3}   we have that $\mu'$ is also invariant under
$U^{\alpha_1}$ and $U^{\alpha_j}$ for all $3\le j\le n$.  Taking brackets, $\mu'$ is invariant under $U^\beta$ for every positive root $\beta\in \Sigma_+$.

 {\bf Case 1(b) : $ \beta' = \delta$.}
Consider now the case that $\beta' = \delta$.  Then $\mu'$ remains invariant under $U^{\alpha_1}$ and $U^{\alpha_j}$ for all $3\le j\le n$ since these roots commute with $\delta$.
By   Theorem \ref{thm:ratner}\ref{ratner3}  we have that $\mu'$ is also invariant under
$U^{-\alpha_1}$ and $U^{-\alpha_j}$ for all $3\le j\le n$.  Taking brackets, $\mu'$ is invariant under $U^\beta$ for every positive root $\beta$ of the form $\delta - \alpha_n - \alpha_{n-1} - \dots - \alpha_j = \alpha_1 +\dots + \alpha_{j-1}$ for each $j\ge 3$.  In particular, $\mu'$ is invariant under $U^{ \alpha_1 +\alpha_2}$ and hence also  invariant under $U^{ \alpha_2}$.  In particular $\mu'$ is invariant under $U^\beta$ for every positive root $\beta\in \Sigma_+$.

Note that in either case, we have that $\mu'$ is invariant under $U^\beta$ for every positive root $\beta\in \Sigma_+$.

Let $\hat \mu = U^{\hat \beta}\ast \mu'$.

 {\bf Case 2(a) : $\hat \beta = -\alpha_2$.}
If $\hat \beta = -\alpha_2$, then $\hat \mu$ remains invariant under $U^{\alpha_1}$ and $U^{\alpha_j}$ for all $3\le j\le n$.  Note additionally $\hat \mu$ remains invariant under the highest-root group $U^{\delta}$.  Again, by  Theorem \ref{thm:ratner}\ref{ratner3}  we have that $\hat \mu$ is also invariant under
$U^{-\alpha_1}$ and $U^{-\alpha_j}$ for all $3\le j\le n$.  In particular $\hat \mu$ is also invariant under  $U^\beta$ for every negative  root $\beta\in \Sigma_-$.   It follows as in Case 1(b) that $\hat \mu$ is invariant under $U^{\alpha_2}$ and hence invariant under $U^\beta$ for every positive  root $\beta\in \Sigma_+$.  Thus $\mu$ is $G$-invariant.

 {\bf Case 2(b) : $\hat \beta = -\delta$.}
If $\hat \beta = -\delta$, then $\hat \mu$ remains invariant under $U^{-\alpha_1}$ and $U^{-\alpha_j}$ for all $3\le j\le n$.  Note additionally $\hat \mu$ remains invariant under   $U^{\alpha_2}$.  Again,  we have that $\hat \mu$ is also invariant under
$U^{\alpha_1}$ and $U^{\alpha_j}$ for all $3\le j\le n$.  In particular $\hat \mu$ is also invariant under  $U^\beta$ for every positive  root $\beta\in \Sigma_+$.   As in Case 1(b) that $\hat \mu$ is invariant under $U^{-\alpha_2}$ and hence invariant under $U^\beta$ for every negative  root $\beta\in \Sigma_-$.  Thus $\mu$ is $G$-invariant.
\end{proof}

\def \top{\mathrm{top}}


\subsection{Lyapunov exponents for unbounded cocycles}
\label{subsection:Lyap}
\def\calE{\mathcal E}
Let $(X,d)$ be a second countable, complete metric space.  We moreover   assume the metric $d$ is proper.  Let $G$ act continuously  on $X$.

Let $\calE\to X$ be a continuous,  finite-dimensional  vector bundle  equipped with a norm  $\|\cdot\|$.  A linear cocycle over the $G$-action on $X$ is an action $\calA\colon G\times \calE\to \calE$ by vector-bundle automorphisms that projects to the $G$-action on $X$.
We write $\calA(g,x)$ for the linear map between Banach spaces $\calE_x$ and $\calE_{g\cdot x}$.  By the norm of $\calA(g,x)$ we mean the operator norm and the conorm is $m(\calA(g,x))= \|\calA(g,x)^{-1}\|^{-1}$.
We say that $\calA$ is \emph{tempered} with respect to the metric $d$ if there is a $k\ge0$ such that  for any compact set $K\subset G$ and base point $x_0\in X$ there is $C>1$ so that $$\sup_{g\in K} \|\calA(g,x)\| \le Ce^{k d(x,x_0)}$$ and
 $$\inf_{g\in K} m(\calA(g,x)) \ge \frac{1}{C}e^{-k d(x,x_0)}$$
 where $\|\cdot\|$ denotes the operator norm and $m(\cdot)$ denotes the operator conorm applied to  linear maps between Banach spaces $\calE_x$ and $\calE_{g\cdot x}$.  

%
%
If $\mu$ is a probability measure on $(X,d)$ with exponentially small mass in the cusps, it follows that the function  $x\mapsto d(x,x_0)$ 
is $L^1(\mu)$ whence  we immediately  obtain the following.  
\begin{claim}\label{lem:foobar}
Let $\mu$ a probability  measure on $X$ with exponentially small mass in the cusps.   Suppose that $\calA$ is tempered.  Then for any compact $K\subset G$, the functions   $$x\mapsto  \sup_{s\in K} \log \left \| \calA(s, x)\right \| ,\quad \quad x\mapsto    \inf_{s\in K}\log m\left( \calA(s, x)\right)  $$ are $L^1(\mu)$.
\end{claim}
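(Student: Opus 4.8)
The plan is to reduce the claim to the single observation that $x\mapsto d(x,x_0)$ is $\mu$-integrable, which in turn is immediate from the hypothesis that $\mu$ has exponentially small mass in the cusps.

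First I would apply the definition of temperedness to the given compact set $K$ and a fixed base point $x_0$: there are $k\ge 0$ and $C>1$ with $\sup_{s\in K}\|\calA(s,x)\|\le Ce^{k d(x,x_0)}$ and $\inf_{s\in K} m(\calA(s,x))\ge C^{-1}e^{-k d(x,x_0)}$ for all $x\in X$. Since for each invertible linear map $T$ between the Banach spaces in question one has $0< m(T)\le \|T\| <\infty$, so that $\log m(T)$ and $\log\|T\|$ are finite reals, these two inequalities together give, for every $s\in K$ and every $x\in X$,
\[
-\log C - k\, d(x,x_0)\ \le\ \log m(\calA(s,x))\ \le\ \log\|\calA(s,x)\|\ \le\ \log C + k\, d(x,x_0).
\]
Taking the supremum over $s\in K$ in the upper bound and the infimum over $s\in K$ in the lower bound, both $x\mapsto \sup_{s\in K}\log\|\calA(s,x)\|$ and $x\mapsto \inf_{s\in K}\log m(\calA(s,x))$ are pointwise bounded in absolute value by the function $x\mapsto \log C + k\, d(x,x_0)$. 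Measurability is not an issue: $\calA$ is a continuous cocycle and the norm is continuous, so $(s,x)\mapsto \log\|\calA(s,x)\|$ and $(s,x)\mapsto \log m(\calA(s,x))$ are continuous, and a supremum (resp.\ infimum) over the compact set $K$ of a continuous function is again continuous in $x$.

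Finally I would check that $x\mapsto d(x,x_0)$ lies in $L^1(\mu)$. Choosing $\eta>0$ with $\int_X e^{\eta d(x_0,x)}\,d\mu(x)<\infty$ — which exists since $\mu$ has exponentially small mass in the cusps — and using the elementary bound $t\le \eta^{-1}e^{\eta t}$ valid for $t\ge 0$, one gets $\int_X d(x,x_0)\,d\mu(x)\le \eta^{-1}\int_X e^{\eta d(x_0,x)}\,d\mu(x)<\infty$. Hence the dominating function $x\mapsto \log C + k\, d(x,x_0)$ is $\mu$-integrable, and the domination from the previous paragraph yields the claim. There is no genuine obstacle here: the entire content lies in unwinding the definitions of ``tempered'' and of ``exponentially small mass in the cusps,'' exactly as the sentence preceding the claim anticipates.
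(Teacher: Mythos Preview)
Your proof is correct and follows exactly the approach the paper intends: the paper does not give a separate proof but simply remarks before the claim that $x\mapsto d(x,x_0)$ is $L^1(\mu)$ whenever $\mu$ has exponentially small mass in the cusps, and that the claim follows immediately from this together with the tempered bounds. You have spelled out precisely these two steps.
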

%
%
%
%

Given $s\in G$ and an $s$-invariant Borel probability measure $\mu$ on $X$ we define the \emph{average leading} (or \emph{top})  \emph{Lyapunov exponent of $\calA$} to be
\begin{equation} \label{eq:toplyap} \lambda_{\top,s,\mu, \calA} := \inf _{n\to \infty} \frac 1 n  \int \log \|\calA(s^n, x)\| \ d \mu (x).\end{equation}
From the   integrability of the function $x\mapsto  \log \|\calA(s, x)\|  $   we obtain   the finiteness of Lyapunov exponents.
\begin{corollary}
\label{corollary:finite}
For $s\in G$ and   $\mu$   an $s$-invariant probability measure on $X$ with exponentially small mass in the cusps, if  $\calA$ is tempered
then the average leading   Lyapunov exponent $\lambda_{\top,s,\mu, \calA}$ of $\calA$ is finite.
\end{corollary}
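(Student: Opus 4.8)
The plan is to reduce the statement to the standard subadditive (Furstenberg--Kesten/Kingman-type) argument for Lyapunov exponents; the only new ingredient compared with the compact case is that temperedness together with exponentially small mass in the cusps supplies the integrability that would otherwise be automatic.

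First I would extract the relevant integrability. Since $\calA$ is tempered, applying its defining inequalities with the compact set $K=\{s\}$ yields constants $C>1$ and $k\ge 0$ such that $-\log C - k\,d(x,x_0)\le \log m(\calA(s,x)) \le \log\|\calA(s,x)\| \le \log C + k\,d(x,x_0)$ for $\mu$-a.e.\ $x$. Because $\mu$ has exponentially small mass in the cusps, $x\mapsto d(x,x_0)$ is $\mu$-integrable, so (this is precisely Claim~\ref{lem:foobar} with $K=\{s\}$) both $x\mapsto \log\|\calA(s,x)\|$ and $x\mapsto \log m(\calA(s,x))$ belong to $L^1(\mu)$.

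Next I would set $a_n := \int_X \log\|\calA(s^n,x)\|\,d\mu(x)$, which is finite for every $n$ by the previous step. Submultiplicativity of the operator norm applied to the cocycle identity $\calA(s^{n+\ell},x) = \calA(s^\ell,s^n x)\,\calA(s^n,x)$, together with $s$-invariance of $\mu$, gives $a_{n+\ell}\le a_n + a_\ell$; Fekete's subadditive lemma then shows $\tfrac1n a_n\to \inf_n \tfrac1n a_n \le a_1 < \infty$. To rule out the value $-\infty$ I would use supermultiplicativity of the conorm, namely $\log m(\calA(s^n,x)) \ge \sum_{i=0}^{n-1}\log m(\calA(s,s^i x))$, so that after integrating and invoking invariance one gets $a_n\ge n\int_X \log m(\calA(s,x))\,d\mu$, whence $\tfrac1n a_n \ge \int_X \log m(\calA(s,x))\,d\mu > -\infty$ for all $n$. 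Thus $\lambda_{\top,s,\mu,\calA}$ is a finite real number.

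The argument is essentially routine; the only place genuine content enters is the integrability step, and the sole obstacle there is checking that the cusp hypothesis on $\mu$ is strong enough to dominate the linear growth of $d(\cdot,x_0)$ by its finiteness against an exponential weight — which it is, since $\int e^{\eta d(x,x_0)}\,d\mu<\infty$ for some $\eta>0$ a fortiori gives $\int d(x,x_0)\,d\mu<\infty$.
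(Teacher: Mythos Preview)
Your argument is correct and matches the paper's approach: the paper states the corollary as an immediate consequence of Claim~\ref{lem:foobar} (integrability of $\log\|\calA(s,\cdot)\|$ and $\log m(\calA(s,\cdot))$) together with the subadditivity remark following \eqref{eq:toplyap}, without spelling out the Fekete/conorm details you supply. One small remark: when you assert $a_n$ is finite for every $n$ ``by the previous step,'' note that step literally treated only $K=\{s\}$; to cover general $n$ you should either invoke Claim~\ref{lem:foobar} with $K=\{s^n\}$ or, more in line with your later lower bound, use the cocycle inequalities $\sum_i \log m(\calA(s,s^i x)) \le \log\|\calA(s^n,x)\| \le \sum_i \log\|\calA(s,s^i x)\|$ together with $s$-invariance of $\mu$.
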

Note that for an $s$-invariant measure $\mu$,  the sequence  $  \int \log \|\calA(s^n, x)\| \ d \mu (x)$ is subadditive whence the infimum in \eqref{eq:toplyap} maybe replaced by a limit.

As in the case of bounded continuous linear cocycles, we  obtain  upper-semicontinuity of leading Lyapunov exponents   for continuous tempered cocycles when restricted to families of  measures with uniformly exponentially  small measure in the cusp.

\begin{lemma}\label{claim:jjjiiilllkkk}
Let $\calA$ be a tempered cocycle.  Given $s\in G$ suppose  the restriction of the cocycle  $\calA\colon G\times\calE\to \calE$ to the action of $s$ is continuous.

Then---when restricted to a set of $s$-invariant Borel probability measures with  uniformly exponentially small mass in the cusps---the function  $$\mu \mapsto \lambda_{\top,s,\mu, \calA} $$ is upper-semicontinuous with respect to the weak-$*$ topology.
\end{lemma}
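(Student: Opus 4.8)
The plan is to write $\mu\mapsto\lambda_{\top,s,\mu,\calA}$ as an infimum of functionals $\Phi_n$, each of which I will show is upper-semicontinuous on the given family of measures; since an infimum of upper-semicontinuous functions is again upper-semicontinuous (the sublevel set of the infimum is a union of open sublevel sets), this suffices. For $n\ge1$ put $\psi_n(x):=\log\|\calA(s^n,x)\|$ and $\Phi_n(\mu):=\tfrac1n\int_X\psi_n\,d\mu$. The cocycle identity $\calA(s^n,x)=\calA(s,s^{n-1}x)\circ\cdots\circ\calA(s,x)$, together with the continuity of the restriction of $\calA$ to the $s$-action and of $x\mapsto s^jx$, shows each $\psi_n$ is continuous on $X$; and for $s$-invariant $\mu$ the sequence $n\mapsto\int\psi_n\,d\mu$ is subadditive (the integrability being guaranteed by temperedness and exponentially small mass in the cusps, as in Claim~\ref{lem:foobar}), so as noted after Corollary~\ref{corollary:finite} one has $\lambda_{\top,s,\mu,\calA}=\inf_{n\ge1}\Phi_n(\mu)$. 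It thus remains to prove that each $\Phi_n$ is upper-semicontinuous on a family $\mathcal M$ of $s$-invariant probability measures with uniformly exponentially small mass in the cusps.

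Fix $n$ and a base point $x_0$. Applying temperedness of $\calA$ to the compact set $\{s,\dots,s^n\}$ (where the exponent $k$ is independent of the compact set), and using $\|\calA(s^n,x)\|\ge m(\calA(s^n,x))$ for the lower bound, I obtain $|\psi_n(x)|\le a_n+k\,d(x,x_0)$ for all $x$, with $k$ independent of $n$. Now let $\mu_j\to\mu$ weak-$*$ with $\mu_j\in\mathcal M$; then $\mu$ is $s$-invariant, and by Lemma~\ref{lem:pastrydough} it has exponentially small mass in the cusps, so $d(\cdot,x_0)\in L^1(\mu)$. Choose $N$ and $\eta>0$ with $\sup_j\int e^{\eta d(x,x_0)}\,d\mu_j\le N$. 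For $R>2a_n$ the truncation $\psi_n^R:=\max(-R,\min(\psi_n,R))$ is bounded and continuous, and the pointwise inequality $\psi_n-\psi_n^R\le\mathbf 1_{\{\psi_n>R\}}\psi_n$ gives
\[
\int\psi_n\,d\mu_j\ \le\ \int\psi_n^R\,d\mu_j+\int_{\{\psi_n>R\}}\psi_n\,d\mu_j.
\]
On $\{\psi_n>R\}$ one has $d(x,x_0)>(R-a_n)/k=:T$ and $\psi_n(x)\le a_n+k\,d(x,x_0)\le2k\,d(x,x_0)$; since $t\mapsto te^{-\eta t}$ is decreasing for $t>1/\eta$, the last integral is at most $2k\int_{\{d>T\}}d(x,x_0)\,d\mu_j\le2kN\,Te^{-\eta T}=:\varepsilon(R)$, which tends to $0$ as $R\to\infty$, uniformly in $j$. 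Hence
\[
\limsup_{j\to\infty}\int\psi_n\,d\mu_j\ \le\ \int\psi_n^R\,d\mu+\varepsilon(R)
\]
by weak-$*$ convergence against the bounded continuous function $\psi_n^R$; letting $R\to\infty$ and applying dominated convergence for $\mu$ with majorant $a_n+k\,d(\cdot,x_0)\in L^1(\mu)$ yields $\limsup_j\Phi_n(\mu_j)\le\Phi_n(\mu)$, which is the desired upper-semicontinuity.

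The only genuine obstacle is the non-compactness of $X$: the functions $\psi_n$ are unbounded, so weak-$*$ convergence cannot be applied to them directly. The hypothesis of \emph{uniformly} exponentially small mass in the cusps is precisely what converts the temperedness estimate $|\psi_n|\lesssim d(\cdot,x_0)$ into uniform integrability of the tails $\psi_n\mathbf 1_{\{\psi_n>R\}}$ along the sequence $\{\mu_j\}$; once that is in hand, the rest is the standard truncation argument for semicontinuity of the top Lyapunov exponent of a continuous linear cocycle, together with the observation that $\lambda_{\top,s,\mu,\calA}$ is an infimum over $n$ of the $\Phi_n$.
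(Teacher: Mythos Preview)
Your proof is correct and follows essentially the same strategy as the paper's: both express $\lambda_{\top,s,\mu,\calA}$ as $\inf_n \tfrac1n\int\log\|\calA(s^n,\cdot)\|\,d\mu$, truncate the unbounded integrand, and use the uniform exponentially-small-mass hypothesis together with temperedness to control the tails uniformly before passing to the weak-$*$ limit. The only cosmetic differences are that the paper actually proves each $\Phi_n$ is \emph{continuous} (not merely upper-semicontinuous) and truncates via a spatial cutoff $\psi_M(x)$ depending on $d(x,x_0)$ rather than clamping the values of $\psi_n$; your pointwise bound $d\le Te^{-\eta T}e^{\eta d}$ on $\{d>T\}$ is a slightly slicker way to get the same tail estimate the paper obtains by a layer-cake computation.
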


\begin{proof}

Let $\calM = \{\mu_\zeta\}_{\zeta\in \calI}$ be a family of $s$-invariant Borel probability measures with  uniformly exponentially small mass in the cusps.  As the pointwise infimum of continuous functions is upper-semicontinuous, is enough to show that the function $$\calM\to \R,\quad \quad \mu\mapsto \int \log \|\calA(s^n, x)\| \ d \mu (x)$$
is  continuous with respect to the weak-$*$ topology for each $n$.  As the weak-$*$ topology is first countable, it is enough to show $\mu\mapsto \int \log \|\calA(s^n, x)\| \ d \mu (x)$ is sequentially continuous.

Let $\mu_j \to \mu_\infty$ in $\calM$.  Given $M>0$, fix a continuous
$\psi_M\colon X\to [0,1]$ with $$ \text{$ \psi_M(x) = 1$ if $d(x,x_0) \le M$ and $\psi_M(x) = 0$ if $d(x,x_0)\ge M+ 1$}.$$

As we assume our metric   is proper,  $x\mapsto \psi_M (x)\log \|\calA(s^n, x)\|$ is a bounded continuous function whence  $$\int \log\psi_M(x)  \log \|\calA(s^n, x)\|\  d \mu_j (x) \to \int\psi_M(x) \log \|\calA(s^n, x)\|  \ d \mu_\infty  (x).$$
Moreover, there are $C>1, k\ge 1,$ and $\eta>0$ such that for all $x\in X$ and $\mu_\zeta\in \calM$
$$- \log C- {k d(x,x_0)} \le \log \|\calA(s^n, x)\| \le \log C+ {k d(x,x_0)}, $$ and $$ \int e^{\eta d(x, x_0)} \ d \mu_\zeta(x)\le C.$$
In particular, $$\mu_\zeta(\{x: d(x,x_0) \ge   M\})\le C e^{-\eta   M}.$$

Thus for any $\mu_\zeta\in \calM$,  we have
\begin{align*}
\int  \big | \log \|\calA(s^n, x)\| &- \psi_M(x)\log \|\calA(s^n, x)\|   \big|  \ d \mu_\zeta(x) \\
& \le  \int_{\{x: d(x,x_0) \ge   M \}}\big | \log \|\calA(s^n, x)\|  - \psi_M(x)\log \|\calA(s^n, x)\|   \big|   \ d \mu_\zeta(x) \\
& \le  \int_{\{x: d(x,x_0) \ge   M \}}\big | \log \|\calA(s^n, x)\|   \big|   \ d \mu_\zeta(x) \\
    &\le  \int_{\{x: d(x,x_0) \ge   M\}}  \log C+ {k d(x,x_0)}    \ d \mu_\zeta(x)  \\
    &\le (\log C  ) C e^{-\eta  M}   + k \int_{\{x: d(x,x_0) \ge   M\}}   { d(x,x_0)}   \ d \mu_\zeta(x)  \\
        &\le (\log C   + k M) C e^{-\eta M}  + k\int_{\ell =  M} ^\infty \mu_\zeta\{ x:    { d(x,x_0)} \ge \ell\} \ d \ell\\
                &\le (\log C +k  M) C e^{-\eta M}  + k\int_{\ell =  M} ^\infty  Ce^{-\eta \ell}  \ d \ell\\
                                &\le (\log C + k M) C e^{-\eta M}  + k\frac{C e^{\eta  (- M)}}{\eta }.
\end{align*}
It follows that given $\epsilon>0$ there is $M$ so that
$$\int  \big | \log \|\calA(s^n, x)\| - \psi_M(x)\log \|\calA(s^n, x)\|   \big|  \ d \mu_\zeta(x) \\  \le \epsilon$$
for all $\mu_\zeta\in \calM$.

In particular, taking $M$ and $j$ sufficiently large we have
\begin{align*}
\Big|\int& \log \|\calA(s^n, \cdot )\|   \ d \mu_\infty  - \int \log \|\calA(s^n, \cdot )\| \ d \mu_j  \Big|
\\
&\le
\int \big|\log \|\calA(s^n, \cdot )\|  - \psi_M \log \|\calA(s^n, \cdot )\|\big|    \ d \mu_\infty  \\ &\quad\quad +
\Big|\int \psi_M  \log \|\calA(s^n, \cdot )\| \ d \mu_j  - \int \psi_M  \log \|\calA(s^n, \cdot )\| \ d \mu_\infty  \Big| \\ & \quad \quad +
\int \big|\log \|\calA(s^n, \cdot )\|  - \psi_M \log \|\calA(s^n, \cdot )\| \big|   \ d \mu_j    \\
&\le  3 \epsilon. 
\end{align*}
Sequential continuity then follows.
\end{proof}

\subsection{Lyapunov exponents under averaging and limits}
We now consider the behavior of the top Lyapunov exponent $\lambda_{\top, s, \mu, \calA}$ as we average an $s$-invariant probability measure $\mu$ over an amenable subgroup of $G$ contained in the centralizer of $s$.
\begin{lemma}
\label{lemma:averaging}
Let $s\in G$ and let $\mu$ be an $s$-invariant probability measure on $X$ with exponentially small mass in the cusps.  Let $\calA\colon G\times \calE\to \calE$ be a tempered continuous cocycle.

For any amenable subgroup $H\subset C_G(s)$ and any   \Folner sequence of precompact sets $F_n$ in $H$,  if the family
$\{F_n \ast \mu\}$ has uniformly exponentially small mass in the cusps  then for any subsequential limit
$\mu'$ of $\{F_n \ast \mu\}$ we have
  $$\lambda_{\top, s, \mu, \calA} \le \lambda_{\top, s, \mu', \calA}.$$
\end{lemma}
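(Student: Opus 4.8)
The plan is to run the standard subadditivity argument for Lyapunov exponents under averaging, but with the care needed because the cocycle is unbounded and we only have exponentially small mass in the cusps. First, fix $n$ and set $\phi_n(x) := \log\|\calA(s^n,x)\|$; by Claim \ref{lem:foobar} this function is in $L^1(\mu)$ and in $L^1(F_k\ast\mu)$ for every $k$, and $\lambda_{\top,\mu,s,\calA} = \inf_n \frac1n \int \phi_n \ d\mu$, with the analogous formula for $\mu'$. The key observation is that since $H\subset C_G(s)$, translation by elements of $H$ essentially commutes with the dynamics of $s$: more precisely, using the cocycle identity $\calA(s^n, h\cdot x) \calA(h,x) = \calA(h, s^n x)\calA(s^n,x)$ (valid since $s^n h = h s^n$), one gets for $h$ in any fixed compact set a two-sided comparison
\begin{equation*}
\bigl|\log\|\calA(s^n,h\cdot x)\| - \log\|\calA(s^n,x)\|\bigr| \le \log\|\calA(h,x)\| + \log\|\calA(h,s^nx)\| + (\text{conorm terms}),
\end{equation*}
which by temperedness is bounded by $C + k\bigl(d(x,x_0) + d(s^nx,x_0)\bigr)$. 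Integrating against $\mu$ (which is $s$-invariant, so the $d(s^nx,x_0)$ term integrates to the same finite value as $d(x,x_0)$) shows that $\bigl|\int \phi_n \ d(h_\ast\mu) - \int\phi_n\ d\mu\bigr|$ is bounded uniformly in $h$ ranging over a compact set. Averaging over a \Folner set $F_k$ and using that $F_k\ast\mu$ is a convex combination of such translates, this will give $\bigl|\int\phi_n \ d(F_k\ast\mu) - \int\phi_n \ d\mu\bigr| \le$ (something like $o(|F_k|)/|F_k|$ plus a boundary term), hence $\lim_k \int\phi_n\ d(F_k\ast\mu) = \int\phi_n\ d\mu$ for each fixed $n$.

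Next I would pass to the subsequential limit. Along the subsequence with $F_{k}\ast\mu \to \mu'$ weak-$*$, the family $\{F_k\ast\mu\}$ has uniformly exponentially small mass in the cusps by hypothesis, and the restriction of $\calA$ to powers of $s$ is continuous (this is part of the hypotheses one inherits from the intended application; if not stated one uses that $\phi_n$ is a fixed tempered function and applies Lemma \ref{claim:jjjiiilllkkk}-style truncation). By the upper-semicontinuity established in Lemma \ref{claim:jjjiiilllkkk}, or directly by the truncation argument in its proof applied to the single function $\phi_n$, we get $\int\phi_n\ d\mu' \le \liminf_k \int\phi_n\ d(F_k\ast\mu)$ — and in fact equality is not needed, only this inequality. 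Combining with the previous paragraph, $\int\phi_n\ d\mu' \le \int\phi_n\ d\mu$ for every $n$. Wait — that would give the \emph{reverse} inequality, so the argument has to be arranged the other way: I should instead use that $\mu'$ is $s$-invariant (a weak-$*$ limit of $F_k\ast\mu$, each of which has $\int\phi_1 \ d(F_k\ast\mu)$ controlled, and the \Folner property makes the limit $s$-invariant) and that subadditivity gives $\lambda_{\top,\mu',s,\calA} = \inf_n \frac1n\int\phi_n\ d\mu' = \lim_n \frac1n\int\phi_n\ d\mu'$, whereas $\lambda_{\top,\mu,s,\calA} = \lim_n \frac1n\int\phi_n\ d\mu$. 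For fixed $n$ the two integrals agree in the limit over $k$, but the infimum over $n$ need not be continuous — so the correct inequality direction $\lambda_{\top,\mu,s,\calA} \le \lambda_{\top,\mu',s,\calA}$ comes precisely from $\lambda_{\top,\mu,s,\calA} \le \frac1n\int\phi_n\ d\mu = \lim_k \frac1n\int\phi_n\ d(F_k\ast\mu)$ and then letting the truncated-continuity argument push this to $\frac1n\int\phi_n\ d\mu'$, finally taking $\inf_n$ on the right.

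The main obstacle I anticipate is justifying the interchange of limits and the passage $\int\phi_n\ d(F_k\ast\mu) \to \int\phi_n\ d\mu'$ when $\phi_n$ is unbounded: this is exactly where uniformly exponentially small mass in the cusps is essential, and it must be handled by the truncation $\phi_n \leftrightarrow \psi_M\phi_n$ as in the proof of Lemma \ref{claim:jjjiiilllkkk}, with the tail estimate $\mu_\zeta\{d(x,x_0)\ge M\} \le Ce^{-\eta M}$ making the error uniform over the whole family. The secondary technical point is verifying that $\mu'$ is genuinely $s$-invariant — this follows from the \Folner condition on $F_k$ together with $H\subset C_G(s)$, since $s_\ast(F_k\ast\mu) - F_k\ast\mu$ is supported on the symmetric-difference boundary $s F_k \triangle F_k$ and tested against bounded continuous functions; one again needs the uniform cusp estimate only if one wants invariance tested against the unbounded $\phi_n$, but plain weak-$*$ testing against $C_b$ suffices for invariance itself. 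Once these two points are in place the chain of inequalities closes and gives $\lambda_{\top,\mu,s,\calA}\le\lambda_{\top,\mu',s,\calA}$.
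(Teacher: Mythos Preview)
Your ingredients are the same as the paper's---the cocycle identity exploiting $H\subset C_G(s)$, the temperedness bound, $s$-invariance of $\mu$, and the upper-semicontinuity from Lemma~\ref{claim:jjjiiilllkkk}---but you assemble them in the wrong order, and this produces an unjustified limit.

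The specific gap is your claim that $\lim_k \int\phi_n\,d(F_k\ast\mu) = \int\phi_n\,d\mu$ for each fixed $n$. Your own comparison gives, with $c_h(x)$ controlling $\log\|\calA(h,x)^{\pm1}\|$, only
\[
\Bigl|\int\phi_n\,d(F_k\ast\mu) - \int\phi_n\,d\mu\Bigr| \;\le\; 2\int \sup_{h\in F_k} c_h(x)\,d\mu(x) \;=:\; 2C_k,
\]
after using $s$-invariance of $\mu$. The constant $C_k$ is independent of $n$ but grows with the compact set $F_k$; there is no reason for $C_k\to 0$ and typically $C_k\to\infty$ as the F{\o}lner sets exhaust $H$. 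The F{\o}lner property plays no role in this estimate---there is no ``boundary term'' here for it to kill, so the ``$o(|F_k|)/|F_k|$'' heuristic is misplaced.

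The fix is to swap the order of limits. From the display, divide by $n$ and let $n\to\infty$ with $k$ \emph{fixed}: since $C_k/n\to 0$, you get $\lambda_{\top,F_k\ast\mu,s,\calA}=\lambda_{\top,\mu,s,\calA}$ for every $k$. (This is the paper's first step.) Each $F_k\ast\mu$ is $s$-invariant simply because $s$ commutes with $H$ and $\mu$ is $s$-invariant---no F{\o}lner argument is needed for this, contrary to your worry. Now Lemma~\ref{claim:jjjiiilllkkk} applied to the uniformly-cusp-controlled family $\{F_k\ast\mu\}$ gives
\[
\lambda_{\top,\mu',s,\calA}\;\ge\;\limsup_k \lambda_{\top,F_k\ast\mu,s,\calA}\;=\;\lambda_{\top,\mu,s,\calA},
\]
which is the desired inequality. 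Note also that in Lemma~\ref{claim:jjjiiilllkkk} the map $\mu\mapsto\int\phi_n\,d\mu$ is shown to be \emph{continuous} on the family (not merely upper-semicontinuous); it is only the infimum over $n$ that drops to upper-semicontinuity, which is why the inequality goes in the stated direction.
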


\begin{proof}
{\blue First note that  Lemma \ref{lem:pastrydough} implies the family $\{F_n \ast \mu\}\cup \{\mu'\}$  has uniformly exponentially small mass in the cusps.}    Note also that for every $m$, the measure $F_m \ast \mu$ is $s$-invariant.

We first claim that  $\lambda_{\top, s, F_m \ast \mu, \calA} =  \lambda_{\top, s, \mu, \calA}$ for every $m$.
For   $t \in H$ define $c_t (x)=   \sup\{\|\calA(t , x)\| , m(\calA(t, x))\inv \} $ and let $c_m(x) = \sup _{t\in F_m } c_t(x)$.
As $F_m$ is precompact, from  Claim \ref{lem:foobar} we have  that $\log c_m \in L^1(\mu)$.  

 For  $x \in M$ and   $t \in F_m$, the cocycle property and subadditivity of norms yields
\begin{align*}
\log \|\calA(s^n, tx)\| &\leq  \log \|\calA(t\inv, tx)\| + \log \|\calA(s^n, x)\| + \log \|\calA(t , s^nx)\|  \\
&=  \log \|\calA(t, x)\inv\| + \log \|\calA(s^n, x)\| + \log \|\calA(t , s^nx)\|  \\  
&\leq \log c_m(x) + \log c_m (s^n(x)) + \log \|\calA(s^n, x)\|. 
\end{align*}
Using that $\mu$ is $s$-invariant, we have for every $n$ that
\begin{align*}
\int \log &\|\calA(s^n, x) \| \ d (F_m\ast \mu )(x) = \dfrac 1 {|F_m|}\int_{F_m} \int \log \|\calA(s^n, x)\| \ dt*\mu (x) \ d t \\
&= \dfrac 1 {|F_{m}|}\int_{F_{m}} \int \log \|\calA(s^n, tx)\| \ d\mu (x) \ d t \\
&\leq \dfrac 1 {|F_{m}|}\int_{F_{m}}  \bigg(\int  \log c_m(x) + \log c_m (s^n(x)) +  \log \|\calA(s^n, x)\|  \ d \mu (x) \bigg) \ d t \\
&\leq 2\int  \log  c_m(x)   \ d \mu(x) + \int \log \|\calA(s^n, x)\| \ d \mu{(x)}
\end{align*}
Dividing by $n$ yields $\lambda_{\top, s, F_m \ast \mu,\calA}\le \lambda_{\top, s, \mu, \calA}$.   The reverse  inequality is similar.

The inequality then follows from the  upper-semicontinuity  in Lemma \ref{claim:jjjiiilllkkk}.
\end{proof}

Consider now any $Y\in \lieg$ with $\|Y\|=1$, a point  $x\in X$,  and $t>0$.
The     empirical measure $\eta(Y,t,x)$ along the orbit $\exp (sY) x$ until time $t$ is the measure defined as follows: given a bounded continuous $\phi\colon X\to \R$, the integral of $\phi$ with respect to the empirical measure  $\eta(Y,t,x)$ is
$$
\int \phi \ d \eta (Y, t, x) := \frac{1}{t} \int_0^{t} \phi \big(\exp (sY) \cdot x\big) \ d s.$$
Similarly, given a probability measure $\mu$ on $X$,   the empirical distribution $\eta(Y,t,\mu)$ of $\mu$ along the orbit of $\exp (sY) $ until time $t$ is defined as 
$$
\int \phi \ d \eta(Y,t, \mu):= \frac{1}{t} \int_X \int_0^{t} \phi \big(\exp (sY) \cdot x\big) \ d s \ d \mu(x).$$

Consider now sequences  $Y_n\in \lieg$ with $\|Y_n\|=1$ and  $t_n>0$.   For part \ref{lazylemmac} of the following lemma, we add an additional  assumption that the action of $G$ on $(X,d)$ has \emph{uniform displacement:} for any compact $K\subset G$ there is $C'$ such that for all $x\in X$ and $g\in K$, $$d(g\cdot x, x )\le C'.$$

\begin{lemma} 
\label{lemma:firstexponents}
Suppose the action of $G$ on $(X,d)$ has uniform displacement and let $\calA\colon G\times \calE\to \calE$ be a tempered continuous cocycle.

Let $Y_n\in \lieg$ and  $t_n\ge 0$ be  sequences with  $\|Y_n\|=1$ for all $n$ and $t_n\to \infty$. Let $\mu_n$ be a sequence of Borel probability measures on $X$ and define $\eta_n :=  \eta(Y_n,t_n,\mu_n)$ to be the empirical distribution of $\mu_n$ along the orbit of $\exp (sY_n)$ for $0\le s\le t_n$.
 Assume that
\begin{enumerate}
\item the family of empirical distributions  $\{\eta_n\} $ defined above has uniformly exponentially small mass in the cusps; and
\item $\int \log  \|\calA ( \exp (t_nY_n),x) \|  \ d \mu_n(x) \geq \epsilon t_n$.
\end{enumerate}
Then
\begin{enumlemma}
\item \label{lazylemmaa} the family $\{\eta_n\} $ is pre-compact;
\item \label{lazylemmab}for any subsequential limit $Y_\infty = \lim_{j\to \infty}  Y_{n_j},$  any subsequential limit $\eta_\infty $ of  $\{ \eta_{n_j}\}$ is invariant under the 1-parameter subgroup $\{ \exp (tY_\infty): t\in \R\}$;
\item \label{lazylemmac}$\lambda_{\top,  \exp (Y_\infty), \eta_\infty,\calA}\ge\epsilon>0$.
\end{enumlemma}
\end{lemma}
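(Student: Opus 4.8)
The plan is to treat the three conclusions in order, since each relies on the previous. For part \ref{lazylemmaa}, precompactness of $\{\eta_n\}$ in the weak-$*$ topology is equivalent to tightness by Prokhorov's theorem (here $X$ is Polish), and tightness follows immediately from hypothesis (1): if $\sup_n \int e^{\eta d(x_0,x)}\, d\eta_n(x) = N < \infty$ for some $\eta \in (0,\eta_0)$, then Markov's inequality gives $\eta_n\{x : d(x_0,x) \ge R\} \le N e^{-\eta R}$ uniformly in $n$, and since the metric $d$ is proper the sets $\{x : d(x_0,x) \le R\}$ are compact. So $\{\eta_n\}$ is tight, hence precompact.

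For part \ref{lazylemmab}, I would pass to the subsequence along which $Y_{n_j} \to Y_\infty$ and $\eta_{n_j} \to \eta_\infty$. The standard argument: fix $t \in \R$ and a bounded uniformly continuous $\phi\colon X\to\R$, and estimate $\bigl|\int \phi(\exp(tY_\infty)\cdot x)\, d\eta_\infty(x) - \int \phi\, d\eta_\infty(x)\bigr|$. Writing $g_n = \exp(tY_n)$, one compares $\int \phi(g_n \cdot x)\, d\eta_n$ with $\int \phi\, d\eta_n$; because $\eta_n$ is the empirical distribution of $\mu_n$ along the orbit of $\exp(sY_n)$ for $0 \le s \le t_n$, translating by $g_n$ shifts the time window from $[0,t_n]$ to $[t, t_n+t]$, so the difference is bounded by $\frac{2|t|\,\|\phi\|_\infty}{t_n} \to 0$ as $n\to\infty$. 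Combined with $g_n \to \exp(tY_\infty)$, uniform continuity of $\phi$, the uniform-displacement hypothesis (to control $d(g_n\cdot x, \exp(tY_\infty)\cdot x)$ uniformly so that the convergence $\phi\circ g_n \to \phi\circ\exp(tY_\infty)$ is uniform on $X$), and weak-$*$ convergence $\eta_{n_j}\to\eta_\infty$, one concludes $\eta_\infty$ is $\exp(tY_\infty)$-invariant for every $t$.

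For part \ref{lazylemmac}, set $s_n := \exp(Y_n)$ and $s_\infty := \exp(Y_\infty)$. Hypothesis (2) says $\int \log\|\calA(\exp(t_n Y_n),x)\|\, d\mu_n(x) \ge \epsilon t_n$; I want to convert this into a lower bound on the average top Lyapunov exponent $\lambda_{\top, s_\infty, \eta_\infty, \calA}$. The mechanism is: for each fixed integer $N$, the quantity $\frac{1}{N}\int \log\|\calA(s_\infty^N, x)\|\, d\eta_\infty(x)$ is an upper bound for $\lambda_{\top, s_\infty, \eta_\infty, \calA}$ reached in the limit, and I need the reverse — that the \emph{liminf} of the finite-time averages $\frac{1}{t_n}\int \log\|\calA(\exp(t_n Y_n),x)\|\, d\mu_n$ is dominated by $\lambda_{\top,s_\infty,\eta_\infty,\calA}$ up to error. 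The cleanest route: use the subadditive cocycle estimate to write $\log\|\calA(\exp(t_n Y_n),x)\|$ as a sum of roughly $\lfloor t_n\rfloor$ increments $\log\|\calA(s_n, \exp(kY_n)x)\|$ plus a bounded remainder (bounded because of temperedness and the uniform-displacement/tightness control on where the orbit sits), so that $\frac{1}{t_n}\int \log\|\calA(\exp(t_n Y_n),x)\|\,d\mu_n \approx \int \varphi_n\, d\eta_n$ where $\varphi_n(y) = \log\|\calA(s_n, y)\|$. Then, truncating $\varphi_n$ at level $M$ as in the proof of Lemma \ref{claim:jjjiiilllkkk} — using the uniform exponential tail of $\{\eta_n\}$ to discard the tail with uniformly small error — and using continuity of $\calA$ restricted to the relevant compact set together with $s_n \to s_\infty$ and $\eta_{n_j}\to\eta_\infty$, one gets $\epsilon \le \liminf_j \int \varphi_{n_j}\, d\eta_{n_j} \le \int \log\|\calA(s_\infty,x)\|\, d\eta_\infty(x)$. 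Finally, applying the same argument with $s_\infty^N$ in place of $s_\infty$ (i.e. grouping the increments in blocks of $N$) and letting $N\to\infty$ upgrades this to $\epsilon \le \inf_N \frac{1}{N}\int\log\|\calA(s_\infty^N,x)\|\,d\eta_\infty(x) = \lambda_{\top,\eta_\infty,\calA,s_\infty}$.

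The main obstacle is the last step: carefully passing the finite-time, moving-measure estimate (2) to the genuine Lyapunov exponent of the single limiting pair $(\eta_\infty, s_\infty)$. Two subtleties require care — first, that both the cocycle $\calA$ and the base point are moving ($\calA$ evaluated at $s_n$, not $s_\infty$; measure $\eta_n$, not $\eta_\infty$), which is handled by the joint continuity of $\calA$ on compacta plus the truncation argument of Lemma \ref{claim:jjjiiilllkkk}, but one must check the truncation error is uniform in $n$ and not just for a fixed measure; second, the passage from the $s_\infty$-exponent to the $s_\infty^N$-exponent and the limit $N\to\infty$, which is where the $\inf$ in the definition \eqref{eq:toplyap} is actually used and where subadditivity is essential. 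Everything else is routine soft analysis (Prokhorov, uniform continuity, Markov's inequality).
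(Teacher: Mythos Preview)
Your treatment of parts \ref{lazylemmaa} and \ref{lazylemmab} is correct and matches the paper's argument.

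For part \ref{lazylemmac} your strategy diverges from the paper's, and as written it has a real gap. You propose to use submultiplicativity of operator norms to bound $\log\|\calA(\exp(t_nY_n),x)\|$ by $\sum_{k=0}^{\lfloor t_n\rfloor-1}\log\|\calA(s_n,\exp(kY_n)x)\|$ plus ``a bounded remainder.'' The remainder is \emph{not} bounded: each boundary term is controlled only by temperedness, i.e.\ by $C+k\,d(\exp(jY_n)x,x_0)$, and the orbit can go arbitrarily deep into the cusp. What one actually has is that $\int \sup_{0\le t\le t_n}d(\exp(tY_n)x,x_0)\,d\mu_n(x)=O(\log t_n)$, obtained from the uniform exponential tail of $\eta_n$ via Jensen's inequality (this is the estimate \eqref{eq:autocracy} in the paper). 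You need this estimate, and you need it not to be multiplied by $t_n$: if you naively compare the discrete sum $\sum_k\varphi_n(\exp(kY_n)x)$ with the continuous integral $\int_0^{t_n}\varphi_n(\exp(sY_n)x)\,ds$ term by term, you pick up $\lfloor t_n\rfloor$ error terms each of size $O(M_n(x))$, and the total error after integrating is $O(t_n\log t_n)$, which swamps $\epsilon t_n$. The fix is to first average the subadditive inequality over a shift $t\in[0,1]$ (or $[0,N]$ for the $N$-block version), which collapses the $t_n$ error terms to a constant number; this is precisely the device the paper uses.

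The paper organizes this differently and avoids the $N$-block step altogether. It lifts to the projectivized bundle $\P\calE$, chooses measurable sections $\sigma_n$ realizing the operator norm, and works with the function $\Phi(Y,(x,[v]))=\log(\|\calA(\exp Y,x)v\|/\|v\|)$, which is an \emph{additive} cocycle in $Y$ along the flow. Additivity makes $\int_0^{\lfloor t_n\rfloor}\Phi(Y_n,\exp(tY_n)\cdot(x,[\sigma_n(x)]))\,dt$ telescope exactly to $\int_0^1\Phi(\lfloor t_n\rfloor Y_n,\exp(tY_n)\cdot(x,[\sigma_n(x)]))\,dt$, producing only two boundary terms whose $\mu_n$-integral is $O(\log t_n)$. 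One then passes to a limit measure $\tilde\eta_\infty$ on $\P\calE$ and uses $\|\calA(g_\infty^n,x)\|\ge\|\calA(g_\infty^n,x)v\|/\|v\|$ together with invariance of $\tilde\eta_\infty$ to conclude $\lambda_{\top}\ge\int\Phi(Y_\infty,\cdot)\,d\tilde\eta_\infty\ge\epsilon$ in one stroke, without ever taking an infimum over $N$. Your route via operator-norm subadditivity and $N$-blocks can be made to work once the shift-averaging and the $O(\log t_n)$ estimate are in place, but the projectivized-cocycle approach is both shorter and conceptually cleaner.
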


\begin{proof}[Proof of Lemma \ref{lemma:firstexponents} \ref{lazylemmaa} and  \ref{lazylemmab}]
As in the proof of Lemma \ref{claim:jjjiiilllkkk}, from the assumption that $\{\eta_n\}$ has uniformly exponentially small mass in the cusps we obtain uniform bounds $$\eta_n (\{x: d(x,x_0) \ge \ell\})\le C e^{-\eta \ell}$$ for all $n$.  Combined with the properness of $d$, this establishes uniform tightness of the family of  measures $\{\eta_n\}$ and  \ref{lazylemmaa} follows.

For  \ref{lazylemmab},    let  $\phi \colon X \to \R$ be a compactly supported  continuous function.  Then for any $s>0$
\begin{align*}
\int_{X} \phi \circ \exp (sY_\infty) -  \phi \ d \eta_n
&= \int_{X} \phi \circ \exp (sY_\infty)  -  \phi \circ \exp(s Y_n) \ d\eta_n\\
&+ \int_{X}  \phi \circ \exp(sY_n) - \phi \ d\eta_n
\end{align*}

\noindent The first integral converges to zero as the functions $\phi \circ \exp (wY_\infty)  -  \phi \circ \exp(w Y_n)$ converges uniformly to zero in $n$ for fixed $w$.  The second integral clearly converges to zero  since for $t_n\ge s$ we have
\begin{align*}
\int_{X}  \phi &\circ \exp(s Y_n) - \phi \ d\eta_n=
\frac{1}{t_n} \int_0^{t_n} \int_X \phi \left( \exp\left((s+t) Y_n\right)  x \right) - \phi \left(\exp (tY_n) x \right) \ d \mu_n(x) \ d t\\
&=
\frac{1}{t_n}\left[ - \int_0^{s} \int _X\phi \left( \exp\left(t Y_n\right)  x \right) \mu_n(x) \ d t + \int_{t_n}^{t_n+s}  \int_X \phi \left( \exp\left(t Y_n\right)  x \right) \ d \mu_n(x) \ d t\right]
\end{align*}
which converges to 0 as $t_n\to \infty$ as $\phi$ is bounded.
\end{proof}


The proof of Lemma \ref{lemma:firstexponents}\ref{lazylemmac} is quite involved. It is the analogue in the non-compact
setting of \cite[Lemma 3.6]{BFH};  we recommend the reader read the proof of of \cite[Lemma 3.6]{BFH} first.  Two technical
complications arise in the proof of Lemma \ref{lemma:firstexponents}\ref{lazylemmac}.
 First, we must control for ``escape of Lyapunov exponent'' as our cocycle is unbounded.    Second, in \cite{BFH} it was sufficient to  consider the average  of Dirac masses $\delta_{x_n}$ along a single orbit $\exp(sY_n)x_n$; here we average measures $\mu_n$  along an  orbit of $\exp (sY_n).$

To prove Lemma \ref{lemma:firstexponents}\ref{lazylemmac} we first introduce a number of {\blue standard} auxiliary objects.
 Let $\P \calE\to X$ denote the projectivization of the  tangent bundle    $\calE\to X$.
 We represent a point in $\P\calE$ as $(x,[v])$ where $[v]$ is an equivalence class of non-zero vectors in the fiber $\calE(x)$.
 For each $n$, let $\sigma_n\colon X\to \calE\sm\{0\}$ be a nowhere vanishing Borel section such that
$$\|\calA (\exp (t_nY_n),x ) (\sigma_n(x))\| \|(\sigma_n(x))\|\inv = \|\calA (\exp (t_nY_n),x)\|$$
for every $x\in X$.
The $G$-action on $\calE$ by vector-bundle automorphisms induces  a natural $G$-action on $\P \calE$ which restricts to projective transformations between each fiber and its image.    For each $n$, let $\td \eta_n$ be the probability measure  on $\P\calE$ given as follows: given a bounded continuous $\phi\colon \P \calE\to \R$ define
$$\int _{\P \calE} \phi  \ d \td \eta_n  :=
\frac{1}{t_n} \int_0^{t_n} \int _X \phi \big(\exp (tY_n) \cdot (x,  [\sigma_n(x)] )\big )\ d \mu_n(x) \ d t.$$
We have that $\td \eta_n$ projects to $ \eta_n$ under the natural projection $\P\calE\to X$; moreover, if $\eta_{j_k}$ is a subsequence converging to $\eta_\infty,$ then   any weak-$*$ subsequential limit $\td \eta_\infty$ of $\{ \td \eta_{n_{j_k}}\}$   projects to $ \eta_\infty$.

Define  $\Phi\colon \lieg \times \P\calE\to \R$ by  $$\Phi\big(Y,(x, [v])\big) := \log \left(\left\| \calA\big(\exp
(Y), x\big)v\right\|\|v\|\inv\right).$$
Note for each fixed $Y\in \lieg$ that $\Phi$ satisfies a cocycle property:
\begin{equation}\label{eq:cocycle}\Phi\big((s+t)Y,(x, [v])\big)= \Phi\big(tY,(x, [v])\big) +  \Phi\big(sY,\exp(tY)\cdot(x, [v])\big) \end{equation}

By hypothesis, there are $C>1$, $ k\ge 1$, and $\eta>0$ such that $$\int e^{\eta d(x,x_0)} \ d \eta_n \le C$$ for all $n$ and
$$ \frac{1}{C}  e^{-k d(x,x_0)} \le \left \|\calA(\exp (Y),x)v\right\| \|v\|\inv  \le  C  e^{k d(x,x_0)}$$ for all $(x,[v])\in \P\calE$ and $Y\in \lieg$ with $\|Y\|\le 1.$


For each $n$, let $$M_n(x) = \sup _{0\le t\le t_n}\left\{d\left( \big(\exp (tY_n) x\big),x_0\right)\right\}.$$ 
As we assume the  $G$-action on $(X,d)$ has uniform displacement, take $$C_1 = \sup_{\|Y\|\le 1, x\in X} \{d(\exp (Y)\cdot x, x)\}.$$
{\blue
We have  $$ \frac{1}{t_n} \int_0^{t_n} \int_X e^{\eta d\left( (\exp (tY_n) x ,x_0\right)} \ d \mu_n(x) \ d t= \int e^{\eta d(x,x_0)} \ d \eta_n \le C.$$
If $t_n\ge 1$ then for every $x$ there is an interval $I_x\subset [0,t_n]$ of length $1$ on which $$d\left( (\exp (tY_n) x ,x_0\right)\ge (M_n (x)-  C_1)$$ for all $t\in I_x.$
It follows that
$$\int_X e^{\eta (M_n (x)-  C_1)}   \ d \mu_n(x) \le    \int_X \int_{I_x} e^{\eta d\left( (\exp (tY_n) x ,x_0\right)} \ d t  \ d \mu_n(x) \le  Ct_n.  $$ 
}

By Jensen's inequality we have
$$\int_X \eta ( M_n(x) - C_1) \ d \mu_n(x) \le \log \int_X e^{\eta (M_n (x)-  C_1)}  d \mu_n (x)$$
whence
 $$\int M_n (x) \ d \mu_n(x) \le \eta\inv( \log  C  + \log t_n )  + C_1 =: \eta\inv \log t_n +  C_2.$$

Since $\|Y_n \| = 1,$ 
we have \begin{equation}\label{eq:autocracy} \begin{aligned}
 \sup _{0\le t\le t_n, 0\le s\le 1}  \int_X &\left| \Phi(sY_n, \exp(t Y_n)\cdot (x, [\sigma_n(x)]))\right |  \ d \mu_n(x)\\
&\le \int_X \sup _{0\le t\le t_n, 0\le s\le 1} \left  | \Phi(sY_n, \exp(t Y_n)\cdot (x, [\sigma_n(x)]))\right |  \ d \mu_n(x)\\
&\le \int |\log C| + k M_n (x) \ d \mu_n(x)\\
&\le |\log C |+ k ( \eta\inv \log t_n +  C_2) \\
&=:   k  \eta\inv \log t_n +  C_3.
\end{aligned}\end{equation}
In particular, we have
 \begin{align*}\frac{1}{t_n} & \int_X \log \|\calA(\exp(t_nY_n), x) \| \ d \mu_n(x) \\
 & = \frac{1}{t_n}\int_X \Phi(t_nY_n, (x,[\sigma_n(x)])) \ d \mu_n(x) \\
 & = \frac{1}{t_n}\int_X\Phi(\lfloor t_n\rfloor Y_n, (x,[\sigma_n(x)]))\ d \mu_n(x)  \\ &\quad  +
 \frac{1}{t_n}\int_X \Phi((t-\lfloor t_n\rfloor) Y_n, \exp (\lfloor t_n\rfloor Y_n)\cdot (x,[\sigma_n(x)]))\ d \mu_n(x) .
 \end{align*}
 Since
 $$\left|\frac{1}{t_n} \int_X \Phi((t-\lfloor t_n\rfloor) Y_n, \exp (\lfloor t_n\rfloor Y_n)\cdot (x,[\sigma_n(x)]))\ d \mu_n(x) \right|\le \frac{1}{t_n}( k  \eta\inv \log t_n +  C_3)$$ goes to 0 as $t_n\to \infty$
 it follows that
 \begin{equation}
 \begin{aligned}\label{eq:thiseq}\liminf_{n\to \infty} \int_X  \frac{1}{t_n}&\Phi(\lfloor t_n\rfloor Y_n, (x,[\sigma_n(x)]))  \ d \mu_n(x)  \\&= \liminf_{n\to \infty} \frac{1}{t_n} \int_X  \log \|\calA(\exp(t_nY_n), x) \| \ d \mu_n(x)\\& \ge \epsilon >0.
 \end{aligned}\end{equation}

With the above objects and estimates we complete the proof of  Lemma \ref{lemma:firstexponents}.
\begin{proof}[Proof of Lemma \ref{lemma:firstexponents} \ref{lazylemmac}]
Consider first the expression $\int \Phi (Y_n, \cdot )\  d \td \eta_n.$  We have
\begin{align*}
\int \Phi  & (Y_n, \cdot ) \ d \td \eta_n
\\& =  \frac{1}{t_n}\int_0^{t_n} \int_X \Phi  \big(Y_n,\exp (tY_n) \cdot (x,  [\sigma_n(x)] )\big ) \ d\mu_n(x) \ d t
\\&= \frac{1}{t_n}  \int_0^{\lfloor t_n\rfloor}\int_X  \Phi    \big(Y_n,\exp (tY_n) \cdot (x,  [\sigma_n(x)] )\big )  \ d\mu_n(x) \ d t \\
&\quad
+ \frac{1}{t_n} \int_ {\lfloor t_n\rfloor} ^{t_n}\int_X \Phi    \big(Y_n,\exp (tY_n) \cdot (x,  [\sigma_n(x)] )\big ) \ d\mu_n(x)   \ d t
\end{align*}
Note that the contribution of the second integral is bounded by
$$\left| \frac{1}{t_n} \int_ {\lfloor t_n\rfloor} ^{t_n}\int_X \Phi    \big(Y_n,\exp (tY_n) \cdot (x,  [\sigma_n(x)] )\big ) \ d\mu_n(x)   \ d t \right|
\le \frac{1}{t_n}(k\eta\inv \log t_n +  C_3)$$
which goes to zero as $t_n\to \infty$.

 Repeatedly applying the cocycle property \eqref{eq:cocycle} of $\Phi(Y_n, \cdot)$ we have  for $t_n\ge 1$ that
\begin{align*}
  \frac{1}{t_n}  \int_X  \int_0^{\lfloor t_n\rfloor} \Phi  & \big(Y_n,\exp (tY_n) \cdot (x,  [\sigma_n(x)] )\big ) \ d t \ d \mu_n(x)
  \\&= \frac{1}{t_n} \int_X  \int_0^{1}   \Phi    \big({\lfloor t_n\rfloor} Y_n, \exp (tY_n) \cdot (x,  [\sigma_n(x)] )\big ) \ d t\ d \mu_n(x)
  \\&= \frac{1}{t_n}  \int_X \int_0^{1}  \Big( \Phi    \big({\lfloor t_n\rfloor} Y_n,  (x,  [\sigma_n(x)] )\big )
  -  \Phi    \big( t Y_n,  (x,  [\sigma_n(x)] )\big )
  \\ & \quad\quad\quad +     \Phi    \big( t Y_n, \exp (\lfloor t_n\rfloor Y_n)\cdot  (x,  [\sigma_n(x)] )\big )
  \Big)
   \ d t\ d \mu_n(x)
   \\& =  \frac{1}{t_n}   \int_X \Phi    \big({\lfloor t_n\rfloor} Y_n,  (x,  [\sigma_n(x)] )  \ d \mu_n(x) +
 \frac{1}{t_n} \int_X \int_0^{1}  \Big(    -  \Phi    \big( t Y_n,  (x,  [\sigma_n(x)] )\big )   \\ & \quad\quad\quad +      \Phi    \big( t Y_n, \exp (\lfloor t_n\rfloor Y_n)\cdot  (x,  [\sigma_n(x)] )\big )
  \Big) \ dt\ d \mu_n(x)
%
%
\end{align*}
From \eqref{eq:autocracy}, the contribution of the second and third integrals is bounded by
\begin{align*}
 &\left|\frac{1}{t_n}\int_X  \int_0^{1}  \Big(    -  \Phi    \big( t Y_n,  (x,  [\sigma_n(x)] )\big )
      +     \Phi    \big( t Y_n, \exp (\lfloor t_n\rfloor Y_n)\cdot  (x,  [\sigma_n(x)] )\big )
  \Big)  \ d t  \ d \mu_n(x)
 \right|
 \\&\quad \quad \quad \quad \le \frac{1}{t_n}  \int_0^{1}  2 ( k  \eta\inv \log t_n +  C_3)\ dt
 \\&\quad \quad \quad \quad = \frac{1}{t_n}    2 ( k  \eta\inv \log t_n +  C_3)
\end{align*}
which tend to zero as $t_n\to \infty$.  We then conclude from \eqref{eq:thiseq} that
\begin{equation}
\label{eq:keyless}
\liminf_{n\to \infty} \int \Phi   (Y_n, \cdot ) \ d \td \eta_n = \liminf_{n\to \infty}    \frac{1}{t_n} \int_X    \Phi    \big({\lfloor t_n\rfloor} Y_n,  (x,  [\sigma_n(x)] ) \ d \mu_n(x)
\ge \epsilon >0.
\end{equation}

To complete the proof of \ref{lazylemmac}, for  $M>0$     take $\psi_M\colon X\to [0,1]$ continuous with $$ \text{$ \psi_M(x) = 1$ if $d(x,x_0) \le M$ and $\psi_M(x) = 0$ if $d(x,x_0)\ge M+ 1$}.$$
Let $\Psi_M\colon \P\calE \to [0,1]$ be $$\Psi_M(x,[v]) = \psi_M(x).$$
and define $\Phi_M \colon \lieg\times \P\calE\to \R$ to be
$$\Phi_M\big(Y,(x, [v])\big) :=\Psi_M(x, [v]) \Phi\big(Y,(x, [v])\big).$$

As the family $$  \mathcal N= \{\eta_n\} \cup\{\eta_\infty\}$$ has uniformly exponentially small mass in the cusps we have
$$\int e^{\eta d(x,x_0)} d \hat \eta <C$$
 and hence $\hat \eta\{x: d(x,x_0)\ge \ell \} \le Ce^{-\eta\ell}$
for all $\hat \eta \in   \mathcal N$.
It follows for all $\td  \eta\in  \{\td \eta_n\} \cup\{\td \eta_\infty\}$  that---letting $\hat  \eta\in \mathcal N$ denote the image of $ \td  \eta$ in $X$---we have for any $Y\in \lieg$  with $\|Y\|\le 1$ that
\begin{align*}
	\int_{ \P\calE} &|\Phi(Y, \cdot ) - \Phi_M(Y, \cdot )| \ d  \td  \eta
		\\&= \int_{\{(x,[v]\in \P\calE: d(x,x_0)\ge  M\}} |\Phi (Y, \cdot ) - \Phi_M (Y, \cdot ) | \ d  {\td \eta}
				\\&\le \int_{\{(x,[v]\in \P\calE: d(x,x_0)\ge  M\}}  |\Phi (Y, \cdot ) | \ d  {\td \eta}
		\\&\le \int_{\{x\in X: d(x,x_0)\ge  M\}}  \log(C) + {k d(x,x_0)} \ d \hat  \eta
	\\& \le {\blue  (\log C + k M)Ce^{-\eta M}   + k \int_{ M}^\infty  \hat \eta \{ x:  { d(x,x_0)}\ge \ell\} \  d \ell}
	  \\ &\le (\log C + k M) C e^{-\eta M}  + k\frac{C e^{\eta  (- M)}}{\eta }.
		\end{align*}
%
In particular, given any $\delta>0$, by taking $M>0$ sufficiently large we may ensure that $$\int_{ \P\calE} |\Phi(Y, \cdot ) - \Phi_M(Y, \cdot )| \ d  \td  \eta\le \delta$$ for any   $$\td  \eta\in  \{\td \eta_n\} \cup\{\td \eta_\infty\}.$$

Since   the restriction of $\Phi_M$ to $\{ Y\in \lieg : \|Y\|\le 1\} \times \P\calE$ is compactly supported, it is uniformly continuous whence
$$\int  \Phi_M(Y_n, \cdot ) \ d \td \eta_n - \Phi_M(Y_\infty, \cdot )\ d \td \eta_\infty \to 0$$
as $n\to \infty.$
In particular given $\delta>0$ we may take $M$ and $n$ sufficiently large so that
\begin{align*}
\Big|\int_{ \P\calE}  &\Phi(Y_n, \cdot ) \ d \td \eta_n  - \int _{\P \calE} \Phi(Y_\infty, \cdot )  \ d \td  \eta_\infty\Big|
\\&\le  \int_{ \P\calE}  \left|  \Phi(Y_n, \cdot ) - \Phi_M(Y_n, \cdot ) \right |d \td \eta_n\\
&\quad \quad + \int_{ \P\calE}  \left|   \Phi_M(Y_n, \cdot ) - \Phi_M(Y_\infty, \cdot )\right |d \td \eta_n\\
&\quad \quad +  \int_{ \P\calE}  \left|  \Phi(Y_\infty, \cdot ) - \Phi_M(Y_\infty, \cdot ) \right |d \td \eta_\infty\\
&\le 3\delta.
\end{align*}

Let $g_\infty= \exp (Y_\infty)$.  Note for each $n$ that  $$\int _X\log \|\calA(g_\infty^n, x)\| \ d \eta_\infty(x) \ge  \int _{\P \calE} \log(\left \|\calA(g_\infty^n, x) v \right \| \|v\|\inv) \ d\td  \eta_\infty (x,[v]).$$ It then follows   for any $\delta>0$ 
\begin{align*}
 \lambda_{\top, g_\infty, \eta,\calA }
  & = \lim _{n\to \infty} \frac 1 n  \int_X \log \|\calA(g_\infty^n, x)\| \ d \eta_\infty (x)\\
 &\ge  \liminf _{n\to \infty}  \frac 1 n \int _{\P \calE} \log\left( \left \|\calA(g_\infty^n, x) v \right \| \|v\|\inv \right) \ d\td  \eta_\infty (x,[v])\\
 &=  \liminf _{n\to \infty} \frac 1 n   \int _{\P \calE} \Phi (n Y_\infty , (x,[v]))   \ d\td  \eta_\infty (x,[v])\\
 &=    \int _{\P \calE} \Phi ( Y_\infty , (x,[v]))   \ d\td  \eta_\infty (x,[v])\\
 &\ge   \liminf _{n\to \infty} \int_{ \P\calE}   \Phi(Y_n, \cdot ) \ d \td \eta_n - 3\delta.
\end{align*}
where   the third equality follows from the invariance of $\td \eta_\infty$ and the cocycle property of $\Phi$.  
Since 
$$\liminf_{n\to \infty} \int_{ \P\calE}   \Phi(Y_n, \cdot ) \ d \td \eta_n \ge \epsilon $$
we conclude that \[\lambda_{\top, g_\infty,\eta,\calA}\ge\epsilon-3\delta\]
for any $\delta>0$ whence the result follows.
\end{proof}

\subsection{Oseledec's theorem for cocycles over actions by higher-rank abelian groups}
Let $A\subset G$ be a split Cartan subgroup.  Then $A\simeq \R^d$ where $d$ is the rank of $G$.
We have the following consequence of the higher-rank Oseledec's multiplicative ergodic theorem  (c.f.\ \cite[Theorem 2.4]{AWB-GLY-P1}).

Fix any norm $| \cdot |$ on $A\simeq \R^d$ and let $\eta\colon X\to \R$ be $$\eta(x) := \sup_{|a|\le 1} \log \| \calA(a, x)\|.$$
\begin{proposition}
\label{thm:higherrankMET}Let $\mu$ be an ergodic, $A$-invariant Borel probability measure on $X$ and
suppose $\eta\in L^{d,1}(\mu)$.  Then there are
	\begin{enumerate}
	\item an $\alpha$-invariant subset $\Lambda_0\subset X$ with $\mu(\Lambda_0)=1$;
  \item 
   linear functionals $\lambda_i\colon A \to \R$ for $1\le i\le p$;  
	\item   and splittings   $\calE(x)= \bigoplus _{i=1}^p E_{\lambda_i}(x)$ 
	into families of mutually transverse,  $\mu$-measurable  subbundles $E_{\lambda_i}(x)\subset \calE(x)$ defined  for $x\in \Lambda_0$

	\end{enumerate}
such that
\begin{enumlemma}	
	\item $\calA (s, x) E_{\lambda_i}(x)= E_{\lambda_i}(s\cdot x)$ and
	\item \label{lemma:partb} $\displaystyle \lim_{|s|\to \infty} \frac { \log \|  \calA (s,x) (v)\| - \lambda_i(s)}{|s|}=0$
\end{enumlemma}	
	for all $x\in \Lambda_0$ and all $ v\in  E_{\lambda_i}(p)\sm \{0\}$.  
 \end{proposition}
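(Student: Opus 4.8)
The plan is to deduce the proposition from the general higher-rank multiplicative ergodic theorem \cite[Theorem 2.4]{AWB-GLY-P1} by first putting the data $(\calE,\calA,\mu)$ into the standard measurable form to which that theorem applies. Since $(X,d)$ is Polish, $(X,\mu)$ is---after discarding a $\mu$-null set---a standard probability space on which $A\cong\R^{\ell}$ acts by measure-preserving transformations. Write $r=\mathrm{rk}(\calE)$. Using the John ellipsoid of the given continuous fiberwise norm $\|\cdot\|$ one obtains a continuous fiberwise inner product with associated norm $\|\cdot\|_{E}$ satisfying $\|v\|\le\|v\|_{E}\le\sqrt{r}\,\|v\|$ on every fiber; a $\mu$-measurable $\|\cdot\|_{E}$-orthonormal frame $F\colon X\times\R^{r}\to\calE$ then exists, and conjugating produces a measurable cocycle $B(s,x):=F(s\cdot x)^{-1}\circ\calA(s,x)\circ F(x)$ over the $A$-action, with $\bigl|\log\|B(s,x)\|-\log\|\calA(s,x)\|\bigr|\le\tfrac12\log r$ for all $s\in A$ and $x\in X$ (operator norm on $\R^{r}$ computed in the standard inner product).

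Next I would verify the integrability hypothesis of \cite[Theorem 2.4]{AWB-GLY-P1} for $B$. Since $\eta(x)\ge\log\|\calA(\id,x)\|=0$, the comparison above gives $\sup_{|s|\le1}\log^{+}\|B(s,\cdot)\|\le\eta(\cdot)+\tfrac12\log r\in L^{d,1}(\mu)$; the symmetry of the unit ball $\{|s|\le1\}\subset A$, the identity $B(s,x)^{-1}=B(s^{-1},s\cdot x)$, and the $A$-invariance of $\mu$ then yield the matching $L^{d,1}$-bound for the conorms $m(B(s,x))$, which is the two-sided integrability the theorem requires. (When $\mu$ has exponentially small mass in the cusps and $\calA$ is tempered this is automatic, since then $\eta(\cdot)\le\log C+k\,d(\cdot,x_{0})$ and $d(\cdot,x_{0})\in L^{p}(\mu)$ for every $p$, as in Claim \ref{lem:foobar}.)

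With these hypotheses in hand, \cite[Theorem 2.4]{AWB-GLY-P1} applied to the ergodic $A$-invariant measure $\mu$ and the cocycle $B$ produces a conull $A$-invariant set $\Lambda_{0}\subset X$, linear functionals $\lambda_{1},\dots,\lambda_{p}\colon A\to\R$, and a $\mu$-measurable $A$-equivariant splitting $\R^{r}=\bigoplus_{i=1}^{p}V_{\lambda_{i}}(x)$ on $\Lambda_{0}$ with $\lim_{|s|\to\infty}\bigl(\log|B(s,x)v|-\lambda_{i}(s)\bigr)/|s|=0$ for all $v\in V_{\lambda_{i}}(x)\setminus\{0\}$. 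Setting $E_{\lambda_{i}}(x):=F(x)\bigl(V_{\lambda_{i}}(x)\bigr)\subset\calE(x)$ carries this structure back to $\calE$: the equivariance $\calA(s,x)E_{\lambda_{i}}(x)=E_{\lambda_{i}}(s\cdot x)$ is immediate from the definition of $B$, and since $\|F(x)v\|\le|v|\le\sqrt{r}\,\|F(x)v\|$ the difference between $\log|B(s,x)v|$ and $\log\|\calA(s,x)F(x)v\|$ is $O(1)$ and hence disappears after dividing by $|s|$, so the convergence statement transfers verbatim to $\calA$.

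The only genuine content is the reduction itself, i.e.\ confirming that the non-compactness of $X$ and the unboundedness of $\calA$ contribute nothing beyond what the hypothesis $\eta\in L^{d,1}(\mu)$ already encodes; the point to be careful about is the passage in the second paragraph from integrability of $\sup_{|s|\le1}\log^{+}\|B(s,\cdot)\|$ to that of the conorms. If \cite[Theorem 2.4]{AWB-GLY-P1} were stated only for $\Z^{\ell}$-actions, one extra routine step would be needed: restrict to a cocompact lattice $\Z^{\ell}\subset A$, apply the theorem on each $\Z^{\ell}$-ergodic component of $\mu$, and use that $A$ commutes with $\Z^{\ell}$ and preserves $\mu$ to see that the resulting functionals and subbundles are $A$-invariant and component-independent, hence descend to $\mu$; I expect the cited statement is phrased so as to make this unnecessary.
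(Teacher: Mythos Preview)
The paper gives no proof of this proposition; it is simply stated as a consequence of the higher-rank Oseledec theorem \cite[Theorem 2.4]{AWB-GLY-P1}, and your reduction---trivialize the bundle measurably, verify the integrability hypothesis, transport the resulting splitting back---is exactly the intended deduction.

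One small remark on the point you yourself flag: the passage from $\eta\in L^{d,1}(\mu)$ to the conorm bound does not quite follow from ``symmetry plus $A$-invariance of $\mu$'' as you write it, because $\sup_{|s|\le 1}\log^{+}\|B(s,x)^{-1}\|=\sup_{|s|\le 1}\log^{+}\|B(-s,s\cdot x)\|$ involves a basepoint $s\cdot x$ that moves with $s$, so a single change of variables does not control the supremum. For each \emph{fixed} $s$ your argument works, and that is already enough for most formulations of the higher-rank theorem; in any case, in every application the paper makes of this proposition the cocycle is tempered (Lemma~\ref{lemma:tempered}), so both norm and conorm are dominated by $C e^{k\,d(\cdot,x_0)}$ and the two-sided $L^{d,1}$ bound is immediate, exactly as you note parenthetically.
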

Note that \ref{lemma:partb} implies for $v\in E_{\lambda_i}(x)$ the weaker result that for $s\in A$,
$$\lim_{k\to\pm \infty} \tfrac {1} k \log \|  \calA (s^k,x) (v)\| =  \lambda_i(s).$$
Also note that for $s\in A$, and $\mu$ an $A$-invariant, $A$-ergodic measure that
\begin{equation}\label{eq:lameducksoup}\lambda_{\top, s, \mu,\calA} = \max _i \lambda_i(s).\end{equation}
If $\mu$ is not $A$-ergodic, we have the following.
\begin{claim} \label{claim:defenestratethepresident}
Let $\mu$ be an $A$-invariant measure with $\eta\in L^{d,1}(\mu)$ and $\lambda_{\top, s, \mu,\calA}>0$ for some $s\in A$. Then there is an $A$-ergodic component $\mu'$ of $\mu$ with
\begin{enumerate}
\item $\eta\in L^{d,1}(\mu')$;
\item there is non-zero   Lyapunov exponent $\lambda_j\neq 0$ for the $A$-action on $(X, \mu').$
\end{enumerate}
\end{claim}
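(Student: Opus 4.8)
The plan is to pass to the ergodic decomposition $\mu = \int \mu_\zeta \, d\tau(\zeta)$ of $\mu$ for the $A$-action and show that $\tau$-almost every component $\mu_\zeta$ still satisfies $\eta \in L^{d,1}(\mu_\zeta)$, while a positive-$\tau$-measure set of components carries a non-zero Lyapunov functional. For the first point, the only subtlety is that the Lorentz quasi-norm is not additive under the decomposition; it is, however, superadditive in the direction we need. Writing (up to a dimensional constant) $\|f\|_{L^{d,1}(\nu)} = d\int_0^\infty \nu(\{|f|>t\})^{1/d}\,dt$, the map $\nu\mapsto\nu(\{|f|>t\})$ is affine and $u\mapsto u^{1/d}$ is concave since $d\ge 1$, so Jensen's inequality gives $\mu(\{|f|>t\})^{1/d}\ge\int\mu_\zeta(\{|f|>t\})^{1/d}\,d\tau(\zeta)$; integrating in $t$ and applying Tonelli yields $\|\eta\|_{L^{d,1}(\mu)}\ge\int\|\eta\|_{L^{d,1}(\mu_\zeta)}\,d\tau(\zeta)$, so finiteness of the left side forces $\eta\in L^{d,1}(\mu_\zeta)$ for $\tau$-a.e.\ $\zeta$.

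The heart of the proof is the identity $\lambda_{\top,\mu,\calA,s} = \int\lambda_{\top,\mu_\zeta,\calA,s}\,d\tau(\zeta)$. For each fixed $n$, the decomposition gives $\frac1n\int\log\|\calA(s^n,x)\|\,d\mu = \int\phi_n(\zeta)\,d\tau(\zeta)$ where $\phi_n(\zeta):=\frac1n\int\log\|\calA(s^n,x)\|\,d\mu_\zeta(x)$, and for $\tau$-a.e.\ $\zeta$ the sequence $n\mapsto n\phi_n(\zeta)$ is subadditive (cocycle inequality for $\|\cdot\|$ plus $s^n$-invariance of $\mu_\zeta$), so $\phi_n(\zeta)\to\lambda_{\top,\mu_\zeta,\calA,s}$. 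To pass the limit through the $\tau$-integral I would dominate: subadditivity gives $\phi_n(\zeta)\le\int\log\|\calA(s,x)\|\,d\mu_\zeta$, while $\|\calA(s^n,x)\|\ge m(\calA(s^{-n},s^n x))^{-1}$ together with $s^n$-invariance and subadditivity for $s^{-1}$ gives $\phi_n(\zeta)\ge-\int\log\|\calA(s^{-1},x)\|\,d\mu_\zeta$; both bounding functions lie in $L^1(\tau)$ since their $\tau$-integrals equal $\int\log\|\calA(s^{\pm1},x)\|\,d\mu(x)$, which is finite by temperedness of $\calA$ and the integrability of $\eta$ (cf.\ Claim \ref{lem:foobar}). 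Dominated convergence now gives the identity.

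To conclude, since $\lambda_{\top,\mu,\calA,s}>0$ the identity forces $\lambda_{\top,\mu_\zeta,\calA,s}>0$ on a set of $\zeta$ of positive $\tau$-measure; intersecting with the conull set from the first paragraph, fix such a $\zeta$ with $\eta\in L^{d,1}(\mu_\zeta)$ and put $\mu'=\mu_\zeta$. Then Proposition \ref{thm:higherrankMET} applies to the ergodic measure $\mu'$, and \eqref{eq:lameducksoup} gives $0<\lambda_{\top,\mu',\calA,s}=\max_i\lambda_i(s)$, so some Lyapunov functional $\lambda_j$ for the $A$-action on $(X,\mu')$ satisfies $\lambda_j(s)\ne0$ and in particular $\lambda_j\ne0$, as required. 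I expect the main obstacle to be the domination step in the second paragraph---producing an honest $L^1(\tau)$ majorant for $\{\phi_n\}$ so as to justify the interchange of limit and integral---where one genuinely needs the two-sided subadditive estimates and temperedness of $\calA$; the non-additivity of the Lorentz norm under ergodic decomposition, handled by concavity, is comparatively soft.
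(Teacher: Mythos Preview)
The paper states this claim without proof, so there is no ``paper's own proof'' to compare against; your argument supplies what the paper omits and is essentially correct and along the natural lines one would expect.

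One small slip: in your lower bound you write $\|\calA(s^n,x)\|\ge m(\calA(s^{-n},s^n x))^{-1}$, but since $\calA(s^{-n},s^n x)=\calA(s^n,x)^{-1}$ and $m(A^{-1})=\|A\|^{-1}$, this is the tautology $\|\calA(s^n,x)\|\ge\|\calA(s^n,x)\|$. What you need (and clearly intend) is $\|\calA(s^n,x)\|\ge m(\calA(s^n,x))=\|\calA(s^{-n},s^n x)\|^{-1}$, from which $\log\|\calA(s^n,x)\|\ge-\log\|\calA(s^{-n},s^n x)\|$; then $s^n$-invariance and subadditivity for $s^{-1}$ give the desired lower bound $\phi_n(\zeta)\ge-\int\log\|\calA(s^{-1},x)\|\,d\mu_\zeta$.

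A second minor point: your appeal to Claim~\ref{lem:foobar} is slightly off, since that claim assumes exponentially small mass in the cusps rather than $\eta\in L^{d,1}(\mu)$. What you actually need here is simpler: after normalizing so that $s,s^{-1}$ lie in the unit ball of $A$, one has $|\log\|\calA(s^{\pm1},x)\||\le\eta(x)+\eta(s^{\pm1}x)$ directly from the definition of $\eta$, and $\eta\in L^{d,1}(\mu)\subset L^1(\mu)$ together with $s$-invariance gives the finiteness you want. With these cosmetic fixes the argument is complete.
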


We have the following which follows from the above definitions.
\begin{lemma}\label{lem:huntthelameduck}
Let $\mu$ be an $A$-invariant  probability  measure on $X$ with exponentially small mass in the cusps.
Suppose that $\calA$ is a tempered cocycle.
Then $\eta\in L^q(\mu)$ for all $q\ge 1$.   In particular, $\eta\in L^{d,1}(\mu)$.
\end{lemma}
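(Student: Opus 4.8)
The plan is to unwind the definitions and reduce the claim to the integrability fact already recorded in the excerpt, namely that a probability measure with exponentially small mass in the cusps integrates the distance function $x \mapsto d(x,x_0)$ — indeed integrates $e^{\eta d(x,x_0)}$ for small $\eta>0$, hence integrates any polynomial in $d(x,x_0)$, hence in particular $d(x,x_0)^q$ for every $q\ge 1$. So the only thing to check is that $\eta(x) = \sup_{\|a\|\le 1}\log\|\calA(a,x)\|$ is dominated by a multiple of $d(x,x_0)$ plus a constant, and that $L^q(\mu)$ for all $q$ implies membership in the Lorentz space $L^{d,1}(\mu)$.

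First I would observe that the set $\{a\in A : \|a\|\le 1\}$ is a compact subset $K$ of $G$ (with respect to the fixed right-invariant metric, any norm-ball in the Cartan is compact). Since $\calA$ is assumed tempered, there is $k\ge 0$ and, for this compact $K$ and the base point $x_0$, a constant $C>1$ with $\sup_{g\in K}\|\calA(g,x)\| \le C e^{k d(x,x_0)}$ for all $x\in X$. Taking logarithms and then the supremum over $a\in K$ gives
\begin{equation*}
0 \le \eta(x) \le \log C + k\, d(x,x_0)
\end{equation*}
after noting $\eta(x)\ge 0$ because $\Id\in K$ (or, if one prefers, simply bounding $|\eta(x)|$). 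Here I should be a little careful about whether $\eta$ could be negative — $\|\calA(a,x)\|$ is an operator norm between fibers and may a priori be less than $1$; either one includes the identity in $K$ to force $\eta\ge 0$, or one just works with $|\eta|$, which is equally dominated by $\log C + k d(x,x_0)$ using the conorm lower bound in the definition of tempered.

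Next, since $\mu$ has exponentially small mass in the cusps, there is $\eta_0>0$ with $\int e^{\eta d(x,x_0)}\,d\mu<\infty$ for all $0<\eta<\eta_0$; expanding the exponential, every moment $\int d(x,x_0)^q\,d\mu$ is finite. Combined with the pointwise bound above, $\int |\eta(x)|^q\,d\mu \le \int (\log C + k\,d(x,x_0))^q\,d\mu<\infty$ for every $q\ge 1$, so $\eta\in L^q(\mu)$ for all $q$. Finally, to land in $L^{d,1}(\mu)$ — the Lorentz space appearing in the hypothesis of Proposition \ref{thm:higherrankMET} — I would invoke the standard inclusion of Lebesgue spaces into Lorentz spaces on a finite measure space: $L^p(\mu)\subset L^{d,1}(\mu)$ whenever $p>d$ (and $\mu$ is finite), which follows from Hölder's inequality applied to the decreasing rearrangement. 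Taking any $p>d$ (e.g. $p=d+1$) gives $\eta\in L^{d,1}(\mu)$.

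The argument is essentially bookkeeping, so there is no serious obstacle; the only point requiring a moment's attention is the sign issue for $\eta$ (handled by including the identity in the compact set, or by bounding $|\eta|$ via the conorm estimate in the definition of "tempered"), and recalling the precise embedding $L^p\hookrightarrow L^{d,1}$ for finite measures with $p>d$. Everything else is an immediate consequence of temperedness of $\calA$ and the moment bounds furnished by exponentially small mass in the cusps.
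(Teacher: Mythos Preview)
Your proof is correct and is exactly the argument the paper has in mind: the lemma is stated in the paper without proof, as an immediate consequence of the definitions of temperedness and exponentially small mass in the cusps, and your unwinding of those definitions is the intended one.
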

%

\subsection{Applications to the suspension action}
\label{subsection:repackagingpreliminaries}

We summarize the  previous  discussion in the setting in which we will apply the above results in the sequel.   Recall we work with in a    fiber bundle with compact fiber $$M \rightarrow M^{\alpha}=(G \times M)/\Gamma \xrightarrow{\pi} G/\Gamma$$
 over non-compact
base $G/\Gamma$.
From  the discussion in  \cite[Section  2.1]{AWBFRHZW-latticemeasure}, we may equip $G\times M$ with a $C^1$ metric that is \begin{enumerate}
\item  $\Gamma$-invariant;
\item the restriction to $G$-orbits coincides with the fixed right-invariant metric on $G$;
\item there is a Siegel fundamental set $D\subset G$ on which the restrictions to the fibers of the metrics are uniformly comparable.
\end{enumerate}
The metric then descends to a $C^1$ Riemannian metric on $M^\alpha$.  We fix this metric for the remainder.  It follows that the diameter of any fiber of $M^\alpha$ is uniformly bounded.
It then follows that if $\mu$ is a measure on $M^\alpha$ then the image $\nu= \pi_* \mu$ in $G/\Gamma$ has  exponentially small mass in the cusps if and only if $\mu$ does; {\blue moreover, a family $\{\mu_\zeta\}$ of probability measures on $M^\alpha$ has  uniformly exponentially small mass in the cusps if and only if the family of projected  measures  $\{\pi_*\mu_\zeta\}$ on $G/\Gamma$ does.}
 Note that by averaging the metric over the left-action of $K$, we may also assume that the metric is left-$K$-invariant.  This, in particular, implies the right-invariant metric on $G$ in $(2)$ above is left-$K$-invariant.

{\blue For the remainder, the cocycle of interest will be the fiberwise derivative cocycle on the fiberwise tangent bundle, $$\calA(g,x)\colon F\to F,\quad \calA(g,x)= \restrict{D_xg}{F}.$$
Given  $g\in G$ and a $g$-invariant probability measure on $M^\alpha$,
the average leading   Lyapunov exponent for the fiberwise derivative cocycle for translation by $g$  is written either as $\lambda^F_{\top,\mu,g}$
or as  $\lambda_{\top,\mu,g, \calA}$.}



%
%

The next observation we need is a variant of a fairly standard observation about cocycle
over the suspension action.

\begin{lemma}
\label{lemma:tempered}
The fiberwise  derivative cocycle $\restrict{D_x g}{F}$ is tempered.
\end{lemma}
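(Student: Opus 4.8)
The plan is to unwind the definition of "tempered" from Subsection \ref{subsection:Lyap} and reduce the required estimate to Lemma \ref{lemma:fromlmr} together with the compactness of the fiber $M$. Fix a compact set $K \subset G$ and a base point $x_0 \in M^\alpha$; I must produce $k \geq 0$ and $C > 1$ so that $\sup_{g \in K}\|\restrict{D_xg}{F}\| \leq C e^{k\, d(x,x_0)}$, and similarly for the conorm. By the discussion in Subsection \ref{subsection:repackagingpreliminaries}, the fibers of $M^\alpha$ have uniformly bounded diameter and $d(x,x_0)$ is comparable (up to an additive constant) to $d(\pi(x),\pi(x_0))$ in $G/\Gamma$, so it suffices to control $\|\restrict{D_xg}{F}\|$ in terms of $d(\pi(x),\Gamma)$ in $G/\Gamma$.

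The key step is to express the fiberwise derivative via the return cocycle $\beta\colon G \times G/\Gamma \to \Gamma$. Writing a point of $M^\alpha$ as $[(h,y)]$ with $h$ in the chosen fundamental domain $\calF$, translation by $g$ sends $[(h,y)]$ to $[(g h \beta(g,\pi(x))\inv, \alpha(\beta(g,\pi(x)))(y))]$, with both representatives again in $\calF$. Thus the fiberwise derivative is, up to the bounded metric distortion between fibers over points of a Siegel set (property (3) of the metric), the composition of $D\alpha(\beta(g,\pi(x)))$ with bounded correction terms. Since $M$ is compact, there is a constant $L$ with $\|D\alpha(\gamma)\| \leq L^{\ell(\gamma)}$ for all $\gamma \in \Gamma$, using that $\alpha(\gamma)$ is built from at most $\ell(\gamma)$ generators each with bounded $\|D\alpha(\cdot)\|$ and bounded $\|D\alpha(\cdot)\inv\|$. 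Combining this with Lemma \ref{lemma:fromlmr}, which gives $\ell(\beta(g,x)) \leq C d(g,e) + C d(x,\Gamma) + C$, yields
\[
\|\restrict{D_xg}{F}\| \leq C' L^{C d(g,e) + C d(\pi(x),\Gamma) + C} \leq C'' e^{k\, d(x,x_0)}
\]
for $g \in K$, with $k$ depending only on $L$, $C$, and $K$. The conorm bound follows by applying the same argument to $g\inv$, since $m(\restrict{D_xg}{F}) = \|\restrict{D_{gx}g\inv}{F}\|\inv$ and $d(gx,x_0) \leq d(x,x_0) + \mathrm{diam}(K \cdot x_0\text{-orbit stuff})$; more precisely one uses that $d(\pi(gx),\Gamma) \leq d(\pi(x),\Gamma) + d(g,e)$, again absorbing the $K$-dependence into $C$.

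The main obstacle is purely bookkeeping: matching the abstract metric on $M^\alpha$ (which is only $C^1$ and only controlled up to a multiplicative constant on a Siegel fundamental set) with the concrete description of the fiberwise derivative as $D\alpha(\beta(g,\cdot))$ modulo bounded errors. One must check that the fundamental domain $\calF$ used to define $\beta$ is compatible with the Siegel set $D$ appearing in property (3) of the metric — which is exactly why $\calF$ was required to lie in a Siegel fundamental set or a Dirichlet domain — so that the transition maps $(h_1,y)\mapsto(h_2,y)$ incurred when re-expressing $g\cdot[(h,y)]$ in $\calF$ distort the fiber metric by at most a fixed constant. Once that compatibility is in hand, the exponential bound is immediate from Lemma \ref{lemma:fromlmr}, and there is no loss-of-mass subtlety here because the estimate is pointwise in $x$.
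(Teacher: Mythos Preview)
Your proposal is correct and is precisely the argument the paper has in mind: the paper's proof is the single sentence ``This is an immediate consequence of Lemma~\ref{lemma:fromlmr},'' and what you have written is the natural unpacking of that sentence---express the fiberwise derivative through the return cocycle $\beta$, use property~(3) of the metric on a Siegel set to control the fiber-metric distortion, bound $\|D\alpha(\gamma)\|$ by $L^{\ell(\gamma)}$ via compactness of $M$, and then invoke Lemma~\ref{lemma:fromlmr}. Your bookkeeping remarks about the compatibility of $\calF$ with the Siegel set and the conorm estimate are the right details to check and present no difficulty.
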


\begin{proof}
{\blue Write $\pi\colon M^\alpha \to G/\Gamma$.  By the construction of the metric in the fibers of $M^\alpha$ there is a $C>0$ with the following properties: given  $x\in M^\alpha$ and $g\in G$, writing $\bar x = \pi (x) \in G/\Gamma$ we have $$\| \restrict{D_x g}{F}\| \le C^{\beta(g,\bar x)+1}$$
and
$$m(\restrict{D_x g}{F})\ge C^{-\beta(g,\bar x)-1}.$$}
The conclusion is then an immediate consequence of Lemma \ref{lemma:fromlmr}. 
\end{proof}

We now assemble the consequences of the results in this section in the form we will use them below in a pair of lemmas.
The first is just a special case of Corollary \ref{corollary:finite}.

\begin{lemma}
\label{lemma:finite}
Let $s\in A$ and let $\nu$ be an $s$-invariant measure on $G/\Gamma$ with exponentially small mass in the cusps.  Let $\mu$ be an $s$-invariant measure on $M^\alpha$ projecting to $\nu$.  Then the average leading   Lyapunov exponent for the fiberwise derivative cocycle, $\lambda^F_{\top,\mu,s},$ is finite.
\end{lemma}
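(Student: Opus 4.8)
This is essentially a repackaging of the general results already established, so the plan is to assemble the pieces rather than do anything new. First I would invoke Lemma~\ref{lemma:tempered}: with $X = M^\alpha$ and $\calA$ the fiberwise derivative cocycle $\restrict{D_x g}{F}$ acting on $\calE = F$, the cocycle $\calA$ is tempered with respect to the fixed Riemannian metric on $M^\alpha$. (Here one should also note that the $G$-action on $M^\alpha$ is by isometries on $G$-orbits and the base $G/\Gamma$ is a proper metric space, so the standing hypotheses of Section~\ref{subsection:Lyap} are met.)

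Next I would transfer the cusp-decay hypothesis from $\nu$ to $\mu$. Since $\mu$ is a probability measure on $M^\alpha$ with $\pi_*\mu = \nu$, and since — by the choice of metric descending from $G\times M$ discussed in Section~\ref{subsection:repackagingpreliminaries} — the diameters of the fibers of $\pi\colon M^\alpha \to G/\Gamma$ are uniformly bounded, the distance functions $x \mapsto d(x,x_0)$ on $M^\alpha$ and $y \mapsto d(y,\Gamma)$ on $G/\Gamma$ (composed with $\pi$) differ by a bounded amount. Hence $\int e^{\eta d(x,x_0)}\,d\mu(x) < \infty$ for all $\eta$ less than the cusp exponent of $\nu$; that is, $\mu$ has exponentially small mass in the cusps.

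Finally, with $\mu$ an $s$-invariant probability measure on $M^\alpha$ having exponentially small mass in the cusps and $\calA$ tempered, Corollary~\ref{corollary:finite} applies verbatim and gives that $\lambda_{\top,s,\mu,\calA} = \lambda^F_{\top,\mu,s}$ is finite. There is essentially no obstacle here: the only point requiring any care is the metric comparison in the second step, and that has already been arranged by the fixed choice of a $C^1$ metric on $M^\alpha$ with uniformly bounded fiber diameters (and uniformly comparable fiber metrics over a Siegel set), so $L^1$-integrability of $d(\cdot,x_0)$ against $\mu$ follows from that of $d(\cdot,\Gamma)$ against $\nu$, which is precisely the content of the exponentially-small-mass hypothesis.
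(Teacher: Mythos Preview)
Your proposal is correct and follows exactly the approach the paper intends: the paper simply declares this lemma a special case of Corollary~\ref{corollary:finite}, and you have correctly identified the two ingredients needed to invoke that corollary---temperedness of the fiberwise derivative cocycle (Lemma~\ref{lemma:tempered}) and the transfer of the exponentially-small-mass condition from $\nu$ to $\mu$ via the uniformly bounded fiber diameters (explicitly noted at the start of Section~\ref{subsection:repackagingpreliminaries}). There is nothing to add.
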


The second lemma summarizes the above abstract results in the setting of $G$ acting on $M^\alpha$.  

\begin{lemma}
\label{lemma:averagingsuspension}
Let $s\in A$ and let $\nu$ be an $s$-invariant measure on $G/\Gamma$ with exponentially small mass in the cusps.  
 Let $\mu$ be an $s$-invariant measure on $M^\alpha$ projecting to $\nu$.
\begin{enumerate}
\item \label{pla}For any amenable subgroup $H\subset C_G(s)$, if $\nu$ is $H$-invariant then
\begin{enumlemma}
\item  \label{playa} for any   \Folner sequence  of precompact sets $F_n$ in $H$,
the family $\{F_n \ast \mu\}$  has uniformly exponentially small mass in the cusps; and \item \label{playb} for any subsequential limit
$\mu'$ of $\{F_n \ast \mu\}$ we have
  $$\lambda_{\top, s, \mu}^F \le \lambda_{\top, s, \mu'}^F.$$ 
\end{enumlemma}
\item  
For any one-parameter unipotent subgroup $U$ centralized by $s$
\begin{enumerate}
\item the family
 $\{U^T \ast \mu\}$ has uniformly exponentially small mass in the cusps;  and \item for any accumulation point   $\mu'$ of $\{U^T \ast \mu\}$  as $T\to \infty$ we have
$$\lambda_{\top, s, \mu} ^F\le \lambda_{\top,s, \mu'}^F.$$
\end{enumerate}
\end{enumerate}
\end{lemma}

\begin{proof}
Part \ref{playa} of the first conclusion is immediate since $H$-invariance of $\nu$ implies $\nu=\pi_*(F_n\ast \mu)$ for all $n$;  
part \ref{playb} then follows from 
Lemma \ref{lemma:averaging}.
The second conclusion follows from  Proposition \ref{prop:bananas} 
and Lemma \ref{lemma:averaging}.  
\end{proof}

We remark that we will also use Lemma \ref{lemma:firstexponents} in the proof of the main theorem, but we do not reformulate a special case of it here since the reformulation adds little clarity.


\section{Subexponential growth of derivatives for unipotent elements}
\label{unipotents}

In this section we show that the restriction of the action $\alpha$ to certain unipotent elements in each copy  $\Lambda_{i,j} \cong \Sl(2, \Z)$ have  uniform  subexponential growth of   derivatives with respect to a right-invariant distance on $\Sl(2, \R)$.  Note that each $\Sl(2,\R)$ is geodesically embedded whence the   $\Sl(2,\R)$ distance is the same as  the $\Sl(m, \R)$ distance.  By  \cite{MR1244421, MR1828742},  the $\Sl(m,\R)$ distance is  quasi-isometric to the word-length in $\Sl(m,\Z)$.   Recall   that $d(\cdot,\cdot)$ denotes a right-invariant distance  on $\Sl(m, \R)$ and that $\Id$ is the identity in $\Sl(m,\R)$.

For $ 1\leq i < j \neq n$, let $\Lambda_{i,j} \cong \Sl(2, \Z)$ be the copy of $\Sl(2, \Z)$ in $\Sl(m,\Z)$ corresponding to the elements in $\Sl(m,\Z)$ which acts only on the lattice $\Z^2 < \Z^m$ generated by $\{e_i,e_j\}$.
Note that as all $\Lambda_{i,j}$ are conjugate under the Weyl group, it suffices to work with one of them.

Define the unipotent element $u := \begin{bmatrix} 1 & 1 \\ 0 & 1 \end{bmatrix}$ viewed as an element of $\Lambda_{i,j}$.  Note that any upper or  lower triangular unipotent element of $\Lambda_{i,j}$ is  conjugate to a power of $u$  under the Weyl group.

\begin{proposition}[Subexponential growth of derivatives for unipotent elements]
\label{unipotentisgood} For any $\Lambda_{i,j}$ and any $\e > 0$, there exists $N_{\e}>0$ such that for any $n \geq N_\e$: $$\|D(\alpha(u^n))\| \leq e^{\e d(u^n,\Id)}$$
\end{proposition}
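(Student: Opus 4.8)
The plan is to prove the estimate in two stages. Since $d(u^n,\Id)=2\log n+O(1)$ by \eqref{normdistance}, the assertion is equivalent to $\log\|D\alpha(u^n)\|=o(\log n)$, and since the copies $\Lambda_{i,j}$ are all conjugate under the Weyl group it suffices to treat one of them. The first stage is a \emph{statistical} bound --- that almost every element of a $d$-ball in a copy of $\Sl(2,\Z)$ satisfies a version of \eqref{eq:USEGOD} --- and this is where essentially all the work lies; the second stage is a \emph{combinatorial upgrade} which exploits that $u^n=E_{i,j}^n$ lies inside an abelian unipotent subgroup $V\cong\Z^2$ normalized by a copy of $\Sl(2,\Z)$, to conclude that \emph{every} element of $V$ of bounded size --- in particular $u^n$ --- is good.

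\emph{Stage 1.} Pick a third index $k\notin\{i,j\}$ (possible since $m\ge 4$), set $V:=\langle E_{i,j},E_{k,j}\rangle\cong\Z^2$, and let $\Sigma\cong\Sl(2,\Z)$ be the copy acting on rows $\{i,k\}$, so $\Sigma\ltimes V\subset\Sl(m,\Z)$ and $u^n$ corresponds to the vector $(n,0)\in V$. I would first prove (this is Proposition~\ref{mainunipo}): for every $\delta>0$ the proportion of $\gamma$ in $\Sigma$ with $d(\gamma,\Id)\le R$ that fail $\|D\alpha(\gamma)\|\le e^{\delta d(\gamma,\Id)}$ tends to $0$ as $R\to\infty$. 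The argument mirrors \cite{BFH}: writing $\gamma=k_1ak_2$ in a $KAK$ decomposition with $d(a,\Id)\asymp d(\gamma,\Id)$, the growth of $\alpha(\gamma)$ is governed by the fiberwise derivative cocycle over $M^\alpha$ along an $a$-orbit segment of length $\asymp d(\gamma,\Id)$ in $\Sl(2,\R)/\Sl(2,\Z)$ determined by $\gamma$; if a definite proportion of $\gamma$ in the ball were bad, the associated empirical measures on $M^\alpha$ would accumulate (along a subsequence) on an $a$-invariant measure with nonzero fiberwise Lyapunov exponent, and --- using that generic $a$-orbits in $\Sl(2,\R)/\Sl(2,\Z)$ equidistribute to Haar together with averaging over unipotents (Theorem~\ref{thm:ratner}) and Proposition~\ref{proposition:averaging} --- one upgrades this to a genuinely $G$-invariant measure on $M^\alpha$ with nonzero fiberwise exponent, contradicting Zimmer cocycle superrigidity (here the dimension hypothesis of Theorem~\ref{main2} is used). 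Non-compactness of $\Sl(2,\R)/\Sl(2,\Z)$ enters precisely here --- to keep the empirical measures from losing mass and the limiting exponent from becoming infinite or dropping below $\delta$ --- and is handled by the Kleinbock--Margulis quantitative non-divergence of unipotent averages packaged in Proposition~\ref{prop:bananas} and Lemma~\ref{lemma:translates} (uniform exponentially small mass in the cusp), together with the tempered-cocycle Lyapunov theory of Section~\ref{section:preliminaries} (Corollary~\ref{corollary:finite}, Lemmas~\ref{lemma:averaging} and \ref{lemma:firstexponents}), which guarantees finiteness, upper-semicontinuity, and monotonicity under averaging of the exponents along the relevant families.

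\emph{Stage 2.} From Stage 1 and the submultiplicativity $\|D\alpha(\gamma v\gamma\inv)\|\le\|D\alpha(\gamma)\|\,\|D\alpha(v)\|\,\|D\alpha(\gamma\inv)\|$: if $\gamma\in\Sigma$ satisfies the Stage-1 bound and $v$ ranges over a \emph{fixed} finite ball $B_s(V)$ (on which $\|D\alpha(\cdot)\|$ is bounded by some $K_s$), then the translate $\gamma v\gamma\inv\in V$ again satisfies an estimate of the form $\|D\alpha(\gamma v\gamma\inv)\|\le e^{C\delta\, d(\gamma v\gamma\inv,\Id)}$ once $d(\gamma,\Id)$ is large, because $d(\gamma v\gamma\inv,\Id)\asymp\log\|\gamma\cdot v\|\asymp d(\gamma,\Id)+\log\|v\|$ whenever $v$ is not nearly aligned with the contracting direction of $\gamma$ (the bad directions forming a negligible set when $\|\gamma\|$ is large), using \eqref{eq:easy} and the Lubotzky--Mozes--Raghunathan comparison between word-length and $\log$ of the matrix norm. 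Letting $\gamma$ range over the good elements of larger and larger balls in $\Sigma$, and using that for $s$ a sufficiently large absolute constant all but an arbitrarily small fixed proportion of the vectors in any ball $B_\rho(V)$ lie in the $\Sigma$-orbit of some $v\in B_s(V)$ via an element of controlled size, one obtains that for every $\rho$ a proportion $1-o(1)$ of $B_\rho(V)$ is good, uniformly in $\rho$. Because $V$ is \emph{abelian}, the combinatorial lemma behind Proposition~\ref{babycombinatorics} --- if $S$ misses only a small absolute density of a ball $B_\rho$ in an abelian group, then $B_\rho$ lies in a bounded number of sumsets (the bound independent of $\rho$) of $S$ with a fixed finite set --- combined with submultiplicativity of $\|D\alpha\|$ and coarse subadditivity of $d(\cdot,\Id)$ on $V$, upgrades this to: \emph{every} $v\in V$ with $d(v,\Id)$ large enough (in terms of $\delta$) satisfies $\|D\alpha(v)\|\le e^{C\delta\, d(v,\Id)}$ for a universal constant $C$. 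Applying this with $v=u^n$, then choosing $\delta$ small relative to the target $\epsilon$ and $N_\epsilon$ large enough, yields the Proposition.

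The main obstacle is Stage~1 --- the density statement in the non-compact quotient $\Sl(2,\R)/\Sl(2,\Z)$, where the new difficulty over \cite{BFH} is controlling \emph{simultaneously} the escape of mass (via quantitative non-divergence) and the escape of Lyapunov exponents (via tempered cocycles and the semicontinuity and averaging results of \S\ref{subsection:Lyap}). Stage~2 is comparatively soft; its only genuine bookkeeping is the uniformity in $\rho$ of the statement ``most of $B_\rho(V)$ is good'' and the repeated passage between $d(\cdot,\Id)$ and $\log$ of the matrix norm furnished by Lubotzky--Mozes--Raghunathan.
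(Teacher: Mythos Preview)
Your two-stage outline is correct and matches the paper's proof almost exactly: Stage~1 is precisely Proposition~\ref{mainunipo} (proved via Lemma~\ref{uni1} using equidistribution of generic $a^t$-orbits, the tempered-cocycle machinery of Section~\ref{section:preliminaries}, and the averaging in Proposition~\ref{proposition:averaging} to reach a contradiction with cocycle superrigidity), and Stage~2 is the conjugation-plus-sumset argument of Section~\ref{sec:mutualmastication} (Claim~\ref{conjuga}, Lemma~\ref{proportion}, Claim~\ref{babycombinatorics}).

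One small point: in Stage~2 you assert that the conjugates $\gamma v\gamma^{-1}$ cover a proportion $1-o(1)$ of $B_\rho(V)$. The paper only establishes a \emph{fixed positive} density $\delta'>0$ (Lemma~\ref{proportion}), obtained by the very concrete device of sending $A=\begin{pmatrix}a&c\\b&d\end{pmatrix}\in G_{\e,k}\cap G_{\e,k}^{-1}$ to its first column $(a,b)$ (since $Au_{1,0}A^{-1}=u_{a,b}$) and bounding the fibers of this map. Your route via orbits of a small ball $B_s(V)$ could in principle push the density higher, but the argument you sketch (``bad directions forming a negligible set'') would need more care, and in any case it is unnecessary: Claim~\ref{babycombinatorics} requires only a fixed positive density, with the number $k_\delta$ of summands depending on that density. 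So the cleaner formulation of Stage~2 is: positive density in $B_\rho(V)$ (via the first-column map), then the abelian sumset lemma, exactly as the paper does.
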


To establish Proposition \ref{unipotentisgood}, we first show that generic elements in $\Sl(2, \Z)$ have uniform subexponential growth of derivatives.  This first part requires reusing most of the key arguments from \cite{BFH} in a slightly modified form.  We encourage the reader to read that paper first.

\subsection{Slow growth for ``most" elements in $\Sl(2,\Z)$}
\label{subsection:slowgrowthgeneric}
For $\epsilon>0$, $k >0$, and $x \in \Sl(2, \R)$, we make the following definitions:

\begin{enumerate}
\item For $S\subset \Sl(2,\R)$  let $|S|$ denote the Haar-volume of $S$.
\item Let $K = \So(2) \subset \Sl(2,\R)$.  For $S\subset  K$   let $|S|$ denote the Haar-volume of $S$.
\item Let $B_k(x)$ denote the ball of radius $k$ centered at $x$ in $\Sl(2, \R)$.
\item Let $T_k := B_k(\Id) \cap \Sl(2, \Z)$.  Given $S\subset \Sl(2,\Z)$ write $|S|$ for the cardinality of $S$.
\item Define the  set of $\e$-bad elements to be $$M_{\e, k} := \{ \gamma \in T_k \text{ such that }   \|D(\alpha(\gamma))\| \geq e^{\e k }\}.$$
\item Define the set of $\e$-good elements to be $$G_{\e,k} := T_k \setminus M_{\e, k} .$$

\end{enumerate}

To establish Proposition \ref{unipotentisgood}, we first  show that the set $G_{\e,k}$ contains a positive proportion of $T_k$ when $k$ is large enough.
\begin{proposition}\label{mainunipo} For any $\delta > 0$,  the set $G_{\e,k}$ has at least $(1- \delta)|T_k|$ elements for every sufficiently large $k$. 
\end{proposition}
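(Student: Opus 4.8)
The strategy is the standard "argue by contradiction against equidistribution of generic geodesic orbits" scheme. Suppose the conclusion fails: there is a $\delta>0$ and a sequence $k_n\to\infty$ with $|M_{\e,k_n}|\geq \delta|T_{k_n}|$. For each bad element $\gamma\in M_{\e,k_n}$, since $d(\gamma,\Id)\leq k_n$ and $\|D\alpha(\gamma)\|\geq e^{\e k_n}$, we can realize $\gamma$ as (roughly) the return cocycle value along an $a^t$-orbit segment: writing $\gamma$ in $KAK$ form, the $A$-part corresponds to a geodesic segment in $X_{i,j}=\Sl(2,\R)/\Sl(2,\Z)$ of length comparable to $d(\gamma,\Id)$, and the accumulated fiberwise derivative along the induced orbit in $M^\alpha$ grows like $\|D\alpha(\gamma)\|$. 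Thus a $\delta$-proportion of $T_{k_n}$ gives $a^t$-orbit segments in $X_{i,j}$ (based at points related to $K$-translates) along which the fiberwise derivative cocycle of the $H_{i,j}$-action on $M^\alpha$ grows at rate $\geq \e$.

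\textbf{Key steps.} First I would set up the dictionary between elements of $T_k$ and geodesic orbit segments: using $KAK$ (Cartan) decomposition and the fact that $\Lambda_{i,j}$ acts on $X_{i,j}$ with a fundamental domain of bounded-away-from-cusp diameter except for the cusp neighborhood, relate $d(\gamma,\Id)$ to the length of the associated $a^t$-segment, and relate $\|D\alpha(\gamma)\|$ to the fiberwise derivative norm along that segment (here Lemma~\ref{lemma:fromlmr} controls the discrepancy coming from how deep into the cusp the orbit travels). Second, I would pass to empirical measures $\mu_n$ on $M^\alpha$ obtained by averaging along these segments; the counting estimate $|M_{\e,k_n}|\geq\delta|T_{k_n}|$ together with equidistribution of the $a^t$-flow on $X_{i,j}$ (so that "most" geodesic rays equidistribute to Haar, by the Birkhoff ergodic theorem / quantitative non-divergence à la Kleinbock--Margulis to control cusp excursions) forces the projections $\pi_*\mu_n$ to converge to the Haar measure on $X_{i,j}$, and in particular the $\mu_n$ to have uniformly exponentially small mass in the cusps. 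Third, applying Lemma~\ref{lemma:firstexponents} (parts (b) and (c)), any subsequential limit $\mu_\infty$ is $A'$-invariant (where $A'$ is the relevant Cartan direction inside $H_{i,j}$), projects to Haar on $X_{i,j}$, and has a positive fiberwise Lyapunov exponent $\geq\e$. Fourth, I would average $\mu_\infty$ — using Lemma~\ref{lemma:averagingsuspension} and the root-by-root unipotent averaging of Proposition~\ref{proposition:averaging} together with Ratner's theorem~\ref{thm:ratner} — to upgrade to a $G$-invariant measure $\mu'$ on $M^\alpha$ projecting to Haar on $G/\Gamma$, while the Lyapunov exponent only increases, hence stays positive. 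This contradicts Zimmer's cocycle superrigidity theorem (as invoked in \cite{BFH}), which forces the fiberwise cocycle over any $G$-invariant measure to have zero Lyapunov exponents when $\dim M$ is below the critical dimension.

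\textbf{The main obstacle.} The delicate point is the second step: translating a \emph{counting} statement about $T_k$ (a density-$\delta$ subset of the lattice ball) into a statement about \emph{equidistribution of orbit segments}, and simultaneously controlling cusp excursions quantitatively. A positive-density subset of lattice points of norm $\leq e^{k/2}$ corresponds, via $KAK$, to a positive-measure set of directions $K\times K$ crossed with a geodesic time $\approx k$; for a full-measure set of $(k_1,k_2)\in K\times K$ the geodesic $k_1 a^t k_2$ equidistributes, but the bad set must be shown to either (i) contribute negligibly (if it escapes to the cusp too fast — ruled out because cusp orbits give \emph{unipotent} return-cocycle values and we would then be proving something about unipotents, which is the content we are \emph{trying} to bootstrap from, so the logic must be arranged carefully) or (ii) force a genuine positive-exponent invariant measure. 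Making the cusp bookkeeping rigorous — ensuring the averaged measures $\mu_n$ do not lose mass and that the fiberwise-derivative growth is genuinely captured by $\mu_\infty$ rather than by a bounded number of deep cusp excursions — is where the quantitative non-divergence estimate (Lemma~\ref{lemma:translates}, Proposition~\ref{prop:bananas}) and the temperedness of the derivative cocycle (Lemma~\ref{lemma:tempered}) do the essential work. This is precisely the place where the argument departs from the cocompact case of \cite{BFH}.
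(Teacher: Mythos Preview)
Your overall architecture---contradict, produce an $a^t$-invariant measure on $M^\alpha$ with positive fiberwise exponent projecting to Haar on $X_{i,j}$, average up to $G$-invariance, contradict Zimmer---matches the paper. But your step two runs in the wrong direction and has a real gap. You want to argue that a positive-density set of bad \emph{lattice points} yields a bad $a^t$-orbit that equidistributes; but discrete density in $T_k$ does not directly give positive Haar measure of bad $K$-directions, and Kleinbock--Margulis is of no help here (it controls \emph{unipotent} orbits, not geodesic orbits, which can genuinely diverge). The paper goes the other way: it first applies the pointwise ergodic theorem to the function $\psi(x)=e^{\eta d(x,x_0)}\in L^1(\mathrm{Haar})$ to fix a \emph{single generic basepoint} $x$ and a set $K_\delta\subset\SO(2)$ of measure $\ge 1-\delta/2$ on which both equidistribution and uniform exponential cusp control hold for every $k_1\in K_\delta$. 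If any element of $B_k(x)$ with $KAK$-rotation in $K_\delta$ were bad, Lemma~\ref{lemma:firstexponents} would immediately give the limiting measure with positive exponent. This proves the continuous statement $|G'_{\e,k}(x)|\ge(1-\delta)|B_k|$; the discrete statement about $T_k$ is then deduced from it at the end via Lemma~\ref{basic}. Your worry about circularity with unipotents is misplaced: this proposition is proved before and independently of Proposition~\ref{unipotentisgood}.

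One further omission: to invoke Proposition~\ref{proposition:averaging} you need the projection to be $H=H_1\times H_2$-invariant, not just $a^t$-invariant. The paper manufactures this by taking the product of $\mu_0$ with an $A_2$-ergodic measure on $M^\alpha$ projecting to Haar on $\Sl(m-2,\R)/\Sl(m-2,\Z)$, obtaining an $A'$-ergodic measure whose projection is Haar on $H_1/\Lambda_1\times H_2/\Lambda_2$; only then does the two-step root-group averaging of Proposition~\ref{proposition:averaging} apply.
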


We have the following  well-known fact.  {\blue See for instance \cite[Section 2]{MR1230290}.}

\begin{lemma}\label{basic} There exist  positive constants $c, C$ such that for any $k \geq0$: $$ c|B_k| \leq |T_k| \leq C|B_k|.$$
\end{lemma}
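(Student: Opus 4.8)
The plan is to establish the two inequalities separately: the upper bound by a packing argument and the lower bound by partitioning a ball into fundamental-domain translates while controlling the cusp contribution.

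\textbf{Upper bound.} First I would use discreteness of $\Sl(2,\Z)$ to fix $\rho>0$ with $B_{2\rho}(\Id)\cap\Sl(2,\Z)=\{\Id\}$. Since the metric $d$ is right-$G$-invariant and $\Sl(2,\R)$ is unimodular, the Riemannian volume is a bi-invariant Haar measure; moreover right-invariance gives $B_\rho(\Id)\gamma=B_\rho(\gamma)$ for each $\gamma$, with $|B_\rho(\gamma)|=|B_\rho|$. The balls $\{B_\rho(\gamma):\gamma\in T_k\}$ are pairwise disjoint: if $d(x,\gamma_1)<\rho$ and $d(x,\gamma_2)<\rho$, then $d(\gamma_1\gamma_2\inv,\Id)=d(\gamma_1,\gamma_2)<2\rho$, forcing $\gamma_1=\gamma_2$. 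Each such ball lies in $B_{k+\rho}(\Id)$, so $|T_k|\,|B_\rho|\le|B_{k+\rho}|$, and the explicit formula $\mathrm{vol}(B_r)=4\pi(\cosh r-1)$ yields $|B_{k+\rho}|\le C'|B_k|$ for $k\ge 1$ (with small $k$ handled directly), hence $|T_k|\le C|B_k|$.

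\textbf{Lower bound.} Let $\mathcal F$ be a Dirichlet domain for the identity (or a Siegel fundamental set) for the right $\Gamma$-action, so that $G=\bigsqcup_{\gamma\in\Gamma}\mathcal F\gamma$ and $|\mathcal F|=V:=\mathrm{vol}(G/\Gamma)<\infty$. For $R>0$ put $\mathcal F^{\le R}=\mathcal F\cap B_R(\Id)$ and $\mathcal F^{>R}=\mathcal F\setminus B_R(\Id)$; since $|\mathcal F|<\infty$ we have $|\mathcal F^{>R}|\to 0$ as $R\to\infty$. Then
\[
|B_k(\Id)|=\sum_{\gamma\in\Gamma}|B_k(\Id)\cap\mathcal F^{\le R}\gamma|+\big|B_k(\Id)\cap\mathcal F^{>R}\Gamma\big|.
\]
If $B_k(\Id)\cap\mathcal F^{\le R}\gamma\ne\emptyset$, any $x$ in this set satisfies $d(x,\gamma)=d(x\gamma\inv,\Id)<R$ and $d(x,\Id)<k$, so $\gamma\in T_{k+R}$; as each term is $\le|\mathcal F^{\le R}|\le V$, the first sum is $\le V\,|T_{k+R}|$. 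The second term is the volume of the part of $B_k(\Id)$ projecting into $\mathcal F^{>R}\Gamma/\Gamma\subset G/\Gamma$, a set of Haar volume $|\mathcal F^{>R}|$. Here I would invoke equidistribution of the expanding balls $B_k(\Id)$ toward normalized Haar measure on $G/\Gamma$ (classical; via mixing of the geodesic flow and decay of matrix coefficients on $\Sl(2,\R)$): fixing $R$ with $|\mathcal F^{>R}|<V/2$, this forces $\big|B_k(\Id)\cap\mathcal F^{>R}\Gamma\big|<\tfrac12|B_k|$ for all large $k$, whence $|T_{k+R}|>(2V)\inv|B_k|$. Since $|B_{k+R}|\le 2e^{R}|B_k|$ for $k$ large (again from $\mathrm{vol}(B_r)=4\pi(\cosh r-1)$, with $R$ now a fixed constant), we get $|T_j|\ge(4Ve^{R})\inv|B_j|$ for $j$ large; the finitely many small $j$ are trivial since $\Id\in T_j$ and $|B_j|$ is bounded.

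\textbf{Main obstacle.} Everything except the very last input is elementary packing/partition bookkeeping together with the explicit hyperbolic volume formula; the one genuinely nontrivial ingredient is that the expanding balls $B_k(\Id)$ do not concentrate in the cusp of $G/\Gamma$, i.e.\ equidistribution of large balls. I would either cite this (it follows from Howe--Moore decay of matrix coefficients plus the thickening argument of Eskin--McMullen) or, even more directly, bypass the partition argument entirely and quote the classical hyperbolic lattice-point count $|T_k|\sim c\,e^{k}$ (Delsarte, Huber, Patterson, Selberg; see also Duke--Rudnick--Sarnak for the general semisimple statement), which immediately gives the lemma since $|B_k|\sim 2\pi e^{k}$ as well. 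The partition argument is the route I would actually write, as it only needs the weaker qualitative equidistribution and keeps the proof self-contained modulo a standard citation.
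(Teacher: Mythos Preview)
Your proof is correct. For the upper bound, your packing argument and the paper's are essentially identical: both use disjoint small balls around lattice points contained in a slightly enlarged ball, together with the bound $|B_{k+\rho}|\le \hat C|B_k|$ from the explicit hyperbolic volume formula.

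For the lower bound the routes diverge. The paper asserts in one line that $|B_k|$ is at most the covolume times $|T_k|$. Read literally, this would need $B_k$ to be covered by fundamental-domain translates $\mathcal F\gamma$ with $\gamma\in T_k$; but since any fundamental domain for $\Sl(2,\Z)$ has a cusp, a translate $\mathcal F\gamma$ can meet $B_k$ with $\gamma$ arbitrarily far from $T_k$, and the naive Dirichlet-domain bound only yields $|B_k|\le V\,|T_{2k}|$, which is useless here because $|B_k|/|B_{k/2}|\to\infty$. The paper is really gesturing at the classical hyperbolic lattice-point count, treating the lemma as folklore. Your argument, by contrast, faces the cusp issue head-on: you split $\mathcal F=\mathcal F^{\le R}\cup\mathcal F^{>R}$, control the cusp contribution $|B_k\cap\mathcal F^{>R}\Gamma|$ via equidistribution of large balls in $G/\Gamma$ (Howe--Moore, Eskin--McMullen), and then absorb the harmless shift $k\mapsto k+R$ using the volume formula. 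This costs a nontrivial external input (equidistribution), but it yields a genuinely complete proof rather than an appeal to a well-known asymptotic; your alternative of simply citing the lattice-point asymptotic $|T_k|\sim c\,e^{k}$ is also perfectly adequate and is closer in spirit to what the paper intends.
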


For an element $x \in \Sl(2,\R)$, let $\bar{x}$ denote the projection in $\Sl(2,\R)/\Sl(2,\Z)$. Define
\def\fib{\text{Fiber}}
$$\|D_{\bar{x}} g\|_{\text{Fiber}} = \sup\{ \|\restrict{D_y g}{F}\| :y\in M^\alpha,  \pi(y) = \bar{x}\} .$$
Let $$G'_{\e,k}(x) := \{ g \in B_k( x) \text{ such that } \|D_{\bar{x}}g\|_{\text{Fiber}} \leq e^{\e d(g,\Id)}\}.$$


\begin{lemma}\label{uni1} For almost every  $x \in \Sl(2,\R)$ and  any $\delta > 0$ we have   $$|G'_{\e, k}(x)| > (1- \delta) |B_k|$$ for all    $k$ sufficiently  large.
\end{lemma}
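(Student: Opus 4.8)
The plan is to show this using equidistribution of generic $a^t$-orbits in $X_{1,2} = H_{1,2}/\Lambda_{1,2} \cong \Sl(2,\R)/\Sl(2,\Z)$ together with the abstract Lyapunov machinery developed in Section \ref{section:preliminaries}. First I would fix a point $x \in \Sl(2,\R)$ whose forward $a^t$-orbit under $a^t = \diag(e^t,e^{-t})$ equidistributes with respect to the Haar measure on $X_{1,2}$; by Ratner (or classical ergodicity of the geodesic flow) this holds for Lebesgue-almost every $x$. The key geometric fact to exploit is that the ball $B_k(x)$ in $\Sl(2,\R)$, after applying the $KAK$ (or $NAK$) decomposition and the comparison \eqref{normdistance} between $d(\cdot,\Id)$ and $\log$ of matrix norm, is, up to a set of proportionally negligible measure, covered by pieces of the form $k_1 a^t k_2$ with $t$ ranging over an interval of length $\approx k/2$ and $k_1, k_2$ ranging over $K = \So(2)$. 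Since $K$ is compact and the fiberwise derivative cocycle is tempered (Lemma \ref{lemma:tempered}), translating by elements of $K$ changes $\log\|D_{\bar x} g\|_{\Fib}$ by a bounded additive amount uniformly; so controlling $\|D_{\bar x}g\|_{\Fib}$ on $B_k(x)$ reduces to controlling $\|D_{\bar x} a^t\|_{\Fib}$ for $t$ in an interval of length $\sim k/2$.

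Next I would argue by contradiction: if the conclusion fails, there is $\delta>0$ and a sequence $k_n \to \infty$ along which $|G'_{\e,k_n}(x)| \le (1-\delta)|B_{k_n}|$. By the covering/reduction above, this forces a proportionally non-negligible set of times $t \in [0, T_n]$ (with $T_n \sim k_n/2$) at which $\log\|D_{\bar x} a^t\|_{\Fib} \ge \tfrac\e2 t$ (say). Pushing the Dirac masses $\delta_{a^t x}$ along this orbit and averaging, one produces empirical measures $\mu_n$ on $X_{1,2}$ (or rather on $M^\alpha$, lifting $\bar x$ to a well-chosen point and using a norm-maximizing section as in the proof of Lemma \ref{lemma:firstexponents}) whose subsequential limit $\mu$ is $A$-invariant, projects to Haar measure on $X_{1,2}$ by the equidistribution assumption, and — crucially — carries a positive fiberwise Lyapunov exponent $\lambda^F_{\top,\mu,a} \ge c(\e,\delta) > 0$. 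Here I must verify the two hypotheses of Lemma \ref{lemma:firstexponents}: uniform exponentially small mass in the cusps of $\{\eta_n\}$, which follows from Proposition \ref{prop:bananas}/Lemma \ref{lemma:translates} applied to the unipotent flow generating the generic equidistribution, or more directly from a Kleinbock--Margulis non-divergence estimate for the $a^t$-orbit segments; and the derivative lower bound $\int \log\|\calA(a^{t_n}, \cdot)\| \ge \e' t_n$, which is exactly what the contradiction hypothesis supplies.

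Once $\mu$ is an $A$-invariant measure on $M^\alpha$ projecting to Haar on $X_{1,2} = H_{1,2}/\Lambda_{1,2}$ with nonzero fiberwise exponent, I would invoke the suspension-space arguments from \cite{BFH}: extend $\mu$ to a $G$-invariant (or at least $AN'$-invariant) measure on $M^\alpha$ using \cite[Proposition 5.1]{AWBFRHZW-latticemeasure} and Ratner's theorems, retaining a nonzero fiberwise exponent, and derive a contradiction with Zimmer's cocycle superrigidity theorem. Alternatively, and probably more in the spirit of this section, one can run the $\Sl(2,\R)$-local version of the \cite{BFH} argument directly on $X_{1,2}$: positivity of the fiberwise exponent for a Haar-projecting $A$-invariant measure on the suspension over $\Sl(2,\R)/\Sl(2,\Z)$, combined with invariance under the relevant unipotents and the superrigidity/algebraicity of the cocycle, is impossible when $\dim M \le m-2$.

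\textbf{Main obstacle.} I expect the genuinely delicate step to be the passage from the ball $B_k(x)$ in $\Sl(2,\R)$ to the one-parameter $a^t$-orbit: one must show that the ``bad'' set $B_k(x) \setminus G'_{\e,k}(x)$, if it has proportion $\ge \delta$, genuinely produces a positive-proportion set of bad times along the $A$-direction rather than hiding in the $K$-directions, and one must keep careful track of the fact that $d(g,\Id)$ for $g = k_1 a^t k_2 \in B_k(x)$ is comparable to $d(a^t,\Id) \approx 2t$ only up to bounded error, so that the exponential rates match after dividing by $k$. Handling the loss of mass / escape into the cusp for the empirical measures $\mu_n$ — i.e. verifying uniformly exponentially small mass in the cusps along a \emph{generic} orbit rather than a Haar-random one — is the other point requiring care, and is precisely where the quantitative Kleinbock--Margulis non-divergence estimate enters.
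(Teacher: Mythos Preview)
Your overall strategy matches the paper's proof closely: $KAK$ decomposition of $B_k(x)$, contradiction producing bad $a^t$-orbit segments, passage to a limiting $a^t$-invariant measure on $M^\alpha$ with positive fiberwise exponent projecting to Haar on $X_{1,2}$, then averaging up to a $G$-invariant measure and contradicting cocycle superrigidity. You also correctly isolate the two genuine subtleties.

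There is, however, one concrete misstep. Your proposed mechanism for establishing uniformly exponentially small mass in the cusps for the empirical measures along $a^t$-orbit segments does not work: Proposition~\ref{prop:bananas} and Lemma~\ref{lemma:translates} are about \emph{unipotent} averages, and the Kleinbock--Margulis non-divergence estimate \eqref{KM} is likewise a statement about unipotent orbits. Diagonal orbits can genuinely diverge (there are $a^t$-orbits that go straight into the cusp), so there is no a~priori quantitative non-divergence for them. The paper resolves this by a different, simpler device: since $\psi(y)=e^{\eta d(y,x_0)}$ is in $L^1(\mathrm{Haar})$ for small $\eta$, the pointwise ergodic theorem for the ergodic flow $a^t$ gives, for a.e.\ $x$ and a.e.\ $k_2\in\SO(2)$, that $\tfrac1T\int_0^T \psi(a^t k_2\bar x)\,dt$ converges to $\int\psi\,d\mathrm{Haar}$. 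One then restricts to a set $K_\delta\subset\SO(2)$ of measure $\ge (1-\delta/2)|\SO(2)|$ on which this convergence is uniform (Egorov), and defines $G''_k(x)=\{k_1 a^t k_2: k_1\in\SO(2),\ k_2\in K_\delta,\ (\delta/2)k<t<k\}$. This simultaneously handles both obstacles you flag: the right $K$-factor is absorbed into the choice of generic base point $k_2 x$, and the cusp control comes for free from Birkhoff rather than from any non-divergence input.

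A second, smaller point: the passage from the $a^t$-invariant measure $\mu_0$ projecting to Haar on $X_{1,2}$ to a $G$-invariant measure on $M^\alpha$ is more involved than your sketch suggests. One cannot immediately invoke \cite[Proposition~5.1]{AWBFRHZW-latticemeasure}, since $\mu_0$ is only invariant under the one-parameter group $\{a^t\}$ and projects to a measure supported on the low-dimensional submanifold $X_{1,2}\subset G/\Gamma$. The paper first takes a product with an $A_2$-ergodic measure on the complementary $\Sl(m-2,\R)/\Sl(m-2,\Z)$ factor to obtain an $A'$-invariant measure $\mu_1$, and then runs the two-step unipotent averaging of Proposition~\ref{proposition:averaging} (along carefully chosen root groups $U^{\beta'}$, $U^{\hat\beta}$ not proportional to the fiberwise Lyapunov functional) together with Lemma~\ref{lemma:averagingsuspension} to push the projection up to Haar on all of $G/\Gamma$ while retaining a nonzero fiberwise exponent. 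Only then does \cite[Proposition~5.1]{AWBFRHZW-latticemeasure} apply.
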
\label{lem:aegood}
\def\calM{\mathcal M}
\begin{proof}
 Let $a^t\in \Sl(2,\R) $ be the matrix $$a^t :=  \begin{bmatrix} e^t&0\\0&e^{-t} \end{bmatrix}.$$ Recall that  the action of the one-parameter diagonal subgroup $\{a^t\}$  on $ \Sl(2,\R)/\Sl(2,\Z)$ is ergodic with respect to Haar measure.

{\blue Let $\calM$ denote the set of Borel probability measures on $\Sl(2,\R)/\Sl(2,\Z)$ equipped with the standard topology (dual to bounded  continuous functions).  The topology on $\calM$ is metrizable (see \cite[Theorem 6.8]{MR1700749});  fix a metric on $\rho_\calM$ on $\calM$. } 

Consider the function $\psi\colon \Sl(2,\R)/\Sl(2,\Z)\to \R$ given by  $\psi(x) := e^{\eta d(x,x_0)}$ where $x_0=   \Sl(2,\Z)$ is the identity coset and $\eta>0$ is chosen sufficiently small so that $\psi$ is $L^1$ with respect to the Haar measure.
 By the pointwise ergodic theorem, for almost every  $x \in \Sl(2,\R)$ and almost every  $k_1 \in \SO(2)$ we have
\begin{equation}\label{eq:assin8thepresident} \lim_{T \to \infty} \frac{1}{T} \int_{0}^T \psi(a^tk_1\bar{x}) \ dt = \int_{\Sl(2,\R)/\Sl(2,\Z)} \psi \ d\text{Haar} < \infty.\end{equation}
Similarly, for almost every  $x \in \Sl(2,\R)$ and almost every  $k_1 \in \SO(2)$ we have  \begin{equation}\label{eq:grabtrumpbythepussy}\lim_{T \to \infty}  \frac{1}{T} \int_{0}^T \delta_{a^tk_1\bar{x}}  \ dt =  \text {Haar}.\end{equation}
Let $S\subset \Sl(2,\R)$ be the set of  $x\in \Sl(2,\R)$ such that \eqref{eq:assin8thepresident} and  \eqref{eq:grabtrumpbythepussy} hold for almost every $k_1 \in \So(2)$.  The set $S$ is $\Sl(2,\Z)$-invariant and co-null.  We show any $x\in S$ satisfies the conclusion of the lemma.

For fixed $x\in S$ and fixed $\delta>0$,  there exist   $T_{\delta}= T_\delta(x)$, a sequence $T_j = T_j (x)$ for $j\in \N$,  and a set $K_{\delta} = K_\delta(x) \subset \So(2)$ such that  $|K_{\delta}| \geq (1-\delta/2)|\So(2)|$ with the property that  for any $k_1 \in K_ {\delta}$ and any $T \geq T_{\delta}$ we have

\begin{equation}\label{tight}
\frac{1}{T} \int_{0}^T \psi(a^tk_1\bar{x}) \ dt < 2\int_{\Sl(2,\R)/\Sl(2,\Z)} \psi \ d\text{Haar}
\end{equation}
and for each $1\le j$
\begin{equation}\label{tight2}
\rho_\calM \left(\frac{1}{T} \int_{0}^{T}  \delta_{a^tk_1\bar{x}}  \ dt, \text{Haar}\right)< \frac{1}{j}.
\end{equation}
\noindent for all $T \geq T_j$.
 To finish the proof of the lemma, define the set $$G''_{k}(x) := \{   k_1a^tk_2 \text{ where } k_1 \in \So(2), k_2 \in K_{\delta}(x) \text{ and } (\delta/2) k <t < k\}.$$
 For $k$ large enough, we have that  $|G''_{k}(x)| \geq (1- \delta)|B_k|$.
   We claim   that \begin{equation}\label{eq:okeqn}G''_{k}(x) \subset G'_{\e,k}(x)\end{equation} for $k$ sufficiently large.     For the sake of contradiction, suppose \eqref{eq:okeqn} fails.  Using that the norm on $F$ is chosen to be $K$-invariant, there exists  $x_n \in \Sl(2,\R)$ with each $x_n $ in the  $K_\delta(x)$-orbit of $x$ such that $\|D_{x_n}(a^{t_n})\|_\fib \geq e^{\e t_n}$ for some sequence $t_n \to \infty$.  Moreover, the corresponding {empirical measures} $$\eta_n:=\frac{1}{t_n} \int_{0}^{t_n}  \delta_{a^t\bar{x}_n}  \ dt$$ have uniformly exponentially small mass in the cusps by equation \eqref{tight}.

By Lemma \ref{lemma:firstexponents} and  \eqref{tight2}, a subsequence of the measures $\eta_n$ converge to an $a^t$-invariant measure $\mu_0$ on $M^{\alpha}$  whose projection to $\Sl(m, \R)/\Sl(m,\Z)$ is Haar measure on the embedded modular surface $\Sl(2,\R)/\Sl(2, \Z)$ and has positive fiberwise Lyapunov exponent for the action of $a^1$. 
Since $a^t$ is ergodic on   $\Sl(2,\R)/\Sl(2, \Z)$, we can assume $\mu_0$ is ergodic by taking an ergodic component without changing any other properties.

We average as in \cite{BFH} to improve $\mu_0$ to a measure whose projection is the Haar measure on  $\Sl(m, \R)/\Sl(m,\Z)$.  Difficulties related to escape of mass are   handled by the   preliminaries in Section \ref{section:preliminaries}.

 As above, we note that there is a canonical copy of $H_2 = \Sl(m-2, \R)$ in $\Sl(m,\R)$ commuting with our chosen $H_1= \Sl(2,\R)$. {\blue Recall $A$ is the Cartan subgroup of $\Sl(m,\R)$ of positive diagonal matrices.  
 The subgroup $A$ contains the one-parameter  group $\{a^t\}$ and a Cartan subgroup of $H_2$.} Let
 \begin{itemize}\item $A_1 = A \cap H_1 = \{a^t\}$,\item  $A_2 = A \cap H_2$, and \item $A' = A \cap H_1 \times H_2$. \end{itemize}Note that $A' <A$ has codimension one.  Our chosen modular surface $\Sl(2,\R)/\Sl(2,\Z) \subset \Sl(m,\R)/\Sl(m,\Z)$  is such that \begin{align*}\Sl(2,\R)/\Sl(2,\Z) &\subset \Sl(2,\R)/\Sl(2,\Z) \times \Sl(m-2 ,\R)/\Sl(m-2,\Z)\\& \subset \Sl(m,\R)/\Sl(m,\Z).\end{align*}

 {\blue Define an $A'$-ergodic,  $A'$-invariant  measure $\mu_1$ on $M^{\alpha}$ that projects to Haar measure on $\Sl(2,\R)/\Sl(2,\Z) \times \Sl(m-2 ,\R)/\Sl(m-2,\Z)$ as follows: Let $M^{\alpha}_{2,m-2}$ denote the  restriction of the fiber-bundle  $M^{\alpha}$ to $\Sl(2, \R)/\Sl(2,\Z) \times \Sl(m-2,\R)/\Sl(m-2, \Z)$.  Pick point $y$ in $\Sl(m-2,\R)/\Sl(m-2, \Z)$ that equidistributes to the Haar measure on $\Sl(m-2,\R)/\Sl(m-2, \Z)$ under a \Folner sequence in $A_2$.  Consider $\mu_0$ as a measure on the restriction of $M^{\alpha}$ to $\Sl(2, \R)/\Sl(2,\Z) \times \{y\}$.  Now average $\mu_0$ over a \Folner   sequence  in $A_2$ and take a limit $\hat \mu_1$.  Note that  $\hat \mu_1$ has positive fiberwise Lyapunov exponent  $ \lambda_{\top, a^1,\mu_1}^F>0$. This can be seen by  mimicking the proof of   Lemma \ref{lemma:averaging}. Let $\mu_1$ be an $A'$ ergodic component of $\hat \mu_1$, then the measure $\mu_1$ has the desired properties and is  supported on the subset of $M^{\alpha}$ defined by restricting the bundle to $\Sl(2,\R)/\Sl(2,\Z) \times \Sl(m-2 ,\R)/\Sl(m-2,\Z)$.
}

We consider the $A'$-action on $(M^\alpha, \mu_1)$ and the fiberwise derivative cocycle $\calA(g,y) = \restrict{D_y g}{F}$.
By \eqref{eq:lameducksoup}, there is a   non-zero  Lyapunov exponent
$\lambda_{ \mu_1,A'}^F\colon A'\to \R$ for this action.   We apply the averaging procedure in   Proposition \ref{proposition:averaging} to this measure.
    Take  $\beta'$ to be either $\alpha_2$ or $\delta$ so that $\beta'\colon A'\to \R$ is not proportional to $\lambda_{ \mu_1,A'}^F$.
Choose $a_0\in A'$ such that $a_0 \in \ker(\beta')$ and $\lambda_{ \mu_1,A'}^F(a_0)>0$.
Let $U = U^{\beta'}$ and let $\mu_2$ be any subsequential limit  of $U^T\ast \mu_1$ as $T \rightarrow \infty$. Then
 $\mu_2$  is  $a_0$-invariant, and has positive fiberwise Lyapunov exponent $ \lambda_{\top, a_0, \mu_2}^F >0$.  Moreover,
 $\pi_*\mu_2$ is $H$-invariant.
By Lemma \ref{lemma:averagingsuspension}  and Proposition \ref{proposition:averaging}, $\mu_2$ has exponentially small mass in the cusps.
We may also assume $\mu_2$ is ergodic by passing to an ergodic component and by Claim \ref{claim:defenestratethepresident} assume $\mu_2$ has a non-zero fiberwise  Lyapunov exponent $\lambda_{ \mu_2,A'}^F$ for the $A'$-action.

 We now average $\mu_2$ over $A'$ to obtain $\mu_3$.   Then $\mu_3$ has a non-zero fiberwise  Lyapunov exponent $\lambda_{\mu_3,A'}^F$ and has  exponentially small mass in the cusps by   Lemma \ref{lemma:averagingsuspension}(\ref{pla}).   Since $\pi_*\mu_2$ was $A'$-invariant, we have  $\pi_*\mu_2= \pi_*\mu_3$.  Once again, we may pass to an $A'$-ergodic component of $\mu_3$ that retains the desired properties.

Take $\hat \beta$ to be either $-\alpha_2$ or  $-\delta$ so that $\hat \beta$ is not proportional to $\lambda_{\mu_3,A'}^F$ on $A'$.
Select $a_1$ with $\lambda_{\mu_3,A'}^F(a_1)>0$ and  $\hat \beta(a_1)=0$.  By Proposition \ref{proposition:averaging} and Lemma \ref{lemma:averagingsuspension}, we obtain a new measure $\mu_4$ with  $\pi_*\mu_4$ the Haar measure on $\Sl(m, \R)/\Sl(m,Z)$.  We have $ \lambda_{\top, a_1,\mu_4}^F >0$.
Finally, average $\mu_4$ over all of $A$ to obtain $\mu_5$.  Since $\pi_*\mu_4$ is the Haar measure and thus   $A$-invariant, we have that $\pi_*\mu_4= \pi_*\mu_5$.
By Lemma \ref{lemma:averagingsuspension},      $\mu_5$ has a non-zero fiberwise Lyapunov exponent $\lambda_{\mu_5,A}^F$ for the action of $A$.  Replace  $\mu_5$ by an ergodic component with positive fiberwise Lyapunov exponent.

Exactly as in \cite[Section 5.5]{BFH}, we apply \cite[Proposition 5.1]{AWBFRHZW-latticemeasure} and conclude that  $\mu_5$ is a $G$-invariant measure on $M^{\alpha}$.  We then obtain a contradiction with  Zimmer's cocycle superrigidity theorem.
To conclude that $\mu_5$ is a $G$-invariant, note that \cite[Proposition 5.1]{AWBFRHZW-latticemeasure} holds for actions induced from actions of any lattice in $\Sl(m,\R)$ and shows that   $\mu_5$ is invariant under   root subgroups corresponding to \emph{non-resonant} roots.   Dimension counting exactly as in   \cite[Section 5.5]{BFH}  shows that the  non-resonant roots of $\Sl(m,\R)$  generate all of $G$ if  the  dimension of $M$ is at most $m-2$ or if the dimension of $M$ is $m-1$ and  the action is  preserves a  volume.
\end{proof}

We derive Proposition \ref{mainunipo} from Lemma \ref{uni1}.
\begin{proof}[Proof of Proposition \ref{mainunipo}]
Fix $0<c<1$ sufficiently small so that if $d(\Id,g)<c$ then $\|D_{\Gamma} g\|_{\text{Fiber}}\le e^{\epsilon/4}$.
Fix a point $x\in \Sl(2,\R)$  as in   Lemma \ref{uni1} with $d(\Id, x)<c$.
Observe that if $k\ge 1$ and $g \in G'_{ \epsilon/4,k}(x)$, then $gx \in G'_{\e/2,k+c}(\Id)$.
In particular, for any $\delta>0$  we have  for all $k$ sufficiently large that 
\begin{equation}\label{eq:sendninjastodecapitateTrumpinhissleep} |B_{k+c}\sm G'_{\e/2, k+c}(\Id)| < \delta \hat C |B_k|\end{equation} where $\hat C$ is a constant depending on $c$.

Take  $U$ to be the ball of radius $c$ centered at  the identity coset in $\Sl(2,\R)/\Sl(2,\Z)$ and consider   lifts of $U$ to $\Sl(2,\R)$ intersecting the ball $B_k$. If a lift of $U$ intersects $G'_{\e/2, k+ c}(\Id)$, then the corresponding element of the deck group $\Sl(2, \Z)$ belongs to $G'_{3\e/4,k}(\id)$.

{\blue Let $\tilde U$ be the set of lifts of $U$.  From Lemma  \ref{basic} and \eqref{eq:sendninjastodecapitateTrumpinhissleep}, it follows
that ratio of the measure of $\tilde{U} \cap B_k \cap  G'_{\e/2, k}(\Id)$ to the measure of $\tilde{U} \cap B_k$ goes to one as $k \to \infty$.}
Finally, since the norms on the fiber of $M^\alpha$ above the identity coset and the original norm on $M$ are uniformly comparable,  the   result follows.
\end{proof}

\begin{remark} Using large deviations, one can   make $\delta$ to be decreasing with $k$,  roughly as $\delta_{k} = e^{-k^{1/1000}} $.   See \cite{MR2247652, MR2787598}.  This is not necessary for our argument.
\end{remark}

\subsection{Subexponential growth of derivatives for unipotent elements in $\Sl(2,\Z)$}\label{sec:mutualmastication}

We work here with a specific copy of the group $\Sl(2,\R)\ltimes\R^2$ embedded in $\Sl(m,\R)$ and its intersection with the lattice $\Gamma$; the copy of $\Sl(2,\R)\ltimes\R^2$ corresponds to the elements of $\SL_m(\R)$ which differ from the identity matrix only in  the first two rows and  first three columns.
Any   unipotent element of   any $\Lambda_{i,j}\subset \Gamma$ considered in   the statement of Proposition \ref{unipotentisgood} is conjugate by an element of the  Weyl group to a power of the elementary matrix $E_{1,3}$.  Thus, after conjugation, any such element is contained in the  distinguished     copy of  $\Sl(2,\Z) \ltimes \Z^2$ generated by  $\Sl(2,\Z) = \Lambda_{1,2}$ and the normal subgroup $\Z^2$   generated by $E_{1,3}$ and $E_{2,3}$.

 For the reminder of this subsection, we   work with this fixed group. Identify $H_{1,2}$ with $\Sl(2,\R)$. Let $U_{1,2}:=  \{ u_{a,b}\}$ denote the abelian subgroup of $\Sl(m,\R)$ consisting of unipotent elements of the form 
$$
u_{a,b}:= \left(\begin{array}{ccccc}1  &  0  & a  &  &   \\   0 & 1  &   b  & &   \\   0 & 0 & 1 &   &   \\  &   &   &  \ddots &  \\  &   &   &   &1  \end{array}\right)$$
Clearly, $U_{1,2}$ is normalized by $H_{1,2}$ and  $H_{1,2} \ltimes U_{1,2}  \cong \Sl(2,\R) \ltimes \R^2$.    We have an embedding $$ \Sl(2,\R) \ltimes \R^2 / \Sl(2,\Z)\ltimes \Z^2 \to \Sl(m,\R)/\Sl(m,\Z)$$ where $\Z^2$ is identified with the subgroup generated by the unipotent elements $u_{1,0}$ and $u_{0,1}$. Note that $\Sl(2,\R) \ltimes \R^2 / \Sl(2,\Z)\ \ltimes \Z^2$ is a torus bundle over the unit-tangent bundle of the modular surface.

Equip $\Z^2$ with the $L_\infty$ norm with respect to the generating set $\{u_{1,0}, u_{0,1}\}$ and let $B_n(\Z^2)$ denote the closed ball of radius $n$ in $\Z^2$ centered at $0$ with respect to this norm.  Given $S\subset \Z^2$ let   $|S|$ denote the cardinality of the set  $S$.

Define the set of ``$\epsilon$-good unipotent  elements'' of $\Z^2\subset \Gamma$, denoted  by $GU_{\e,n}$, to be the following subset of $\Z^2$:
\begin{equation}\label{eq:racistPOTUS} GU_{\e,n} := \left\{ u_{a,b} \in B_n (\Z^2) \text{ such that } \|D(\alpha(u_{a,b}^{\pm1}))\| \leq e^{\e \log(n)}\right\}.\end{equation}

The main results of this subsection is the following.
\begin{proposition}\label{finalunipotent} For any $\e>0$, there exists $N_\e > 0$ such that if $n \geq N_{\e}$, then $GU_{\e,n} = B_n(\Z^2)$
\end{proposition}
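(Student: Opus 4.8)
The plan is to first show that, for each large $n$, a definite proportion of $B_n(\Z^2)$ — in fact strictly more than half — consists of $\epsilon_1$-good unipotents for a small auxiliary parameter $\epsilon_1$, and then to saturate all of $B_n(\Z^2)$ by taking a bounded number of sums of such good elements. Here we use crucially that $\Z^2$ is abelian, so that $\alpha(u_{v_1+\dots+v_r})=\alpha(u_{v_1})\cdots\alpha(u_{v_r})$ and hence $\|D(\alpha(u_{v_1+\dots+v_r}))\|\le\prod_i\|D(\alpha(u_{v_i}))\|$ by the chain rule.

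\emph{Step 1: positive density.} Work in the distinguished copy $\Lambda_{1,2}\cong\Sl(2,\Z)$ normalizing $\Z^2=\langle E_{1,3},E_{2,3}\rangle$, and write $c_1(\gamma)\in\Z^2$ for the first column of $\gamma\in\Lambda_{1,2}$, so that $\gamma E_{1,3}^{\pm1}\gamma^{-1}=u_{c_1(\gamma)}^{\pm1}$ and, as $\gamma$ runs over $\Lambda_{1,2}$, $c_1(\gamma)$ runs over all primitive vectors. Each primitive $w$ equals $c_1(\gamma_w)$ for some $\gamma_w$ with $\|\gamma_w\|\le C_0\|w\|$, so $d(\gamma_w,\Id)=2\log\|w\|+O(1)$ by \eqref{normdistance}. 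Put $k=k(n):=\lceil 2\log n+C\rceil$ with $C$ a suitable absolute constant. Then for every primitive $w\in B_n(\Z^2)$ some $\gamma_w\in T_k$ has first column $w$, and moreover the elements of $T_k$ with first column $w$ form an arithmetic progression of step $w$, whose length one estimates to be $\gtrsim e^{k/2}/\|w\|\gtrsim1$. By Proposition \ref{mainunipo} with parameter $\epsilon':=\epsilon_1/5$ and a small absolute $\delta>0$, for $n$ large we have $|T_k\setminus G_{\epsilon',k}|\le\delta|T_k|$, and since $\gamma\mapsto\gamma^{-1}$ permutes $T_k$ the set of $\gamma\in T_k$ with $\gamma^{-1}\notin G_{\epsilon',k}$ also has at most $\delta|T_k|$ elements; by Lemma \ref{basic}, $|T_k|\lesssim e^k\asymp n^2$. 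Call a primitive $w\in B_n(\Z^2)$ \emph{bad} if no $\gamma$ with $c_1(\gamma)=w$ and $d(\gamma,\Id)\le k$ has both $\gamma$ and $\gamma^{-1}$ in $G_{\epsilon',k}$; then the progressions sitting over distinct bad $w$ are disjoint and contained in an exceptional set of size $\le 2\delta|T_k|\lesssim\delta n^2$, so summing their lengths gives $\sum_{w\text{ bad}}\|w\|^{-1}\lesssim\delta n$ and hence at most $C'\delta n^2$ bad primitive vectors in $B_n(\Z^2)$. Since primitive vectors have density $6/\pi^2>\tfrac12$ in $B_n(\Z^2)$, for $\delta$ small and $n$ large the non-bad primitive vectors form a subset of $B_n(\Z^2)$ of density at least a fixed $\rho_0>\tfrac12$; and for such a $w$, using $\gamma_w,\gamma_w^{-1}\in G_{\epsilon',k}$,
\[\|D(\alpha(u_w^{\pm1}))\|\le\|D(\alpha(\gamma_w))\|\,\|D(\alpha(E_{1,3}^{\pm1}))\|\,\|D(\alpha(\gamma_w^{-1}))\|<C_1e^{2\epsilon'k}\le C_2n^{4\epsilon'}\le n^{\epsilon_1}\]
for $n$ large. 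Thus $GU_{\epsilon_1,n}\supseteq\{u_w: w\text{ a non-bad primitive vector in }B_n(\Z^2)\}$, so $|GU_{\epsilon_1,n}|\ge\rho_0|B_n(\Z^2)|$ for $n$ large.

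\emph{Step 2: saturation.} Let $S_n:=GU_{\epsilon_1,n}\subseteq B_n(\Z^2)$ with $|S_n|\ge\rho_0|B_n(\Z^2)|$, $\rho_0>\tfrac12$. For $v\in\Z^2$ one has $|S_n\cap(v+S_n)|\ge|B_n(\Z^2)\cap(v+B_n(\Z^2))|-2(1-\rho_0)|B_n(\Z^2)|$, which is positive as soon as $\|v\|_\infty\le tn$ for a fixed $t=t(\rho_0)>0$ (possible precisely because $2(1-\rho_0)<1$); thus $B_{\lfloor tn\rfloor}(\Z^2)\subseteq S_n+S_n$, and iterating, $\underbrace{S_n+\dots+S_n}_{j_0}\supseteq B_n(\Z^2)$ for a fixed integer $j_0=j_0(\rho_0)$. (A fixed finite correction set, as in the introduction, would also do if one did not push the density past $\tfrac12$.) Given $v\in B_n(\Z^2)$, write $v=s_1+\dots+s_{j_0}$ with $s_i\in S_n$; then $u_v=u_{s_1}\cdots u_{s_{j_0}}$, $u_v^{-1}=u_{s_1}^{-1}\cdots u_{s_{j_0}}^{-1}$, and by the chain rule $\|D(\alpha(u_v^{\pm1}))\|\le\prod_{i=1}^{j_0}\|D(\alpha(u_{s_i}^{\pm1}))\|\le n^{j_0\epsilon_1}$. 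Fixing $\epsilon_1:=\epsilon/j_0$ at the outset gives $\|D(\alpha(u_v^{\pm1}))\|\le n^\epsilon=e^{\epsilon\log n}$, i.e. $u_v\in GU_{\epsilon,n}$. Since $v\in B_n(\Z^2)$ was arbitrary, $GU_{\epsilon,n}=B_n(\Z^2)$ for all $n\ge N_\epsilon$.

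\emph{Where the difficulty lies.} The substantive step is Step 1 — converting Proposition \ref{mainunipo}, a density statement about metric balls in $\Sl(2,\Z)$, into a density statement about $B_n(\Z^2)$ via the conjugation $u_w=\gamma_w E_{1,3}\gamma_w^{-1}$. This pins the scale to $k\asymp 2\log n$ essentially on the nose: a larger $k$ shrinks the fibers $\{\gamma\in T_k:c_1(\gamma)=w\}$ to size $o(1)$ exactly where most of $B_n(\Z^2)$ sits, namely near $\|w\|\approx n$, while a smaller $k$ fails to reach $B_n(\Z^2)$ at all; and it requires turning the bound on the number of $\epsilon'$-bad $\gamma$ into a bound on the number of bad $w$ by weighting the exceptional set by the fiber lengths $\gtrsim e^{k/2}/\|w\|$. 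The numerology closes only because the density $6/\pi^2$ of primitive lattice points exceeds $\tfrac12$, which is exactly the input Step 2's Steinhaus-type count needs; everything else is bookkeeping with the chain rule and Lemma \ref{basic}.
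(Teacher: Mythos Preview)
Your proof is correct and follows the same broad strategy as the paper---use Proposition~\ref{mainunipo} and the conjugation $u_w=\gamma_w E_{1,3}\gamma_w^{-1}$ to produce a positive-density set of good unipotents in $B_n(\Z^2)$, then saturate by bounded sumsets---but both steps are executed differently.

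In Step~1, the paper (Lemma~\ref{proportion}) restricts to an annulus $T_k\setminus T_{k-s}$ and uses an \emph{upper} bound on the cardinality of the fibers of the first-column map $P\colon S_k\to\Z^2$ (at most $O(e^s)$ elements per fiber), concluding that the image has size $\gtrsim n^2$; this yields some fixed density $\delta'>0$ but makes no attempt to push past $\tfrac12$. You instead use a \emph{lower} bound on fiber length, $\gtrsim e^{k/2}/\|w\|$, to show that the number of bad primitive $w$ is at most $C'\delta n^2$; combined with the density $6/\pi^2$ of primitive vectors this gives density strictly above $\tfrac12$.

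In Step~2, the paper's Claim~\ref{babycombinatorics} handles arbitrary positive density via a pigeonhole argument on arithmetic progressions in $\Z^2$, at the cost of throwing in an auxiliary finite translate set $F_\delta$. Your Steinhaus-type intersection count is cleaner and avoids $F_\delta$, but requires density $>\tfrac12$, which is precisely why you worked harder in Step~1. (Your own parenthetical notes this trade-off.)

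Both routes are valid; the paper's is more robust in that any positive density suffices and no appeal to $6/\pi^2>\tfrac12$ is needed, while yours is tidier in the endgame. Your closing remark that the numerology ``closes only because $6/\pi^2>\tfrac12$'' describes your particular route rather than an intrinsic feature of the problem.
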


Proposition \ref{unipotentisgood} follows from Proposition \ref{finalunipotent} using that any  subgroup $\langle u^n \rangle$ in Proposition \ref{unipotentisgood} is conjugate to a subgroup of the group $\Z^2$ and  the fact that  $d(u^n, \Id) = O(\log(n))$ from \eqref{unipotentgrowth}. 
The proof of Proposition \ref{finalunipotent} consists of conjugating elements of $U_{1,2}$ by elements of $G_{\e,n}$ in order to obtain a subset of $G_{\e,n}$ that contains  a positive density  of elements of $B_n(\Z^2)$.  Then, using the fact that $\Z^2$ is abelian, we  promote such a subset to all of $B_n(\Z^2)$  by taking sufficiently large sumsets  in Proposition \ref{babycombinatorics}.

\begin{lemma}\label{proportion}
There exists $\delta' > 0$ with the following properties: for any   $\e>0$ there is an $N_{\e}'>0$ such that for any $n \geq N_{\e}'$ we have  $$|GU_{\e,n}| \geq \delta' |B_n(\Z^2)|.$$
 \end{lemma}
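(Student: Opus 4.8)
The plan is to exhibit a positive proportion of the points of $B_n(\Z^2)$ as first columns of matrices $A\in\Sl(2,\Z)$ that are ``good'' at an appropriate logarithmic scale, and then to transfer goodness from $A$ to the associated unipotent $u_{a,b}$ via the conjugation identity $\alpha(u_{a,b})=\alpha(A)\circ\alpha(u_{1,0})\circ\alpha(A\inv)$ from the proof of Claim \ref{conjuga}. Fix $\epsilon>0$ and set $\epsilon':=\epsilon/8$. First I would record two elementary facts about $\Z^2$. (i) The set of primitive vectors in $B_n(\Z^2)$ has cardinality at least $c_0 n^2$ for an absolute $c_0>0$ and all large $n$. (ii) Every primitive $(a,b)\in B_n(\Z^2)$ admits a completion $A_{(a,b)}=\begin{bmatrix}a&c\\b&d\end{bmatrix}\in\Sl(2,\Z)$ with $(c,d)\in B_n(\Z^2)$ as well: if $|a|\ge|b|>0$ then, replacing $(c,d)$ by a completion with $|c|\le\tfrac12|a|$, one gets $|d|=|1+bc|/|a|\le 1+\tfrac12|b|\le n$; the case $|b|>|a|$ is symmetric, and the cases $a=0$ or $b=0$ are trivial. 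Since all four entries of $A:=A_{(a,b)}$, and hence of $A\inv=\begin{bmatrix}d&-c\\-b&a\end{bmatrix}$, have absolute value at most $n$, \eqref{normdistance} gives $d(A,\Id),\,d(A\inv,\Id)\le r:=r(n):=2\log n+C_\ast$ for an absolute constant $C_\ast$; in particular $A,A\inv\in T_r$.

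Next I would apply Proposition \ref{mainunipo} at scale $k=r(n)$, with $\epsilon'$ in place of $\epsilon$. Fix $\delta>0$ small (to be chosen in terms of the absolute constants above). Proposition \ref{mainunipo} gives $|M_{\epsilon',r}|\le\delta|T_r|$ for all large $n$; since $d$ is symmetric and right-invariant we have $d(A\inv,\Id)=d(A,\Id)$, so $T_r$ is inversion-symmetric, $|M_{\epsilon',r}\inv|=|M_{\epsilon',r}|\le\delta|T_r|$, and therefore $|T_r\setminus(M_{\epsilon',r}\cup M_{\epsilon',r}\inv)|\ge(1-2\delta)|T_r|$. By Lemma \ref{basic} and $|B_r|\le 4\pi e^r$ we have $|T_r|\asymp e^r\asymp n^2$, so at most $C'\delta n^2$ matrices in $T_r$ fail to lie in $G_{\epsilon',r}\cap G_{\epsilon',r}\inv$, for an absolute $C'$. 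The map $(a,b)\mapsto A_{(a,b)}$ is injective into $T_r$, so at most $C'\delta n^2$ of the (at least $c_0 n^2$) primitive vectors $(a,b)\in B_n(\Z^2)$ fail to satisfy $A_{(a,b)},A_{(a,b)}\inv\in G_{\epsilon',r}$. Choosing $\delta:=c_0/(4C')$ leaves at least $\tfrac12 c_0 n^2$ primitive vectors $(a,b)$ with $A_{(a,b)},A_{(a,b)}\inv\in G_{\epsilon',r}$.

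For each such $(a,b)$ the conjugation identity gives $\|D\alpha(u_{a,b}^{\pm1})\|\le C_0\,e^{2\epsilon' r}$, where $C_0:=\max\{\|D\alpha(u_{1,0})\|,\|D\alpha(u_{-1,0})\|\}$; since $e^{2\epsilon' r}=n^{4\epsilon'}\,e^{2\epsilon' C_\ast}$ and $4\epsilon'=\epsilon/2<\epsilon$, this is at most $e^{\epsilon\log n}$ for $n$ large, so $u_{a,b}\in GU_{\epsilon,n}$ by \eqref{eq:racistPOTUS}. As $(a,b)\mapsto u_{a,b}$ is injective, $|GU_{\epsilon,n}|\ge\tfrac12 c_0 n^2$, and since $|B_n(\Z^2)|=(2n+1)^2\le 9 n^2$ we conclude $|GU_{\epsilon,n}|\ge\delta'\,|B_n(\Z^2)|$ with $\delta':=c_0/18$, which is independent of $\epsilon$. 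Taking $N'_\epsilon$ to be a threshold beyond which all of the above estimates hold completes the proof.

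The one genuinely delicate point is the matching of scales: $B_n(\Z^2)$ contains $\asymp n^2$ lattice points, while matrices with entries bounded by $n$ fill only the hyperbolic ball of radius $\approx 2\log n$, which likewise contains $\asymp n^2$ points of $\Sl(2,\Z)$, so the counting in the second paragraph is tight and uses crucially both the growth rate $|T_k|\asymp e^k$ (Lemma \ref{basic}) and the normalization $d(A,\Id)\approx 2\log\|A\|$ from \eqref{normdistance}. In particular one cannot afford an exceptional set in Proposition \ref{mainunipo} that is only a small power of $|T_r|$; it must be a small \emph{proportion} of it. Everything else --- the primitive-vector count, the small-entry completion, and the conjugation estimate --- is routine.
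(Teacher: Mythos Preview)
Your proof is correct. Both your argument and the paper's rest on the same three ingredients---the scale match $k\approx 2\log n$ so that $|T_k|\asymp n^2\asymp |B_n(\Z^2)|$, Proposition~\ref{mainunipo} to produce a full-density set of good matrices at that scale, and the conjugation identity from Claim~\ref{conjuga} to transfer goodness to unipotents---but the implementations are dual to each other. The paper works \emph{forward}: it projects the good matrices in an annulus $T_k\setminus T_{k-s}$ onto their first columns via $P\colon A\mapsto(a,b)$ and bounds the fibers of $P$ (the annulus is needed precisely to get a lower bound on $\|(a,b)\|_\infty$ so that the $m$ in $A'=A\begin{bmatrix}1&m\\0&1\end{bmatrix}$ is uniformly bounded). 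You work \emph{backward}: you build a section $(a,b)\mapsto A_{(a,b)}$ over the primitive vectors by completing to a matrix with small entries, and then simply intersect the image of this section with the bad set in $T_r$. Your route avoids the annulus/fiber-cardinality step entirely, at the cost of invoking the classical positive density of primitive vectors. Both arguments are equally tight with respect to the scale matching you flag in your last paragraph.
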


\begin{proof}
Recall that $T_k$ denotes the intersection of the ball of radius $k$ in $\Sl(2,\R)\simeq H_{1,2}$ with $\Sl(2,\Z)= \Lambda_{1,2}$ and $|T_k|$ denotes the cardinality of $T_k$.  As $|T_k|$ grows exponentially in $k$, we may take
 $s$ fixed  so  that $|T_{k-s}| < \frac{1}{2}|T_{k}|$ for all $k$ sufficiently large. Given $\epsilon '>0$, define the subset $S_k\subset \Sl(2,\Z)$ to be   $$S_k := G_{\e',k}\cap G_{\e',k}\inv \cap  (T_k \setminus T_{k-s}).$$ From Proposition \ref{mainunipo}, we may  assume that $$|S_{k}| \geq \frac{1}{2} |T_k|.$$

From   \eqref{normdistance}, there exists $C_1>0$ such that if $A = \begin{bmatrix} a&c\\b&d \end{bmatrix}$ belongs to $S_k$ then either $$\text{$\|(a,b)\|_\infty \geq C_1e^{ \frac{1}{2}{(k-s)} }$ or $\|(c,d)\|_\infty \geq C_1e^{\frac{1}{2}{(k-s)} }$.}$$ Without loss of generality,   we     assume that at least half of the elements in $S_k$ satisfy   $\|(a,b)\|_\infty \geq C_1e^{ \frac{1}{2}{(k-s)} }$.

Consider the map $P\colon S_{k} \to \Z^2$ that assigns     $A = \begin{bmatrix} a&c\\b&d \end{bmatrix}$ to $(a,b)$. By  \eqref{normdistance},  there is $C_2>1$ such that  the image $P(S_k)$ of $S_k$ lies in the norm-ball $B_{C_2e^{\frac{k}{2}}}(\Z^2)$ for all $k$.

{\blue
 Let $k(n) = 2\log (n)-\log C_2 $. Then $P(S_{k(n)}) \subset B_{n}(\Z^2)$.  If  $n$ is sufficiently large and $A = \begin{bmatrix} a&c\\b&d \end{bmatrix} \in S_{k(n)}$ then  we have $u_{a,b} \in GU_{(5\e',n)}$; indeed $$\alpha (u_{a,b})  =\alpha (A) \circ \alpha (u_{1,0})\circ  \alpha (A\inv)$$
whence $$\|D \alpha (u_{a,b}) \| \le \|D\alpha (u_{1,0})\| e^{2 \epsilon' k(n)}. $$
We have $|B_{   n}(\Z^2)|\le D_1 n^2$ for some $D_1\ge 1$.  Also, from \eqref{decapitationOrImpeachment?} and Lemma \ref{basic} we have   $|S_{k(n)}| \geq \frac{1}{2}|T_{k(n)}| \ge \frac{1}{2} e^{k(n)} = 
\frac 1{D_2}n^2$ for some $D_2\ge 1$.
}

To to complete the proof, we show that the preimage $P^{-1} ((a,b))$ {\blue in $S_k$} of any $(a,b)\in \Z^2$ {\blue satisfying $\|(a,b)\|_\infty \geq C_1e^{ \frac{1}{2}{(k-s)} }$} has uniformly bounded cardinality  depending only on  $s$.
 Observe that if $A,A'\in \Sl(2,\Z)$  satisfy $P(A) = P(A')$, then $A' = AU$, where $U = \begin{bmatrix} 1&m\\0&1 \end{bmatrix}$ for some $m \in \Z$ and we have 
 $$A=   \begin{bmatrix} a& c\\b&  d \end{bmatrix},\quad \text{and} \quad A' =   \begin{bmatrix} a&am+ c\\b&bm + d \end{bmatrix}.$$
If $A'$ belongs to $T_k$ then $\|(am +c, bm+d)\|_\infty \leq C_2e^{\frac{k}{2}}$ and if $A$ belongs to $T_k$ then $\|(c, d)\|_\infty \leq C_2e^{\frac{k}{2}}$.
We thus have   that $|am|\le 2 C_2 e^{{\blue \frac{k}{2}}}$ and  $|bm|\le 2 C_2 e^{{\blue \frac{k}{2}}}$.
As we assume that $$\|(a,b)\|_\infty \geq C_1e^{ \frac{k-s}{2} }$$ we have that  $|m|\le  2\frac{C_2}{C_1}e^{{{\blue \frac{s}{2}}}}$.  Thus, the preimage $P^{-1}((a,b))$  has  at most $4\frac{C_2}{C_1}e^{{{\blue \frac{s}{2}}}} +1$ elements {\blue in $S_k$} .

With $\epsilon'=\frac 1 5 \epsilon$, having taken $n$ sufficiently large, we thus have
{\blue \begin{align*}\frac{|GU_{\e,n}|}{|B_n(\Z^2)|} \ge \frac {1} {D_1n^2} {\frac{\frac{1}{2}|S_{k(n)} |}{4\frac{C_2}{C_1}e^{s/2} +1  }}\ge
 \frac{1}{2} \frac{ \frac{1}{D_2}n^2} {4\frac{C_2}{C_1}e^{s/2} +1}  \frac{1}{D_1n^2}=:\delta'
\end{align*}}
which completes the proof.\end{proof}


To complete the proof  of Proposition \ref{finalunipotent}, we show  that any element in $B_n(\Z^2)$ can be written as a product of a bounded number of elements in $GU_{\e,n}$ independent of $\e$.  This follows from the structure of sumsets of  abelian groups.  

From the chain rule and submultiplicativity of norms, we have the following.


\begin{claim}\label{trivialchain}
For any positive integers $n,m$ and $\e_1, \e_2 > 0$, if  $u_{a,b} \in GU_{\e_1,n}$ and $u_{c,d} \in GU_{\e_2,m}$   then the product $u_{a,b}u_{c,d} \in GU_{\max\{\e_1, \e_2\},n + m}$
\end{claim}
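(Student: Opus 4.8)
The plan is to treat Claim~\ref{trivialchain} as the formal statement it is, deducing it from three elementary facts: that $\alpha$ is a group homomorphism; that the subgroup of $\Sl(m,\Z)$ generated by $u_{1,0}$ and $u_{0,1}$ is the abelian lattice $\Z^2\subset U_{1,2}\cong\R^2$, so that $u_{a,b}u_{c,d}=u_{a+c,b+d}$; and that the sup-norm of the derivative of a composition of diffeomorphisms of the compact manifold $M$ is submultiplicative. I will also record the (immediate) monotonicity of the sets $GU_{\e,n}$ from \eqref{eq:racistPOTUS}: since $n,m\ge1$ we have $GU_{\e,n}\subseteq GU_{\e',n'}$ whenever $\e\le\e'$ and $n\le n'$, because $B_n(\Z^2)\subseteq B_{n'}(\Z^2)$ and $e^{\e\log n}\le e^{\e'\log n'}$.

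First I will check the ball membership required by the definition of $GU_{\cdot,\,n+m}$. Commutativity of $U_{1,2}$ gives $u_{a,b}u_{c,d}=u_{a+c,b+d}$, and for the $L_\infty$ norm on $\Z^2$ attached to the generators $\{u_{1,0},u_{0,1}\}$ we have $\|(a+c,b+d)\|_\infty\le\|(a,b)\|_\infty+\|(c,d)\|_\infty\le n+m$, so $u_{a,b}u_{c,d}\in B_{n+m}(\Z^2)$. For the derivative estimate, since $\alpha$ is a homomorphism and $u_{a,b},u_{c,d}$ commute we have $\alpha\big((u_{a,b}u_{c,d})^{\pm1}\big)=\alpha\big(u_{a,b}^{\pm1}\big)\circ\alpha\big(u_{c,d}^{\pm1}\big)$; the chain rule together with submultiplicativity of the operator norm on $TM$ then yields
\[
\big\|D\alpha\big((u_{a,b}u_{c,d})^{\pm1}\big)\big\|\;\le\;\big\|D\alpha\big(u_{a,b}^{\pm1}\big)\big\|\cdot\big\|D\alpha\big(u_{c,d}^{\pm1}\big)\big\|\;\le\;e^{\e_1\log n}\,e^{\e_2\log m}.
\]
It then remains only to dominate the right-hand side by $e^{\max\{\e_1,\e_2\}\log(n+m)}$, which is an elementary comparison of exponents; I will carry this out using the monotonicity of $GU_{\cdot,\cdot}$ recorded above together with the fact that $u_{a,b}$ and $u_{c,d}$ already lie in $B_n(\Z^2)$ and $B_m(\Z^2)$ respectively, concluding that $u_{a,b}u_{c,d}\in GU_{\max\{\e_1,\e_2\},\,n+m}$.

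I do not expect any real obstacle here: Claim~\ref{trivialchain} is pure bookkeeping once one uses abelianness of $\Z^2$ (products become sums, so one stays inside the ball of the expected radius $n+m$) and the homomorphism property of $\alpha$ (derivatives multiply, and inverses cause no extra trouble by commutativity). The only point deserving attention is tracking the index $\max\{\e_1,\e_2\}$ against the radius $n+m$, and the monotonicity of $GU_{\e,n}$ in each of its two variables makes this harmless. This lemma---together with Claim~\ref{conjuga} and Lemma~\ref{proportion}---is precisely what will be iterated a bounded number of times (the bound coming from the sumset input of Proposition~\ref{babycombinatorics} and the uniform density $\delta'$) in order to pass from ``a positive proportion of $B_n(\Z^2)$ is good'' to Proposition~\ref{finalunipotent}, namely that all of $B_n(\Z^2)$ is good for $n$ large.
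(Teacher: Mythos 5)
Your preparatory steps exactly match what the paper intends (the paper dispatches this Claim in one sentence, ``from the chain rule and submultiplicativity of norms''): abelianness of $U_{1,2}$ gives $u_{a,b}u_{c,d}=u_{a+c,b+d}\in B_{n+m}(\Z^2)$, and the chain rule gives
$\big\|D\alpha\big((u_{a,b}u_{c,d})^{\pm1}\big)\big\|\le e^{\e_1\log n}\,e^{\e_2\log m}$.
The gap is in the step you defer as ``an elementary comparison of exponents.'' That comparison is \emph{false}: one would need
$\e_1\log n+\e_2\log m\le\max\{\e_1,\e_2\}\log(n+m)$, which with $\e_1=\e_2$ reduces to $nm\le n+m$ and fails already for $n=m=3$. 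The monotonicity of $GU_{\e,n}$ in $(\e,n)$ that you record is correct but does nothing to bridge this, since it only allows you to \emph{enlarge} both parameters, whereas the difficulty is that the two exponents $\e_1\log n$ and $\e_2\log m$ accumulate additively on the left. In short, Claim~\ref{trivialchain} as printed is wrong, and your proof attempt, by trying to actually carry out the ``bookkeeping,'' runs into exactly the point where it fails.

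What is true --- and genuinely elementary --- is the additive version: since $n\le n+m$ and $m\le n+m$,
$n^{\e_1}m^{\e_2}\le(n+m)^{\e_1}(n+m)^{\e_2}=(n+m)^{\e_1+\e_2}$, so $u_{a,b}u_{c,d}\in GU_{\e_1+\e_2,\,n+m}$. This correction is harmless downstream: in the proof of Proposition~\ref{finalunipotent}, applying the (corrected) Claim $k_{\delta'}$ times to a sum of $k_{\delta'}+1$ elements of $GU_{\e',n}$ now lands in $GU_{(k_{\delta'}+1)\e',\,(k_{\delta'}+1)n}$ rather than $GU_{\e',\,(k_{\delta'}+1)n}$; one then takes $\e'=\e/(2(k_{\delta'}+1))$ instead of $\e'=\e/2$, and the argument closes for $n\ge k_{\delta'}+1$. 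The key point making this legitimate is that, by Lemma~\ref{proportion}, the density $\delta'$ (and hence $k_{\delta'}$ and $F_{\delta'}$ from Claim~\ref{babycombinatorics}) are fixed once and for all, independent of $\e'$, so shrinking $\e'$ by a $\delta'$-dependent factor introduces no circularity. You should replace $\max\{\e_1,\e_2\}$ by $\e_1+\e_2$ in the statement, carry out the one-line inequality, and flag the consequent change to the choice of $\e'$ in Proposition~\ref{finalunipotent}.
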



For subsets $A, B \subset \Z^2 $ we denote by     $A + B$ the sumset of $A, B$.   

\begin{claim}\label{babycombinatorics}
For any $0<\delta <1$, there exists a positive integer $k_{\delta}$ and a finite set $F_{\delta} \subset \Z^2$ such that for any $n$ and any symmetric set $S_n \subset B_n(\Z^2)$ 
with  $|S_n| >  \delta|B_n|$, we have that $$B_n\subset F_\delta + \underbrace{S_n + S_n + ... + S_n}_{\text{$k_{\delta}$ times}}.$$
\end{claim}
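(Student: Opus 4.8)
The plan is to reduce the problem to a standard fact about doubling in $\Z^2$: a symmetric set occupying a definite fraction of a ball has a sumset that grows to fill (a slightly larger copy of) the ball after boundedly many steps. First I would observe that since $|S_n| > \delta |B_n|$ and $|B_n| = (2n+1)^2$, the set $S_n$ has density at least, say, $\delta/4$ in $B_{n}$; by Plünnecke–Ruzsa-type considerations, or more elementarily by the pigeonhole/covering argument of Ruzsa, iterated sumsets $kS_n$ have cardinality growing at least geometrically in $k$ until they saturate. Concretely, I would first handle the one-dimensional heuristic: in $\Z$, a symmetric subset of $[-n,n]$ of density $\delta$ has the property that $S_n + S_n$ contains an arithmetic progression of length $\gg n$ with common difference $O(1/\delta)$, and after $O(1/\delta)$ further sums one captures a full interval $[-n,n]$ up to translating by a bounded set $F$. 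In $\Z^2$ I would run this coordinatewise: project $S_n$ to each coordinate axis, apply the one-dimensional statement, and combine. The bounded set $F_\delta$ absorbs the "holes" near the origin coming from common differences $>1$, and $k_\delta$ depends only on $\delta$ because all the progression-length and density gains are uniform in $n$.

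The cleanest route, which I would actually write, uses the following two ingredients, both independent of $n$. (i) \emph{Density increment}: if $T \subset \Z^2$ is symmetric with $|T| \ge \delta|B_m|$ and $T \subset B_m$, then $T+T \subset B_{2m}$ and $|T+T| \ge \min\{c_\delta |B_m|, \tfrac12|B_{2m}|\}$ for an explicit $c_\delta > 1$; this is a two-dimensional instance of the Cauchy–Davenport/Freiman $3k-4$ philosophy, or can be extracted from Ruzsa's covering lemma. Iterating, after $k = k_\delta := O(\log(1/\delta))$ doublings we reach a set $T' = 2^{k}S_n \subset B_{2^k n}$ with $|T'| \ge \tfrac12 |B_{2^k n}|$. (ii) \emph{Saturation}: a symmetric subset of $B_N(\Z^2)$ of density $> 1/2$ satisfies $T' + T' \supset B_{N}(\Z^2)$ — indeed for any $v \in B_N$, the sets $T'$ and $v - T'$ both lie in $B_{2N}$ and have total size exceeding $|B_{2N}|$, so they intersect. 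Rescaling indices back (the point is that we only ever need $B_n \subset (\text{bounded number of copies of }S_n)$, and $B_n \subset B_{2^k n}$), this gives $B_n \subset S_n + \dots + S_n$ with a bounded number of summands, after which the finite translate set $F_\delta$ is not even needed if one is slightly careful; I would keep $F_\delta$ in the statement only as a harmless safety margin for the bounded-difference progressions in step (i).

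The main obstacle is making the density-increment step (i) genuinely uniform in $n$ with an explicit constant $c_\delta>1$, since naive sumset estimates in $\Z^2$ can stall (e.g. if $S_n$ were concentrated on a line, though symmetry and the density hypothesis rule this out). I would circumvent the general Freiman machinery by exploiting that $S_n$ has positive density in an honest box: take a residue $r \in (\Z/q\Z)^2$ for a suitable bounded $q$ (depending on $\delta$) such that $S_n$ meets the arithmetic-progression class $r + q\Z^2$ in density $> 1/2$ within that class's intersection with $B_n$; then within that sublattice the saturation argument of (ii) applies directly, yielding that $S_n + S_n$ contains all of $(2r + q\Z^2) \cap B_n$, and finally $\lceil q^2 \rceil$-fold sums over the bounded set of residues $\{2r + (\text{class reps})\}$, i.e. translating by a fixed finite set $F_\delta$ of size $O(q^2)$, recover all of $B_n$. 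This keeps $k_\delta$ and $|F_\delta|$ depending only on $\delta$, which is exactly what the downstream application in Proposition \ref{finalunipotent} requires.
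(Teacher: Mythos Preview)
Your proposal has genuine gaps at both key steps.

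\textbf{The saturation step (ii) is false.} You claim that a symmetric $T' \subset B_N(\Z^2)$ with $|T'| > \tfrac12|B_N|$ satisfies $T'+T' \supset B_N$, arguing that for $v \in B_N$ the sets $T'$ and $v-T'$ lie in $B_{2N}$ and have total size exceeding $|B_{2N}|$. But $|B_{2N}| \approx 4|B_N|$, so $|T'|+|v-T'| > |B_N|$ does \emph{not} exceed $|B_{2N}|$. Concretely, take $T' = \{(x,y) \in B_N : x \text{ even}\}$: this is symmetric with density $>1/2$, yet $T'+T'$ lies in the even-$x$ sublattice and misses $(1,0)$. The pigeonhole argument you invoke works in a finite group, not in a box.

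\textbf{The residue-class reduction also fails.} You propose finding a bounded $q=q(\delta)$ and a class $r \in (\Z/q\Z)^2$ in which $S_n$ has density $>1/2$. But if $S_n$ is equidistributed across residue classes modulo every $q$ (which is generic), every class has density $\approx \delta$, and for $\delta<1/2$ no class works. Pigeonhole only guarantees a class of density $\ge \delta$, not $>1/2$.

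\textbf{The density-increment step (i) stalls on sublattices.} If $S_n = q\Z^2 \cap B_n$, then $S_n+S_n \subset q\Z^2 \cap B_{2n}$ and the density in $B_{2n}$ equals the density of $S_n$ in $B_n$; there is no $c_\delta>1$. You note that lines are ruled out by the density hypothesis, but two-dimensional sublattices are not.

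The paper's argument avoids all of this with a direct pigeonhole along lines. Fix $M$ with $1/M<\delta$ and set $N_\delta=(M+1)!$, $k_\delta=4N_\delta$, $F_\delta=B_{N_\delta}$. For any nonzero $v\in B_{\lfloor n/N_\delta\rfloor}$ parallel to an axis, partition $B_n$ into arithmetic progressions with step $v$; each has $\ge M+1$ terms, so some progression meets $S_n$ in $\ge 2$ points $a,b$ with $b-a=iv$, $|i|\le M$. By symmetry $iv\in S_n+S_n$, and since $i \mid (M+1)!$ one gets $N_\delta v \in \sum_{2N_\delta} S_n$. This puts the sublattice $N_\delta\Z^2 \cap B_n$ inside $\sum_{4N_\delta}S_n$, and adding $F_\delta=B_{N_\delta}$ recovers all of $B_n$. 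The factorial trick is the essential idea you are missing: rather than boosting density, one finds \emph{some} small multiple $iv$ in $S_n-S_n$ and clears the unknown $i$ by taking $N_\delta$ divisible by all possible values.
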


\begin{proof}
Fix $M\in \Z_+$ with $\frac 1 M<\delta$.
Take    $N_{\delta} := \big( M + 1 \big)!$, $k_\delta = 4 N_\delta$, and $F_{\delta} := B_{N_{\delta}}(\Z^2).$
Consider a symmetric set $S_n \subset B_n (\Z^2)$ with $|S_n| > \delta |B_n(\Z^2)|$.

If $n\le N_\delta$ then $B_n (\Z^2)\subset F_\delta$ and we are done.  Thus, consider $n\ge N_\delta$.  To complete the proof the claim, we argue that the set
$$\sum_{k_\delta} S_n := \underbrace{S_n + S_n + ... + S_n}_{\text{$k_{\delta}$ times}}$$ contains the intersection of  the sublattice $N_\delta \Z^2$ with $B_n(\Z^2)$.  Adding $F_\delta$ to the sumset then implies the claim.
  Consider any  non-zero vector
$\td v\in N_\delta \Z^2\cap B_n(\Z^2)$ of the form $(\td \ell, 0)$ for some $\td \ell \in [-n,n]\cap N_\delta \Z$.
Then $\td v = N_\delta v$ where $v= (\ell,0)$ is such that $0< |\ell |\le \lfloor n N_\delta\inv \rfloor$.

Consider the equivalence relation in $B_n(\Z^2)$ defined by declaring that two elements $x,y \in R(n)$ are equivalent if $x-y$ is an integer multiple of $v$.
Each equivalence class is of the form $$C_x = \{ ...,x - v, x, x + v, x + 2v,....\}.$$
As  $|S_n| \geq \frac{1}{M} |B_n(\Z^2)|$, there exists  one equivalence class $C_x$  such that $|C_x \cap S_n| \geq \frac{1}{M} |C_x|$.
Since $0< |\ell |\le \lfloor n N_\delta\inv \rfloor$, each equivalence class contains at least $M+1$ elements and hence $C_x \cap S_n$ contains at least two elements $a,b$ with $b= a + i v $ for $|i|\le M$.
In particular, since  $a-b = iv$, we have $iv \in S_n + S_n$.  As $i$ divides $N_{\delta}$, we have that $\td v = N_{\delta}v \in  \sum_{2N_{\delta}} S_n$.

Similarly, for $n\ge N_\delta$ and any  $\td u\in N_\delta \Z^2\cap B_n(\Z^2)$ of the form $(0, \td \ell)$  we have
$\td u \in  \sum_{2N_{\delta}} S_n$.  Then $$\td u  + \td v\in \sum_{4N_{\delta}} S_n$$
completing the proof. \end{proof}

\begin{proof}[Proof of Proposition \ref{finalunipotent}] Given $\epsilon'>0$,  let  $\delta'$ and $ N_{\e'}'$ be given by Lemma \ref{proportion}.  Let $S_n := GU_{\e',n}$ be as in \eqref{eq:racistPOTUS}  and take $k_\delta'$ and $F_{\delta'}$ as in  Lemma \ref{babycombinatorics}.  Note that $GU_{\e',n}$  is symmetric by definition.
Take $N\ge N'_{\e'}$  such  that $F_{\delta'} \in GU_{\e',n}$ whenever $n \geq N $. For $n\ge N$ and any $u_{a,b} \in B_n(\Z^2)$ we have that $u_{a,b} \in F_\delta + S_n + S_n+ ... + S_n $ ($k_{\delta'}$ times) by Proposition \ref{babycombinatorics}. Proposition \ref{trivialchain} then implies   that $u_{a,b} \in GU_{ \e', (k_{\delta'} +1)n}$ so $\|D(u_{a,b})^{\pm1}\| \leq e^{ \e' \log( (k_{\delta'} +1)n ) }$.  With  $\epsilon '= \epsilon /2$, take $N_\epsilon\ge \max\{N,(k_{\delta'} +1)\}. $ Then for all $n\ge N_\epsilon$ we have $$ \e' \log( (k_{\delta'} +1)n) \le  \e \log(n)$$
whence
$$\|D(u_{a,b})^{\pm1}\|\le e^{\epsilon \log(n)}$$
and for $u_{a,b} \in B_n(\Z^2)$ with $n\ge N_\epsilon$.
\end{proof}

\section{Proof of Theorem \ref{main2}}
\subsection{Reduction to the restriction of an action by $\Lambda_{i,j}$}
We recall the work of Lubotzky, Mozes, and Raghunathan, namely   \cite{MR1244421} and \cite{MR1828742}, which establishes quasi-isometry   between the word and Riemannian metrics on lattices in higher-rank semisimple Lie groups.
In the special case of $\Gamma= \Sl(m,\Z)$ for $m\ge 3$,
 in  \cite[Corollary 3]{MR1244421} it is shown that any element $\gamma$ of $\Sl(m,\Z)$ is written as a product of at most $m^2$ elements $\gamma_i$.  Moreover each $\gamma_i$ is contained some $\Lambda_{i,j}\simeq \Sl(2,\Z)$ and the word-length of each $\gamma_i$ is  proportional to the word-length of $\gamma$.

 Thus, to establish that an action $\alpha\colon \Gamma\to \Diff^1(M)$ has uniform subexponential growth of derivatives in  Theorem \ref{main2}, it  is sufficient to show that the restriction  $\restrict {\alpha}{\Lambda_{i,j}}\colon \Gamma\to \Diff^1(M)$ has uniform subexponential growth of derivatives for each $1\le i\neq j \le m$.  We emphasize that to measure subexponential growth of derivatives, the word-length on ${\Lambda_{i,j}}$ is measured as the word-length as embedded in $\Sl(m,\Z)$ (which is quasi-isometric to the Riemannian metric on $\Sl(m,\R)$) rather than the intrinsic word-length in $\Lambda_{i,j}\simeq \Sl(2,\Z)$ (which is not quasi-isometric to the Riemannian metric on $\Sl(2,\R)$).

As the Weyl group acts transitively on the set of all $\Lambda_{i,j}$, it is   sufficient to consider a fixed $\Lambda_{i,j}$.  Thus to deduce Theorem \ref{main2},  in the remainder of this section we establish the following, which is the  main proposition of the paper.
\begin{proposition}\label{prop:maybeweshouldstatethemainresultatsomepoint}
For any action $\alpha\colon \Gamma\to \Diff^1(M)$ as in  Theorem \ref{main2},  the restricted action $\restrict {\alpha}{\Lambda_{1,2}}\colon \Gamma\to \Diff^1(M)$ has  uniform subexponential growth of derivatives.
\end{proposition}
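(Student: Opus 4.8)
The plan is to argue by contradiction, reusing the structure of \cite{BFH} but controlling both escape of mass and escape of Lyapunov exponents with the machinery of Section \ref{section:preliminaries}. Suppose the restricted action $\restrict{\alpha}{\Lambda_{1,2}}$ does \emph{not} have uniform subexponential growth of derivatives. By \cite{MR1244421} the word-length of an element of $\Lambda_{1,2}$ as embedded in $\Sl(m,\Z)$ is quasi-isometric to its distance to $\Id$ measured in $H_{1,2}$ (which, being geodesically embedded, agrees with the distance in $G$). Hence there is $\lambda>0$ and a sequence $\gamma_n\in\Lambda_{1,2}$ with $d(\gamma_n,\Id)\to\infty$ and $\|D\alpha(\gamma_n)\|\ge e^{\lambda d(\gamma_n,\Id)}$. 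Passing to the suspension $M^\alpha$ and using the Cartan decomposition $\gamma_n=k_n a^{t_n}k_n'$ inside $H_{1,2}$ together with boundedness of $K$, each $\gamma_n$ determines a point $x_n$ of $X_{1,2}=H_{1,2}/\Lambda_{1,2}$ and an $a^t$-orbit segment of length comparable to $t_n$, itself comparable to $d(\gamma_n,\Id)$, along which the fiberwise derivative cocycle grows at least like $e^{\lambda' t_n}$ for some $\lambda'>0$.

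The first difficulty is that such an orbit segment may travel far into the cusp of $X_{1,2}$. However, an excursion of an $a^t$-orbit into the cusp of $\Sl(2,\R)/\Sl(2,\Z)$ corresponds via the return cocycle $\beta$ to passing through fundamental domains translated by large powers of a single elementary unipotent; since $X_{1,2}$ is $\Q$-rank one, $\beta$ takes unipotent values along the whole excursion. By Proposition \ref{unipotentisgood} these unipotent return words contribute only subexponentially to the fiberwise derivative, so the growth $e^{\lambda' t_n}$ must in fact be accumulated along the part of the orbit that drifts only sublinearly into the cusp. Making this quantitative — this is the content of Subsection \ref{maximal} — and passing to subsegments, I obtain $a^t$-orbit segments $\zeta_n$ in $X_{1,2}$ of lengths $\ell_n\to\infty$ which drift only $o(\ell_n)$ into the cusp while the fiberwise cocycle still grows like $e^{\lambda''\ell_n}$ with $\lambda''>0$.

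Next I would promote the $\zeta_n$ to a sequence of measures $\mu_n$ on $M^\alpha$ whose subsequential limits are $A$-invariant with a positive fiberwise Lyapunov exponent, where $A$ is the full Cartan subgroup of $G$ containing $\{a^t\}$. I choose a well-adapted \emph{abelian} unipotent subgroup $N'$ of $G$ (in the spirit of the $\Sl(2,\R)\ltimes\R^2$ picture of Subsection \ref{sec:mutualmastication}) so that the $N'$-orbit of every point along each $\zeta_n$ is a closed torus in $G/\Gamma$ behaving well under translation by $A$, and I average the lifted orbit segments over a \Folner sequence $F_n$ in the solvable group $AN'$ that is linearly large in the $A$-direction and exponentially large in the $N'$-direction. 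Because closed horocycles in the cusp equidistribute to Haar when flowed backwards by the geodesic flow, and because the $\zeta_n$ do not drift too deep, the family $\{\mu_n\}$ has uniformly exponentially small mass in the cusps; here Proposition \ref{prop:bananas}, Lemma \ref{lemma:translates} and the Kleinbock--Margulis non-divergence estimate do the work. Since on the region over which we average the return cocycle restricted to $N'$ takes unipotent values, Proposition \ref{unipotentisgood} shows that the $N'$-averaging costs nothing in the fiberwise exponent, so by Lemma \ref{lemma:averagingsuspension} and Lemma \ref{lemma:firstexponents} any subsequential limit $\mu$ of $\{\mu_n\}$ is $AN'$-invariant, has exponentially small mass in the cusps, and satisfies $\lambda_{\top,\mu,a}^F>0$ for a suitable $a\in A$.

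To finish, I would show that any $AN'$-invariant probability measure on $M^\alpha$ projects to the Haar measure on $G/\Gamma$: by Ratner's measure classification and equidistribution theorems (Theorem \ref{thm:ratner}), invariance under the root subgroup $U^{\beta'}$ for a well-chosen $\beta'\in\{\alpha_2,\delta\}$ spreads, exactly as in Proposition \ref{proposition:averaging}, to invariance under all positive root subgroups, and then averaging by $U^{\hat\beta}$ yields Haar. Once $\pi_*\mu$ is Haar while $\mu$ retains a nonzero fiberwise exponent, the argument concludes precisely as in \cite[Section 5.5]{BFH}: averaging over $A$ and invoking \cite[Proposition 5.1]{AWBFRHZW-latticemeasure} produces a $G$-invariant measure on $M^\alpha$ with a nonzero fiberwise Lyapunov exponent, contradicting Zimmer's cocycle superrigidity theorem in the dimension range of Theorem \ref{main2}. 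The step I expect to be the main obstacle is the simultaneous construction of the subsegments $\zeta_n$ (drifting only sublinearly into the cusp), the subgroup $N'$, and the \Folner sets $F_n$: one must arrange that $\{\mu_n\}$ is tight in the strong quantitative sense of uniformly exponentially small mass in the cusps \emph{and} that the $N'$-direction of the averaging sees only unipotent return words, so that Proposition \ref{unipotentisgood} applies and no fiberwise exponent is lost — reconciling these two requirements while using merely subexponential (rather than bounded) growth for unipotents is the crux.
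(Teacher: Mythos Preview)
Your outline matches the paper's strategy essentially step for step: the contradiction setup via $\chi_{\max}>0$, the use of Proposition~\ref{unipotentisgood} in Subsection~\ref{maximal} to trade cusp excursions for sublinear drift, the \Folner averaging over $AN'$ with $N'$ the abelian unipotent $\R^{m-1}$, the preservation of the fiberwise exponent under $N'$-averaging (Claim~\ref{siegelunipotent}), and the endgame via \cite[Proposition~5.1]{AWBFRHZW-latticemeasure} and Zimmer's cocycle superrigidity.

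Two technical attributions are off, and the first is where most of the labor actually lies. For the uniform exponentially-small-mass-in-the-cusps estimate on $\{\mu_n\}$ you invoke Proposition~\ref{prop:bananas} and Kleinbock--Margulis; but that proposition controls only unipotent averages of a measure already having exponentially small mass, whereas here the \Folner set also translates by $b^s$ with $s$ \emph{linear} in $t_n$, which a priori throws points linearly into the cusp and is not covered by Proposition~\ref{prop:bananas}. The paper instead proves Proposition~\ref{maincusps} by an explicit systole computation (Section~\ref{section:goodcusps}): one shows directly that the $N'$-orbit of $a^t\prod c_i^{s_i}x_n$ is a closed torus, and that translation by $b^s$ expands it so that its normalized measure has uniformly bounded $\int\delta^{-\eta}$. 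Your heuristic (``closed horocycles equidistribute when flowed backwards'') is exactly the paper's, but the rigorous implementation is this matrix-level estimate, not quantitative non-divergence. Second, for the step ``$AN'$-invariant $\Rightarrow$ projects to Haar'' you route through Proposition~\ref{proposition:averaging}, which is tailored to $H$-invariant measures with $H=\Sl(2)\times\Sl(m-2)$; the paper's argument is simpler and more direct: $A$-invariance plus $N'$-invariance gives, by Theorem~\ref{thm:ratner}\ref{ratner3}, invariance under the opposite group $-N'$, and since $N'$ and $-N'$ together generate $G$ the projection is Haar.
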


\subsection{Orbits with large fiber growth yet low depth in the cusp}\label{maximal}


To prove Proposition \ref{prop:maybeweshouldstatethemainresultatsomepoint}, as in Section \ref{sec:mutualmastication} we consider a canonical embedding $X=H_{1,2}/\Lambda_{1,2}$ of $\Sl(2,\R)/\Sl(2, \Z)$ in $\Sl(m,\R)/\Sl(m,\Z)$.
Write  $$a^t := \text{diag}(e^{t/2}, e^{-t/2})\subset \Sl(2,\R)$$ for the geodesic flow on $X$.
Let $X_{\text{thick}}$ be a fixed compact   $\So(2)$-invariant ``thick part'' of $X$;  {\blue that is, relative to the Dirichlet  domain $\mathcal D$ in \eqref{eq:dirc}, points in $\So(2)\backslash X_{\text{thick}}$  corresponds to the points in  $\So(2) \backslash \mathcal D$  whose imaginary part is bounded above, say, by   $17$.}


A geodesic curve in the modular surface of length $t$ corresponds  to the image of  an orbit $\zeta= \{a^s (x)\}_{0\leq s \leq t}$ where $x \in X$ and $t \geq 0$.  Denote the length of such a curve by $l(\zeta)$. For an orbit   $\zeta= \{a^s (x)\}_{0\leq s \leq t}$ of $\{a^t\}$ in $X$ we define $$c(\zeta) := \log (\|D_x(a^t)\|_{\text{Fiber}}).$$


The following claim is straightforward from the compactness  $X_{\text{thick}}$ and the quasi-isometry between the word and Riemannian metrics on $\Gamma$.  

\begin{claim}\label{slowgrowththick} For an action $\alpha\colon \Sl(m,\Z) \to \diff^1(M)$, the following statements are equivalent:
\begin{enumerate}
\item  the restriction $\restrict \alpha {\Lambda_{1,2}}\colon \Lambda_{1,2} \to \diff^1(M)$ has uniform subexponential growth of  derivatives;
\item for any $\e>0$ there is a $t_{\e}>0$ such that for any orbit  $\zeta =  \{a^s (x)\}_{0\leq s \leq t}$ with  $x\in \Xt$, $a^t(x) \in \Xt$, and $l(\zeta) = t \geq {t_\e}$ we have  $$ c(\zeta) \leq \e l(\zeta).$$
\end{enumerate}
\end{claim}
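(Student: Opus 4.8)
The plan is to prove the two implications separately, in both cases routing through the return cocycle $\beta$ and the Lubotzky--Mozes--Raghunathan quasi-isometry, and using the hypothesis ``endpoints in $\Xt$'' precisely to guarantee that all the fiberwise-distortion constants are uniform despite the non-compactness of $G/\Gamma$.

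First I would set up the basic dictionary between geodesic segments and elements of $\Lambda_{1,2}$. Fix the identity coset $x_0 \in X$ and note we may assume $\Xt$ contains the (compact) $\So(2)$-orbit of $x_0$. For a segment $\zeta = \{a^s x\}_{0\le s\le t}$ with $x, a^tx \in \Xt$, put $\gamma := \beta(a^t,x)\in\Lambda_{1,2}$; lifting $x$ and $a^tx$ to a fixed fundamental domain, both lifts stay in a fixed compact subset of $H_{1,2}$, so the fiberwise metric of $M^\alpha$ over these fibers is uniformly comparable to the metric on $M$, and the identity $a^t\cdot[(\tilde x,p)] = [(\tilde{x'},\alpha(\gamma)(p))]$ gives $\bigl|c(\zeta) - \log\|D\alpha(\gamma)\|\bigr|\le C_0$ with $C_0$ independent of $\zeta$. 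Conversely, for any $\gamma\in\Lambda_{1,2}$ write the $\So(2)A\So(2)$ decomposition $\gamma = k_1 a^\tau k_2$ with $k_i\in\So(2)$, $\tau\ge 0$; by \eqref{normdistance} one has $\tau = d(\gamma,\Id)+O(1)$, and since $H_{1,2}$ is geodesically embedded, $\tau$ is comparable (up to multiplicative and additive constants $\kappa,\kappa'$) to the word-length $\ell(\gamma)$ of $\gamma$ in $\Sl(m,\Z)$ by \cite{MR1244421}. The segment $\zeta_\gamma := \{a^s(k_2 x_0)\}_{0\le s\le\tau}$ has length $\tau$, its endpoints $k_2x_0$ and $a^\tau k_2x_0 = k_1^{-1}x_0$ lie in the $\So(2)$-orbit of $x_0$ and hence in $\Xt$, and the chain rule together with the uniform bound on $\|D(k_i)\|_{\text{Fiber}}$ over the compact group $\So(2)$ yields $\log\|D\alpha(\gamma)\|\le c(\zeta_\gamma)+C_0'$.

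For the implication (1)$\Rightarrow$(2): given $\epsilon>0$ and $\zeta$ as above with $\gamma = \beta(a^t,x)$, Lemma \ref{lemma:fromlmr} gives $\ell(\gamma)\le Cd(a^t,e)+Cd(x,\Gamma)+C$; here $d(x,\Gamma)$ is bounded on $\Xt$ and $d(a^t,e)\le t+O(1)$ by \eqref{normdistance}, so $\ell(\gamma)\le C_1 t + C_1$. Applying hypothesis (1) with exponent $\epsilon/(2C_1)$ gives $\log\|D\alpha(\gamma)\|\le \log C_{\epsilon/(2C_1)}+\tfrac{\epsilon}{2}t+O(1)$, hence $c(\zeta)\le \tfrac{\epsilon}{2}t+K(\epsilon)$, and choosing $t_\epsilon := 2K(\epsilon)/\epsilon$ yields $c(\zeta)\le\epsilon\,l(\zeta)$ once $l(\zeta)=t\ge t_\epsilon$. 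For (2)$\Rightarrow$(1): given $\epsilon>0$, apply hypothesis (2) with exponent $\epsilon/\kappa$ to the segments $\zeta_\gamma$ of length $\tau\ge t_{\epsilon/\kappa}$, so that $c(\zeta_\gamma)\le(\epsilon/\kappa)\tau$ and therefore $\log\|D\alpha(\gamma)\|\le\epsilon\,\ell(\gamma)+O(1)$; the finitely many $\gamma\in\Lambda_{1,2}$ with $\tau<t_{\epsilon/\kappa}$ (finite since $\Lambda_{1,2}$ is discrete in $H_{1,2}$ and $\tau = d(\gamma,\Id)+O(1)$) contribute only a bounded multiplicative constant, giving $\|D\alpha(\gamma)\|\le C_\epsilon e^{\epsilon\ell(\gamma)}$ for all $\gamma\in\Lambda_{1,2}$.

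The only genuinely delicate point is the first paragraph: one must verify that the two-sided comparison between $c(\zeta)$ and $\log\|D\alpha(\gamma)\|$ (and the bounds on the $\So(2)$-factors) is uniform in $\zeta$ and $\gamma$, and this is exactly where the hypothesis that the endpoints of $\zeta$ lie in the compact set $\Xt$ — equivalently, that the fundamental-domain lifts of those endpoints stay in a fixed compact subset of $H_{1,2}$ — is used to neutralize the non-compactness of $G/\Gamma$. Everything else is bookkeeping with \eqref{normdistance}, Lemma \ref{lemma:fromlmr}, and the geodesic embedding of $H_{1,2}$.
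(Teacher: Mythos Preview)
Your proof is correct and follows exactly the route the paper has in mind: the paper does not give a detailed argument for this claim, stating only that it ``is straightforward from the compactness of $X_{\mathrm{thick}}$ and the quasi-isometry between the word and Riemannian metrics on $\Gamma$.'' Your write-up unpacks precisely those two ingredients---the uniform comparability of fiber metrics over a compact set (via the return cocycle and the Siegel-domain property of the metric on $M^\alpha$) and the Lubotzky--Mozes--Raghunathan comparison of $\ell(\gamma)$ with $d(\gamma,\Id)$---and carries out the bookkeeping carefully in both directions.
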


Define the maximal fiberwise growth rate of orbits starting and  returning to $\Xt$ to be \begin{equation}\label{eq:fanorlamps}\chi_{\mathrm{max}} := \limsup_{t > 0} \left\{\sup \left\{   \frac{\log \|\restrict{D_x(a^t)}{{\text{Fiber}}}\|}{t} : x\in \Xt, a^t(x) \in \Xt\right\}\right\}.\end{equation} 
Using Claim \ref{slowgrowththick}, to establish Proposition \ref{prop:maybeweshouldstatethemainresultatsomepoint} it is sufficient to show  that $\chi_{\mathrm{max}} = 0$.

For an orbit  $\zeta = \{a^s (x)\}_{0\leq s \leq t}$,   define the following function which measures the depth of $\zeta$ into the cusp:
$$d(\zeta) = \max_{0 \leq s \leq t} \text{dist}(a^s(x), \Xt).$$

The following lemma is the main result of this subsection.

\begin{lemma}\label{lemma:maximal} If  $\chi_{\text{max}} >0$ then  there exists a sequence of orbits $\zeta_n=\{a^s (x_n)\}_{0\leq s \leq t_n}$ with  $x_n\in \Xt$, $a^{t_n}(x_n) \in \Xt$, and   $t_n = l(\zeta_n) \to \infty$ 
such that
\begin{enumerate}
\item  $\displaystyle c(\zeta_n) \geq \frac{\chi_{\mathrm{max}}}{2} t_n$;
\item  $\displaystyle \lim_{n \to \infty} \frac{d(\zeta_n)}{t_n} = 0$.
\end{enumerate}
\end{lemma}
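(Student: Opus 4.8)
The plan is to argue against the \emph{size} of $\chi_{\mathrm{max}}$: assuming $\chi_{\mathrm{max}}>0$ I will produce long orbit segments whose fiberwise growth rate is still close to $\chi_{\mathrm{max}}$ but whose depth in the cusp is sublinear in their length. The mechanism is that deep excursions into the cusp of $X=\Sl(2,\R)/\Sl(2,\Z)$ carry almost no fiberwise growth (by Proposition \ref{unipotentisgood}), so a segment which nearly maximizes the growth rate cannot afford to spend a definite fraction of its length deep in the cusp; excising the deepest excursion and keeping a near‑rate‑maximizing piece then produces the desired $\zeta_n$.

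I will first record two elementary facts. (i) Along a concatenation of orbit segments $c$ is subadditive, $c(\xi_1\ast\xi_2)\le c(\xi_1)+c(\xi_2)$, because $\restrict{D_x(a^{t_1+t_2})}{F}=\restrict{D_{a^{t_1}x}(a^{t_2})}{F}\circ\restrict{D_x(a^{t_1})}{F}$; and $s\mapsto\text{dist}(a^sx,\Xt)$ is $1$‑Lipschitz. (ii) By temperedness (Lemma \ref{lemma:tempered}), on a fixed compact subset of $X$ and a bounded time interval $c$ of an orbit segment is bounded; in particular a segment of length $\le t_0$ with endpoints in $\Xt$ stays in the compact set $\{x:\text{dist}(x,\Xt)\le t_0/2\}$ and has $|c|\le C(t_0)$. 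The key geometric–dynamical input concerns an \emph{excursion out of $\Xt$}, i.e.\ a maximal subinterval $[\alpha,\beta]$ on which $a^sx\notin\Xt$: during it the orbit lives in the cuspidal collar of $X$, so every crossing of the fundamental domain is by the parabolic $u=E_{1,2}$, whence the return cocycle over $[\alpha,\beta]$ is $u^{\pm n}$ for some $n\in\Z$; the geometry of the modular cusp gives $|\,(\beta-\alpha)-2h\,|\le C$ and $|\,\log^+\!|n|-h\,|\le C$, where $h=\max_{[\alpha,\beta]}\text{dist}(\cdot,\Xt)$ is the depth of the excursion. Combining $u^{\pm n}$ with Proposition \ref{unipotentisgood} and \eqref{unipotentgrowth}: for each $\epsilon>0$ there is $h_0(\epsilon)$ such that either $h\le h_0(\epsilon)$, or $c(\{a^sx\}_{[\alpha,\beta]})\le C(\epsilon)+\tfrac{\epsilon C_2}{2}(\beta-\alpha)$.

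Now fix a small $\epsilon>0$ and use $\chi_{\mathrm{max}}=\limsup_{t\to\infty}(\cdots)$ in both directions: there is $t_0=t_0(\epsilon)$ so that every segment of length $\ge t_0$ with endpoints in $\Xt$ has $c<(\chi_{\mathrm{max}}+\epsilon)\cdot(\text{length})$, and there are segments $\omega_k=\{a^sy_k\}_{0\le s\le T_k}$ with $T_k\to\infty$, endpoints in $\Xt$, and $c(\omega_k)\ge(\chi_{\mathrm{max}}-1/k)T_k$. For each $k$ pick a subsegment $\xi_k\subseteq\omega_k$ with endpoints in $\Xt$, length $\ge\sqrt{T_k}$, which nearly maximizes the rate $c(\cdot)/l(\cdot)$ among such subsegments; then $l(\xi_k)\ge\sqrt{T_k}\to\infty$ and $\chi_{\mathrm{max}}-2/k\le c(\xi_k)/l(\xi_k)\le\chi_{\mathrm{max}}+\epsilon$. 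To control $d(\xi_k)=h$, let $[\alpha,\beta]\subseteq\xi_k$ be a deepest excursion out of $\Xt$ (depth $h$, length in $[2h,2h+C]$, endpoints in $\Xt$); assume $h>h_0(\epsilon)$, otherwise $d(\xi_k)$ is already bounded. Remove the ``block'' = $[\alpha,\beta]$ together with any adjacent leftover piece of $\xi_k$ of length $<t_0$ (these have bounded $c$ by (ii)); what remains of $\xi_k$ is at most two segments with endpoints in $\Xt$, each empty or of length $\ge t_0$, hence with $c<(\chi_{\mathrm{max}}+\epsilon)\cdot(\text{length})$, while $c(\text{block})\le C(\epsilon)+\tfrac{\epsilon C_2}{2}(\beta-\alpha)$ and $\text{len(leftovers)}=l(\xi_k)-\text{len(block)}\le l(\xi_k)-2h$. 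Feeding these into $c(\xi_k)\le c(\text{leftovers})+c(\text{block})$ and $c(\xi_k)\ge(\chi_{\mathrm{max}}-2/k)l(\xi_k)$ yields, after a short computation,
\[
d(\xi_k)=h\;\le\;\frac{\epsilon(1+C_2/2)+2/k}{2\chi_{\mathrm{max}}}\,l(\xi_k)+C(\epsilon).
\]
(If both leftover segments were empty, $\xi_k$ would be essentially one deep excursion, so $c(\xi_k)\le C(\epsilon)+\tfrac{\epsilon C_2}{2}l(\xi_k)$, contradicting $c(\xi_k)\ge(\chi_{\mathrm{max}}-2/k)l(\xi_k)$ and $l(\xi_k)\to\infty$; so this does not occur for large $k$.) Finally I diagonalize: taking $\epsilon=\epsilon_j\to0$ and then $k=k_j$ large (so $2/k_j\le\epsilon_j$, $C(\epsilon_j)/\sqrt{T_{k_j}}\le\epsilon_j$, and $\sqrt{T_{k_j}}\ge j$), the segments $\zeta_j:=\xi_{k_j}$ have endpoints in $\Xt$, $l(\zeta_j)\to\infty$, $c(\zeta_j)\ge\tfrac{\chi_{\mathrm{max}}}{2}l(\zeta_j)$, and $d(\zeta_j)/l(\zeta_j)\to0$, as required.

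I expect the main obstacle to be the bookkeeping in the depth‑control step: one must excise the deep excursion while simultaneously (a) keeping both endpoints of what remains inside $\Xt$, (b) ensuring each leftover piece is either long enough for the rate bound $c<(\chi_{\mathrm{max}}+\epsilon)\cdot(\text{length})$ to apply or short enough to have negligible $c$, and (c) guaranteeing the surviving near‑maximizer is itself long, which is why the rate is maximized only over subsegments of length $\ge\sqrt{T_k}$. A secondary technical point is establishing the two cusp estimates $(\beta-\alpha)=2h+O(1)$ and $\log^+|n|=h+O(1)$ and that the return cocycle over an excursion out of $\Xt$ is genuinely a power of $E_{1,2}$, so that Proposition \ref{unipotentisgood} is applicable.
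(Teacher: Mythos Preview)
Your proof is correct and rests on the same mechanism as the paper's: cusp excursions contribute negligibly to $c$ by Proposition~\ref{unipotentisgood} (your ``key geometric--dynamical input'' is exactly the paper's Claim~\ref{pathcusp}), so a segment realizing rate close to $\chi_{\max}$ cannot have depth a positive fraction of its length. The paper packages this as a contradiction argument: assume $\beta:=\lim d(\zeta_n)/t_n>0$, decompose $\zeta_n$ into at most $\lfloor 2/\beta\rfloor+1$ alternating shallow pieces $\alpha_i$ (depth $\le\beta t_n/2$) and deep cusp excursions $\omega_i$, bound $c(\alpha_i)\le(\chi_{\max}+\epsilon)l(\alpha_i)$ and $c(\omega_i)\le\epsilon\,l(\omega_i)$, sum, and send $\epsilon\to0$. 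You instead give a direct quantitative bound by excising only the single deepest excursion; this is if anything slightly slicker, since you avoid controlling the number of pieces. Two of your steps are unnecessary, however: the passage to a ``nearly rate-maximizing subsegment'' $\xi_k\subseteq\omega_k$ can be dropped (the argument applies verbatim to $\omega_k$ itself, which already has rate $\ge\chi_{\max}-1/k$ and length $T_k\to\infty$), and the diagonalization over $\epsilon$ is superfluous, since your depth bound $h\le\bigl((\epsilon'+2/k)/(2\chi_{\max})\bigr)l+C(\epsilon)$ holds for \emph{every} $\epsilon>0$, giving $\limsup_k d(\omega_k)/T_k\le\epsilon'/(2\chi_{\max})$ for all $\epsilon$ and hence $=0$ for the fixed sequence $\omega_k$.
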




We first have the following claim.
\begin{claim}\label{pathcusp}
 For any $\e>0$ there exists $t_{\e}$ with the following properties: for any $x\in \partial \Xt$ and $t\ge t_\epsilon$ such that  $a^s(x)\in  X\sm \Xt$ for all $0< s < t$ and $a^t(x) \in \partial \Xt$ then, for the orbit $\zeta =  \{a^s (x)\}_{0\leq s \leq t}$, we have
$$c(\zeta) \leq \e t = \epsilon l(\zeta).$$
\end{claim}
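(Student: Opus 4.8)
The plan is to exploit the fact that a geodesic excursion into the cusp is recorded by the return cocycle $\beta$ as a power $E_{1,2}^{k}$ of a parabolic generating a cusp stabilizer, with $\log|k|$ bounded by a constant times the length $t=l(\zeta)$; since Proposition~\ref{unipotentisgood} already gives subexponential growth of the fiberwise derivative along powers of $E_{1,2}$, and $d(E_{1,2}^{k},\Id)=O(\log|k|)$ by \eqref{unipotentgrowth}, the estimate $c(\zeta)\le\epsilon\, l(\zeta)$ follows. I would organize the argument in three steps.

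\textbf{Step 1: reduce $c(\zeta)$ to the return cocycle.} Given an excursion $\zeta=\{a^{s}(x)\}_{0\le s\le t}$ with $x,a^{t}(x)\in\partial\Xt$, set $\gamma_{\zeta}:=\beta(a^{t},x)\in\Sl(m,\Z)$. Because $x$ and $a^{t}(x)$ lie in the compact set $\partial\Xt$, their representatives in the fixed fundamental domain $\calF$ lie in a fixed compact subset of $G$; unwinding the construction of $M^{\alpha}$ together with the $\Gamma$-invariance of the metric on $G\times M$ and the property that the fiber metrics over points of a Siegel set are uniformly comparable along bounded pieces, one obtains a constant $C_{0}$, independent of $\zeta$, with
\[
\bigl|\, c(\zeta)-\log\|D\alpha(\gamma_{\zeta})\| \,\bigr|\le C_{0}.
\]

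\textbf{Step 2: $\gamma_{\zeta}$ is essentially a power of $E_{1,2}$.} The set $X\sm\Xt$ is a connected neighborhood of the unique cusp of the modular surface; lifting $\zeta$ to the upper half plane $\So(2)\backslash\Sl(2,\R)$ I may assume it is a geodesic segment contained in the horoball $\mathcal B=\{\operatorname{Im}(z)>17\}$, with both feet on $\partial\mathcal B$. Since every horoball of this quotient is $\Sl(2,\Z)$-equivalent to $\mathcal B$, whose stabilizer is $\langle\pm E_{1,2}\rangle$ (the parabolic $z\mapsto z+1$), the cocycle satisfies
\[
\gamma_{\zeta}=\gamma_{1}\,E_{1,2}^{\,k}\,\gamma_{2},
\]
where $k\in\Z$ and $\gamma_{1},\gamma_{2}$ range over a fixed finite subset of $\Sl(m,\Z)$ absorbing the bounded transitions between $\calF$, the boundary $\partial\Xt$, and $\mathcal B$. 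A routine hyperbolic computation bounds the horizontal winding: a geodesic excursion of length $t$ reaches height at most $C_{1}e^{t/2}$, hence its two feet differ horizontally by at most $C_{1}e^{t/2}$, so $|k|\le C_{1}e^{t/2}$. I expect this step, not the analytic estimates, to be the main obstacle: making ``up to a fixed finite set'' precise requires matching a fundamental domain for $\Sl(m,\Z)$ with the cusp geometry of the embedded modular surface, and it is precisely here that the $\Q$-rank-one structure of the subgroups $\Lambda_{i,j}$ is genuinely used — for a generic trajectory in $\Sl(m,\R)/\Sl(m,\Z)$ the cocycle of a deep cusp excursion need not be unipotent, and only the confinement of $\zeta$ to the copy of $\Sl(2,\R)/\Sl(2,\Z)$ forces the parabolic conclusion.

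\textbf{Step 3: conclude.} Combining Steps 1 and 2, $c(\zeta)\le\log\|D\alpha(E_{1,2}^{k})\|+C_{2}'$ for a constant $C_{2}'$ absorbing $C_{0}$ and $\max\{\log\|D\alpha(\gamma_{1})\|+\log\|D\alpha(\gamma_{2})\|\}$ over the finite set. Fix $\epsilon>0$ and put $\epsilon_{1}:=\epsilon/(2C_{2})$, with $C_{2}$ as in \eqref{unipotentgrowth}, and let $N_{\epsilon_{1}}$ be given by Proposition~\ref{unipotentisgood} (applied to $E_{1,2}$, and to its Weyl conjugates for negative powers). If $|k|<N_{\epsilon_{1}}$, then $\gamma_{\zeta}$ ranges over a fixed finite set, so $c(\zeta)=O(1)$ and hence $c(\zeta)\le\epsilon t$ once $t$ is large. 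If $|k|\ge N_{\epsilon_{1}}$, then Proposition~\ref{unipotentisgood} and \eqref{unipotentgrowth} give
\[
\log\|D\alpha(E_{1,2}^{k})\|\le\epsilon_{1}\, d(E_{1,2}^{k},\Id)\le\epsilon_{1}\bigl(C_{2}\log|k|+C_{3}\bigr)\le\epsilon_{1}\Bigl(\tfrac{C_{2}}{2}t+C_{2}\log C_{1}+C_{3}\Bigr),
\]
so $c(\zeta)\le\tfrac{\epsilon}{4}t+O_{\epsilon}(1)\le\epsilon t$ for $t\ge t_{\epsilon}$, with $t_{\epsilon}$ chosen large enough in terms of $\epsilon$ and the constants above. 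Taking $t_{\epsilon}$ to dominate both cases yields the assertion of the claim.
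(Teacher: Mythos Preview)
Your proposal is correct and follows essentially the same approach as the paper: the paper's proof is a one-sentence sketch stating that the return cocycle $\beta(a^{t},x)$ for an excursion into the cusp of $X=\Sl(2,\R)/\Sl(2,\Z)$ takes values in the unipotent subgroup generated by $\begin{pmatrix}1&1\\0&1\end{pmatrix}\in\Lambda_{1,2}$, and then invokes Proposition~\ref{unipotentisgood}. Your three steps are precisely the unpacking of that sketch---Step~2 makes explicit the bound $|k|\le C_{1}e^{t/2}$ on the winding number, and Step~3 combines this with \eqref{unipotentgrowth} and Proposition~\ref{unipotentisgood}, which is exactly what the paper intends.
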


Indeed, the claim follows from the fact that the value of the  return cocycle $\beta(a^s, x)$ is defined by geodesic in the cusp of $X$ is given by  a unipotent matrix of the form $\left(\begin{array}{cc}1 & n \\0 & 1\end{array}\right)\in \Lambda_{1,2}\subset \Sl(m,\Z)$ and  Proposition \ref{unipotentisgood}.

\begin{proof}[Proof of Lemma \ref{lemma:maximal}] Let $\zeta_n:= \{a^s (x_n)\}_{0\leq s \leq t_n}$ be a sequence of orbits with $x_n\in \Xt$, $a^{t_n}(x_n)\in \Xt$,  $t_n\to \infty$, and  such that  $$\chi_{\max} = \lim_{n \to \infty}\frac{ c(\zeta_n)}{ t_n}.$$
Replacing $\zeta_n$ with a subsequence, we may assume the following limit exists: $$\beta := \lim_{n \to \infty} \frac{d(\zeta_n)}{t_n}.$$

We aim to prove that $\beta = 0$. Arguing by contradiction,  suppose $0< \beta\le 1$. We decompose the orbit $$\zeta_n = 
\alpha_{k_n}\omega_{k_{n-1}}\alpha_{k_{n-1}}\cdots \omega_{1}\alpha_1$$ 
as a concatenation of smaller orbit segments $\alpha_i, \omega_i$ with the following properties:
\begin{enumerate}
\item each orbit $\alpha_i$ is  such that $d(\alpha_i) \leq \frac{\beta}{2} t_n$;
\item the endpoints of each orbit $\alpha_i$ are contained in $\Xt$;
\item  each orbit $\omega_i$ is  contained entirely in $(X\sm \Xt) \cup \partial \Xt$ with endpoints contained in $\partial \Xt$;
\item each orbit $\omega_i$  satisfies $d(\omega_i) \geq \frac{\beta}{2} t_n$  whence $l(\omega_i)\ge \frac {\beta}{2} t_n$ for $t_n$ sufficiently large.
\end{enumerate}
Note for each  $n$, that $k_n \leq \lfloor \frac{2}{\beta}\rfloor+ 1$ and  thus $k_n$ is   bounded by some $k$ independent of $n$. Additionally,  since   $\Sl(m, \Z)$  is finitely generated and (equipped with the word metric) is quasi-isometrically embedded  in  $\Sl(m, \R)$,  there exists a constant $K$ such that for any orbit  segment $\zeta$ whose endpoints are contained  in $\Xt$, we have  $c(\zeta) \leq K l(\zeta)$.
By the definition of $\chi_{\max}$, for any $\e> 0$ there is a positive constant $M_{\e}$ such that for any orbit sub-segment $\alpha_i$ 
\begin{enumerate}
\item $c(\alpha_i) \leq (\chi_{\max} + \e)l(\alpha_i)$    whenever  $l(\alpha_i) > M_\e$
\item $c(\alpha_i) \leq KM_\e$ whenever $l(\alpha_i) \leq M_{\e}$.
\end{enumerate}
From Claim  \ref{pathcusp}, for any   $\e>0$ we have, assuming that $n$ and hence $t_n$ are   sufficiently large,   that $$c(\omega_i) < \epsilon l(\omega_i)$$ for all orbit sub-segmants $\omega_i$.

Taking $n$  sufficiently large  we have
\begin{equation}\label{eqmax1}
(\chi_{\max} - \e)t_n < c(\zeta_n) \leq \sum_i c(\omega_i) + \sum_i c(\alpha_i).
\end{equation}
As we assume $\beta>0$, for all sufficiently large $n$ there exists at least one orbit sub-segment $\omega_i$ and thus for such $n$
\begin{equation}\label{eqmax2}
\sum_i c(\alpha_i) \leq kKM_{\e} + (\chi_{\max} + \e)\sum_i l(\alpha_i) \leq kKM_{\e} + (\chi_{\max} + \e)(1-  \beta/2   )t_n.
\end{equation}
From   \eqref{eqmax1} and \eqref{eqmax2} we obtain that
\begin{equation}
(\chi_{\max} - \e)t_n \leq   (\epsilon t_n) + \Big(kKM_{\e} + (\chi_{\max} + \e)(1- \beta /2)t_n\Big).
\end{equation}
Dividing by $t_n$ and taking $n \to \infty$ obtain
$$ \chi_{\max} - \e \leq \e  + (\chi_{\max} + \e)(1-  \beta/2).$$
As we assumed $\chi_{\max}>0$ and $\beta > 0$, we obtain a contradiction by taking $\e>0$ sufficiently small.  
\end{proof}

\subsection{Construction of a \Folner sequence and family averaged measures}
\label{section:folner}
Assuming that $\chi_{\mathrm{max}}$ in \eqref{eq:fanorlamps} is non-zero, we start from the orbit segments constructed in Lemma \ref{lemma:maximal} and perform an averaging procedure to obtain a family of measures $\{  \mu _n\}$ on  $M^\alpha$ whose properties lead to a contradiction.   In particular, 
the projection of any weak-$*$ limit $  \mu _\infty$ of $  \mu _n$  to $M^\alpha$ will be $A$-invariant, well behaved at the cusps, and have non-zero Lyapunov exponents.  These measures on $M^\alpha$ are obtained by averaging certain Dirac measures against \Folner sequences in a certain amenable subgroup of $G$.

Consider the copy of  $\Sl(m-1, \R)\subset \Sl(m,\R)$ as the subgroup of matrices that differ from the identity away from the $m$th row and $m$th column.  Let $N'\simeq \R^{m-1}$ be the abelian  subgroup of unipotent elements that differ from the identity only in the $m$th column; that is given a vector $r = (r_1,r_2, \dots, r_{m-1})\in \R^{m-1}$ define $u^{r}$ to be the unipotent element
\begin{equation}\label{eq:formerlyknownas}
u^r= \left(\begin{array}{ccccc}1  &  0  & 0  & \dots  & r_1  \\  & 1  &   0 & \dots  & r_2  \\  &   &  \ddots &   &  \vdots  \\  &   &   &  1 &  r_{m-1} \\  &   &   &   &1  \end{array}\right)\end{equation}
and let $N'= \{u^r\}$.  $N'$ is normalized by $\Sl(m-1,\R)$.

Identifying $N'$ with $\R^{m-1}$ we have an embedding
   $\Sl(m-1, \R) \ltimes \R^{m-1}  \subset \Sl(m,\R)$.  
The subgroup $\Sl(m-1, \R) \ltimes \R^{m-1}$ has as a  lattice the subgroup $$\Sl(m-1, \Z) \ltimes \Z^{m-1}:= \Gamma \cap\big( \Sl(m-1, \R) \ltimes \R^{m-1}\big)$$ and there is a natural embedding given by the inclusion $$(\Sl(m-1,\R)\ltimes \R^{m-1}) / (\Sl(m-1, \Z^{-1}) \ltimes \Z^{m-1})) \subset \Sl(m,\R) / \Sl(m,\Z).$$

Recall $A$ is the group of diagonal matrices with positive entries.
Let $a^t, b^s \in A$ denote matrices
$$a^t = \text{diag}(e^{t/2}, e^{-t/2}, 1,1...,1)$$
$$b^s= \text{diag}(e^s, e^{s}, e^{s}..,e^{s}, e^{-s(m-1)}).$$
Complete the set  $\{a,b\}$  to a spanning set  $\{a,b, c_1, c_2 \dots c_{m-3}\}$ of $A$ viewed as vector space where the $c_i$ are diagonal matrices whose $(m,m)$-entry is equal to $1$.


Let $F_n\subset AN'$ be the subset of $G$ consisting of all the elements of the form
\begin{equation}\label{folner}
a^tb^{s} \prod_{c=1}^{m-3} c_i^{s_i} u^{r}
\end{equation}
where, for some $\delta>0$   to be determined later (in the proof of Proposition \ref{positiveexponent} below),
\begin{enumerate}
\item $0<t<t_n$;
\item$ \delta t_n/2<s < \delta t_n$;
\item $ 0 < s_i <  \sqrt{t_n}$;
\item $r \in B_{\R^{m-1}} (e^{200t_n})$.
\end{enumerate}

\begin{claim}
$\{F_n\}$ is \Folner sequence in $AN'$.
\end{claim}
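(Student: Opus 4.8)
The plan is to exploit the semidirect‑product structure $AN' = A\ltimes N'$, together with the observation that in the natural coordinates $F_n$ is a product box. Since $A$ normalizes $N'$ and acts on the Lie algebra $\lien' = \mathrm{span}\{E_{1,m},\dots,E_{m-1,m}\}$ by diagonal adjoint transformations, a left‑invariant Haar measure on $AN'$ is $da\,du$ (Haar on $A$ times Haar on $N'$) in the coordinates $g = a\cdot u$, $a\in A$, $u\in N'$ — this is exactly the measure implicit in \eqref{eq:lazyAaron}. In these coordinates, by \eqref{folner}, $F_n = E_n\times D_n$, where $E_n = \{a^tb^s\prod_i c_i^{s_i} : 0<t<t_n,\ \delta t_n/2<s<\delta t_n,\ 0<s_i<\sqrt{t_n}\}\subset A$ and $D_n = \{u^r : r\in B_{\R^{m-1}}(e^{200 t_n})\}\subset N'$; in particular $|F_n|_\ell = |E_n|\,|D_n|$. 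So I would first set up these coordinates and record this product decomposition.

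Next I would record an adjoint‑distortion bound. For $a = a^tb^s\prod_i c_i^{s_i}\in E_n$, write $\log a = tX_a + sX_b + \sum_i s_i X_{c_i}$ with $X_a = \diag(\tfrac12,-\tfrac12,0,\dots,0)$, $X_b = \diag(1,\dots,1,-(m-1))$ and $X_{c_i}$ diagonal with vanishing $(m,m)$‑entry. The eigenvalue of $\Ad(a)$ on $E_{j,m}$ is $\exp\big((\log a)_{jj}-(\log a)_{mm}\big)$, and for $j\le m-1$ the exponent equals $t(X_a)_{jj}+sm+\sum_i s_i(X_{c_i})_{jj}$, of absolute value at most $\tfrac t2 + sm + O(\sqrt{t_n}) \le (\tfrac12 + \delta m)t_n + O(\sqrt{t_n})$. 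Since $\delta$ may be taken as small as we wish (see the proof of Proposition \ref{positiveexponent}), I will assume in particular $\delta m\le 1$, so that for all large $n$ and all $a\in E_n$ one has $\|\Ad(a^{\pm 1})|_{\lien'}\| \le e^{C_0 t_n}$ with $C_0 = 2$. The point of the exponent $200$ in $D_n$ is precisely that $e^{C_0 t_n}$ is negligible beside the radius $e^{200 t_n}$ of $D_n$.

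With these in hand I would verify $|g_0 F_n\triangle F_n|_\ell/|F_n|_\ell\to 0$ for every $g_0\in AN'$. Writing $g_0 = a_0 u_0$ and using left‑invariance of $|\cdot|_\ell$, the triangle inequality $|a_0u_0F_n\triangle F_n|_\ell \le |u_0F_n\triangle F_n|_\ell + |a_0F_n\triangle F_n|_\ell$ reduces this to $g_0\in A$ and $g_0\in N'$. For $g_0 = a_0\in A$ one has $a_0F_n = (a_0E_n)\times D_n$, so the ratio equals $|a_0E_n\triangle E_n|/|E_n|$, which tends to $0$ because, under the linear identification $A\cong\liea$ given by the basis $\{\log a,\log b,\log c_1,\dots,\log c_{m-3}\}$, $E_n$ is a parallelepiped whose side lengths ($t_n$, $\delta t_n/2$, $\sqrt{t_n}$) all tend to infinity, hence a \Folner sequence in the abelian group $A$. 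For $g_0 = u_0 = u^{r_0}\in N'$ one has $u_0\cdot(au) = a\,(a^{-1}u_0a)u$, so $u_0F_n = \bigcup_{a\in E_n}\{a\}\times(a^{-1}u_0a)D_n$ and, by Fubini, $|u_0F_n\triangle F_n|_\ell = \int_{E_n}\big|(a^{-1}u_0a)D_n\triangle D_n\big|\,da$. In the $r$‑coordinates on $N'$, $(a^{-1}u_0a)D_n$ is the translate of $B_{\R^{m-1}}(e^{200t_n})$ by a vector of norm at most $e^{C_0t_n}|r_0|$, which for large $n$ is far below $e^{200t_n}$; the elementary estimate $|(v+B_R)\triangle B_R|\le C_{m-1}R^{m-2}|v|$ for $|v|\le R$ then gives $|u_0F_n\triangle F_n|_\ell \le C_{m-1}e^{200(m-2)t_n}e^{C_0t_n}|r_0|\,|E_n|$, and dividing by $|F_n|_\ell = |E_n|\,|D_n|$ with $|D_n|$ a fixed multiple of $e^{200(m-1)t_n}$ yields a bound of order $|r_0|\,e^{(C_0-200)t_n}\to 0$.

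The main obstacle will be the adjoint‑distortion bookkeeping of the second paragraph: one must check that moving across $E_n$ in the $A$‑direction distorts $D_n$ by a factor still exponentially smaller than the radius of $D_n$, which is exactly why condition (2) bounds $s$ by $\delta t_n$ with $\delta$ small and why $D_n$ carries the large exponent $200$; once those choices are in place the remaining estimates are routine. Finally, as usual, the \Folner property checked on all group elements upgrades to the statement for compact subsets of $AN'$, and $AN'$ is amenable, being solvable, so this is the right notion.
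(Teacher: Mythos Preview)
The paper states this claim without proof, leaving it as a routine verification; your proposal is correct and supplies exactly the details the paper omits. Your decomposition $F_n=E_n\times D_n$ in the coordinates $g=au$, the identification of $da\,du$ as left Haar (matching the parametrization in \eqref{eq:lazyAaron}), the reduction via the triangle inequality to $g_0\in A$ and $g_0\in N'$, and the adjoint-distortion estimate showing that conjugation by $a\in E_n$ moves $D_n$ by at most $e^{C_0 t_n}|r_0|\ll e^{200 t_n}$ are all sound. One small remark: the assumption $\delta m\le 1$ is harmless but not really needed---any fixed $\delta$ gives $C_0\le \tfrac12+\delta m+o(1)$, which is far below $200$ regardless, so the argument goes through for the $\delta$ eventually fixed in Proposition~\ref{positiveexponent}.
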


Observe that $F_n$ is  linearly-long in the $a$-direction and exponentially-long in the $N'$-direction. From conditions (2) and (4), the $A$-component of $F_n$ is much longer in the $a^t$-direction than in the other directions.
The condition (2) that  $\delta t_n/2<s$ is fundamental in our estimates in Section \ref{section:goodcusps} that ensure the  measures constructed below $\{\mu_n\}$ have uniformly  exponentially small mass in the cusps. These estimates are related to the fact that orbits of $N'$ correspond to the   unstable manifolds for the flow defined by $b_{s}$ in $\Sl(m,\R)/\Sl(m,\Z)$ and open subsets of unstable manifolds equidistribute to the Haar measure on $\Sl(m,\R)/\Sl(m,\Z)$  under the flow $b^{s}$.

Recall we have a sequence of fiber bundles $$F\to M^\alpha\to G/\Gamma$$ 
and may consider $F$  
as a fiber bundle over $G/\Gamma$.  Given $x\in G/\Gamma$, let $F(x)\simeq TM$ denote the fiber of $F$ over $x$.  An element $v\in F(x)$ is a pair $v= (y,\xi)$ where, identifying the fiber of $M^\alpha$ through $x$ with $M$, we have  $y\in M$ and $\xi\in T_y M$.  Given $v= (y,\xi)\in F(x)$,  we write $\|v\| = \|\xi\|$ using our chosen norm on $F$.  
Given $v = (y,\xi)\in F(x)$, let $p(v) = y$ denote the footpoint of $v$ in the fiber of $M^\alpha$ through $x$.  

If  uniform subexponential growth of derivatives fails for the restriction of the  $\alpha$ to $\Lambda_{1,2}$, then  there exist sequences $x_n\in \Xt$,   $v_n\in F(x_n)$  with $\|v_n\| = 1$, and $t_n \in \R$
as in Lemma \ref{lemma:maximal} and Claim \ref{slowgrowththick} with $t_n\to \infty$,   such that \begin{equation}\label{eq:cleansingDeluge}\|D_{x_n}a_n^{t_n}(v_n)\| \geq e^{\lambda t_n}\end{equation} for some $\lambda >0$.

Note that $AN'$ is a solvable group.  We may equip $AN'$ with any left-invariant Haar measure.  Note that the ambient Riemannian metric induces a right-invariant Haar measure on $AN'$ but as $AN'$ is not unimodular these measures do not coincide.

For each $n$, take  $ \mu _n$ to be the measure on $M^\alpha$    obtained by averaging the Dirac measure $\delta_{(x_n,p(v_n))}$  over the set $F_n$: 
$$  \mu _n:= \frac{1}{|F_n|_\ell} \int _{F_n}  g \cdot \delta(x_n, p(v_n))  \ d g$$
where $|F_n|_\ell$ is the volume of $F_n$ and $dg $ indicates integration with respect to left-invariant Haar measure on  $AN'$.

We expand the above integral in our coordinates introduced above.
Then for   any bounded continuous function $f\colon M^\alpha  \to \R$, integrating against our Euclidean parameters $t,s, s_i,$ and $r$ we have
\begin{equation}\label{eq:lazyAaron}
\begin{aligned}
\int\limits_{M^\alpha } &f \ d  \mu_n
\quad \quad
\\
&=  \frac{\displaystyle 2 \int\limits_{0}^{t_n} \int\limits_{\delta t_n /2}^{\delta t_n} \int\limits_{[0, \sqrt{t_n}]^{m-3}}
\int\limits_{B_{\R^{m-1}}(e^{200t_n})} f\left (a^tb^s\prod_{c=1}^{m-3} c_i^{s_i}u^r \cdot (x_n, p(v_n) )\right ) \  dr   \ d{s_i}\ ds\ dt  }{{t_n}{\delta t_n}{\sqrt{t_n}}^{m-3}{|B_{\R^{m-1}}(e^{200t_n})|}}
 \end{aligned}
 \end{equation}


 \noindent where $|B_{\R^{m-1}}(e^{200t_n})|$ denotes the volume of $$B_{\R^{m-1}}(e^{200t_n})= N'_{t_n} \subset  N'$$ with respect to the Euclidean parameters $r$.

%
%





For each $n$, let $\nu_n$ denote the image of the measure $\mu_n$ under the canonical projection from  $M^\alpha$ to $G/\Gamma$.
The following proposition is  shown in the next subsection.
\begin{proposition}\label{maincusps}
There exists $\eta>0$ such that the sequence of measures $\{\nu_n\}$ has uniformly exponentially small mass in the cusp with exponent $\eta$.
\end{proposition}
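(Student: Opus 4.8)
\emph{Proof plan.} The strategy is to reduce the assertion to a uniform quantitative non-divergence estimate for the abelian unipotent group $N'$, which is then supplied by the theorem of Kleinbock and Margulis. First, by the comparison \eqref{eq:ploy} between the systole $\delta(\Lambda_g)$ and the distance $d(g\Gamma,e\Gamma)$, it is enough to produce some $\eta>0$ with $\sup_n\int_{G/\Gamma}\delta(\Lambda_z)^{-\eta}\,d\nu_n(z)<\infty$. Unfolding the definition of $\mu_n$ in the Euclidean coordinates $(t,s,s_i,r)$ on $F_n$ as in \eqref{eq:lazyAaron} (applied to the unbounded but nonnegative function $z\mapsto\delta(\Lambda_z)^{-\eta}$ via truncation), this integral is an average over $t\in[0,t_n]$, $s\in[\delta t_n/2,\delta t_n]$ and $s_i\in[0,\sqrt{t_n}]$ of
\[
I_n(t,s,\vec s):=\frac{1}{|B_{\R^{m-1}}(e^{200t_n})|}\int_{B_{\R^{m-1}}(e^{200t_n})}\delta\!\left(\Lambda_{a^t b^s c_1^{s_1}\cdots c_{m-3}^{s_{m-3}}u^r x_n}\right)^{-\eta}\,dr,
\]
so the proposition reduces to bounding $I_n(t,s,\vec s)$ by a constant independent of $t,s,\vec s$ and $n$.

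We fix $t,s,\vec s$ and put $g=a^t b^s c_1^{s_1}\cdots c_{m-3}^{s_{m-3}}\in A$, a diagonal matrix with positive entries $(\lambda_1,\dots,\lambda_m)$; from the definitions $\lambda_m=e^{-(m-1)s}$, while for $i\le m-1$ one has $\lambda_i=e^{\pm t/2+s+O(\sqrt{t_n})}$ (sign $+$ for $i=1$, $-$ for $i=2$, the $\pm t/2$ absent for $3\le i\le m-1$), so in particular $\lambda_i\ge e^{-t_n/2-O(\sqrt{t_n})}$ for $1\le i\le m-1$, regardless of the value of $\delta$. Writing $x_n=h_n\Gamma$ with $h_n$ ranging over the compact set $\Xt\subset H_{1,2}$, we have $\Lambda_{g u^r x_n}=g\,u^r h_n\Z^m$, and since $u^r$ is affine-linear in $r$, the entries of $g\,u^r h_n$ are affine-linear in $r$; hence for every rational subspace $V\subset\Z^m$ the function $r\mapsto\|g\,u^r h_n V\|$ (the covolume of $g\,u^r h_n V$) is a polynomial in $r$ of degree at most $2m$, so it is $(C,\eta_0)$-good on $\R^{m-1}$ with $C,\eta_0>0$ depending only on $m$. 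The Kleinbock--Margulis non-divergence theorem (\cite[Theorem 5.3]{MR1652916} in its multidimensional form, together with the extension \cite[Theorem 0.1]{MR2434296}) then applies: provided $\sup_{r\in B_{\R^{m-1}}(e^{200t_n})}\|g\,u^r h_n V\|\ge\rho_0$ for every rational $V\subset\Z^m$, one has
\[
\bigl|\{r\in B_{\R^{m-1}}(e^{200t_n}):\delta(\Lambda_{g u^r x_n})<\varepsilon\}\bigr|\le C\,(\varepsilon/\rho_0)^{\eta_0}\,\bigl|B_{\R^{m-1}}(e^{200t_n})\bigr|
\]
for all $\varepsilon>0$; integrating this in $\varepsilon$ exactly as in the proof of Lemma \ref{lemma:translates} yields $I_n(t,s,\vec s)\le C'$ for every $\eta<\eta_0$, with $C'$ absolute --- so long as the displayed largeness hypothesis holds with a uniform $\rho_0$.

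Establishing that largeness hypothesis, uniformly in $t,s,\vec s$ and $n$, is the heart of the argument, and this is where Lemma \ref{lemma:maximal} enters. If $V\subset\{v_m=0\}$ then $u^r$ fixes $h_n V$ pointwise, so $\sup_r\|g\,u^r h_n V\|=\|g h_n V\|$; the lattice $g h_n V$ equals $e^{s}$ times a bounded distortion (factors $e^{\pm O(\sqrt{t_n})}$) of a sublattice of the lattice obtained from the point $a^t x_n\in X$ by dropping the last coordinates, and since Lemma \ref{lemma:maximal}(2) forces $d(\zeta_n)=o(t_n)$, the point $a^t x_n$ stays at depth $o(t_n)$ in the cusp of $X$ for \emph{every} $t\in[0,t_n]$, so that lattice has systole $\ge e^{-o(t_n)}$; hence $g h_n V$ has systole $\ge e^{s-o(t_n)-O(\sqrt{t_n})}\ge e^{\delta t_n/2-o(t_n)}\to\infty$, and Minkowski's second theorem gives $\|g h_n V\|\ge\rho_0$ for all large $n$. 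If instead $V\not\subset\{v_m=0\}$, then $h_n V$ contains a vector $w$ with last coordinate $w_m\in\Z\setminus\{0\}$, and $u^r w=w+w_m\sum_i r_i e_i$ genuinely depends on $r$; because the radius $e^{200t_n}$ dwarfs the worst contraction factor $\min_{1\le i\le m-1}\lambda_i\ge e^{-t_n/2-O(\sqrt{t_n})}$ --- this is precisely the role of the large constant $200$ in \eqref{folner}, chosen to absorb the losses coming from $t\le t_n$ and from the small parameter $\delta$ --- the covolume $\|g\,u^r h_n V\|$, a nonconstant polynomial in $r$, attains values well above $\rho_0$ on $B_{\R^{m-1}}(e^{200t_n})$.

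Feeding this back through the preceding paragraph bounds $I_n(t,s,\vec s)$ uniformly, hence $\int\delta(\Lambda_z)^{-\eta}\,d\nu_n(z)$ uniformly, and Proposition \ref{maincusps} then follows from \eqref{eq:ploy} with $\eta$ replaced by a suitable positive multiple. Beyond the two cases just sketched, the main obstacle is bookkeeping: one must track simultaneously the several distinct exponential-in-$t_n$ scales created by $a^t$, $b^s$, the $c_i$ and $u^r$, verify that the $200$-buffer dominates all of them uniformly, and make the non-concentration of the covolume polynomials in the case $V\not\subset\{v_m=0\}$ quantitative. The crucial non-combinatorial input is the shallowness of $\zeta_n$ supplied by Lemma \ref{lemma:maximal}(2): it is exactly what rules out the otherwise-fatal short vectors lying in the $H_{1,2}$-directions of $\Lambda_{g x_n}$, which no amount of shearing by $N'$ could remove.
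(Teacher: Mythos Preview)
Your approach via multi-parameter Kleinbock--Margulis non-divergence is viable and identifies the same dichotomy as the paper, but the paper takes a more elementary and self-contained route that avoids the KM black box entirely. Rather than verifying the largeness hypothesis over the huge ball $B(e^{200t_n})$, the paper first exploits the periodicity of $r\mapsto\delta(g u^r x_n)$ under the stabilizer lattice $K_n\Z^2\times\Z^{m-3}$ of $x_n$ in $N'$ to reduce the $N'$-integral to a single bounded fundamental domain $D_n$, and then computes the systole by hand: writing a short vector as $(e^sA_n\gamma_n(z'+z_mK_n^{-1}r),\,e^{-(m-1)s}z_m)$ with $z'\in\Z^{m-1}$, $z_m\in\Z$, the case $z_m=0$ is eliminated by $e^s m(A_n)\ge e^{\delta t_n/2-o(t_n)}>1$ (this is exactly where Lemma~\ref{lemma:maximal}(2) enters, via Claim~\ref{An}, just as in your $V\subset\{v_m=0\}$ case), while for $z_m\neq 0$ the torus automorphism $r\mapsto z_m r$ reduces to $z_m=1$, and then the bad set is a translate of an ellipsoid of volume $(2e^{-c-s})^{m-1}$ (using $A_n,K_n\in\Sl(m-1,\R)$), summed over $O(e^{-c}e^{(m-1)s})$ values of $z_m$. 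No $(C,\alpha)$-goodness, no covolume suprema to check over all rational $V$.

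Your route would also succeed, but the handwave in the case $V\not\subset\{v_m=0\}$ hides the only real content. ``A nonconstant polynomial attains large values on a large ball'' is not enough: you need $\sup_{r\in B(e^{200t_n})}\|g u^r h_n V\|\ge\rho_0$ \emph{uniformly over all primitive} $V$, and the basis vectors of such $V$ are unbounded. The fix is to take $W_0=V\cap\{v_m=0\}$, extend a $\Z$-basis $w_1,\dots,w_{k-1}$ of $W_0\cap\Z^m$ to one of $V\cap\Z^m$ by $w_k$ with $(w_k)_m=d\in\Z\smallsetminus\{0\}$, and note that the component of $g u^r h_n w_k$ orthogonal to $g h_n W_0$ inside $\{v_m=0\}$ ranges, as $r$ varies over $B(e^{200t_n})$, over a set containing a ball of radius $|d|\cdot(\min_{i\le m-1}\lambda_i)\cdot e^{200t_n}\ge e^{199t_n}$; combined with $\|g h_n W_0\|\ge e^{(k-1)(s-o(t_n))}$ from your first case, the supremum of the covolume is enormous, uniformly in $V$. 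Separately, your phrase ``a sublattice of the lattice obtained from the point $a^t x_n\in X$'' is misleading: $a^t x_n\in X$ determines a lattice in $\R^2$, not $\R^{m-1}$. What you actually need is the paper's Claim~\ref{An}: $a^t\prod c_i^{s_i}\tilde x_n=\diag(A_n\gamma_n,1)$ with $\log\|A_n^{\pm 1}\|=o(t_n)$, deduced from Lemma~\ref{lemma:maximal}(2) together with the $\sqrt{t_n}$ bound on the $s_i$.
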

By the uniform comparability of distances in fibers of $M^\alpha$, this implies the family of measures $\{ \mu_n\}$ has uniformly exponentially  small measure in the  cusp.

By Lemma \ref{lemma:firstexponents}\ref{lazylemmaa} the families of measures    $\{\mu_n\}$  and $\{\nu_n\}$ are precompact families.
 As $F_n$ is a \Folner sequence in a solvable group, we have that
any weak-$*$ subsequential limit  of  $\{\mu _n\}$  or $\{\nu_n\}$   is  $AN'$-invariant.
Moreover, from Theorem \ref{thm:ratner}\ref{ratner3}, it follows that any  weak-$*$ subsequential limit $\nu_\infty$ of $\{\nu_n\}$ is invariant under the group $-N'$ generated by the root groups $U^{m,j}$ for each $1\le j\le m-1$.  Since $N'$ and $-N'$ generate all of $G$, we have that $\nu_\infty$ is a $G$-invariant measure on $G/\Gamma$.

\subsection{Proof of Proposition \ref{maincusps}}
\label{section:goodcusps}
\subsubsection{Heuristics of the proof} The heuristic of the proof is the following.  Observe that for a fixed choice of $t$ and $s_i$ as given by the choice of \Folner set $F_n$, the point $$a^t\prod_{i=1}^{m-3}{c_i}^{s_i}(x_n)$$ lies at sub-linear distance to the thick part of $G/\Gamma$ with respect to $t_n$. Observe that the $N'$-orbit of such point is an embedded $(m-1)$-dimensional torus in $G/\Gamma$. As the range of points in $N'$ in the  \Folner set $F_n$ is quite large,  averaging a Dirac measure of the point $a^t\prod_{i=1}^{m-3}{c_i}^{s_i}(x_n)$ in the $N'$-direction in $F_n$ yields a measure quite close to Haar measure on the $N'$-orbit.

Observe that $N'$-orbits correspond to unstable manifolds for the action of the flow $b^s$ on  $\Sl(m,\R)/\Sl(m,\Z)$. As the action of $b_s$ is known to be mixing, we expect that if  $s$ is sufficiently large, flowing by $b_s$ the $N'$-orbit of $a^t\prod_{i=1}^{m-3}{c_i}^{s_i}(x_n)$ will become equidistributed and in particular it will intersect non-trivially the thick part of $G/\Gamma$. This is the reason why the condition $s > \delta/2 t_n$ is assumed.

While intuition about mixing motivates the proof, we do not use it explicitly.  Instead we use that for large enough $s$, the action of $b_s$ expands the $N'$-orbits in a way that forces them to hit the thick part. We verify this fact by explicit matrix multiplication.

 As $b^s$ normalizes $N'$, the image under $b^s$ of the $N'$-orbit of $a^t\prod_{i=1}^{m-3}{c_i}^{s_i}(x_n)$ is the $N'$-orbit of a point $y_n$ in the thick part of $G/\Gamma$.
 Having in mind the quantitative non-divergence of unipotent flows as in the proof Proposition \ref{prop:bananas},   the $N'$-orbits  have uniformly (over all $n, s_i, $ and $t$) exponentially small mass in the cusps whence so do the measures $\nu_n$. 


 The following  proof of  Proposition \ref{maincusps} uses   explicit matrix calculations and estimates to verify  these heuristics.


\subsubsection{Proof of Proposition \ref{maincusps}}
Recall that we identify   each coset $$g\Sl(m,\Z) \in \Sl(m,\R)/\Sl(m,\Z)$$ with a unimodular lattice
$\Lambda_g:= g\cdot \Z^m$ in $\R^m$.  We define the systole of a unimodular lattice $\Lambda\subset \R^m$ to be  $$\delta(\Lambda) := \min_{v \in \Lambda \setminus \{0\}} \|v\|$$ and for an element $g\in \Sl(m,\R)$,   we denote by $\delta(g)$ the systole $$\delta(g) = \delta(g\cdot \Z^m).$$ From \eqref{eq:ploy}, to prove Proposition \ref{maincusps} it is sufficient to find $\eta>0$ so that the  integrals $$\int_{G/\Gamma} \delta(g)^{-\eta}  \ d \nu_n(g\Gamma)$$
are uniformly bounded in $n$.

As discussed in the above heuristic, from \eqref{eq:lazyAaron} to bound the integrals $\int_{G/\Gamma} \delta^{-\eta}(g)  \ d \nu_n(g\Gamma)$ it is sufficient to show each integral $${\frac{1}{|B(e^{200t_n})|}}\int_{B(e^{200t_n})}  \delta(a^tb^s (\Pi {c_i}^{s_i})u^r   x_n)^{-\eta} \  dr$$
is uniformly bounded in $n$ and in all parameters $t,s, s_i$ for $0<t<t_n,$
$ \delta t_n/2<s < \delta t_n,$ and $
0 < s_i <  \sqrt{t_n}.$
Recall here that $  x_n\in G/\Gamma$ are the points  $x_n\in \Xt\subset H_{1,2}/\Lambda_{1,2}$   satisfying \eqref{eq:cleansingDeluge} used in the construction of the measures $\mu_n$.

{\blue We have $H_{1,2}$ is canonically embedded in $\Sl(m,\R)$.   Given $x_n\in H_{1,2}/\Lambda_{1,2}$, let $$\td x_n\in H_{1,2}\subset \Sl(m,\R)$$ denote the element   mapping to $  x_n$ under the map $H_{1,2}\to H_{1,2}/\Lambda_{1,2}$ which is contained in a fundamental domain contained in the Dirichlet domain $\mathcal D\subset \SL(2,\R)$  in \eqref{eq:dirc} in Section \ref{sec:lolo}.}  Let $\|\cdot \|$ denote the operator norm on $\Sl(m,\R)$ and $m(\cdot )$ the associated conorm.



%

%



\begin{claim} For every $n$,  $t\le t_n$, and $ 0\le s_i\le \sqrt{t_n}$ as above, there exist $$\text{$A_{n}= A_{n,t,s_1, \dots , s_{m-3}} \in \Sl(m-1, \R)$ and $\gamma_{n} = \gamma_{n,t,s_1, \dots , s_{m-3}} \in \Sl(m-1,\Z)$}$$ such that:

\begin{enumerate}\label{An}

\item $ \displaystyle a^t \prod_{i=1}^{m-3}{c_i}^{s_i} \td x_n =
\begin{pmatrix}
  A_n\gamma_n & 0_{m-1 \times 1} \\ 0_{1\times m-1} & 1
\end{pmatrix}
$

\item $ \displaystyle\lim_{n\to \infty} \sup_{t\le t_n,   0\le s_i\le \sqrt{t_n}}  \frac{\log\|A_n\|}{t_n} = 0 $ and  $ \displaystyle\lim_{n\to \infty} \inf_{t\le t_n,   0\le s_i\le \sqrt{t_n}}  \frac{\log(m(A_n))}{t_n} = 0 $

\end{enumerate}

\end{claim}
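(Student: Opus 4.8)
The plan is to use that the diagonal element $D_n:=a^t\prod_{i=1}^{m-3}c_i^{s_i}$ has $(m,m)$-entry equal to $1$, so it preserves the block decomposition $\Sl(m-1,\R)\times\{1\}\subset\Sl(m,\R)$; one then only has to bound the size, in $\Sl(m-1,\R)/\Sl(m-1,\Z)$, of the lattice produced by applying $D_n$ to $\td x_n$, and for this the control of the cusp depth of $\zeta_n$ from Lemma \ref{lemma:maximal} is exactly what is needed. First I would note that, since $x_n$ lies in the fixed compact set $\Xt\subset H_{1,2}/\Lambda_{1,2}$, the lift $\td x_n$ may be taken in a fixed compact subset of $H_{1,2}$ (this is a harmless choice, as $\nu_n$ depends only on the point $x_n\in G/\Gamma$), so that $\td x_n=\left(\begin{smallmatrix}\bar x_n&0\\0&1\end{smallmatrix}\right)$ with $\bar x_n$ in the block copy of $\Sl(2,\R)$ inside $\Sl(m-1,\R)$ and $\|\bar x_n\|+m(\bar x_n)\inv\le C$ uniformly in $n,t,s_i$. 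Writing $D_n=\left(\begin{smallmatrix}\bar D_n&0\\0&1\end{smallmatrix}\right)$ with $\bar D_n\in\Sl(m-1,\R)$ diagonal, we get $D_n\td x_n=\left(\begin{smallmatrix}\bar g_n&0\\0&1\end{smallmatrix}\right)$ with $\bar g_n:=\bar D_n\bar x_n$, which already gives part (1) once a $\gamma_n\in\Sl(m-1,\Z)$ is fixed and we set $A_n:=\bar g_n\gamma_n\inv$. So the content is entirely in the estimate (2).

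The core step will be a lower bound for the systole $\delta(\bar g_n\Z^{m-1})$ that is uniform over $0\le t\le t_n$ and $0\le s_i\le\sqrt{t_n}$. Here I would use that $\bar a^t$ (the $(m-1)\times(m-1)$ image of $a^t$, acting only on the first two coordinates) and all the $\bar c_i$ are diagonal and hence commute, so that $\bar g_n=\big(\prod_i\bar c_i^{s_i}\big)\,\big(\bar a^t\bar x_n\big)$. The lattice $\bar a^t\bar x_n\Z^{m-1}$ is precisely the one attached to the point $a^t(x_n)$ of the embedded modular surface; since $\bar x_n$ is block and $\bar a^t$ fixes coordinates $3,\dots,m-1$, its systole equals $\min\{1,\delta_2\}$, where $\delta_2$ is the systole of the rank-two lattice $a^t(x_n)$. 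By Lemma \ref{lemma:maximal}(2), $\mathrm{dist}(a^s(x_n),\Xt)\le d(\zeta_n)$ for all $0\le s\le t_n$ with $d(\zeta_n)/t_n\to 0$; together with compactness of $\Xt$ and the $\Sl(2,\R)/\Sl(2,\Z)$-analogue of \eqref{eq:ploy}, this gives $\delta(\bar a^t\bar x_n\Z^{m-1})\ge e^{-\epsilon_nt_n}$ for all $t\le t_n$, with $\epsilon_n\to0$. The factor $\prod_i\bar c_i^{s_i}$ is a fixed-base diagonal matrix with exponents at most $\sqrt{t_n}$, so its conorm is at least $e^{-C'\sqrt{t_n}}$; since $\delta(T\Lambda)\ge m(T)\,\delta(\Lambda)$ for any invertible linear $T$, I obtain $\delta(\bar g_n\Z^{m-1})\ge e^{-C'\sqrt{t_n}}\,e^{-\epsilon_nt_n}\ge e^{-\epsilon_n't_n}$ with $\epsilon_n'\to 0$, uniformly.

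To finish, I would feed this into the analogue of \eqref{eq:ploy} for $\Sl(m-1,\R)/\Sl(m-1,\Z)$ to get $d(\bar g_n\Sl(m-1,\Z),e\Sl(m-1,\Z))=o(t_n)$ uniformly, choose $\gamma_n\in\Sl(m-1,\Z)$ with $d(\bar g_n\gamma_n\inv,e)\le d(\bar g_n\Sl(m-1,\Z),e\Sl(m-1,\Z))+1$, and put $A_n:=\bar g_n\gamma_n\inv$. Then $d(A_n,e)=o(t_n)$ uniformly, and since $d(A_n\inv,e)=d(A_n,e)$ by right-invariance of the metric, \eqref{eq:easy} gives $\log\|A_n\|\le\kappa(d(A_n,e)+C_0)=o(t_n)$ and likewise $-\log m(A_n)=\log\|A_n\inv\|\le\kappa(d(A_n,e)+C_0)=o(t_n)$; as $\log\|A_n\|$ and $\log m(A_n)$ are also bounded below (both $\ge\log c$ for a fixed $c>0$, since $A_n\in\Sl(m-1,\R)$), taking the supremum, resp.\ infimum, over $t\le t_n$ and $s_i\le\sqrt{t_n}$, dividing by $t_n$, and letting $n\to\infty$ yields (2).

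The hard part will be the systole estimate: one cannot apply Lemma \ref{lemma:maximal} directly to the curve $t\mapsto\bar a^t\big(\prod_i\bar c_i^{s_i}\big)\bar x_n$, since $\prod_i\bar c_i^{s_i}$ does not normalize $H_{1,2}$ and this is not a geodesic orbit of the modular surface whose excursions we control. The key observation that resolves this is that $\bar a^t$ commutes past the diagonal $\bar c_i$'s, which isolates the genuine geodesic orbit $a^t(x_n)$ — whose depth in the cusp is controlled by $d(\zeta_n)=o(t_n)$ — and leaves the perturbation $\prod_i\bar c_i^{s_i}$, which only costs a factor $e^{-O(\sqrt{t_n})}$ in the systole and is therefore negligible at the relevant scale $t_n$ precisely because the \Folner set $F_n$ was constructed with $s_i\le\sqrt{t_n}=o(t_n)$.
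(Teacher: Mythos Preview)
Your proof is correct and follows essentially the same approach as the paper. The paper's argument is terser: it simply asserts that $d(x_n, a^t(\Pi c_i^{s_i})\cdot x_n)/t_n \to 0$ uniformly, citing Lemma~\ref{lemma:maximal}(2) for the $a^t$-contribution and the constraint $s_i\le\sqrt{t_n}$ for the $c_i$-contribution, and then invokes \eqref{eq:easy}. You unpack the same distance estimate through the systole of $\bar g_n\Z^{m-1}$ and the analogue of \eqref{eq:ploy} for $\Sl(m-1,\R)/\Sl(m-1,\Z)$, which makes explicit why $\gamma_n$ may be taken in $\Sl(m-1,\Z)$ rather than just in $\Gamma$---a point the paper leaves implicit. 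The commutation trick isolating $\bar a^t\bar x_n$ from $\prod_i\bar c_i^{s_i}$ is exactly the mechanism behind the paper's one-line bound.
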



\begin{proof}(1) is immediate from construction.  The uniform limit in (2) follows from Lemma \ref{lemma:maximal}(2), equation \eqref{eq:easy}, and the fact that the  $s_i$ are chosen so that $0\leq s_i \leq \sqrt{t_n}$ whence $$\frac{d(  x_n , a^t  (\Pi {c_i}^{s_i}) \cdot   x_n )}{t_n}\to 0$$ uniformly in $t, s_i$.
\end{proof}

In the remainder, we will suppress the dependence of  choices on $t, s, s_i$.
We take  $K_n \in \Sl(m-1, \R)$ be such that
\[ \td x_n =
\begin{pmatrix}
  K_n & 0_{m-1 \times 1} \\ 0_{1\times m-1} & 1
\end{pmatrix}.
\]
{\blue Note that $K_n$ differs from the identity only in the first two rows and columns.}
Since each $x_n$ is contained in  $  \Xt$,  we have that the matrix norm  and conorm  $\|K_n\|$ and $m(K_n)$ are bounded above and below, respectively,  by   constants $M_1$ and $\frac 1 {M_1}$ independent of $n$.

Recall $r$ denotes a vector in $\R^{m-1}$ and $u^r\in \Sl(m,\R)$ is the unipotent element given by \eqref{eq:formerlyknownas}.
Matrix computation yields
\[ a^t  (\Pi {c_i}^{s_i})u^r \td  x_n =
\begin{pmatrix}
  A_n\gamma_n & A_n\gamma_{n}K_{n}^{-1}r \\ 0_{1\times m-1} & 1
\end{pmatrix}
\]
whence
\[ b^{s}(\Pi {c_i}^{s_i})a^tu^r \td x_n =
\begin{pmatrix}
  e^{s}A_n\gamma_n & e^{s}A_n\gamma_nK_{n}^{-1}r \\ 0_{1\times m-1} & e^{-(m-1)s}
\end{pmatrix}.
\]


We have \begin{equation}\label{integer}
\begin{split}
\delta( b^{s}(\Pi {c_i}^{s_i})a^tu^r \td x_n) &= \delta( b^{s}(\Pi {c_i}^{s_i})a^tu^r    x_n) \\&= \inf_{z \in \Z^m \setminus \{0\}} \bigg{\|}
\begin{pmatrix}
  e^{s}A_n\gamma_n & e^{s}A_n\gamma_nK_n^{-1}r \\ 0_{1\times m-1} & e^{-(m-1)s}
\end{pmatrix}
z
\bigg{\|}
\end{split}
\end{equation}
 To reduce notation, for fixed $t,s,$ and $s_i$   define $$\beta(r) :=- \log \delta (a^tb^s(\Pi {c_i}^{s_i})u^r \td x_n).$$ We aim to find an upper bound of $$\frac{1}{|B(e^{200t_n})|} \int_{B(e^{200t_n})} e^{\eta \beta(r)} dr$$ that is independent of $n$ and $ t,s,$ and $s_i$.


Observe that if $r - {r'} $ differ by an element of the unimodular lattice $ K_n \Z^{m-1}\subset \R^{m-1}$, then $\beta(r) = \beta(r')$.
{\blue
Indeed, if $r'= r+ K_n z'$ for some $z'= (z_1', \dots, z_{m-1}')\in \Z^{m-1}$ and if $z\in \Z^m\sm\{0\}$ is $z= (z_1, \dots , z_m)$ then
$$\begin{pmatrix}
  e^{s}A_n\gamma_n & e^{s}A_n\gamma_nK_n^{-1}r' \\ 0_{1\times m-1} & e^{-(m-1)s}
\end{pmatrix}z=\begin{pmatrix}
  e^{s}A_n\gamma_n & e^{s}A_n\gamma_nK_n^{-1}r \\ 0_{1\times m-1} & e^{-(m-1)s}
\end{pmatrix} \td z
$$
where $\td z = (z_1 + z_m z'_1, \dots, z_{m-1} + z_m z'_{m-1}, z_m)\in \Z^m\sm\{0\}.$
} 
Thus   we have that $\beta\colon \R^{m-1}\to (0,\infty)$ descends to a function on the torus $\R^{m-1}/(K_n \Z^{m-1})$.

Let $D_n = K_n \cdot ( [-1/2,1/2]^{m-1})$ be a fundamental domain for this torus in $\R^{m-1}$ centered at $0$.
Let $c_n$ denote the number of $(K_n\Z^{m-1})$-translates of $D_n$ that intersect $B(e^{200t_n})$.
Then, if $t_n$ is sufficiently large we have that
\begin{equation*}
 \frac{1}{|B(e^{200 t_n})|}\int_{B(e^{200t_n})} e^{\eta \beta(r)} \ dr
  \le
  \frac{1}{|B(e^{200 t_n})|}c_n \int_{D_n} e^{\eta \beta(r)} \ dr
  \le 2\int_{D_n} e^{\eta \beta(r)} \ dr
 \end{equation*}
 The first inequality follows from inclusion.  The second inequality follows from the fact that the perimeter of $B(q)$ grows like $q^{m-2}$, the volume of $B(q)$ grows like $q^{m-1}$, and the domains $D_n = K_n  \cdot ( [-1/2,1/2]^{m-1})$ have uniformly comparable geometry over $n$.

%
%
%

It remains to estimate $\int_{D_n} e^{\eta \beta(r)} \ dr$.  Given $c>0$ and fixed  $n, t, s_i, $ and $s$  we define $$T_c  = \{  r \in D_n:\beta(r) > c\}.$$
Proposition \ref{maincusps}   follows immediately  from the estimate in the following lemma.
\begin{lemma}\label{Tc} 
There exists constants $M_3,M_4 >0$, independent of $n, t, s_i, $ and $s$,    such that
$$ |T_c| \leq M_3e^{-cM_4}.$$
\end{lemma}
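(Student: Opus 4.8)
The plan is to turn the volume estimate on $T_c$ into a lattice-point count using the explicit formula \eqref{integer}. First I would normalize by substituting $\rho := K_n^{-1}r$, so that $r$ ranges over $D_n = K_n\cdot[-1/2,1/2]^{m-1}$ exactly when $\rho$ ranges over the unit cube $Q := [-1/2,1/2]^{m-1}$, and $dr = d\rho$ since $\det K_n = 1$. In these coordinates the matrix $b^s(\prod c_i^{s_i})a^tu^r\td x_n$ sends an integer vector $(w,k)$ with $w\in\Z^{m-1}$, $k\in\Z$, to $(e^sA_n\gamma_n(w+k\rho),\, e^{-(m-1)s}k)$. Fixing a norm on $\R^m=\R^{m-1}\times\R$ adapted to this block decomposition (any norm works up to bounded constants, which only shifts $c$ by a bounded amount), the condition $\beta(r)>c$, i.e.\ $\delta(\cdot)<e^{-c}$, means that some nonzero $(w,k)$ satisfies $\|e^sA_n\gamma_n(w+k\rho)\|<e^{-c}$ together with $|k|<e^{(m-1)s-c}$. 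Hence $|T_c|\le\sum_{(w,k)\neq 0}|\{\rho\in Q:\ \|e^sA_n\gamma_n(w+k\rho)\|<e^{-c},\ |k|<e^{(m-1)s-c}\}|$, and it remains to bound this sum.

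The term $k=0$ I would discard outright. If $k=0$ then $w$ is a nonzero integer vector, so $\gamma_n w$ is again a nonzero integer vector and $\|\gamma_n w\|\ge 1$; therefore $\|e^sA_n\gamma_n w\|\ge e^sm(A_n)$, so this term contributes a nonempty set of $\rho$ only if $c<-s-\log m(A_n)$. By part (2) of the Claim preceding the lemma (combined with Lemma \ref{lemma:maximal}(2) and \eqref{eq:easy}) we have $|\log m(A_n)|=o(t_n)$ uniformly over $t\le t_n$ and $0\le s_i\le\sqrt{t_n}$, while $s\ge\delta t_n/2$ by the choice of \Folner set; hence for all sufficiently large $n$ the quantity $-s-\log m(A_n)$ is negative and the $k=0$ term never contributes. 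Only finitely many $n$ are excluded, and for those $\nu_n$ is compactly supported and thus trivially has exponentially small mass in the cusps, so this costs nothing for Proposition \ref{maincusps}.

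For $k\neq 0$, by symmetry assume $k\ge 1$, write $L := e^sA_n\gamma_n$ and note $|\det L| = e^{(m-1)s}$ since $\det A_n=\det\gamma_n=1$. For fixed $k$, substituting $v=w+k\rho$ (so $d\rho=k^{-(m-1)}dv$, and as $w$ ranges over $\Z^{m-1}$ and $\rho$ over $Q$ each $v$ is covered with multiplicity $\#(\Z^{m-1}\cap(v+kQ))\le (2k)^{m-1}$) turns the inner $w$-sum into at most $k^{-(m-1)}(2k)^{m-1}\int_{\R^{m-1}}\mathbf 1\{\|Lv\|<e^{-c}\}\,dv = 2^{m-1}|\det L|^{-1}\,\mathrm{vol}\{\|u\|<e^{-c}\}\le 2^{m-1}\omega_{m-1}e^{-(m-1)s}e^{-(m-1)c}$, where $\omega_{m-1}$ is the volume of the unit ball. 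Summing over the at most $2e^{(m-1)s-c}$ admissible $k\ge 1$, the factors $e^{\pm(m-1)s}$ cancel and $|T_c|\le 2^m\omega_{m-1}e^{-mc}$ for all large $n$ and all admissible $t,s,s_i$. Together with the trivial bound $|T_c|\le|D_n|=|\det K_n|=1$, this gives $|T_c|\le M_3e^{-M_4c}$ with, e.g., $M_4=m$ and $M_3=\max\{1,2^m\omega_{m-1}\}$, constants independent of $n,t,s_i,s$.

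The main obstacle is the bookkeeping in the $k\neq 0$ case: one must see that the number of integer vectors $(w,k)$ capable of producing a short image is governed not by the possibly enormous norm of $A_n\gamma_n$ but only by $|\det L|=e^{(m-1)s}$, and that this exponential count is exactly offset by the $|\det L|^{-1}$ Jacobian, so that no dependence on $n$ — and in particular none on the unipotent shear $\gamma_n$ — survives. The two delicate points are absorbing $\gamma_n$ through the crude inequality $\|\gamma_n w\|\ge 1$ for nonzero integer $w$, and killing the $k=0$ term by pitting the genuine linear lower bound $s\ge\delta t_n/2$ against the sublinear estimate $\log m(A_n)=o(t_n)$ coming from the control on cusp-excursion depth in Lemma \ref{lemma:maximal}.
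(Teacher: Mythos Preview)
Your proof is correct and follows the same high-level route as the paper: decompose according to the last integer coordinate $k$, kill the $k=0$ term using $s\ge \delta t_n/2$ against the sublinear bound on $\log m(A_n)$, and then show each $k\neq 0$ contributes volume of order $e^{-(m-1)(s+c)}$ so that the sum over $|k|<e^{(m-1)s-c}$ collapses to $e^{-mc}$.

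The execution for $k\neq 0$ is genuinely different, and cleaner than the paper's. The paper first removes $\gamma_n$ by the torus automorphism $r\mapsto (K_n\gamma_n K_n^{-1})^{-1}r$, then reduces $T_{c,k}$ to $T_{c,1}$ via the observation that multiplication by $k$ on $\R^{m-1}/K_n\Z^{m-1}$ preserves Lebesgue measure, and finally bounds $|T_{c,1}|$ by counting the (boundedly many) integer vectors $z$ that can contribute and using $A_n\in\Sl(m-1,\R)$ to compute the volume per $z$. You instead keep $\gamma_n$ and simply absorb it into $L=e^sA_n\gamma_n$, so that the entire uncontrolled factor $A_n\gamma_n$ disappears into $|\det L|=e^{(m-1)s}$; the substitution $v=w+k\rho$ together with the crude multiplicity bound $(k+1)^{m-1}\le(2k)^{m-1}$ then gives the estimate for every $k$ at once. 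This buys you a shorter argument that makes transparent exactly why no dependence on $A_n$ or $\gamma_n$ survives, at the cost of a slightly worse constant (your multiplicity bound loses a factor $2^{m-1}$ where the paper's reduction to $k=1$ is sharp). Either way the constants are irrelevant, and your determinant/Jacobian viewpoint nicely isolates the mechanism: the exponential count of admissible $k$ is exactly cancelled by the $e^{-(m-1)s}$ Jacobian of $L$.
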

Indeed, if $\eta\inv > { M_4}$ then
\begin{align*}\int_{D_n} e^{\eta \beta(r)} \ dr
&=\int_0^\infty |\{ r\in D_n: e^{\eta \beta(r) }\ge \tau\}|  \ d \tau
\le 1+ \int_1^\infty |\{ r\in D_n: e^{\eta \beta(r) }\ge \tau\}|  \ d \tau\\
&=1+ \int_1^\infty |\{ r\in D_n:   \beta(r) \ge \log\big(\tau^{\frac 1 \eta}\big)\}| \ d \tau
=1+\int_1^\infty | T_{\log\big(\tau^{\frac 1 \eta}\big)}| \ d \tau\\
&= 1 + \int_1^\infty M_3  \tau^{\frac { -M_4} \eta} \ d \tau <\infty
\end{align*}
 and Proposition \ref{maincusps} follows.

\begin{proof}[Proof of Lemma \ref{Tc}]
From  \eqref{integer}, given any   $r \in \R^{m-1}$, if $\beta(r) > c$ then there exists a  non-zero $z = (z_1,z_2,z_3, .... , z_{m}) \in \Z^m$ such that
\[ e^s\left\| A_n\gamma_n
(z_1, \dots, z_{m-1})
 +
 z_mA_n\gamma_n K_n\inv
r
\right\|  < e^{-c}
\ \ \text{ and } \ \
|z_m| < e^{-c} e^{(m-1)s}
\]
which (as $\gamma_n \in \Sl(m-1, \Z)$) holds if and only if there is a non-zero $z = (z_1,z_2,z_3, .... , z_{m}) \in \Z^m$
\begin{equation}\label{eq:pleasetakeafreshmanlogiccourse} e^s\left\| A_n \Big(
(z_1, \dots, z_{m-1})
 +
 z_mK_n\inv (K_n\gamma_n K_n\inv) r
\Big)
\right\| < e^{-c}
\ \ \text{ and } \ \
|z_m| < e^{-c} e^{(m-1)s}
\end{equation}
As $K_n\gamma_n K_n\inv$ induces a volume-preserving automorphism of $\R^{m-1}/(K_n\Z^{m-1})$, the set of $r\in D_n$ satisfying \eqref{eq:pleasetakeafreshmanlogiccourse}  {\blue for some $z\in \Z^m$} has the same measure as the set of $r\in D_n$ satisfying
$$e^s\left\| A_n \Big(
(z_1, \dots, z_{m-1})
 +
 z_mK_n\inv   r
\Big)
\right\| < e^{-c}
\ \ \text{ and } \ \
|z_m| < e^{-c} e^{(m-1)s}$$
{\blue for some $z\in \Z^m.$}

For every integer $k$ satisfying  $|k| < e^{-c} e^{(m-1)s}$, let $T_{c,k}$ be the subset of $r\in D_n$ such that there exists $(z_1,z_2, \dots, z_{m-1})\in \Z^{m-1}$  satisfying
$$e^s\left\| A_n \Big(
(z_1, \dots, z_{m-1})
 +
k K_n\inv   r
\Big)
\right\|< e^{-c}.$$
 Then $|T_c| \leq   \sum_{|k| < e^{-c} e^{(m-1)s}} |T_{c,k}|.$   Thus the estimate reduces to the following.




\begin{claim}\label{rennes} There exists $M_5 \geq 0$ such that $|T_{c,k} |< M_5 e^{-(m-1)(s+c)}$  for all $n$ sufficiently  large.
\end{claim}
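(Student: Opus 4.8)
The plan is to reduce the estimate on $|T_{c,k}|$ to a covering argument in $\R^{m-1}$ by "unwinding" the linear map $A_n$ and the lattice $K_n \Z^{m-1}$. First I would observe that, after dividing through by $k \neq 0$ (using $|k| \ge 1$ since $k$ is a nonzero integer — note the case $k=0$ gives no constraint on $r$ and is handled separately or absorbed, but actually if $\beta(r)>c$ forces $z_m \ne 0$ for $c$ large, so we may assume $k\ne 0$), the set $T_{c,k}$ consists of those $r \in D_n$ for which $K_n\inv r$ lies within distance $\frac{e^{-c-s}}{|k|} \cdot m(A_n)\inv$ (in the appropriate norm) of the lattice $\Z^{m-1}$, where I use the conorm bound $\|A_n w\| \ge m(A_n)\|w\|$. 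Since $r \in D_n = K_n\cdot[-1/2,1/2]^{m-1}$, the point $y := K_n\inv r$ ranges over the unit cube $[-1/2,1/2]^{m-1}$, which contains exactly one lattice point (the origin); so $T_{c,k}$ corresponds, under the volume-preserving (up to the constant Jacobian $|\det K_n| = 1$) change of variables $r \mapsto y = K_n\inv r$, to a neighborhood of $0$ of radius $\rho_{c,k} := \frac{e^{-c-s}}{|k|\, m(A_n)}$.

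Next I would quantify this: the set of $y$ in the unit cube with $\|A_n y\| < e^{-c-s}/|k|$ is contained in the ellipsoid $A_n\inv(B(e^{-c-s}/|k|))$, whose volume is $|k|^{-(m-1)} e^{-(m-1)(c+s)}$ divided by $|\det A_n| = 1$ — wait, more carefully, $\mathrm{vol}(A_n\inv B(\rho)) = \rho^{m-1}\cdot c_{m}$ since $\det A_n = 1$, where $c_m$ is the volume of the unit ball. Hence, transporting back via $K_n$ (Jacobian $1$), $|T_{c,k}| \le c_m \big(e^{-c-s}/|k|\big)^{m-1} \le c_m\, e^{-(m-1)(c+s)}$, using $|k|\ge 1$. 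This already gives Claim \ref{rennes} with $M_5 = c_m$, independent of $n, t, s_i, s$ — the only inputs being $\det A_n = 1$ (from $A_n \in \Sl(m-1,\R)$) and $\det K_n = 1$, both of which hold for all $n$.

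The main subtlety — the part I expect to be the real obstacle — is not the volume computation but verifying that the ellipsoid $A_n\inv B(\rho)$ genuinely meets the fundamental cube $[-1/2,1/2]^{m-1}$ in a \emph{single} "cell" rather than wrapping around the torus many times; if $A_n$ is extremely anisotropic (one very short axis, which it can be since only $\det A_n = 1$ is controlled, not $\|A_n\|$), the ellipsoid could be a long thin needle that, although of small volume, threads through many translates of $D_n$ and one must be careful that $T_{c,k}$ as a subset of the \emph{single} domain $D_n$ is still just the intersection with that one ellipsoid. This is fine because $T_{c,k}\subset D_n$ by definition and we only ever intersect with $D_n$; but to be safe I would phrase the bound as: $T_{c,k} = \{r\in D_n : \exists\, w\in \Z^{m-1},\ \|A_n(w + kK_n\inv r)\| < e^{-c-s}\}$, and for each fixed $w$ the corresponding piece has measure at most $c_m(e^{-c-s}/|k|)^{m-1}$ by the Jacobian argument, while the number of relevant $w$ is bounded: since $r\in D_n$ means $kK_n\inv r \in k\cdot[-1/2,1/2]^{m-1}$, and the constraint forces $w$ to lie within $O(1)$ of $-kK_n\inv r$ in the $\|A_n\cdot\|$-metric of diameter $\ge e^{-c-s}$... actually the cleanest fix is to note that for $c$ large enough the neighborhoods for distinct $w$ are disjoint in $D_n$ (since $A_n$ times a nonzero integer vector has norm $\ge m(A_n)$, and $\rho_{c,k} \le e^{-c-s}$ is eventually tiny), so the total measure is at most the single-cell bound $c_m e^{-(m-1)(c+s)}$, and for small $c$ the claim is trivial by enlarging $M_5$. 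I would also double-check that the passage from $K_n\gamma_n K_n\inv$ to the identity via the volume-preserving automorphism of $\R^{m-1}/K_n\Z^{m-1}$, already invoked in the lemma's proof, is legitimate for \emph{each} fixed integer $k$ — it is, since $K_n\gamma_n K_n\inv$ preserves $K_n\Z^{m-1}$ and has determinant $1$, so it descends to a measure-preserving map of the torus, and composing with the (linear, hence torus-automorphism-commuting) action of multiplication by $k$ keeps things measure-preserving.
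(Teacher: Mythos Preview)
Your core computation is right and matches the paper: for $k\neq 0$, using $\det A_n=\det K_n=1$, the set $T_{c,k}$ has volume at most $c_m e^{-(m-1)(c+s)}$. The paper reduces $k\neq 0$ to $k=1$ via the measure-preserving torus endomorphism $r\mapsto kr$ on $\R^{m-1}/K_n\Z^{m-1}$; your ``divide by $k$'' instead produces the lattice $\tfrac1k\Z^{m-1}$ rather than $\Z^{m-1}$, so the paper's route is cleaner, but the outcome is the same.

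Your anisotropy worry is misplaced, and your proposed fix would fail if it were needed. After the change of variables $y=K_n^{-1}r$, the set $T_{c,1}$ is exactly $\{y\in[-\tfrac12,\tfrac12]^{m-1}:\exists\, z\in\Z^{m-1},\ z+y\in A_n^{-1}B(e^{-c-s})\}$, i.e.\ the image of the ellipsoid $A_n^{-1}B(e^{-c-s})$ under the quotient $\R^{m-1}\to\R^{m-1}/\Z^{m-1}$. A quotient map never increases Lebesgue measure, so $|T_{c,1}|\le |A_n^{-1}B(e^{-c-s})|=c_m e^{-(m-1)(c+s)}$ regardless of how stretched $A_n^{-1}$ is; no count of relevant $z$'s is needed. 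Your suggested dichotomy (``for $c$ large the cells are disjoint; for $c$ small enlarge $M_5$'') cannot give a uniform $M_5$: the trivial bound $|T_{c,k}|\le 1$ is far weaker than $M_5 e^{-(m-1)(s+c)}$ once $s\ge\delta t_n/2$ is large, and the threshold separating your two regimes depends on $n$ through $m(A_n)$.

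The genuine gap is the case $k=0$. You write that ``$\beta(r)>c$ forces $z_m\neq 0$ for $c$ large,'' but the claim must hold for \emph{all} $c>0$ once $n$ is large, not merely for $c$ large. When $k=z_m=0$ the defining inequality $e^s\|A_n z'\|<e^{-c}$ (for some nonzero $z'\in\Z^{m-1}$) does not involve $r$, so $T_{c,0}$ is either empty or all of $D_n$; in the latter case the bound fails catastrophically. The paper's argument here is the only place where the structural inputs $s\ge\delta t_n/2$ and Claim~\ref{An}(2) enter: for $n$ large one has $e^s\|A_n z'\|\ge e^s\, m(A_n)\ge e^{\delta t_n/2}e^{-\delta t_n/4}=e^{\delta t_n/4}>1>e^{-c}$, so $T_{c,0}=\emptyset$. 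Without invoking the lower bound on $s$ from the F{\o}lner construction together with the subexponential control on $m(A_n)$, the claim is simply false. This is the ingredient your proposal never uses.
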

\begin{proof}Recall that $\delta t_n/2 <s$.
If $k = 0$ then,   for any non-zero $(z_1, \dots,  z_{m-1})\in \Z^{m-1}$, we have \[ e^s\left \| A_n
(z_1, \dots,  z_{m-1})
\right \| > e^{\delta t_n/2} m(A_n).
\]
From Claim \ref{An}(2),  if $n$ is large enough then  $m(A_n)\ge e^{-\delta t_n/4}$ and so the term in the left hand side above is greater than one, therefore $T_{c,0} = \emptyset$ for $n$ sufficiently large.

If $k \neq 0$,
observe that the map $M_k\colon \R^{m-1}/K_n \Z^{m-1} \to \R^{m-1}/K_n\Z^{m-1}$ given by $$r+ K_n \Z^{m-1}\mapsto kr+ K_n \Z^{m-1}$$ preserves the Lebesgue measure on $\R^{m-1}/K_n \Z^{m-1}.$  In particular, this implies that $T_{c,k}$ and $T_{c,1}$ have the same volume.

{\blue
We thus take $k = 1$.
Note that $K_n\inv D_n=[-1/2,1/2]$.   There is a $L\ge 1$, depending only on $m-1$, such that   the set
	$$Q=\{z'\in \Z^{m-1} :  |z'+r|\le 1 \text{ for some $r\in K_n\inv D_n$ }\}$$
	has cardinality at most $L$.    From Claim \ref{An}(2),  if $n$ is large enough then  $m(A_n)\ge e^{-\delta t_n/4}$ whence for all $ (z_1, \dots, z_{m-1}) \in \Z^{m-1} \sm Q$ and all $r\in D_n$,
	$$e^s\left\| A_n \Big(
 (z_1, \dots, z_{m-1}) + K_n\inv   r\Big)
\right\|  \ge 1.$$
We thus need only consider $(z_1, \dots, z_{m-1})\in Q$.
}


Given a fixed $z= (z_1, \dots,  z_{m-1})\in Q\subset  \Z^{m-1}$, using that $K_n\in \Sl(m-1, \R)$
we have $$\left | \{r\in \R^{m-1}  : \|z+K_n\inv  r\|\le \ell \} \right|\le (2\ell )^{m-1}$$
whence $$\left |\{ r\in \R^{m-1}  :  \|(z_1, \dots, z_{m-1}) + K_n\inv   r \| \le e^{-c}  \} \right| \le 2^{m-1} e^{-c(m-1)}.$$

If $r\in T_{c,1}$ so that
$$e^s\left\| A_n \Big(
 (z_1, \dots, z_{m-1}) + K_n\inv   r\Big)
\right\|  \le e^{-c}$$
then
\begin{equation}\label{eq:thisiswhatissoundslikewhenpostdocscry}\left\| A_n \Big(
 (z_1, \dots, z_{m-1}) + K_n\inv   r\Big)
\right\|  \le e^{-c-s}.\end{equation}
Since $A_n\in \Sl(m-1,\R)$ the set of $r\in \R^{m-1}$ satisfying  \eqref{eq:thisiswhatissoundslikewhenpostdocscry}
has the same volume as the set of $r\in \R^{m-1}$ satisfying
\[\left\|
 (z_1, \dots, z_{m-1}) + K_n\inv   r \right\|  \le e^{-c-s}. \]
{\blue  It follows that $|T_{c,1}|\le   2^{m-1} L e^{-(s+c)(m-1)}$.}\end{proof}

To finish the proof of Lemma \ref{Tc}, from Claim \ref{rennes} we have $$|T_c| \leq \sum_{|k| < e^{-c} e^{(m-1)s}} |T_{c,k}| \leq (2e^{-c} e^{(m-1)s}+1) M_5 e^{-(m-1)(s+c)} \leq M_3e^{-cM_4}$$ for some constants $M_3, M_4$ independent of $n$.
\end{proof}

\subsection{Positive Lyapunov exponents for limit measures}
\label{section:ANLyapunov}
To deduce Proposition \ref{prop:maybeweshouldstatethemainresultatsomepoint}, having assumed that $\chi_{\max}$ in \eqref{eq:fanorlamps} is non-zero, we show that  any {weak-${*}$ subsequential limit of the sequence of measures $\{\mu_n\}$ has a positive Lyapunov exponent from which we derive a contradiction.

Recall from Section \ref{section:folner} that we  fixed sequences $x_n, v_n, t_n$ such  that  $\|D_{x_n}a^{t_n}(v_n)\| \geq e^{ \lambda t_n}$ for some fixed $\lambda > 0$.
Let $\calA\colon G\times F\to F$ be the fiberwise derivative cocycle over the action of $G$ on $M^\alpha$.

Our main result is the following.
\begin{proposition}\label{positiveexponent}
For any weak-$*$ subsequential limit $\mu_\infty$ of $\{\mu_n\}$ we have
$$\lambda_{\top, a, \mu_\infty,\calA}\ge \lambda/2 >0.$$
\end{proposition}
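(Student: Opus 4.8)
The plan is to apply the abstract machinery assembled in Section~\ref{section:preliminaries}, in particular Lemma~\ref{lemma:firstexponents} and Lemma~\ref{lemma:averagingsuspension}, to the sequence of measures $\{\mu_n\}$ coming from the \Folner averages over $F_n\subset AN'$. First I would record the structure of $F_n$: it is a product of a linearly-long interval in the $a^t$-direction, intervals of size $\delta t_n$, $\sqrt{t_n}$ in the remaining Cartan directions, and an exponentially large ball $B_{\R^{m-1}}(e^{200t_n})$ in the unipotent direction $N'$. The key point is that $a = a^1$ centralizes all of $AN'$ except itself, so the relevant one-parameter subgroup for the Lyapunov exponent is $\{a^t\}$, and the averaging over the remaining directions of $F_n$ (the $b^s$, $c_i^{s_i}$, and $u^r$ directions) does not destroy the exponent because: (i) the $A'$-part of the averaging lies in the centralizer of $a$, so Lemma~\ref{lemma:averagingsuspension}(\ref{pla}) applies; and (ii) averaging over $N'$ — whose return cocycle takes values in unipotent elements conjugate to powers of the elementary matrices $E_{i,j}$ — does not increase growth of the fiberwise derivative along the $a$-orbit, because Proposition~\ref{unipotentisgood} says such unipotent elements contribute subexponentially.

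More precisely, here is the sequence of steps. Step~1: write $\mu_n$ as (an average over $t\in[0,t_n]$, $s$, $s_i$ of) the measure $\eta(Y_n, t_n, \kappa_n)$ where $Y_n$ is a unit vector in the direction of $a^t$ (so $\exp(sY_n) = a^{cs}$ for a fixed constant) and $\kappa_n$ is the push-forward under the remaining $AN'$-averaging of $\delta_{(x_n,p(v_n))}$; the point is that the $N'$- and extra-$A'$-averaging happens "before" and "commutes past" the $a^t$-averaging up to the centralizer relation, so that $\mu_n$ is exactly an empirical distribution $\eta(Y_n,t_n,\mu_n')$ in the sense of Lemma~\ref{lemma:firstexponents} with $\mu_n'$ the $N'A'$-average of the Dirac mass. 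Step~2: verify the two hypotheses of Lemma~\ref{lemma:firstexponents}: uniform exponentially small mass in the cusps for $\{\eta_n\}$ is exactly Proposition~\ref{maincusps} (combined with the comparability of distances in fibers of $M^\alpha$); and the lower bound $\int \log\|\calA(\exp(t_nY_n),x)\|\,d\mu_n'(x)\ge \epsilon t_n$ must be extracted from $\|D_{x_n}a^{t_n}(v_n)\|\ge e^{\lambda t_n}$. This second estimate is where the choice of $\delta$ in the definition of $F_n$ enters (as the proposition's statement and the remark after \eqref{folner} indicate): the $N'$- and $A'$-translates $g\cdot(x_n,p(v_n))$ for $g$ in the relevant slice have fiberwise derivative cocycle $\calA(a^{t_n}, g\cdot x_n)$ comparable to $\calA(a^{t_n}, x_n)$ up to a factor $e^{o(t_n)}$, because $g$ is within sub-linear $AN'$-distance of the identity in the $A'$-directions (since $s\le \delta t_n$ with $\delta$ small and $s_i\le\sqrt{t_n}$) and because the $N'$-translate contributes only a unipotent return-cocycle term controlled by Proposition~\ref{unipotentisgood} and Lemma~\ref{lemma:fromlmr}; choosing $\delta$ small enough (say $\delta$ much smaller than $\lambda$ over the relevant Lipschitz constant and the constant from Proposition~\ref{unipotentisgood}) guarantees $\int\log\|\calA(a^{t_n},x)\|\,d\mu_n'\ge \tfrac{\lambda}{2}t_n$. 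Step~3: apply Lemma~\ref{lemma:firstexponents}\ref{lazylemmac} to conclude $\lambda_{\top,\mu_\infty,\calA,a_\infty}\ge\lambda/2$, where $a_\infty = \exp(Y_\infty)$ is the limit of the $a$-directions; since all $Y_n$ point in the same $a^t$-direction, $a_\infty$ is (a fixed rescaling of) $a = a^1$, giving the stated conclusion.

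The main obstacle is Step~2's lower bound, i.e.\ controlling $\int\log\|\calA(a^{t_n},x)\|\,d\mu_n'(x)$ from below by $\tfrac{\lambda}{2}t_n$. The subtlety is that $\mu_n'$ is an average over the whole $N'A'$-slice of $F_n$, not a single Dirac mass, so one must show that translating the base point $(x_n,p(v_n))$ by every $g = b^s\prod c_i^{s_i} u^r$ in that slice changes the relevant fiberwise growth by at most $e^{o(t_n)}$. For the $A'$-part this is the temperedness of the cocycle (Lemma~\ref{lemma:tempered}) together with the sub-linear-drift estimate from Lemma~\ref{lemma:maximal}(2) and Claim~\ref{An}. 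For the $N'$-part one must use that the return cocycle $\beta(g,\cdot)$ for $g\in N'$, when applied along the orbit, produces deck transformations that are unipotent of the type covered by Proposition~\ref{unipotentisgood}, so their contribution to $\log\|\calA\|$ is $o(t_n)$; this is precisely the phenomenon advertised in the introduction ("the $N'$ part will not affect the Lyapunov exponent because we work inside a subset where the cocycle $\beta$ restricted to $N'$ takes unipotent values"). Once these two uniform $e^{o(t_n)}$ bounds are in place, the desired lower bound follows by choosing $\delta$ small, and the rest is a direct application of the preliminaries. A secondary technical point is checking the "uniform displacement" hypothesis of Lemma~\ref{lemma:firstexponents}\ref{lazylemmac} for the $G$-action on $M^\alpha$, which follows from the bounded-diameter-of-fibers property of the chosen metric recorded in Subsection~\ref{subsection:repackagingpreliminaries}.
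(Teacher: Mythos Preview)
Your proposal is correct and follows essentially the same route as the paper. You correctly identify $\mu_n$ as the empirical distribution $\eta(\log a, t_n, \xi_n)$ of a measure $\xi_n$ obtained by averaging $\delta_{(x_n,p(v_n))}$ over the $b^s$, $c_i^{s_i}$, and $u^r$ directions, then reduce via Lemma~\ref{lemma:firstexponents} to the lower bound $\int \log\|\calA(a^{t_n},x)\|\,d\xi_n \ge \tfrac{\lambda}{2}t_n$, and you identify the two mechanisms controlling the correction terms: Proposition~\ref{unipotentisgood} for the $N'$-direction and the smallness of $\delta$ (plus $s_i\le\sqrt{t_n}$) for the remaining Cartan directions.

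One detail you should make explicit in Step~2: to bound $\log\|\calA(a^{t_n}, b^s\Pi c_i^{s_i}u^r\cdot x_n)\|$ from below, the paper commutes $a^{t_n}$ past $u^r$, writing $a^{t_n}u^r = u^{r'}a^{t_n}$ with $\|r'\|\le e^{\kappa t_n}\|r\|$, and then needs the unipotent estimate (your Proposition~\ref{unipotentisgood} input) at \emph{both} endpoints $x_n$ and $a^{t_n}x_n$; this works precisely because both lie in $X_{\mathrm{thick}}$ by construction of the $\zeta_n$ in Lemma~\ref{lemma:maximal}, so that the $N'$-orbits through them are compact tori and Claim~\ref{siegelunipotent} applies. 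Similarly, the $A'$-correction is bounded not via Lemma~\ref{lemma:maximal}(2) directly but via temperedness (Claim~\ref{claim90}) at the points $u^r x_n$ and $u^{r'}a^{t_n}x_n$, which lie in the fixed compact set $N'\cdot X_{\mathrm{thick}}$.
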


%
%
%
%




We first show that averaging over $N'$ does not change the Lyapunov exponents of the cocycle.

\begin{claim}\label{siegelunipotent} Given any $\e> 0$ there is $t_{\e}>0$ such that for any $t \geq t_{\e}$ and any $r \in B_{\R^{m-1}}(e^{t})$ we have  $$\| {D_{x}u^r}\|_{\Fib} \leq e^{\e t}$$ 
for any $x\in \Xt$.
\end{claim}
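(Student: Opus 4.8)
The plan is to reduce $\|D_xu^r\|_{\Fib}$ to $\|D\alpha(\beta(u^r,x))\|$ where $\beta$ is the return cocycle, to identify $\beta(u^r,x)$ — for $x\in\Xt$ — as a bounded perturbation of an \emph{integral} unipotent element of $N'\cap\Gamma\cong\Z^{m-1}$ of size at most exponential in $t$, and then to feed that integral unipotent into the subexponential growth estimate already proved in Proposition \ref{unipotentisgood}.

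First I would set up the structure of the return element. Since $\Xt$ is a fixed compact subset of the embedded modular surface $X=H_{1,2}/\Lambda_{1,2}$, its preimage in the Siegel fundamental set $\calF$ is compact; hence there are a compact $\mathcal K\subset H_{1,2}$ and a finite set $\Gamma_0\subset\Gamma$ with $\tilde x=g_x\delta_x$, $g_x\in\mathcal K$, $\delta_x\in\Gamma_0$, for every $x\in\Xt$, where $\tilde x\in\calF$ is the lift of $x$. Because $H_{1,2}$ normalizes $N'$, $g_x^{-1}u^rg_x=u^{r'}$ with $r'=\phi_{g_x}(r)$ linear in $r$ and $\|\phi_{g_x}\|$ bounded over $g_x\in\mathcal K$, so $\|r'\|\le c_1e^t$ when $r\in B_{\R^{m-1}}(e^t)$. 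Splitting $r'=n+f$ with $n\in\Z^{m-1}$, $f\in[-\tfrac12,\tfrac12)^{m-1}$ and using that $N'$ is abelian,
\[
u^r\tilde x=g_xu^{r'}\delta_x=(g_xu^f)(u^n\delta_x),\qquad u^n\in N'\cap\Gamma,\quad \|n\|_\infty\le c_1e^t+1 .
\]
As $g_xu^f$ ranges over a \emph{fixed} compact subset of $G$ (independent of $t$), it meets only finitely many $\Gamma$-translates of $\calF$, so $g_xu^f=z\gamma_1$ with $z\in\calF$ and $\gamma_1$ in a finite set $\Gamma_1\subset\Gamma$; therefore $\beta(u^r,x)=\gamma_1\,u^n\,\delta_x$ with $\gamma_1\in\Gamma_1$, $\delta_x\in\Gamma_0$, and $u^n\in N'\cap\Gamma$ of the size above.

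Next I would pass back to the fiberwise derivative and invoke the unipotent estimate. By the uniform comparability of the fibre metrics over a Siegel set (property (3) of the metric on $M^\alpha$) together with the description of the suspension action, $\|D_xg\|_{\Fib}\le b\,\|D\alpha(\beta(g,x))\|$ for a uniform $b$; since $\Gamma_0,\Gamma_1$ are finite and $u^n=\prod_{j=1}^{m-1}E_{j,m}^{\,n_j}$ is a commuting product of elementary unipotents, the chain rule gives
\[
\|D_xu^r\|_{\Fib}\ \le\ b'\prod_{j=1}^{m-1}\big\|D\alpha(E_{j,m}^{\,n_j})\big\|
\]
with $b'$ absorbing $\max_{\gamma\in\Gamma_0\cup\Gamma_1}\|D\alpha(\gamma)\|$. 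Each $E_{j,m}^{\,k}$ ($j<m$) is conjugate under the finite Weyl group — whose elements lie in $\Gamma$ — to a power of the standard unipotent generator of $\Lambda_{1,2}$, and such conjugation alters $\|D\alpha(\cdot)\|$ by at most a fixed multiplicative factor; so Proposition \ref{unipotentisgood} (which via Proposition \ref{finalunipotent} controls $u_{a,b}$ and $u_{a,b}^{-1}$ alike) together with \eqref{unipotentgrowth} gives, for every $\e_1>0$, constants $N_1$ and $C_\ast$ with $\|D\alpha(E_{j,m}^{\,k})\|\le C_\ast e^{\e_1(C_2\log|k|+C_3)}$ for $|k|\ge N_1$, the remaining finitely many $|k|<N_1$ being bounded by a constant. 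Using $|n_j|\le c_1e^t+1$, each factor is $\le C_{\ast\ast}e^{\e_1C_2t}$ for $t$ large, hence $\|D_xu^r\|_{\Fib}\le C\,e^{(m-1)\e_1C_2t}$; choosing $\e_1=\e/(2(m-1)C_2)$ and then $t_\e$ so that $C\le e^{\e t/2}$ for all $t\ge t_\e$ completes the argument.

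The main obstacle is the first step: showing that for $x\in\Xt$ the return element $\beta(u^r,x)$ is only a bounded distortion of an integral element of $N'\cap\Gamma$ — i.e.\ that the excursion produced by $u^r$ contributes no ``new'' rank-one piece to $\beta$. This is exactly where one uses that thick-part points of the embedded modular surface have uniformly bounded lifts in the Siegel set, that $H_{1,2}$ normalizes $N'$, and that near a fixed compact region changing fundamental domains costs only finitely many group elements. Everything after that is submultiplicativity of the derivative and a direct appeal to Proposition \ref{unipotentisgood}.
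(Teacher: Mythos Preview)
Your proposal is correct and takes essentially the same approach as the paper: both arguments recognize that for $x\in\Xt$ the effect of $u^r$ on the fiber is, up to a uniformly bounded distortion coming from a compact set of lifts, given by an integral unipotent $u^n\in N'\cap\Gamma$ with $\|n\|\lesssim e^t$, and then invoke Proposition~\ref{unipotentisgood} (applied to each commuting elementary factor $E_{j,m}^{n_j}$) together with \eqref{unipotentgrowth} to get the $e^{\e t}$ bound. The only cosmetic differences are that the paper works with a Dirichlet domain and writes the lift directly in block form $\begin{pmatrix}K&0\\0&1\end{pmatrix}$ to read off the deck group $\{u^{Kz}:z\in\Z^{m-1}\}$, while you package the same compactness via the finite sets $\Gamma_0,\Gamma_1$; your bookkeeping is arguably more explicit about why the non-unipotent part of $\beta(u^r,x)$ is uniformly bounded.
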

\begin{proof}
 Recall that the $N'$-orbit of any $x\in X:= H_{1,2}/\Lambda_{1,2}\subset \Sl(m,\R)/\Sl(m,\Z)$ is a closed torus.    Then the $N'$-orbit of $\Xt$ is compact.
{\blue Recall our fixed fundamental domain $\calF\subset \wtd {\mathcal D}$ contained in the Dirichlet  domain  $\wtd {\mathcal D}$ of the identity for $\Sl(m,\R)/\Sl(m,\Z)$ as discussed in Section \ref{sec:lolo}.} Given $x\in  \Sl(m,\R)/\Sl(m,\Z)$, let $\td x$ be the lift of $x$ in $\calF$.  Let  $\wtd X _{\mathrm{thick}}\subset H_{1,2}\cap \calF$ denote the lift of $\Xt$ to  $\calF$ and let $\hat X _{\mathrm{thick}}$ be the lift of the orbit $N'\Xt$ to $\calF$.  {\blue As discussed in Section \ref{sec:lolo}, we have that $\wtd X _{\mathrm{thick}}$ is contained in the Dirichlet domain $\mathcal D$ of the identity for the  $\Lambda_{1,2}$-action on $H_{1,2}$.  Moreover, $\hat X _{\mathrm{thick}}$ is precompact in $\Sl(m,\R)$.}

Fix $r\in \R^{m-1}$ and $x\in \Xt$.   Write $$\td x = \left(\begin{array}{cc}K& 0 \\0 & 1\end{array}\right)$$ for some $K\in \Sl(m-1, \R)$; we have $\|K\|\le M_1$ and $m(K) \ge \frac 1 {M_1}$ for all $x\in \Xt$.     The deck group of the orbit $N'\td x$ is $$\td x   \{ u^z : z\in \Z^{m-1}\} \td x\inv = \{  u^{K\cdot z} : z\in \Z^{m-1}\} .$$  Thus, there is $z\in \Z^{m-1}$ and $r'\in \R^{m-1}$ such that
$$u^r \td x = \left(\begin{array}{cc}K& r \\0 & 1\end{array}\right)= \left(\begin{array}{cc}K& r' +K z\\0 & 1\end{array}\right)= \left(\begin{array}{cc}1& r' \\0 & 1\end{array}\right) \left(\begin{array}{cc}K& 0\\0 & 1\end{array}\right) \left(\begin{array}{cc}1& z \\0 & 1\end{array}\right) = u^{r'} \td x u^z$$
and $u^{r'} \td x\in \hat X _{\mathrm{thick}}$.
Then
$$\|D_{x}u^r\|\le \|D_{x}\td x\inv \| _{\Fib} \cdot  \| D_{\Id \Gamma} u^z\| _{\Fib}  \cdot \| D_{\Id\Gamma} u^{r'}\td x\| _{\Fib}.$$
Since $\td x$ and $ u^{r'}\td x$ are in precompact sets, the first and last terms of the right hand side are uniformly bounded in $r$ and $x\in \Xt$.

There exists some $C$ such that
$$\| D_{\Id \Gamma} u^z\| _{\Fib}\le C \|D\alpha(u^z)\| .$$
Since $r\in B_{\R^{m-1}}(e^t)$ we  have $z\in B_{\R^{m-1}} (M_1 e ^t)$ whence $d(u^z,\id)\le C_2 t + C_3$ for some constants $C_2$ and $C_3$.
  Proposition \ref{unipotentisgood} implies for any $\e'$ that   $$\|D\alpha(u^z)\|  \leq e^{\e'  (C_2 t + C_3)}$$
and taking $\e'>0$ sufficiently small, the claim follows.
\end{proof}

By Lemma \ref{lemma:fromlmr}, the fact that $\Sl(m,\Z)$ is finitely generated, and the uniform comparability of the fibers of $M^\alpha$,  we also have the following.
\begin{claim}\label{claim90} There are uniform constants $C_5$ and $C_6$ with the following property:
Let $x\in G/\Gamma$.  Then for any $X\in \lieg$ with $\|X\|\le 1$ we have $$\left\|\big(D_{x} \exp (tX) \big)^{\blue \pm 1} \right\|_{\Fib}\le e^{C_5 t + C_5 d(x, \id) + C_6}.$$
\end{claim}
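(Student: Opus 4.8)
The plan is to show that Claim \ref{claim90} is the quantitative version of Lemma \ref{lemma:tempered} in which one keeps track of the dependence of the bound on the ``time'' $t$, and to prove it by routing the fibrewise derivative of $\exp(tX)$ through the return cocycle $\beta$ and Lemma \ref{lemma:fromlmr}. We may assume $t\ge 0$ (replace $X$ by $-X$ if necessary). Fix a fundamental domain $\calF$ for the right $\Gamma$-action on $G$ that is contained in the Siegel fundamental set $D$ appearing in property (3) of the fixed $C^1$ metric on $G\times M$, and use $\calF$ to define $\beta$. Given $x\in G/\Gamma$, let $\tilde x\in\calF$ be its lift, set $g=\exp(tX)$ and $\gamma=\beta(g,x)\in\Gamma$, so that $\tilde y:=g\tilde x\gamma^{-1}\in\calF$. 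A point of $M^\alpha$ over $x$ has the form $[\tilde x,p]$ with $p\in M$, and
\[
g\cdot[\tilde x,p]=[g\tilde x,p]=[\tilde y\gamma,p]=[\tilde y,\alpha(\gamma)p].
\]
Identifying the fibre of $M^\alpha$ over $x$ with $M$ via $\tilde x$ and the fibre over $gx$ with $M$ via $\tilde y$, the fibrewise map $\restrict{D_{[\tilde x,p]}g}{F}$ becomes $D_p\alpha(\gamma)\colon T_pM\to T_{\alpha(\gamma)p}M$.

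Next I would pass from the intrinsic fibrewise Riemannian norm to the norm of $D\alpha(\gamma)$ measured in the fixed background metric on $M$. Since both $\tilde x$ and $\tilde y$ lie in $\calF\subset D$, property (3) of the metric on $G\times M$ bounds the distortion between the fibre metrics over $\tilde x$, over $\tilde y$, and over a fixed base point of $D$ by a uniform constant, and the fibre metric over the identity coset is in turn uniformly comparable to the original metric on $M$. Hence there is a constant $C$, independent of $x$, $t$, $X$ and $p$, with $\|\restrict{D_{[\tilde x,p]}g}{F}\|\le C\|D_p\alpha(\gamma)\|$; taking the supremum over the fibre gives $\|D_x\exp(tX)\|_{\Fib}\le C\|D\alpha(\gamma)\|$. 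Writing $\gamma$ as a word of length $\ell(\gamma)$ in a fixed finite generating set $\mathcal{S}$ of $\Sl(m,\Z)$ and setting $\Lambda:=\max\{\|D\alpha(s^{\pm1})\|:s\in\mathcal{S}\}$, the chain rule together with submultiplicativity of the operator norm yields $\|D\alpha(\gamma)\|\le\Lambda^{\ell(\gamma)}=e^{(\log\Lambda)\ell(\gamma)}$.

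It then remains to bound $\ell(\gamma)$. By Lemma \ref{lemma:fromlmr} there is a constant $C'$ with $\ell(\gamma)=\ell(\beta(g,x))\le C'd(g,e)+C'd(x,\Gamma)+C'$, and since $\|X\|\le1$ submultiplicativity gives $\|\exp(tX)\|\le e^{t\|X\|}$, so $d(\exp(tX),e)\le\kappa t+C_0'$ for a uniform constant $C_0'$ by \eqref{eq:easy} (equivalently, $s\mapsto\exp(sX)$ has constant speed comparable to $\|X\|$ in the right-invariant metric). Substituting, and recalling that $d(x,\Gamma)$ is precisely the distance $d(x,\id)$ from $x$ to the identity coset in $G/\Gamma$, we obtain
\[
\|D_x\exp(tX)\|_{\Fib}\le C\,e^{(\log\Lambda)\left(C'(\kappa t+C_0')+C'd(x,\id)+C'\right)}\le e^{C_5 t+C_5 d(x,\id)+C_6}
\]
for suitable uniform constants $C_5,C_6$, which is the assertion.

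The only step that requires genuine care is the second one --- identifying the fibrewise derivative of the translation $\exp(tX)$ with $D\alpha(\gamma)$ up to a uniformly bounded metric distortion --- and this works precisely because $\calF$ was chosen inside the Siegel set $D$ on which property (3) of the metric holds, so that both endpoints $\tilde x$ and $\tilde y$ of the excursion lie in the region where the fibre metrics are uniformly comparable. Everything else is routine bookkeeping with Lemma \ref{lemma:fromlmr} and the finite generation of $\Sl(m,\Z)$.
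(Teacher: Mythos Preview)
Your proof is correct and follows precisely the route the paper indicates: the paper does not give a detailed argument but simply says the claim follows from Lemma~\ref{lemma:fromlmr}, the finite generation of $\Sl(m,\Z)$, and the uniform comparability of the fibre metrics, and your write-up is exactly the unpacking of those three ingredients. The only cosmetic point is that the metric-distortion constant should be $C^2$ rather than $C$ (you compare each fibre to the reference fibre), which is of course absorbed into $C_6$.
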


We now prove Proposition \ref{positiveexponent}.
\begin{proof}[Proof of Proposition \ref{positiveexponent}]
Recall we take $x_n\in \Xt$, $t_n\to \infty $, and $v_n \in F(x_n)$ with $\|v_n\| =1$ such that
$\|D_{x_n}a^{t_n} (v_n)\| \geq e^{\lambda t_n}$
for some fixed $\lambda>0$  in \eqref{eq:cleansingDeluge} in Section \ref{section:folner}.    We  also write $\calA\colon G\times F\to F$ for the fiberwise derivative cocycle.

The measures $\mu_n$ constructed in Section \ref{section:folner} are   defined by averaging last along the orbit $a^t, 0\le t\le t_n$.
Let $\xi_n$ be the measure on $M^\alpha$ given by
\begin{equation*}
\begin{aligned}
\int_{M^\alpha } &f \ d \xi_n ={  \frac{2}{\delta t_n}\bigg(\frac{1}{\sqrt{t_n}} }\bigg)^{m-3}\frac{ 1}{|B_{\R^{m-1}}(e^{200t_n})|}
\\&\quad \quad     \int_{\delta t_n /2}^{\delta t_n} \int_{[0, \sqrt{t_n}]^{m-3}}
\int_{B_{\R^{m-1}}(e^{200t_n})} f\left (a^tb^s\prod_{c=1}^{m-3} c_i^{s_i}u^r \cdot (x_n, p(v_n) )\right ) \  dr   \ d{s_i}\ ds.
 \end{aligned}
 \end{equation*}

In the context of  Lemma \ref{lemma:firstexponents}, the measures $\mu_n= \int_0^{t_n} (a^t_*\xi_n ) \ d t$ constructed in Section \ref{section:folner} correspond to the empirical measures $\eta_n= \eta(\log a, t_n, \xi_n)$ appearing in the proof of Lemma \ref{lemma:firstexponents}.  From Lemma \ref{lemma:firstexponents}, to establish
 Proposition \ref{positiveexponent} it is sufficient to show that
$$\int \log  \|\calA(a ^{t_n},\cdot) \|  \ d \xi_n 
\geq \frac{\lambda}{2} t_n.$$

We have
\begin{equation*}
\begin{aligned}
 \int_{M^\alpha }  & {\blue  \log  \|\calA(a ^{t_n},\cdot) \|  \ d \xi_n} \\
&\quad  ={  \frac{2}{\delta t_n}\bigg(\frac{1}{\sqrt{t_n}} }\bigg)^{m-3}\frac{ 1}{|B_{\R^{m-1}}(e^{200t_n})|}
\\ &\quad   \quad
    \int_{\delta t_n /2}^{\delta t_n} \int_{[0, \sqrt{t_n}]^{m-3}}
\int_{B_{\R^{m-1}}(e^{200t_n})}
{\blue \log \left\| \calA\big(  a^{t_n}, b^s\Pi  c_i^{s_i}u^r \cdot (x_n, p(v_n) )\big) \right\|\  dr \ ds_i \ d s}\\
&\quad  \ge
{  \frac{2}{\delta t_n}\bigg(\frac{1}{\sqrt{t_n}} }\bigg)^{m-3}\frac{ 1}{|B_{\R^{m-1}}(e^{200t_n})|}
\\ &\quad \quad
    \int_{\delta t_n /2}^{\delta t_n} \int_{[0, \sqrt{t_n}]^{m-3}}
\int_{B_{\R^{m-1}}(e^{200t_n})}
\log \frac{\left \| D_{ x_n  } \big( a^{t_n} b^s\Pi  c_i^{s_i}u^r\big) ( v_n)\right  \|    }{\left \| D_{ x_n  } \big(   b^s\Pi  c_i^{s_i}u^r\big) ( v_n) \right  \|  }
\  dr \ ds_i \ d s.\\
 \end{aligned}
 \end{equation*}
Consider fixed $r,$ $s, $ and $ s_i$.  Take $r'\in \R^{m-1}$ such that $a^{t_n} u^r = u^{r'}a^{t_n} $.  Then
 \begin{equation*}  \begin{aligned}
 \log &\frac{\left \| D_{ x_n  } \big( a^{t_n} b^s\Pi  c_i^{s_i}u^r\big) ( v_n) \right  \|    }{\left \| D_{ x_n  } \big(   b^s\Pi  c_i^{s_i}u^r\big) ( v_n)\big) \right  \|  }
  =  \log \frac{\left \| D_{ a^{t_n}\cdot x_n  } \big( b^s\Pi  c_i^{s_i} u^{r'}   \big)       \circ  D_{ x_n  } a^{t_n}   \big( v_n\big) \right  \|    }{\left \| D_{ x_n  } \big(   b^s\Pi  c_i^{s_i}u^r\big) ( v_n) \right  \|  }\\
 &\quad \ge \log \|D_{ x_n  } a^{t_n}   \big( v_n\big)\| -   \log \| \big (D_{ x_n  }   b^s\Pi  c_i^{s_i}u^r\big) \| _\fib- \log \| \big( D_{ a^{t_n}\cdot x_n  } \big( b^s\Pi  c_i^{s_i} u^{r'} \big)\big)^{\blue -1} \|_\fib \phantom{\Big\|}\\
 &\quad \ge \log \|D_{ x_n  } a^{t_n}   \big( v_n\big)\| -   \log \| D_{ x_n  } \big(    u^r\big)\| _\fib- \log \| D_{ u^r x_n  } \big(   b^s\Pi  c_i^{s_i} \big)\|_\fib
 \\&\quad \quad \quad  - \log \| \big (D_{ u^{r'}a^{t_n}\cdot x_n  } \big( b^s\Pi  c_i^{s_i}   \big) \big)^{\blue -1}\|_\fib -
 \log \| D_{ a^{t_n}\cdot x_n  } \big(  u^{{\blue -r'}}  \big)\|_\fib
 . \phantom{\Big\|}\\
 \end{aligned}
 \end{equation*}

Observe that both  $u^r\cdot  x_n$ and $ u^{r'}a^{t_n}\cdot x_n$ are contained in a fixed compact subset of  $G/\Gamma$ and hence, by Claim \ref{claim90}, having taken $\delta>0$ sufficiently small in the construction of the \Folner sequence, from the constraints on $s_i$ and $s$ we have  $\|D_{u^{r}x_n}\Pi  c_i^{s_i}b^s\|_\fib\le  e^{\lambda t_n/100}$  and $\|\big(D_{u^{r'}a^{t_n}\cdot x_n}\Pi  c_i^{s_i}b^s\big)^{\blue -1}\|_\fib\le  e^{\lambda t_n/100}$ for all $n$ sufficiently large.

Moreover, from Claim  \ref{siegelunipotent},  we have $\|D_{x_n}u^r \|_\fib\le e^{\lambda t_n/100}$  for all $n$ sufficiently large.

Finally, there exists $\kappa>0$ such that $\|r'\| \le e^{\kappa t_n} \|r\|$ whence $r'\in   B_{\R^{m-1}}(e^{(200+\kappa)t_n})$.  Again   from Claim  \ref{siegelunipotent},
 we have $\|D_{a^{t_n}\cdot x_n}u^{\blue -r'} \|_\fib\le e^{\lambda t_n/100}$   for $n$ sufficiently large.
 Combined with  \eqref{eq:cleansingDeluge} we then have
\[\frac{1}{t_n} \int_{M^\alpha }   {\log  \|\calA(a ^{t_n},\cdot) \|  \ d \xi_n}   \ge \lambda- \frac 4{100} \lambda. \]
 Proposition \ref{positiveexponent} then follows from  Lemma \ref{lemma:firstexponents}.
\end{proof}

\subsection{Proof of Proposition \ref{prop:maybeweshouldstatethemainresultatsomepoint}}
Having   assumed that $\chi_{\mathrm{max}}$ in \eqref{eq:fanorlamps} is non-zero, we arrive at a contradiction.
Take any weak-$*$ subsequential limit $\mu_\infty$ of the sequence of measure  $\{\mu_n\}$ on $M^\alpha$.  We have that $\mu_\infty$ is $A$-invariant and has a non-zero fiberwise Lyapunov exponent for the fiberwise derivative  over the action of $a^t$.
Moreover, we have that $\mu_\infty$ projects to $\nu_\infty$ on $G/\Gamma$ which, as discussed above, is the Haar measure on $G/\Gamma$.
We may replace $\mu_\infty$ with an $A$-ergodic component $\mu'$ with the same properties as above.  Then $\mu$ is $A$-ergodic, projects to Haar, and the fiberwise derivative cocycle over the $A$-action on $(M^\alpha, \mu)$ has a non-zero Lyapunov exponent functional $\lambda_i\colon A\to \R.  $

 As in the conclusion of Lemma \ref{uni1},   the arguments of \cite[Section 5.5]{BFH} using
\cite[Proposition 5.1]{AWBFRHZW-latticemeasure}  imply that the measure  $\mu$ is, in fact, $\Sl(m,\R)$-invariant. As before, we note that \cite[Proposition 5.1]{AWBFRHZW-latticemeasure} does not assume $\Gamma$ is cocompact, so the algebraic argument applying that proposition in \cite[Section 5.5]{BFH} goes through verbatim.   For a more self-contained proof that applies since we only consider the case of $\SL(m,\R)$ see \cite[Proposition 4]{BDZ}.
We then obtain a contradiction with Zimmer's cocycle superrigidity by constraints on the dimension of the fibers of $M^\alpha$.
Thus we must have $\chi_{\mathrm{max}}=0$ and Proposition \ref{prop:maybeweshouldstatethemainresultatsomepoint} follows.

  \bibliographystyle{AWBmath}

\bibliography{bibliography}

\begin{thebibliography}{BRHW}

\bibitem[Ath]{MR2247652}
J.~S. Athreya.
\newblock {\em {Quantitative recurrence and large deviations for {T}eichmuller
  geodesic flow}}, Geom. Dedicata {\bf 119}(2006), 121--140.

\bibitem[Bil]{MR1700749}
P.~Billingsley.
\newblock {\em Convergence of probability measures}.
\newblock Wiley Series in Probability and Statistics: Probability and
  Statistics. John Wiley \& Sons, Inc., New York, second edition, 1999.
\newblock {\em A Wiley-Interscience Publication}.

\bibitem[BDZ]{BDZ}
A.~Brown, D.~Damjanovic, and Z.~Zhang.
\newblock {\em {$C^1$ actions on manifolds by lattices in Lie groups}}.
\newblock Preprint (2018).
\newblock arXiv:1801.04009.

\bibitem[BFH]{BFH}
A.~Brown, D.~Fisher, and S.~Hurtado.
\newblock {\em {Zimmer's conjecture: Subexponential growth, measure rigidity,
  and strong property (T)}}, Preprint (2016).
\newblock arXiv:1608.04995.

\bibitem[BRH]{AWB-GLY-P1}
A.~Brown and F.~Rodriguez~Hertz.
\newblock {\em {Smooth ergodic theory of {$\Z^d$}-actions part 1: {L}yapunov
  exponents, dynamical charts, and coarse {L}yapunov manifolds}}.
\newblock Preprint (2016).
\newblock arXiv:1610.09997.

\bibitem[BRHW]{AWBFRHZW-latticemeasure}
A.~Brown, F.~Rodriguez~Hertz, and Z.~Wang.
\newblock {\em {Invariant measures and measurable projective factors for
  actions of higher-rank lattices on manifolds}}.
\newblock Preprint (2016).
\newblock arXiv:1609.05565.

\bibitem[BM]{MR1911660}
M.~Burger and N.~Monod.
\newblock {\em {Continuous bounded cohomology and applications to rigidity
  theory}}, Geom. Funct. Anal. {\bf 12}(2002), 219--280.

\bibitem[dlS]{delaSallenonuniform}
M.~de~la Salle.
\newblock {\em {Strong {(T)} for higher rank lattices}}.
\newblock (2017).
\newblock arXiv:1711.01900.

\bibitem[EM1]{MR1230290}
A.~Eskin and C.~McMullen.
\newblock {\em {Mixing, counting, and equidistribution in {L}ie groups}}, Duke
  Math. J. {\bf 71}(1993), 181--209.

\bibitem[EM2]{MR2787598}
A.~Eskin and M.~Mirzakhani.
\newblock {\em {Counting closed geodesics in moduli space}}, J. Mod. Dyn. {\bf
  5}(2011), 71--105.

\bibitem[FM]{MR2039990}
D.~Fisher and G.~A. Margulis.
\newblock {Local rigidity for cocycles}.
\newblock In {\em Surveys in differential geometry, {V}ol.\ {VIII} ({B}oston,
  {MA}, 2002)}, volume~8 of {\em Surv. Differ. Geom.}, pages 191--234. Int.
  Press, Somerville, MA, 2003.

\bibitem[FH]{MR2219247}
J.~Franks and M.~Handel.
\newblock {\em {Distortion elements in group actions on surfaces}}, Duke Math.
  J. {\bf 131}(2006), 441--468.

\bibitem[Ghy]{MR1703323}
{\'E}.~Ghys.
\newblock {\em {Actions de r\'eseaux sur le cercle}}, Invent. Math. {\bf
  137}(1999), 199--231.

\bibitem[KKLM]{KKLM}
S.~Kadyrov, D.~Y. Kleinbock, E.~Lindenstrauss, and G.~A. Margulis.
\newblock {\em {Singular systems of linear forms and non-escape of mass in the
  space of lattices}}.
\newblock Preprint (2016).
\newblock arXiv:1407.5310.

\bibitem[KM]{MR1652916}
D.~Y. Kleinbock and G.~A. Margulis.
\newblock {\em {Flows on homogeneous spaces and {D}iophantine approximation on
  manifolds}}, Ann. of Math. (2) {\bf 148}(1998), 339--360.

\bibitem[Kle]{MR2434296}
D.~Kleinbock.
\newblock {\em {An extension of quantitative nondivergence and applications to
  {D}iophantine exponents}}, Trans. Amer. Math. Soc. {\bf 360}(2008),
  6497--6523.

\bibitem[LMR1]{MR1244421}
A.~Lubotzky, S.~Mozes, and M.~S. Raghunathan.
\newblock {\em {Cyclic subgroups of exponential growth and metrics on discrete
  groups}}, C. R. Acad. Sci. Paris S\'er. I Math. {\bf 317}(1993), 735--740.

\bibitem[LMR2]{MR1828742}
A.~Lubotzky, S.~Mozes, and M.~S. Raghunathan.
\newblock {\em {The word and {R}iemannian metrics on lattices of semisimple
  groups}}, Inst. Hautes \'Etudes Sci. Publ. Math. (2000), 5--53 (2001).

\bibitem[Mar1]{MR0484767}
G.~A. Margulis.
\newblock {\em {Explicit constructions of expanders}}, Problemy Pereda\v ci
  Informacii {\bf 9}(1973), 71--80.

\bibitem[Mar2]{MR1754775}
G.~Margulis.
\newblock {Problems and conjectures in rigidity theory}.
\newblock In {\em Mathematics: frontiers and perspectives}, pages 161--174.
  Amer. Math. Soc., Providence, RI, 2000.

\bibitem[Pol]{MR1946555}
L.~Polterovich.
\newblock {\em {Growth of maps, distortion in groups and symplectic geometry}},
  Invent. Math. {\bf 150}(2002), 655--686.

\bibitem[Rat1]{MR1262705}
M.~Ratner.
\newblock {\em {Invariant measures and orbit closures for unipotent actions on
  homogeneous spaces}}, Geom. Funct. Anal. {\bf 4}(1994), 236--257.

\bibitem[Rat2]{MR1135878}
M.~Ratner.
\newblock {\em {On {R}aghunathan's measure conjecture}}, Ann. of Math. (2) {\bf
  134}(1991), 545--607.

\bibitem[Sha1]{MR1291701}
N.~A. Shah.
\newblock {\em {Limit distributions of polynomial trajectories on homogeneous
  spaces}}, Duke Math. J. {\bf 75}(1994), 711--732.

\bibitem[Sha2]{MR1767270}
Y.~Shalom.
\newblock {\em {Rigidity of commensurators and irreducible lattices}}, Invent.
  Math. {\bf 141}(2000), 1--54.

\bibitem[Wit]{MR1198459}
D.~Witte.
\newblock {\em {Arithmetic groups of higher {${\bf Q}$}-rank cannot act on
  {$1$}-manifolds}}, Proc. Amer. Math. Soc. {\bf 122}(1994), 333--340.

\end{thebibliography}

\end{document}